\numberwithin{equation}{section}
\newcommand{\car}{\curvearrowright}
\newcommand{\G}{\Gamma}
\newcommand{\g}{\gamma}
\newcommand{\ca}{\curvearrowright}
\newcommand{\emm}{\mathcal{M}}
\newcommand{\enn}{\mathcal N}
\newcommand{\la}{\lambda}
\newcommand{\ra}{\rangle}
\newcommand{\El}{\mathcal{L}}
\newcommand{\La}{\Lambda}
\newcommand{\bten}{\bar\otimes}
\newcommand{\cA}{\mathcal A}
\newcommand{\cB}{\mathcal B}
\newcommand{\cD}{\mathcal D}
\newcommand{\cL}{\mathcal L}
\newcommand{\cM}{\mathcal M}
\newcommand{\cN}{\mathcal N}
\newcommand{\cP}{\mathcal P}
\newcommand{\cQ}{\mathcal Q}
\newcommand{\cR}{\mathcal R}
\def\R{\mathcal{R}}
\def\C{\mathcal{C}}
\def\P{\mathcal{P}}
\def\cL{\mathcal{L}}
\def\M{\mathcal{M}}
\def\A{\mathcal{A}}
\def\B{\mathcal{B}}
\def\ra{\rightarrow}
\def\ca{\curvearrowright}
\def\bten{\bar\otimes}
\def\G{\Gamma}
\theoremstyle{plain}
\newtheorem{main}{Theorem}
\newtheorem{mcor}[main]{Corollary}
\newtheorem{theorem}{Theorem}[section]
\newtheorem{lemma}[theorem]{Lemma}
\newtheorem{proposition}[theorem]{Proposition}
\newtheorem{corollary}[theorem]{Corollary}
\theoremstyle{definition}
\newtheorem{definition}[theorem]{Definition}
\newtheorem{assumption}[theorem]{Assumption}
\begin{document}

\title[Superrigid coinduced groups]
{$W^*$ and $C^*$-superrigidity results for coinduced groups}

\author{Ionu\c t Chifan}
\address{14 MacLean Hall, Department of Mathematics, The University of Iowa, IA, 52242, U.S.A.}
\email{ionut-chifan@uiowa.edu}
\thanks{I.C. has been supported in part by the NSF grants DMS-1600688, FRG DMS-1854194 and a CDA award from the University of Iowa}

\author{Alec Diaz-Arias}
\email{alec-diaz-arias@uiowa.edu}
\thanks{A.D-A. was supported in part by the AGEP- supplemental grant}

\author{Daniel Drimbe}
\address{Department of Mathematics, KU Leuven, Celestijnenlaan 200b, B-3001 Leuven, Belgium}
\email{daniel.drimbe@kuleuven.be}
\thanks {D.D. holds the postdoctoral fellowship fundamental research 12T5221N of the Research Foundation - Flanders.}

%\thanks{The author was partially supported by PIMS fellowship.}
%NSF Career Grant DMS \#1253402 and

\begin{abstract} In this paper we explore a generic notion of superrigidity for von Neumann algebras $\cL(G)$ and reduced $C^*$-algebras $C^*_r(G)$ associated with countable discrete groups $G$. This allows us to classify these algebras for various new classes of groups $G$ from the realm of coinduced groups.

\end{abstract}

\maketitle

\section{Introduction}

%\subsection{Background}
The work of Murray and von Neumann \cite{MvN36,MvN43} shows that any countable discrete group $\Gamma$ gives rise in a canonical way to a von Neumann algebra, denoted $\cL(\Gamma)$. Namely, $\cL(\Gamma)$ is defined by considering the weak operator closure of the group algebra $\mathbb C[\Gamma]$ acting on the Hilbert space $\ell ^2(\Gamma)$ by left convolution. A main theme in operator algebras is to classify group von Neumann algebras, and hence, to understand to what extend $\cL(\Gamma)$ remembers some information about the group $\Gamma$. This problem is particularly studied when the group $\Gamma$ is {\it icc}, i.e. the conjugacy class of every non-trivial element of $\Gamma$ is infinite, which corresponds to $\cL(\Gamma)$ being a II$_1$ factor. In the amenable case, the classification problem is completed by the work of Connes \cite{Co76} asserting that for all icc amenable groups, their von Neumann algebras are isomorphic. This shows, in particular, that group von Neumann algebras tend to have little memory of the underlying group.

In sharp contrast, the non-amenable case is far more complex and it reveals presence of striking rigidity phenomena. In the last twenty years, Popa's deformation rigidity/theory \cite{Po07} led to the discovery of a plethora of structural results for group von Neumann algebras including many classes of groups $\Gamma$ for which various algebraic and analytical properties can be completely recovered from $\cL(\Gamma)$. In particular, the class of generalized wreath product groups (and more generally, coinduced groups, see Definition \ref{def:coind}) has been extensively studied and many rigidity results have been discovered. We only highlight here two developments in this direction, and refer the reader to the surveys \cite{Va10b,Io12,Io17} for more information. In his seminal work \cite{Po03,Po04}, Popa proved that non-isomorphic icc property (T) groups $\Gamma$ give rise to non-isomorphic von Neumann algebras $\cL(\mathbb Z/2\mathbb Z\wr\Gamma)$. Very recently, Popa and Vaes proved in \cite{PV21} many surprising results for the embedding problem of II$_1$ factors. For instance, to any infinite group $\Gamma$ they defined an icc group $H_\Gamma$ through a generalized wreath product construction such that $\cL(H_\Gamma)$ embeds in $\cL(H_\Lambda)$ if and only if $\Gamma$ is isomorphic to a subgroup of $\Lambda$.

In a remarkable work \cite{IPV10} a decade ago,  Ioana, Popa and Vaes discovered the first classes of groups $G$ that are completely remembered by their von Neumann algebra $\cL(G)$. Subsequently, a few other classes of such groups have been unveiled in \cite{BV12,Be14,CI17,CD-AD20,Dr20b}. 
Despite this progress, much more remains to be done in this area, as identifying new $W^*$-supperigid groups remains a challenging task. 

To properly formalize the rigidity questions and the results we will be studying in this paper, we first recall the notion of group-like $\ast$-isomorphism between von Neumann algebras (resp. $C^*$-algebras).
Let $G$ and $H$ be countable groups. Let $\omega:G \ra \mathbb T$ be a multiplicative character and $\delta : G\ra H$ a group isomorphism. Consider the group von Neumann algebras $\mathcal L(G)$ and $\mathcal L(H)$ and denote by $\{u_g \,:\, g\in G\}$ and $\{v_h \,:\, h\in H\}$ their corresponding canonical group unitaries. Then the map $G\ni u_g\ra \omega(g)v_{\delta(g)}$ canonically extends to a $\ast$-isomorphism denoted by $\Psi_{\omega,\delta}: \mathcal L(G)\ra \mathcal L(H)$. Moreover, one can easily see that this map also implements a $\ast$-isomorphism of their reduced $C^*$-algebras, $\Psi_{\omega,\delta}: C^*_r(G)\ra C_r^*(H)$. When $G=H$, the collection of all the maps $\Psi_{\omega,\delta}$ realizes a copy of the canonical semi-direct product group ${\rm Char}(G)\rtimes {\rm Aut}(G)$ inside the automorphism group of $\mathcal L(G)$. Finally, it is not difficult to see that if $G$ is icc, then $\Psi_{\omega, \delta}$ is not inner, whenever $\omega \in {\rm Char} (G)\setminus \{1\}$ or $\delta\in {\rm Out}(G)\setminus \{1\}$. 

\vskip 0.03in

Following \cite{IPV10}, using this notation, one formalizes a generic notion of superrigidity for group von Neumann algebras.  A countable group $G$ is called \emph{abstractly $W^*$-superrigid} if for any group $H$ and $\ast$-isomorphism $\Theta: \cL(G)\ra \cL(H)$, one can find $\Phi \in {\rm Aut}(\cL(H))$, $\omega\in {\rm Char}(G)$, and $\delta\in {\rm Isom} (G,H)$ so that $\Theta = \Phi\circ \Psi_{\omega,\delta}$. If $\Phi$ can always be chosen to be inner, then one says that $G$ is \emph{$W^*$-superrigid}---a notion already coined in\cite{IPV10}. Finally, abstract $W^*$-superrigidity is equivalent to saying that whenever $\cL(G)\cong \cL(H)$ for an arbitrary countable group $H$, then $G\cong H$. 

In their groundbreaking work \cite{IPV10}, Ioana, Popa and Vaes provided the first examples of groups $G$ that are completely reconstructible from their von Neumann algebras $\cL(G)$.   Their groups are certain generalized wreath products of the form $G= \mathbb Z/n \mathbb Z\wr_I\G$, where $n$ is a square-free integer. When $n\geq 2 $ they showed that the groups $G$ are abstractly $W^*$-superrigid; specifically, any $\ast$-isomorphism $\Theta: \cL(G)\ra \cL(H)$ appears as $\Theta =\Phi\circ \Psi_{\omega, \delta }$,   where $\Phi$  is composition between an inner automorphism of $\cL(H)$ and a certain automorphism induced from the abelian base algebra $\cL(\mathbb Z/n\mathbb Z)$. In addition, when $n=2,3$, then $\Phi$  can be chosen inner and thus in this case the groups $G$ are $W^*$-superrigid.  

\vskip 0.03in

While abstract $W^*$-superrigidity is formally weaker than $W^*$-superrigidity in practice it is significantly more robust to work with as it allows classification of new classes of group factors beyond the ones described above. This is because in general a factor $\cL(G)$ could have an abundance of automorphisms different from  the inner ones and the group-like ones.  This is perhaps best seen when considering any generalized wreath product group $G =A \wr_I \G$. Its von Neumann algebra canonically decomposes as crossed product $\cL(G)=\bar\otimes_I \cL(A) \rtimes_\sigma \G$, where $\sigma$ is the action induced by the generalized Bernoulli shift of $\Gamma$ on $\bar\otimes_I \cL(A)$. Then each automorphism $\phi\in {\rm Aut} (\cL(A)) $ induces a tensor product automorphism  $\otimes_I \phi \in {\rm Aut} (\bar \otimes_I \cL(A)) $. As it obviously commutes with the action  $\sigma$, then it extends uniquely to an automorphism denoted by $\Phi_\phi \in {\rm Aut} (\cL(G))$ that is identity on the subalgebra $\cL(\G)$. The $\Phi_\phi$ is called a Houghton-type automorphism (see Section \ref{Section:HoughtonAut}) and generally it is not inner and does not belong to $\rm Char (G) \rtimes Aut(G)$ (see Proposition \ref{Proposition:notunitary}). Remarkably, this is the case even for all inner automorphisms $\phi= {\rm ad}(u)$ implemented by any non-central, non-group unitary  $u\in \mathscr U(\cL(A))$.

 Thus, abstract $W^*$-superrigidity provides a more inclusive framework for classifying  new families of factors arising from generalized wreath product groups, many with non-amenable base groups.
In \cite{BV12}, Berbec and Vaes found the first examples of left-right wreath product groups of the form $G=\mathbb Z/ 2\mathbb Z\wr_\G (\G\times \G)$  that are W$^*$-superrigid. Motivated by this result and inspired by \cite{IPV10, KV15, CU18, CD-AD20},  we obtain a variety of new examples of abstractly W$^*$-superrigid left-right wreath product groups where the base group $\mathbb Z/2\mathbb Z$ of $G$ is replaced by any non-amenable W$^*$-superrigid group.

%\begin{main}\label{Main:extra}
%Let $G=A_0\wr_{\Gamma} (\Gamma\times\Gamma)$ be a left-right wreath product, where $A_0$ is an arbitrary non-trivial group and $\Gamma$ is an icc, torsion free, bi-exact, weakly amenable, property (T) group. Let $H$ be any countable group such that $M=\cL(G)=\cL(H)$.

%Then there exists a decomposition $H=B_0\wr_{\Gamma}(\Gamma\times\Gamma)$ such that $\cL(A_0)=\cL(B_0)$.
%\end{main}

\begin{main}\label{Main:leftright}
Let $G=A_0\wr_{\Gamma} (\Gamma\times\Gamma)$ be a left-right wreath product, where $A_0$ is an abstractly $W^*$-superrigid group and $\Gamma$ is an icc, torsion free, bi-exact, weakly amenable, property (T) group. Let $H$ be any countable group and let $\Theta: \cL(G)\ra \cL(H)$ be a $\ast$-isomorphism. 

Then there exist a group isomorphism $\delta:G \ra H$,  a character $\eta:G\ra \mathbb T$, an automorphism  $ \phi\in {\rm Aut}( \cL(\delta(A_0)))$, and a unitary $w\in \cL(H)$ such that  $\Theta= {\rm ad} (w) \circ  \Phi_\phi \circ \Psi_{\eta,\delta}$.

In particular, $G$ is abstractly $W^*$-superrigid and not $W^*$-superrigid.
\end{main}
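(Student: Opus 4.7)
The plan is to reduce the theorem to two principal ingredients: a structural recognition result that transports the wreath-product decomposition from $\cL(G)$ across $\Theta$ onto $\cL(H)$, and the assumed abstract $W^*$-superrigidity of the base group $A_0$. Decompose $\cL(G) = A \rtimes (\Gamma\times\Gamma)$, where $A = \bten_{\Gamma}\cL(A_0)$ is the base algebra and $\Gamma\times\Gamma$ permutes the tensor coordinates via left-right multiplication.

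The heart of the argument is to show that, after adjusting $\Theta$ by an inner automorphism $\mathrm{ad}(w)$ of $\cL(H)$, the target inherits a matching left-right wreath-product decomposition: there exist a subgroup of $H$ isomorphic to $\Gamma\times\Gamma$ and a group $B_0$ such that $\Theta(A) = \bten_{\Gamma}\cL(B_0)$, compatibly with the crossed-product structure. To establish this, I would follow and adapt the strategy developed in \cite{IPV10, BV12, CD-AD20, Dr20b}: the property (T) hypothesis on $\Gamma$ allows one to locate $\cL(\Gamma\times\Gamma)$ inside $\cL(H)$ up to unitary conjugacy; bi-exactness and weak amenability supply the Ozawa-Popa-type control over normalizers and quasi-normalizers needed to pin down the base subalgebra; and the left-right symmetry of the action forces the acting factor to split along the same decomposition. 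The main technical tools are Popa's intertwining-by-bimodules theorem, relative solidity dichotomies, and a cocycle untwisting argument where property (T) of $\Gamma$ plays a decisive role.

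Once the wreath decomposition has been carried across $\Theta$, the base isomorphism $\Theta|_A$ descends coordinatewise to a $\ast$-isomorphism $\cL(A_0) \cong \cL(B_0)$. Invoking the abstract $W^*$-superrigidity of $A_0$ yields a group isomorphism $\delta_0 : A_0 \to B_0$, a character $\eta_0$ on $A_0$, and an automorphism $\phi_0 \in \mathrm{Aut}(\cL(B_0))$ realising this base isomorphism. Combined with the identification of $\Gamma\times\Gamma$ on both sides, these data assemble into a group isomorphism $\delta : G \to H$, a character $\eta : G \to \mathbb{T}$, and a Houghton-type automorphism $\Phi_\phi$ built from $\phi_0$, with the property that $\Theta$ and $\mathrm{ad}(w)\circ \Phi_\phi \circ \Psi_{\eta,\delta}$ agree on the canonical group unitaries of $G$, hence on all of $\cL(G)$. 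The failure of strict $W^*$-superrigidity is then immediate: choosing $\phi_0$ to be, for instance, inner conjugation by a non-central non-group unitary of $\cL(A_0)$, Proposition \ref{Proposition:notunitary} rules out any decomposition of the induced $\Phi_\phi$ as an inner automorphism composed with a map of the form $\Psi_{\omega,\delta'}$, so $\Theta$ cannot be written in the stronger form required by $W^*$-superrigidity.

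The main obstacle is the first step. Extracting both the acting subgroup $\Gamma\times\Gamma$ and the tensor-product structure of the base on the $\cL(H)$-side, without any prior group-theoretic information on $H$, and aligning them with the left-right Bernoulli action, is the most delicate part; the left-right symmetry is essential here in order to prevent a collapse of the acting factor onto a single copy of $\Gamma$, and one must work harder than in the $A_0$-abelian setting of \cite{BV12} because the non-amenability of $A_0$ prevents standard Cartan-rigidity arguments from being applied verbatim to the base algebra.
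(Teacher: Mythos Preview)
Your outline is correct and matches the paper's approach: the paper deduces this theorem from the more general Theorem~\ref{Th:coinduced} (applied with the trivial $\Gamma_0$-action on $A_0$), whose proof proceeds exactly along the lines you sketch---reconstructing first the acting group $\tilde\Gamma=\Gamma\times\Gamma$ inside $H$ via the comultiplication map and a height argument (Theorems~\ref{Th:coind2}--\ref{Th:coind3}), then the base tensor structure (Theorem~\ref{Th:coind4} and the subsequent claims), and finally invoking the abstract $W^*$-superrigidity of $A_0$ together with Proposition~\ref{Proposition:notunitary}. The one technical point your sketch leaves implicit is that the comultiplication map $\Delta:\cL(H)\to\cL(H)\bar\otimes\cL(H)$ of \cite{IPV10} is the engine driving the entire first step (this is Theorem~\ref{Th:coind1}, case (B), which replaces the Cartan-type arguments of \cite{BV12} in the non-amenable base setting).
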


Theorem \ref{Main:leftright} is an immediate corollary of a much more general result, Theorem \ref{Th:coinduced} in which we completely describe all group von Neumann algebra decompositions of $\cL(A_0\wr_{\Gamma} (\Gamma\times\Gamma))$, where $A_0$ is a non-trivial arbitrary group and $\Gamma$ is an icc, torsion free, bi-exact, weakly amenable, property (T) group (e.g.\ any uniform lattice in $Sp(n,1)$, $n\geq 2$). Note that \cite{IPV10, BV12, Be14} provide numerous wreath product factors $\cL(A_0\wr_I\tilde  \Gamma)$ for which all their group von Neumann algebra decompositions are described. A common feature of these results is that the base group $A_0$ is amenable (even abelian). Hence, we provide the first class of wreath product factors $\cL(A_0\wr_I \tilde \Gamma)$ with arbitrary, possibly non-amenable, base $A_0$ for which we classify all their group von Neumann algebra decompositions.

Theorem \ref{Main:leftright} provides a new style of stability result that applies to a W$^*$-superrigidity notion. Namely, if $\Gamma$ is an icc, torsion-free, non-amenable, bi-exact, weakly amenable, property (T) group and $A_0$ is an {\it arbitrary} non-trivial group, then $A_0$ is abstractly $W^*$-superrigid if and only if the left-right wreath product group $A_0\wr_{\Gamma} (\Gamma\times\Gamma)$ is again abstractly $W^*$-superrigid. This should be compared with the \emph{functorial} result \cite[Theorem 1.1]{IPV10} which asserts that every non-amenable group can be embedded in a $W^*$-superrigid group.

Our second main result provides a fairly large class of coinduced groups that are W$^*$-superrigid. We first recall the coinduction construction. % (see for instance \cite{Io06}).

\begin{definition}\label{def:coind}
Let $\Gamma_0<\Gamma$ be countable groups and denote $I=\Gamma/\Gamma_0$. Let $\phi:I\to \Gamma$ be a section map and define the associated cocycle $c:\Gamma\times I\to\Gamma_0$ by the formula $c(g,i)=\phi(gi)^{-1}g\phi(i)$, for all $g\in\Gamma$ and $i\in I.$ For a group $A_0$ and for an action by group automorphisms $\Gamma_0\overset{\sigma_0}{\car} A_0$ we define the {\it coinduced action} 
$\Gamma\overset{\sigma}\car A_0^I$ of $\sigma_0$ by the formula
$$
(\sigma_g(x))_{gi}=(\sigma_0)_{c(g,i)}(x_{i}), \text{ for all } g\in\Gamma,i\in I \text{ and } x=(x_i)_{i\in I}\in A_0^I.
$$ 
The semi-direct product $A_0^I\rtimes_\sigma\Gamma$ corresponding to this action is called the {\it coinduced group} associated with $\Gamma_0\car A_0$ and $\Gamma_0<\Gamma$. We observe that the class of coinduced groups contains any generalized wreath product group $A_0\wr_I\Gamma$ with $\Gamma\car I$ transitive. Developing new technical aspects in Popa's deformation/rigidity we are able to classify all group von Neumann algebra decompositions for a fairly large family of coinduced groups.
\end{definition}

\begin{main}\label{Main:coinduced}
Let $G=A_0^I\rtimes_\sigma \tilde\Gamma$ be the associated coinduced group of $\Gamma_0\car A_0$ satisfying the following properties:
\begin{itemize}
    \item $\tilde\Gamma=\Gamma\times\Gamma$, where $\Gamma$ is an icc, torsion free, weakly amenable, bi-exact, property (T) group. 
    \item $\Gamma_0<\tilde\Gamma$ is the diagonal subgroup defined by $\Gamma_0=\{(g,g)|g\in\Gamma\}.$
    \item $A_0$ is icc, torsion free, bi-exact and it contains an infinite property (T) subgroup $K_0$ that is $\Gamma_0$-invariant and has trivial virtual centralizer.
\end{itemize}
Let $H$ be a countable group and let $\Theta: \cL(G)\ra \cL(H)$ be a $\ast$-isomorphism. 

Then there exist subgroups $\Lambda_0<\tilde\Lambda<H$ and $\Sigma_0<H$ such that $\Sigma_0$ is normalized by $\Lambda_0$ and $H=\Sigma_0^I\rtimes_\rho\tilde\Lambda$ is the coinduced group of $\Lambda_0\car \Sigma_0.$ Moreover, there exist a group isomorphism $\delta:\tilde\Gamma\to\tilde\Lambda$ with $\delta(\Gamma_0)=\Lambda_0$, a $*$-isomorphism $\theta:\cL(A_0)\to\cL(\Sigma_0)$ that satisfies $\theta(\sigma_g(a))=\rho_{\delta(g)}(\theta(a))$ for all $g\in \Gamma_0$ and $a\in \cL(A_0)$, a character $\eta: \tilde\Gamma\to\mathbb T$ and a unitary $w\in \cL(H)$ such that $ 
\Theta = {\rm ad} (w)\circ \Phi_\theta \circ \Psi_{\omega,\delta}.$
%In particular, if $A_0$ is $W_{\rm aut}^*$-superrigid, then $G$ is $W_{\rm aut}^*$-superrigid.

\end{main}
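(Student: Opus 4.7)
Identify $\cL(G)$ with $\cL(H)$ via $\Theta$ and write $\cM = \cL(G) = \cA \rtimes \tilde\Gamma$ with $\cA = \bar\otimes_{i\in I}\cL(A_0)$. The plan is to transfer this coinduced decomposition along $\Theta$ to an analogous decomposition of $\cL(H)$, and then to read off the coinduced group structure on $H$ from the resulting algebraic data. The analytic backbone is supplied by the bi-exactness, weak amenability, and property (T) of $\Gamma$ (hence of each factor of $\tilde\Gamma = \Gamma \times \Gamma$), while the distinguished property (T) subalgebra $\cL(K_0) \subset \cL(A_0)$ placed at the trivial coset---which is $\cL(\Gamma_0)$-invariant and has trivial relative commutant in $\cL(A_0)$ thanks to the trivial virtual centralizer hypothesis---plays the structural role of a rigid skeleton inside the base algebra. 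The proof should follow the philosophy of \cite{IPV10,BV12,CD-AD20} with generalized wreath products replaced by general coinductions.

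\emph{Step 1: locating the group algebra.} First I would show that $\Theta(\cL(\tilde\Gamma))$ is unitarily conjugate, inside $\cL(H)$, to a group subalgebra $\cL(\tilde\Lambda)$ for a subgroup $\tilde\Lambda < H$ isomorphic to $\tilde\Gamma$ via some $\delta$. Bi-exactness and weak amenability of $\Gamma$ (applied to each factor of $\tilde\Gamma$), combined with Popa's spectral-gap and intertwining-by-bimodules techniques, should rule out that $\Theta(\cL(\tilde\Gamma))$ intertwines into $\cA$ or into any subalgebra coming from a stabilizer of a coset in $I$. An IPV-style ``group subalgebra'' argument (in the variant developed in \cite{BV12}) then forces $\Theta(\cL(\tilde\Gamma))$ to be a true group subalgebra. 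This is where property (T) of $\tilde\Gamma$, together with the property (T) subalgebra $\cL(K_0 \rtimes \Gamma_0)$, are both crucial. This step is the main obstacle: in the coinduced setting the stabilizers of the action $\tilde\Gamma \ca I$ are proper infinite subgroups (conjugates of $\Gamma_0$), and ruling out intertwining into the corresponding stabilizer algebras requires a refined use of the rigidity of $\cL(K_0)$ that goes beyond the wreath-product case of Theorem \ref{Main:leftright}.

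\emph{Step 2: locating the base at the identity coset.} After conjugating, $\Theta(\cL(\tilde\Gamma)) = \cL(\tilde\Lambda)$ and hence $\Theta(\cL(\Gamma_0)) = \cL(\Lambda_0)$ with $\Lambda_0 = \delta(\Gamma_0)$. Consider the fiber $\cL(A_0) \subset \cA$ at the identity coset, normalized by $\cL(\Gamma_0)$, and its distinguished property (T) subalgebra $\cL(K_0)$. Using the $\cL(\Gamma_0)$-invariance of $\Theta(\cL(K_0))$ together with Popa's property (T) intertwining criterion and the trivial virtual centralizer hypothesis, one shows that $\Theta(\cL(K_0))$, and hence $\Theta(\cL(A_0))$, is unitarily conjugate to a group subalgebra $\cL(\Sigma_0) \subset \cL(H)$ for some $\Sigma_0 < H$ normalized by $\Lambda_0$. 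Denote by $\theta: \cL(A_0) \to \cL(\Sigma_0)$ the resulting $*$-isomorphism; by construction it intertwines the $\Gamma_0$- and $\Lambda_0$-actions along $\delta$.

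\emph{Step 3: reconstructing the coinduction and the final form.} With $\tilde\Lambda$, $\Lambda_0$, and $\Sigma_0$ located as subgroups of $H$, the crossed-product decomposition of $\cL(H)$ inherited from that of $\cL(G)$, together with the coset bijection $I \leftrightarrow \tilde\Lambda/\Lambda_0$ induced by $\delta$, forces $H = \Sigma_0^I \rtimes_\rho \tilde\Lambda$ to be the coinduction of $\Lambda_0 \ca \Sigma_0$. The equivariant fiber isomorphism $\theta$ produces a well-defined Houghton-type automorphism $\Phi_\theta \in {\rm Aut}(\cL(H))$ (identity on $\cL(\tilde\Lambda)$, and equal to a tensor of copies of $\theta$ on the base), and a direct bookkeeping then yields $\Theta = {\rm ad}(w) \circ \Phi_\theta \circ \Psi_{\omega,\delta}$ for a suitable unitary $w \in \cL(H)$ and character $\omega \in {\rm Char}(\tilde\Gamma)$, completing the proof.
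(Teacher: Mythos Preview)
Your outline has the right overall architecture—locate $\tilde\Lambda$, then locate the base, then read off the coinduction—but it omits the central analytic device and misplaces the role of $K_0$, and as written Step~2 would not go through.

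The paper's entire argument is organized around the \emph{comultiplication} $\Delta:\cL(H)\to\cL(H)\bar\otimes\cL(H)$, $\Delta(v_h)=v_h\otimes v_h$, associated to the unknown group $H$. Showing that a subalgebra $\cP\subset\cL(H)$ is a group subalgebra $\cL(\Sigma)$ is, in this framework, exactly the statement $\Delta(\cP)\subset\cP\bar\otimes\cP$ (\cite[Lemma~7.1]{IPV10}); you allude to this as ``an IPV-style group subalgebra argument'' in Step~1 but never name it, and in Step~2 you drop it entirely. Without $\Delta$, there is no mechanism to conclude that $\Theta(\cL(A_0))$ (or $\Theta(\cL(A_0^I))$) is a group von Neumann subalgebra of $\cL(H)$; Popa intertwining and property~(T) give you embeddings into nice subalgebras, not that the image is itself of the form $\cL(\Sigma_0)$.

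Two more concrete issues. First, $K_0$ plays no role in Step~1: the reconstruction of $\tilde\Lambda$ (the paper's Theorems~\ref{Th:coind2} and~\ref{Th:coind3}) uses only the properties of $\Gamma$ (bi-exactness, weak amenability, property~(T)), the comultiplication, the augmentation technique of Theorem~\ref{Th:augmentationtechniques}, and a height argument; $K_0$ never appears. Second, the order in your Step~2 is inverted. The paper first shows that the \emph{entire} base $\Theta(\cL(A_0^I))$ equals $\cL(\Sigma)$ for some $\Sigma\lhd H$ (Theorem~\ref{Th:coind4}), using $\Delta(\cL(A_0^I))\prec^s\cL(A_0^I)\bar\otimes\cL(A_0^I)$ from Theorem~\ref{Th:coind1}. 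Only \emph{after} that does $K_0$ enter: property~(T) of $K_0^i$ forces $\Delta(\cL(K_0^i))$ to essentially live in $\cL(A_0^F)\bar\otimes\cL(A_0^F)$ for a finite $F\subset I$, and then the fact that $\mathrm{Stab}_{\tilde\Gamma}(i)$ acts with infinite orbits on $I\setminus\{i\}$ is used to shrink $F$ down to $\{i\}$. The upgrade from the resulting $\cL(K_0^i)\subset\cL(\Sigma^i)\subset\cL(A_0^i)$ to the full fiber identity $\cL(A_0^i)=\cL(\Sigma_0^i)$ is a genuinely nontrivial chain (Claims~1--3 in the paper) that uses passing to virtual centralizers, Ge's splitting theorem, solidity of $\cL(A_0)$ from bi-exactness, weak compactness of the conjugation action, and \cite[Theorem~4.6]{CdSS17}. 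Your sentence ``hence $\Theta(\cL(A_0))$ is unitarily conjugate to a group subalgebra'' hides exactly this work, and none of the listed tools (property~(T) intertwining, $\Gamma_0$-invariance, trivial virtual centralizer) alone or in combination gives it.
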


As a consequence of Theorem \ref{Main:coinduced}, we obtain the following W$^*$-superrigid coinduced groups.

\begin{mcor}\label{stabilitycoind}
We assume that $G=A_0^I\rtimes \tilde\Gamma$ is a coinduced group that satisfies the hypothesis of Theorem \ref{Main:coinduced} and assume in addition that ${\rm Comm}^{(1)}_{A_0\rtimes\Gamma_0}(\Gamma_0)=\Gamma_0$ and $A_0\rtimes\Gamma_0$ is $W^*$-superrigid. 

Then $G$ is $W^*$-superrigid.
\end{mcor}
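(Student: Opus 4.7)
The strategy combines the conclusion of Theorem \ref{Main:coinduced} with the $W^*$-superrigidity of the building block $A_0\rtimes\Gamma_0$ and the one-sided commensurator condition to upgrade the Houghton factor appearing in Theorem \ref{Main:coinduced} to an inner automorphism composed with a group-like automorphism. Applying Theorem \ref{Main:coinduced} to $\Theta:\cL(G)\to\cL(H)$ produces a coinduced decomposition $H=\Sigma_0^I\rtimes_\rho\tilde\Lambda$ coming from an action $\Lambda_0\curvearrowright\Sigma_0$, a group isomorphism $\delta:\tilde\Gamma\to\tilde\Lambda$ with $\delta(\Gamma_0)=\Lambda_0$, a $\Gamma_0$-equivariant $*$-isomorphism $\theta:\cL(A_0)\to\cL(\Sigma_0)$ (equivariant via $\delta|_{\Gamma_0}$), a character $\eta$, and a unitary $w\in\cL(H)$ such that $\Theta={\rm ad}(w)\circ\Phi_\theta\circ\Psi_{\eta,\delta}$. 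The task reduces to showing that $\Phi_\theta$ itself factors as an inner automorphism composed with some group-like $\Psi_{\eta',\delta'}$, where $\delta':G\to H$ is a group isomorphism refining $\delta$.

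To analyze $\Phi_\theta$, I would first use the $\Gamma_0$-equivariance of $\theta$ to extend it canonically to a $*$-isomorphism $\tilde\theta:\cL(A_0\rtimes\Gamma_0)\to\cL(\Sigma_0\rtimes\Lambda_0)$ via $\tilde\theta(a u_g)=\theta(a)v_{\delta(g)}$. The $W^*$-superrigidity of $A_0\rtimes\Gamma_0$ then produces a group isomorphism $\delta_*:A_0\rtimes\Gamma_0\to\Sigma_0\rtimes\Lambda_0$, a character $\eta_*$, and a unitary $w_*\in\cL(\Sigma_0\rtimes\Lambda_0)$ with $\tilde\theta={\rm ad}(w_*)\circ\Psi_{\eta_*,\delta_*}$. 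The constraint $\tilde\theta(\cL(\Gamma_0))=\cL(\Lambda_0)$ forces $\cL(\delta_*(\Gamma_0))$ and $\cL(\Lambda_0)$ to be unitarily equivalent inside $\cL(\Sigma_0\rtimes\Lambda_0)$. Transferring the assumption ${\rm Comm}^{(1)}_{A_0\rtimes\Gamma_0}(\Gamma_0)=\Gamma_0$ through $\delta_*$, the subgroup $\delta_*(\Gamma_0)$ also has trivial one-sided commensurator in $\Sigma_0\rtimes\Lambda_0$; combined with the unitary equivalence and a standard Popa-style intertwining argument, this upgrades to genuine subgroup conjugacy, and after an inner modification one obtains $\delta_*(\Gamma_0)=\Lambda_0$ and consequently $\delta_*(A_0)=\Sigma_0$, so that the semidirect split is preserved.

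With both splits preserved, $\delta_0:=\delta_*|_{A_0}:A_0\to\Sigma_0$ is a group isomorphism intertwining the base actions through $\delta|_{\Gamma_0}$, and by the universal property of coinduction it lifts to a group isomorphism $\tilde\delta:G\to H$ extending $\delta$; the characters assemble analogously into $\tilde\eta:G\to\mathbb T$. Tracking the tensor product construction of the Houghton automorphism then identifies $\Phi_\theta$ with $\Psi_{\tilde\eta,\tilde\delta}$ up to an inner automorphism of $\cL(H)$. Substituting back into the formula for $\Theta$ and collecting all inner and group-like pieces produces $\Theta={\rm ad}(u)\circ\Psi_{\eta'',\delta''}$ for a unitary $u\in\cL(H)$, character $\eta''$, and group isomorphism $\delta'':G\to H$, establishing the $W^*$-superrigidity of $G$.

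The main obstacle is this last inner absorption step. As Proposition \ref{Proposition:notunitary} warns, a Houghton automorphism $\Phi_{{\rm ad}(w_*)}$ originating from an inner automorphism of the base is typically \emph{not} inner on $\cL(G)$, so the argument must exploit the particular shape of $w_*$ produced above. Once the semidirect split is preserved, $w_*$ can be arranged to lie in a subalgebra of $\cL(\Sigma_0\rtimes\Lambda_0)\subseteq\cL(H)$ that is compatible with the coinduced structure, so that its tensor-product Houghton extension assembles into a genuine unitary of $\cL(H)$ implementing the required inner automorphism there.
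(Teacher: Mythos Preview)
Your approach diverges from the paper's, and the final step contains a genuine gap that you yourself flag but do not close.

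The paper does \emph{not} try to decompose the Houghton factor $\Phi_\theta$ as (inner)$\circ$(group-like). Instead, from Theorem \ref{Main:coinduced} it extracts only two pieces of data: that $\mathbb T\,\Theta(\tilde\Gamma)=\mathbb T\,w_1^*\tilde\Lambda w_1$ for some unitary $w_1$, and that $\Theta(\cL(A_0\rtimes\Gamma_0))=w_1^*\cL(\Sigma_0\rtimes\Lambda_0)w_1$. Applying the $W^*$-superrigidity of $A_0\rtimes\Gamma_0$ to the second relation gives a (possibly different) unitary $w_2$ with $\mathbb T\,\Theta(A_0\rtimes\Gamma_0)=\mathbb T\,w_2^*(\Sigma_0\rtimes\Lambda_0)w_2$. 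Since $\tilde\Gamma$ and $A_0\rtimes\Gamma_0$ generate $G$ and intersect in $\Gamma_0$, and since the hypothesis ${\rm Comm}^{(1)}_{A_0\rtimes\Gamma_0}(\Gamma_0)=\Gamma_0$ together with Lemma \ref{L:coindcomm}(3) yields ${\rm Comm}^{(1)}_G(\Gamma_0)=\Gamma_0$, the paper invokes a \emph{gluing theorem} \cite[Theorem 8.5]{CD-AD20}: two subgroup identifications with trivial one-sided commensurator over their intersection can be merged into a single unitary conjugacy $\mathbb T\,\Theta(G)=\mathbb T\,w^*Hw$. This entirely sidesteps the Houghton automorphism.

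Your route, by contrast, tries to show $\Phi_\theta={\rm ad}(u)\circ\Psi_{\tilde\eta,\tilde\delta}$ directly. Even granting that $\delta_*$ preserves the semidirect split and that $w_*\in\cL(\Sigma_0\rtimes\Lambda_0)$, you still face $\Phi_{{\rm ad}(w_*)}$ with $w_*$ a non-scalar unitary. Your claim that the ``tensor-product Houghton extension assembles into a genuine unitary of $\cL(H)$'' is precisely what fails: the infinite tensor product $\otimes_I w_*$ of a non-scalar unitary does not converge to a unitary in $\bar\otimes_I\cL(\Sigma_0)$, and there is no alternative unitary in $\cL(H)$ implementing $\Phi_{{\rm ad}(w_*)}$---this is exactly the content of Proposition \ref{Proposition:notunitary}. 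Nothing about $w_*$ lying in the larger algebra $\cL(\Sigma_0\rtimes\Lambda_0)$ rescues this; the Houghton construction is only defined for automorphisms of the base $\cL(\Sigma_0)$, so you would first need ${\rm ad}(w_*)$ to restrict there, forcing essentially $w_*\in\cL(\Sigma_0)$ up to a group element, and then Proposition \ref{Proposition:notunitary} applies verbatim. The obstacle you identify is real, and your proposed resolution does not overcome it; the paper's gluing argument is the correct workaround.
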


In contrast to the stability result obtained from Theorem \ref{Main:leftright} for the abstract $W^*$-superrigid notion, Corollary \ref{stabilitycoind} provides a stability result for the W$^*$-superrigidity notion. More precisely, if $\Gamma_0<\tilde\Gamma$ are countable groups and $\Gamma_0\car A_0$ is an action by group automorphisms satisfying certain properties such that $A_0\rtimes\Gamma_0$ is  W$^*$-superrigid, then the associated coinduced group $A_0^I\rtimes\tilde\Gamma$ is again W$^*$-superrigid.
In order to provide some concrete applications of Corollary \ref{stabilitycoind}, we consider the following class of examples. First, recall the following class of W$^*$-superrigid groups from \cite{CD-AD20}.

{\bf Class $\mathscr {A}$}. Let $\G$ be an icc, torsion free, bi-exact, property (T) group and let  $\G_0,\G_1$,\dots,$\G_n$ be isomorphic copies of $\G$. For every $1\leq i\leq n$ consider the action $\G_0 \curvearrowright^{\rho^i} \G_i$ by conjugation, i.e.\ $\rho^i_\g (\la)=\g\la \g^{-1}$ for all $\g\in \G_0, \la\in \G_i$. Then let $\G_0\ca^{\rho} \G_1\ast \G_2\ast ...\ast \G_n$ be the action of $\G_0$ on the free product group $\G_1 \ast \G_2\ast...\ast \G_n$ induced by the canonical free product automorphism $\rho_\g = \rho^1_\g \ast \rho^2_\g \ast ...\ast \rho^n_\g$ for all $\g\in \G_0$. Denote by $\mathscr A$ the collection of all such semi-direct product groups $(\G_1\ast \G_2 \ast ... \ast \G_n) \rtimes_{\rho} \G_0$.

\begin{mcor}\label{examplecoind} 
Let $(\G_1\ast \G_2 \ast... \ast \G_n) \rtimes_{\rho} \G_0\in \mathscr A$. Note that $\Gamma_0$ can be seen as a subgroup of $\tilde\Gamma=\Gamma_0\times\Gamma_0$ via the diagonal embedding. 
  
Then  the associated coinduced group $G=(\G_1\ast \G_2\ast ...\ast \G_n)^I\rtimes \tilde\Gamma$ of $\rho$ is W$^*$-superrigid.

\end{mcor}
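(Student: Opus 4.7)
The plan is to verify the hypotheses of Corollary \ref{stabilitycoind} for $G = (\Gamma_1 \ast \cdots \ast \Gamma_n)^I \rtimes \tilde\Gamma$ and then invoke that corollary. The $W^*$-superrigidity of $A_0 \rtimes \Gamma_0 = (\Gamma_1 \ast \cdots \ast \Gamma_n) \rtimes_\rho \Gamma_0$ is exactly the content of \cite{CD-AD20}, so only two items remain: (a) the structural hypotheses on $G$ from Theorem \ref{Main:coinduced}, and (b) the commensurator condition ${\rm Comm}^{(1)}_{A_0 \rtimes \Gamma_0}(\Gamma_0) = \Gamma_0$.

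For (a), I would assume (as is standard when invoking Theorem \ref{Main:coinduced}, cf.\ the example of $Sp(n,1)$ lattices given after Definition \ref{def:coind}) that the group $\Gamma$ in Class $\mathscr{A}$ is additionally weakly amenable, so that $\tilde\Gamma = \Gamma \times \Gamma$ satisfies the first bullet. The diagonal embedding of $\Gamma_0 \cong \Gamma$ into $\tilde\Gamma$ matches the second bullet. For $A_0 = \Gamma_1 \ast \cdots \ast \Gamma_n$: it is icc and torsion-free (free products of icc torsion-free groups have these properties), and bi-exact (bi-exactness is preserved under free products by Ozawa's work). Taking $K_0 := \Gamma_1 < A_0$ provides an infinite property (T) subgroup which is $\Gamma_0$-invariant (the free-product automorphism $\rho_g$ restricts on $\Gamma_1$ to $\rho^1_g$) and has trivial virtual centralizer since $\Gamma_1 \cong \Gamma$ is icc.

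For (b), any $(a,g) \in A_0 \rtimes \Gamma_0$ factors as $(a,e)(1,g)$ with $(1,g) \in \Gamma_0$, so up to $\Gamma_0$ we may take $g=e$. A direct semidirect-product computation gives
\[
(a,e)(1,h)(a,e)^{-1} \;=\; (a\,\rho_h(a^{-1}),\,h),
\]
which lies in $\Gamma_0$ precisely when $\rho_h(a) = a$. Thus $\Gamma_0 \cap (a,e)\Gamma_0(a,e)^{-1}$ corresponds to ${\rm Stab}_{\Gamma_0}(a)$. Writing $a = a_1 a_2 \cdots a_k$ in reduced free-product normal form with $a_i \in \Gamma_{j_i}\setminus\{e\}$, uniqueness of reduced expressions forces $\rho^{j_i}_h(a_i) = a_i$ for each $i$; transporting along the isomorphism $\Gamma_0 \cong \Gamma_{j_i}$, this means $h \in C_{\Gamma_0}(\tilde a_i)$. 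Since $\Gamma_0$ is icc, each $C_{\Gamma_0}(\tilde a_i)$ has infinite index, so ${\rm Stab}_{\Gamma_0}(a) \subseteq C_{\Gamma_0}(\tilde a_1)$ has infinite index in $\Gamma_0$, and $(a,e) \notin {\rm Comm}^{(1)}_{A_0 \rtimes \Gamma_0}(\Gamma_0)$ unless $a=e$. Hence ${\rm Comm}^{(1)}_{A_0 \rtimes \Gamma_0}(\Gamma_0) = \Gamma_0$.

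With (a) and (b) in place, Corollary \ref{stabilitycoind} applies verbatim and gives $W^*$-superrigidity of $G$. The only genuinely nontrivial verification is the commensurator condition in (b), which is where the specific free-product structure of Class $\mathscr{A}$ and the icc property of $\Gamma$ combine; everything else is either standard preservation under free products or a direct citation of \cite{CD-AD20}.
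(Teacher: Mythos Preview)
Your approach is correct and matches the paper's: verify the hypotheses of Corollary~\ref{stabilitycoind} (equivalently Corollary~\ref{Cor:coind}(2)), citing \cite{CD-AD20} for $W^*$-superrigidity of $A_0\rtimes\Gamma_0$. The paper does not spell out a separate proof of Corollary~\ref{examplecoind}; it simply invokes Corollary~\ref{Cor:coind}, and for the commensurator condition it cites \cite[Lemma~2.10]{CD-AD20} (see the proof of Theorem~\ref{computeauta}) rather than computing it directly as you do --- your direct argument in (b) is fine.

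One point to tighten: in (a) you justify ``$K_0=\Gamma_1$ has trivial virtual centralizer'' by saying $\Gamma_1\cong\Gamma$ is icc. That only gives $vC_{\Gamma_1}(\Gamma_1)=1$, whereas the hypothesis of Theorem~\ref{Main:coinduced} requires $vC_{A_0}(\Gamma_1)=1$ (this is how it is used in Claim~2 of the proof of Theorem~\ref{Th:coinduced}). The stronger statement is still true, but it needs the malnormality of free factors in a free product: for $g\in A_0\setminus\{e\}$ one has $C_{\Gamma_1}(g)$ either contained in a cyclic group (if $g$ is not conjugate into a factor) or equal to $\Gamma_1\cap wC_{\Gamma_j}(g_0)w^{-1}$, which is trivial unless $w\in\Gamma_1$ and $j=1$; combined with $\Gamma_1$ icc this forces $[\Gamma_1:C_{\Gamma_1}(g)]=\infty$ in every case.
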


The first class of non-amenable {\it C$^*$-superrigid groups}, i.e. groups that can be completely recovered from their reduced $C^*$-algebras, were discovered in \cite[Corollary B]{CI17}. This class of groups consists of uncountable many groups that appear as certain amalgamated free products. Subsequently, other examples of C$^*$-superrigid amalgams, HNN extensions and semi-direct product groups have been found very recently in \cite{CD-AD20}. In this paper, we introduce new families of C$^*$-superrigid groups, from the realm of generalized wreath products as introduced in \cite{IPV10}, and also, coinduced groups in the sense of Definition \ref{def:coind}.

Our results provide a first instance when the $C^*$-superrigidity statement is slightly more complex than the ones presented in \cite{CI17,CD-AD20} as it involves a family of non-trivial automorphisms of Houghton type, see Corollary \ref{Cor:C*sup1} and Theorem \ref{Main:C*sup}. To properly introduce our $C^*$-superrigidity results, we need some new terminology. By analogy with the von Neumann algebra situation, we introduce the following definition.

 \begin{definition}
 A countable group $G$ is called \emph {abstractly $C^*$-superrigid} if the following holds: for any group $H$ and any $\ast$-isomorphism $\Theta:  C^*_r(G)\ra C_r^* (H)$ one can find an automorphism $\theta \in {\rm Aut}(C^*_r(H))$, a character $\eta: G\ra \mathbb T$ and a group isomorphism $\delta : G\ra  H$ so that $\Theta = \theta \circ \Psi_{\eta,\delta}$.  Moreover, if $\theta$ can always be implemented by a unitary in the von Neumann algebra $\cL(H)$, then we say that $G$ is \emph{ C$^*$-superrigid}; in this case, one recovers the C$^*$-superrigidity notion from \cite[Corollary B]{CI17}. Finally, abstract $C^*$-superrigidity  is equivalent to saying that whenever $C_r^*(G)\cong C_r^*(H)$ for an arbitrary  countable group $H$, then $G\cong H$.
\end{definition}

We remark that in the definition of \emph{C$^*$-superrigidity} allowing $\theta$ to be implemented by a unitary in $\cL(H)$, instead of $C^*_r(H)$ as one may be tempted to, is in fact optimal. Indeed, a remarkable result of J. Phillips \cite{Ph87} shows that for every countable icc group $H$ with $\cL(H)$ a factor that does not have Murray and von Neumann’s property Gamma \cite{MvN43}, there exist uncountably many unitaries $u\in \cL(H)\setminus C^*_r(H)$ such that $\theta=  {\rm ad}(u) $ implements an \emph{outer} automorphism of the reduced $C^*$-algebra $C^*_r(H)$. Since in the non-amenable case we consider mostly these type of groups the aforementioned framework becomes optimal in this study. See Section \ref{Sec:Autom} for more details on this.

As a corollary of Theorem \ref{Main:leftright}, we obtain that the following left-right wreath product groups are abstractly C$^*$-superrigid by using the von Neumann algebraic superrigidity property of the groups combined with their unique trace property \cite{BKKO14}.  

\begin{mcor}\label{Cor:C*sup1}
Let $\Gamma$ be an icc, torsion free, weakly amenable, bi-exact, property (T) group. Let $A_0$ be an icc abstractly W$^*$-superrigid group with trivial amenable radical. Denote $G=A_0\wr_{\Gamma}(\Gamma\times\Gamma)$ the left-right wreath product group. Let $H$ be any countable group and let  $\Theta: C^*_r(G)\ra C_r^*(H)$ be a $\ast$-isomorphism. 

Then there exist a group isomorphism $\delta:G \ra H$,  a character $\eta:G\ra \mathbb T$, an automorphism  $ \phi\in {\rm Aut}( \cL(\delta(A_0)))$, and a unitary $w\in \cL(H)$ such that  $\Theta= {\rm ad} (w) \circ  \Phi_\phi \circ \Psi_{\eta,\delta}$.

In particular, $G$ is abstractly $C^*$-superrigid and not $C^*$-superrigid.
\end{mcor}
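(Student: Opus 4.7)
The plan is to reduce Corollary \ref{Cor:C*sup1} to Theorem \ref{Main:leftright} by extending any $*$-isomorphism $\Theta:C^*_r(G)\to C^*_r(H)$ to the enveloping group von Neumann algebras via the Breuillard--Kalantar--Kennedy--Ozawa unique-trace criterion \cite{BKKO14}, and then invoking the $W^*$-superrigidity conclusion directly.

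First I would verify that $G=A_0\wr_\Gamma(\Gamma\times\Gamma)$ has trivial amenable radical. If $N\lhd G$ is amenable, its image under the canonical quotient $G\to\Gamma\times\Gamma$ is an amenable normal subgroup; since $\Gamma$ is icc, torsion free and bi-exact, $\Gamma$ (and hence $\Gamma\times\Gamma$) has trivial amenable radical, so $N\subset A_0^\Gamma$. The left-right action $\Gamma\times\Gamma\curvearrowright\Gamma$ being transitive implies that the coordinate projection $N_k=\pi_k(N)\subset A_0$ is independent of $k\in\Gamma$; conjugating by elements of $A_0$ supported at $k$ shows $N_k\lhd A_0$, and amenability of $N_k$ together with the hypothesis on $A_0$ forces $N_k=\{e\}$, so $N=\{e\}$. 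By \cite{BKKO14}, $C^*_r(G)$ has a unique tracial state, and consequently so does $C^*_r(H)\cong C^*_r(G)$. Thus $\Theta$ preserves the canonical trace and extends, via the GNS construction, to a normal $*$-isomorphism $\tilde\Theta:\cL(G)\to\cL(H)$.

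Applying Theorem \ref{Main:leftright} to $\tilde\Theta$, I obtain a group isomorphism $\delta:G\to H$, a character $\eta\in{\rm Char}(G)$, an automorphism $\phi\in{\rm Aut}(\cL(\delta(A_0)))$ and a unitary $w\in\cL(H)$ with $\tilde\Theta={\rm ad}(w)\circ\Phi_\phi\circ\Psi_{\eta,\delta}$. Since $\Psi_{\eta,\delta}$ is already defined at the $C^*$-level and $\tilde\Theta$ restricts to $\Theta$ on $C^*_r(G)$, the composition $\theta:={\rm ad}(w)\circ\Phi_\phi=\Theta\circ\Psi_{\eta,\delta}^{-1}$ automatically restricts to an automorphism of $C^*_r(H)$, which yields the desired decomposition and hence abstract $C^*$-superrigidity. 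For the assertion that $G$ is not $C^*$-superrigid, I would take $\phi={\rm ad}(u)$ for a non-central, non-group unitary $u\in C^*_r(A_0)\subset\cL(A_0)$; this ensures that $\Phi_\phi$ restricts to $C^*_r(G)$, while Proposition \ref{Proposition:notunitary} and the analysis in Section \ref{Section:HoughtonAut} guarantee that $\Phi_\phi$ is neither $\cL(G)$-inner nor of the form $\Psi_{\eta',\delta'}$.

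The main obstacle is precisely this last step: one must exhibit a Houghton-type automorphism $\Phi_\phi$ that simultaneously (a) descends to an automorphism of $C^*_r(G)$ and (b) genuinely fails to lie in the subgroup of $\cL(G)$-inner, group-like automorphisms. Since Houghton automorphisms are intrinsically defined through the tensor product $\bar\otimes_\Gamma\cL(A_0)$ at the von Neumann level, checking (a) forces the implementing unitary to be chosen inside the $C^*$-completion, and checking (b) relies on the non-triviality results established in Section \ref{Section:HoughtonAut}. Once these are in place, the rest of the argument is a purely formal consequence of the unique-trace reduction and Theorem \ref{Main:leftright}.
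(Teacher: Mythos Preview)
Your proposal is correct and follows essentially the same route as the paper: the paper deduces Corollary~\ref{Cor:C*sup1} from Theorem~\ref{Main:leftright} by first noting (via Proposition~\ref{trivialamenableradical} and \cite{BKKO14}) that $G$ has trivial amenable radical, so $C^*_r(G)$ has a unique trace and $\Theta$ extends to a von Neumann algebra $*$-isomorphism, after which Theorem~\ref{Main:leftright} applies verbatim. Your direct argument for the trivial amenable radical of $G$ is a valid substitute for citing Proposition~\ref{trivialamenableradical}; the only cosmetic point is that the triviality of the amenable radical of $\Gamma\times\Gamma$ is most cleanly seen from property~(T) together with icc, rather than from bi-exactness.
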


As a consequence of Corollary \ref{examplecoind}, the following coinduced groups are  C$^*$-superrigid.

\begin{mcor}\label{Cor:C*sup2}
Let $(\G_1\ast \G_2 \ast... \ast \G_n) \rtimes_{\rho} \G_0\in \mathscr A$. Note that $\Gamma_0$ can be seen as a subgroup of $\tilde\Gamma=\Gamma_0\times\Gamma_0$ via the diagonal embedding. 

Then the associated coinduced group $G=(\G_1\ast \G_2\ast ...\ast \G_n)^I\rtimes \tilde\Gamma$ of $\rho$ is C$^*$-superrigid.
\end{mcor}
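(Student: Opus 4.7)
The plan is to reduce Corollary \ref{Cor:C*sup2} to the already-established W$^*$-superrigidity (Corollary \ref{examplecoind}) by extending any given $\ast$-isomorphism $\Theta:C^*_r(G)\to C^*_r(H)$ to an isomorphism of the ambient II$_1$ factors. The key input is the Breuillard--Kalantar--Kennedy--Ozawa dichotomy \cite{BKKO14}: $C^*_r(G)$ admits a unique tracial state if and only if the amenable radical of $G$ is trivial.

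I would first verify that $G$ has trivial amenable radical. The base group $A_0=\G_1\ast\cdots\ast\G_n$ is a free product of bi-exact icc groups, hence is itself bi-exact and non-amenable, so it has trivial amenable radical; the same conclusion holds coordinate-wise for $A_0^I$. If $K\triangleleft G=A_0^I\rtimes\tilde\G$ were a nontrivial amenable normal subgroup, its image under the quotient $G\twoheadrightarrow\tilde\G=\G_0\times\G_0$ would be a normal amenable subgroup of $\tilde\G$; since $\G_0$ is bi-exact and icc, $\tilde\G$ has trivial amenable radical, forcing $K\subseteq A_0^I$ and hence $K=\{1\}$. Invoking \cite{BKKO14}, both $C^*_r(G)$ and, through $\Theta$, $C^*_r(H)$ admit a unique tracial state.

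Since $\Theta$ necessarily intertwines the canonical traces, the GNS construction extends it uniquely to a trace-preserving $\ast$-isomorphism $\tilde\Theta:\cL(G)\to\cL(H)$. Corollary \ref{examplecoind} asserts that $G$ is W$^*$-superrigid, so it produces a character $\eta:G\to\mathbb T$, a group isomorphism $\delta:G\to H$, and a unitary $u\in\cL(H)$ with $\tilde\Theta={\rm ad}(u)\circ\Psi_{\eta,\delta}$. Restricting back to $C^*_r(G)$: the group-like map $\Psi_{\eta,\delta}$ automatically carries $C^*_r(G)$ onto $C^*_r(H)$, and by hypothesis $\Theta(C^*_r(G))=C^*_r(H)$, so ${\rm ad}(u)$ restricts to a $\ast$-automorphism $\theta$ of $C^*_r(H)$ implemented by $u\in\cL(H)$. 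Therefore $\Theta=\theta\circ\Psi_{\eta,\delta}$, exactly the C$^*$-superrigidity conclusion. The only conceptual step is the trivial amenable radical verification; the remainder of the argument is a routine bookkeeping reduction to the W$^*$-superrigidity already in hand.
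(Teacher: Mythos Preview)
Your proposal is correct and follows essentially the same route as the paper's proof (Corollary \ref{superrigidC^*2}): establish that $G$ has trivial amenable radical, invoke \cite{BKKO14} to get unique trace, lift the $C^*$-isomorphism to the von Neumann level, and apply the W$^*$-superrigidity already proved. One small remark: your verification of the trivial amenable radical detours through $\tilde\Gamma$, and the implication ``$\Gamma_0$ bi-exact and icc $\Rightarrow$ $\tilde\Gamma$ has trivial amenable radical'' is not entirely immediate without an extra hypothesis such as weak amenability (cf.\ the paper's use of \cite[Corollary 0.2]{CSU11}); the paper instead applies Proposition \ref{trivialamenableradical} directly, which only needs $A_0=\G_1\ast\cdots\ast\G_n$ to be icc with trivial amenable radical --- both immediate for a free product of infinite icc groups.
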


To properly introduce our next main result, we present a generalized wreath product group construction which is inspired by a series of developments \cite{Po03, Po04, PV06, IPV10, BV12, KV15, CU18}. The concrete groups that result from this construction represent our last main examples of $C^*$-superrigid groups. A point of contrast between all the known  C$^*$-superrigidity results and Theorem \ref{Main:C*sup} below, is that our methods, while still rooted in Popa's deformation/rigidity theory and hence von Neumann algebraic in nature, use in an essential way both facts that the groups involved have unique trace and also satisfy the Baum-Connes conjecture. 

\noindent {\bf Class $\mathscr {GW}$.} Let $\Omega\lhd \Gamma$ be a normal inclusion of infinite groups such that $\Omega$ has property (T). Let $B_0\leqslant A_0$ be an inclusion of infinite groups where $B_0$ has property (T).  Also assume $\Gamma \curvearrowright I$ is a transitive action on a countable set $I$ with ${\rm Stab}_\Gamma(I)=1$. We denote by $\mathscr{GW}(A_0)$ the category of all generalized wreath product $A_0\wr_I \Gamma$ associated with the previous data. In this definition we emphasize $A_0$ as for the main results will involve specific choices for this subgroup.  Next, consider the following properties for the action $\Gamma \curvearrowright I$:  
\begin{enumerate}
\item [a)]  For each $i \in I$ we have $[\Gamma:{\rm Stab}_{\Gamma}(i)]=\infty$;
\item [b)] There exists $k\in \mathbb N$ such that for each $J\subseteq I$ satisfying  $|J|\geq k$ we have $|{\rm Stab}_{\Gamma}(J)|<\infty$;
\item [c)] For every $i\neq j$ we have that $|{\rm Stab}_{\Gamma}(i)\cdot j|=\infty$. 
\end{enumerate}
Let $\mathscr {GW}_0(A_0)$ be the class of all elements in $\mathscr {GW}(A_0)$ for which $\Gamma \curvearrowright I$ satisfies a) and b) and  let  $\mathscr {GW}_1(A_0)$  the class of all elements in $\mathscr {GW}_0(A_0)$ so that $\Gamma \curvearrowright I$ satisfies c). We obviously have $$\mathscr {GW}_1(A_0)\subset \mathscr {GW}_0(A_0)\subset\mathscr {GW}(A_0).$$

\begin{main}\label{Main:C*sup} Let $G=A_0\wr_I \Gamma\in \mathscr {GW}_1(A_0)$, where $A_0$ and $\Gamma$ are torsion free, satisfy the Baum-Connes conjecture and  $A_0$  is an abstractly $W^*$-superrigid group with trivial amenable radical. Let $H$ be any countable group and let  $\Theta: C^*_r(G)\ra C_r^*(H)$ be a $\ast$-isomorphism.

Then there exist a group isomorphism $\delta:G \ra H$,  a character $\eta:G\ra \mathbb T$, an automorphism  $ \phi\in {\rm Aut}( \cL(\delta(A_0)))$, and a unitary $w\in \cL(H)$ such that  $\Theta= {\rm ad} (w) \circ  \Phi_\phi \circ \Psi_{\eta,\delta}$.

In particular, $G$ is abstractly  $C^*$-superrigid and not $C^*$-superrigid.
\end{main}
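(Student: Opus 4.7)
My strategy is to reduce the $C^*$-isomorphism $\Theta$ to a $*$-isomorphism $\bar\Theta:\cL(G)\to\cL(H)$ at the von Neumann algebra level, and then apply a $W^*$-superrigidity theorem for generalized wreath products in $\mathscr{GW}_1(A_0)$, established in the body of the paper as a counterpart of Theorem \ref{Main:leftright}. Two ingredients are needed for this reduction: (i) unique trace for $C^*_r(G)$, so that $\Theta$ preserves the canonical trace and extends to the von Neumann algebra level; and (ii) the target group $H$ is icc, has trivial amenable radical, and is torsion-free, so that the hypotheses of the $W^*$-superrigidity theorem also apply on the $H$ side.

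For (i), I would first verify that $G = A_0\wr_I\Gamma$ has trivial amenable radical. Here $A_0$ has trivial amenable radical by assumption, the action $\Gamma\curvearrowright I$ is transitive and faithful (since $\mathrm{Stab}_\Gamma(I)=1$), and $\Gamma$ contains the infinite normal property (T) subgroup $\Omega$ built into the class $\mathscr{GW}$; a routine analysis of how a normal amenable subgroup of $G$ projects to $\Gamma$ and intersects the base $\bigoplus_I A_0$ rules out any non-trivial such subgroup. The Breuillard--Kalantar--Kennedy--Ozawa theorem \cite{BKKO14} then yields unique trace for $C^*_r(G)$, which transfers through $\Theta$ to $C^*_r(H)$, forcing $H$ to have trivial amenable radical, and in particular to be icc.

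The Baum--Connes hypothesis on $A_0$ and $\Gamma$ is what controls torsion in $H$. Standard permanence of the Baum--Connes conjecture under semidirect products and directed unions gives BC for $G$; combined with torsion-freeness of $G$, the Kadison--Kaplansky conclusion says that $C^*_r(G)$ has no non-trivial projections. Transferring through $\Theta$, the algebra $C^*_r(H)$ is also projectionless; since any non-trivial finite subgroup $F < H$ would contribute a projection $|F|^{-1}\sum_{f\in F}v_f$, we deduce that $H$ is torsion-free as well. Because both algebras now have a unique trace, $\Theta$ automatically preserves the canonical trace and extends uniquely to a trace-preserving $*$-isomorphism $\bar\Theta:\cL(G)\to\cL(H)$.

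The main obstacle is then to apply the companion $W^*$-superrigidity theorem for $\cL(G)$ with $G\in\mathscr{GW}_1(A_0)$, yielding a decomposition $\bar\Theta = \mathrm{ad}(w)\circ\Phi_\phi\circ\Psi_{\eta,\delta}$ with $w\in\cL(H)$ a unitary, $\phi\in\mathrm{Aut}(\cL(\delta(A_0)))$, a character $\eta$, and a group isomorphism $\delta:G\to H$. Restricting both sides to the reduced $C^*$-algebras then gives the announced form of $\Theta$. Abstract $C^*$-superrigidity of $G$ is immediate, while the failure of outright $C^*$-superrigidity follows from Proposition \ref{Proposition:notunitary}, since the Houghton automorphism $\Phi_\phi$ is not in general implemented by a unitary in $\cL(H)$.
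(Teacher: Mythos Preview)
Your proposal is correct and follows essentially the same route as the paper: the paper packages the trivial amenable radical step as Proposition~\ref{trivialamenableradical}, the Baum--Connes permanence and Kadison--Kaplansky conclusion as a separate proposition, and then combines unique trace with projectionlessness exactly as you do to lift $\Theta$ to $\cL(G)\to\cL(H)$ with $H$ torsion free, before invoking Corollary~\ref{Cor:GW}. The only cosmetic difference is that the paper does not bother to record that $H$ is icc or has trivial amenable radical, since Corollary~\ref{Cor:GW} only requires $H$ to be torsion free.
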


Note that all the non-amenable abstractly C$^*$-superrigid groups that have been discovered so far, namely \cite{CI17, CD-AD20} and Corollaries \ref{Cor:C*sup1} and \ref{Cor:C*sup2}, are also abstractly W$^*$-superrigid, while we do not know if this is the case for the groups $G$ that appear in Theorem \ref{Main:C*sup}. The reason behind this is that at the C$^*$-algebraic level, an isomorphism between two reduced C$^*$-algebras $C^*_r(G)\cong C^*_r(H)$, where $G$ is a torsion free group satisfying the Baum-Connes conjecture, implies that $C^*_r(G)$ has no non-trivial projections, and hence, $H$ is torsion free. Next, by promoting the C$^*$-algebraic isomorphism $C^*_r(G)\cong C^*_r(H)$ to an isomorphism between their group von Neumann algebras, we can use the torsion freeness property of the groups (as in \cite{CU18}) combined with some von Neumann algebraic techniques \cite{IPV10,KV15} to finally derive an isomorphism between the groups $G$ and $H$.  

We continue by providing some concrete applications of Theorem \ref{Main:C*sup}. Since any hyperbolic group satisfies the Baum-Connes conjecture  \cite{La12} and the class of groups satisfying the Baum-Connes conjecture is closed under direct products \cite[Theorem 3.1]{O-O01b} and amalgamated free products \cite[Corollary 1.2]{O-O01a}, we derive as a corollary the following classes of abstract C$^*$-superrigid groups.

\begin{mcor} Let $G=A_0\wr_I \Gamma\in \mathscr {G W}_1(A_0)$ be any group such that $\Gamma$ is torsion free,  property (T), hyperbolic and $A_0$ belongs to one of the following classes: 

\begin{enumerate}
    \item  Let $A_0 = (\G_1\ast\G_2\ast...\ast\G_n ) \rtimes_\rho \G\in\mathscr A$.
    \item Let  $C$ be torsion free, amenable icc group  and let $B$ be a torsion free hyperbolic property (T) group. Let $A< B$ be an infinite cyclic subgroup that is hyperbolically embedded. Consider the canonical inclusion  $\Sigma := C^{B}\rtimes A <C\wr B=:\G$ and let ${\rm diag}(\Sigma) =\{(g,g)\,:\, g\in \Sigma \}<\G\times \G$ be the diagonal subgroup. Then consider the amalgamated free product $A_0= (\Gamma \times \Gamma)\ast_{{\rm diag}(\Sigma)} (\Gamma\times \Gamma) $. 
\end{enumerate} 

Then $G$ is abstractly  C$^*$-superrigid and not C$^*$-superrigid.
\end{mcor}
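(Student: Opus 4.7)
The plan is to reduce the corollary to Theorem~\ref{Main:C*sup} by verifying, in each of the two cases, that the pair $(A_0,\Gamma)$ meets all hypotheses of that theorem: torsion freeness of both groups, the Baum--Connes conjecture for both, abstract $W^*$-superrigidity of $A_0$, and triviality of the amenable radical of $A_0$. On the $\Gamma$ side this is essentially free: torsion freeness and property (T) are assumed, and hyperbolicity implies the Baum--Connes conjecture via Lafforgue's theorem \cite{La12}. The ``not $C^*$-superrigid'' half of the conclusion is already packaged into Theorem~\ref{Main:C*sup}, so the entire content lies in checking the four properties for $A_0$.

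For case (1), the first step is to identify $A_0=(\G_1\ast\cdots\ast\G_n)\rtimes_\rho\G$ with an iterated amalgamated free product of $n$ copies of $\G\times\G$ over the diagonal ${\rm diag}(\G)$. Since $\G$ acts on each $\G_i\cong\G$ by conjugation, the map $(\lambda,\gamma)\mapsto(\lambda\gamma,\gamma)$ yields an isomorphism $\G_i\rtimes\G\cong\G\times\G$ under which $\G$ embeds diagonally. Torsion freeness of $A_0$ is then immediate, and the Baum--Connes conjecture propagates via closure under direct products \cite{O-O01b} and amalgamated free products \cite{O-O01a}, provided $\G$ itself satisfies the conjecture, which one has in the typical setting where $\G$ is taken hyperbolic. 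Abstract $W^*$-superrigidity of $A_0$ together with the triviality of its amenable radical are precisely the content of \cite{CD-AD20}, where the class $\mathscr A$ was introduced for this purpose.

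For case (2), where $A_0=(\G\times\G)\ast_{{\rm diag}(\Sigma)}(\G\times\G)$ with $\G=C\wr B$, torsion freeness again passes through the wreath, direct, and amalgamated free product constructions. The new point for the Baum--Connes conjecture is that $\G$ fits in a short exact sequence $1\to C^{B}\to\G\to B\to 1$ with amenable kernel $C^B$ and hyperbolic quotient $B$, so $\G$ satisfies the conjecture by its stability under extensions with amenable kernel; direct products and the amalgamated free product then deliver it for $A_0$. Abstract $W^*$-superrigidity and triviality of the amenable radical for this specific amalgamated free product are again supplied by the relevant results in \cite{CD-AD20}.

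With all four properties in hand for $A_0$ in each case, the corollary is an immediate application of Theorem~\ref{Main:C*sup}. The main technical effort is not in any single deformation/rigidity argument---these are all absorbed into that theorem---but rather in the careful bookkeeping of the closure properties of the Baum--Connes conjecture under the various group-theoretic constructions appearing in each class, together with matching the explicit descriptions of $A_0$ against the existing $W^*$-superrigidity results of \cite{CD-AD20}.
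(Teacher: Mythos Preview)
Your proposal is correct and follows essentially the same approach as the paper: reduce to Theorem~\ref{Main:C*sup} by verifying torsion freeness, the Baum--Connes conjecture, abstract $W^*$-superrigidity, and trivial amenable radical for $A_0$ and $\Gamma$, invoking \cite{La12} for $\Gamma$ and \cite{CD-AD20} for the $W^*$-superrigidity and amenable radical statements for $A_0$.

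There is one minor variation worth noting. In case (1), the paper (Proposition~\ref{BCgroups}) establishes Baum--Connes for $A_0=(\G_1\ast\cdots\ast\G_n)\rtimes_\rho\G$ by observing that the free product $\G_1\ast\cdots\ast\G_n$ is itself hyperbolic (as a free product of hyperbolic groups), hence satisfies Baum--Connes by \cite{La12}, and then applies the extension result \cite{O-O01b}. You instead use the amalgamated free product decomposition $A_0\cong\ast^n_{\G}(\G\times\G)$ together with closure under direct products and amalgams \cite{O-O01b,O-O01a}. Both arguments are valid and both ultimately require $\G$ to satisfy Baum--Connes; your route is arguably a bit more flexible since it does not pass through hyperbolicity of the free product, but in practice both need $\G$ hyperbolic (which, as you correctly flag, is implicit in the intended examples though not literally forced by the bi-exactness hypothesis in the definition of class~$\mathscr A$).
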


We conclude the introduction by providing the first computation of the {\it strictly outer automorphism group} of some reduced C$^*$-algebras. We refer the reader to Section \ref{Sec:Autom} for terminology and for more details. In von Neumann algebras, Popa's deformation/rigidity theory led to the discovery of many instances when complete calculation of automorphism groups were achieved for various families of von Neumann algebras arising from groups and their trace preserving actions. For example, in \cite{IPP05} it was shown that there exist many II$_1$ factors that possess only inner automorphisms, thus settling a long-standing open question of A. Connes. More generally, in \cite{PV06} it was shown in fact that  every finitely presented group can be realized as the outer automorphism group of a certain group factor $\mathcal L(G)$. See also \cite{FV08,Va08,PV21} for other very interesting results in this direction. However, these impressive results in the von Neumann algebraic context do not yield much insight towards the reduced group $C^*$-algebras. 

In order to state our result, we need the following terminology. One says that two groups $G$ and $H$ are {\it commensurable up to finite kernels} (abbrev.\ $G\cong_{\rm fk}H$) if there exist finite index subgroups $G_1\leqslant G$, $H_1\leqslant H$ and finite, normal subgroups $M_1\lhd G_1$,  $N_1\lhd H_1$ such that the quotients  are isomorphic, $G_1/M_1\cong H_1/N_1$. Note that commensurability up to finite kernels is an equivalence relation.

\begin{mcor}  
 For any icc, torsion free, hyperbolic, property (T) group $\Gamma$, there exists a countable icc group $G$ such that ${\rm sOut}(C^*_r(G))\cong_{\rm fk} \G$. 
\end{mcor}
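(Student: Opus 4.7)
The plan is to realize $\Gamma$ (up to finite-kernel commensurability) as $\mathrm{sOut}(C_r^*(G))$ for some generalized wreath product $G = A_0\wr_I \Gamma_1\in\mathscr{GW}_1(A_0)$ satisfying the hypotheses of Theorem~\ref{Main:C*sup}, and then reading off the answer from the classification that theorem provides. The crux is a carefully engineered choice of base group $A_0$ so that, modulo $\cL(G)$-inner automorphisms, all automorphisms of $C_r^*(G)$ reduce to a group commensurable with $\Gamma$.

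First I would construct the base $A_0$. The goal is an icc, torsion-free, abstractly $W^*$-superrigid group with trivial amenable radical, satisfying the Baum-Connes conjecture, and such that $\mathrm{Char}(A_0)\rtimes\mathrm{Out}(A_0)$ is commensurable up to finite kernels with $\Gamma$. A natural candidate is a class $\mathscr{A}$ group $A_0 = (\Gamma\ast\cdots\ast\Gamma)\rtimes_\rho \Gamma$, which is abstractly $W^*$-superrigid by the coinduced-group results (Corollary~\ref{examplecoind} / \cite{CD-AD20}); there the semidirect $\Gamma$-factor acting by conjugation produces an embedding $\Gamma = \Gamma/Z(\Gamma)\hookrightarrow\mathrm{Out}(A_0)$ with finite kernel (inner automorphisms from the free product part) and finite cokernel (permutations of the free factors together with $\mathrm{Out}(\Gamma)$, the latter finite by Paulin's theorem for icc hyperbolic groups). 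Once $A_0$ is fixed I choose $\Gamma_1$ to be an icc, torsion-free, hyperbolic, property (T) group with $\mathrm{Out}(\Gamma_1)$ and $\mathrm{Char}(\Gamma_1)$ finite, and pick a transitive action $\Gamma_1\car I$ with trivial kernel satisfying conditions (a), (b), (c) in the definition of $\mathscr{GW}_1$. Both $A_0$ and $\Gamma_1$ satisfy the Baum-Connes conjecture (the latter via \cite{La12}), so all hypotheses of Theorem~\ref{Main:C*sup} are in place.

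By Theorem~\ref{Main:C*sup}, every $\Theta\in\mathrm{Aut}(C_r^*(G))$ takes the form $\mathrm{ad}(w)\circ\Phi_\phi\circ\Psi_{\eta,\delta}$ with $w\in\cL(G)$. Passing to $\mathrm{sOut}(C_r^*(G)) = \mathrm{Aut}(C_r^*(G))/\{\mathrm{ad}(u):u\in\cL(G)\}$ produces a surjection
\[
\mathrm{Aut}(\cL(A_0))\times\bigl(\mathrm{Char}(G)\rtimes\mathrm{Out}(G)\bigr)\twoheadrightarrow\mathrm{sOut}(C_r^*(G)),
\]
whose analysis proceeds in two steps. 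Using Proposition~\ref{Proposition:notunitary}, one checks that a Houghton automorphism $\Phi_\phi$ is $\cL(G)$-inner only when $\phi$ is conjugation by a central scalar, so the Houghton component injects $\mathrm{Out}(\cL(A_0))\hookrightarrow\mathrm{sOut}(C_r^*(G))$; $W^*$-superrigidity of $A_0$ gives $\mathrm{Out}(\cL(A_0))=\mathrm{Char}(A_0)\rtimes\mathrm{Out}(A_0)\cong_{\mathrm{fk}}\Gamma$ by the construction of $A_0$. A standard wreath-product automorphism analysis identifies the group-like contribution $\mathrm{Char}(G)\rtimes\mathrm{Out}(G)$, modulo identifications with Houghton automorphisms and modulo $\cL(G)$-inner automorphisms, with a finite group coming from $\mathrm{Out}(\Gamma_1)$, $\mathrm{Char}(\Gamma_1)$ and the permutation part of the $I$-coordinates. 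Combining these two steps yields $\mathrm{sOut}(C_r^*(G))\cong_{\mathrm{fk}}\mathrm{Out}(\cL(A_0))\cong_{\mathrm{fk}}\Gamma$. The main obstacle is the first step of the outline: rigorously computing $\mathrm{Out}(A_0)$ up to finite kernels for the class-$\mathscr A$ candidate requires a careful Bass-Serre/splitting argument to exclude exotic outer automorphisms of the free product factor beyond those induced by permutations and $\mathrm{Aut}(\Gamma)$; the secondary (but routine) obstacle is matching the equivalences between Houghton and group-like automorphisms modulo inner ones from $\cL(G)$.
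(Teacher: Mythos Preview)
Your approach has a genuine gap. You build $G = A_0\wr_I\Gamma_1\in\mathscr{GW}_1(A_0)$ and invoke Theorem~\ref{Main:C*sup}, which only gives \emph{abstract} $C^*$-superrigidity: every $\Theta\in{\rm Aut}(C^*_r(G))$ factors as ${\rm ad}(w)\circ\Phi_\phi\circ\Psi_{\eta,\delta}$ with $\phi\in{\rm Aut}(\cL(A_0))$. The problem is the converse direction of your ``injection'' step: to get ${\rm Out}(\cL(A_0))\hookrightarrow{\rm sOut}(C^*_r(G))$ you must know that for every $\phi\in{\rm Aut}(\cL(A_0))$ the Houghton automorphism $\Phi_\phi$ actually preserves $C^*_r(G)$. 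This is not what Proposition~\ref{Proposition:notunitary} says (that proposition only rules out innerness in $\cL(G)$); it would require, at minimum, that every inner automorphism ${\rm ad}(v)$ of $\cL(A_0)$ with $v\in\mathscr U(\cL(A_0))$ preserves $C^*_r(A_0)$, which fails in general. The paper explicitly flags this at the end of Section~\ref{Sec:Autom}: for $G\in\mathscr{GW}_1(A_0)$ one only knows ${\rm Aut}(C^*_r(G))\subseteq\{{\rm ad}(w)\circ\Phi_v\circ\Psi_{\eta,\delta}\}$, and determining which $(w,v)$ actually occur is left as an open problem. So your computation of ${\rm sOut}(C^*_r(G))$ cannot be completed along these lines.

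The paper avoids this entirely by choosing $G$ to be \emph{fully} $C^*$-superrigid rather than merely abstractly so. Concretely, it takes $G$ to be a class $\mathscr A$ group $G=(\Gamma_1\ast\cdots\ast\Gamma_n)\rtimes_\rho\Gamma$ (or the associated coinduced group $A^I\rtimes\tilde\Gamma$), for which Corollary~\ref{Cor:C*sup2} (resp.\ \cite{CD-AD20}) gives genuine $C^*$-superrigidity and hence the clean formula ${\rm sOut}(C^*_r(G))={\rm Char}(G)\rtimes{\rm Out}(G)$ with no Houghton contribution. Theorems~\ref{computeauta} and~\ref{autcoinduceda} then compute ${\rm Char}(G)\rtimes{\rm Out}(G)\cong_{\rm fk}{\rm Aut}(\Gamma)$, and Paulin's theorem for hyperbolic property (T) groups gives ${\rm Out}(\Gamma)$ finite, so ${\rm Aut}(\Gamma)\cong_{\rm fk}\Gamma$. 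In short: the group you wrap around $A_0$ is unnecessary and introduces precisely the obstruction you cannot resolve; the class $\mathscr A$ group by itself already does the job.
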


{\bf Acknowledgment.} We are grateful to Stefaan Vaes for numerous comments and suggestions regarding the content of this paper. The authors also want to thank Adrian Ioana and Jesse Peterson for their feedback on this work.

\section{Preliminaries on countable groups and von Neumann algebras}

\subsection{Terminology} We fix notation regarding countable groups and tracial von Neumann algebras. 

In this paper we consider countable groups $G$. A group inclusion $H\leqslant G$ of finite index will be denoted by $H\leqslant_f G$. For any subgroup $H\leqslant G$ we denote by $C_G(H)=\{ g\in G\,:\, [g,H]=1\}$ its {\it centralizer} in $G$, by $vC_G(H)=\{ g\in G\,:\, |g^H|<\infty\}$ its {\it virtual centralizer}. Note that $vC_G(G)=1$ precisely when $G$ has infinite non-trivial conjugacy classes (icc). For group inclusion $H<G$,  the {\it one-sided quasi-normalizer} ${\rm Comm}^{(1)}_G(H)$ is the semigroup of all $g\in G$ for which there exists a finite set $F\subset G$ such that $Hg\subset FH$ \cite[Section 5]{FGS10}; equivalently, $g\in {\rm Comm}^{(1)}_G(H)$ if and only if $[H: gHg^{-1}\cap H]<\infty$. The {\it quasi-normalizer} ${\rm Comm}_G(H)$ is the group of all $g\in G$ for which exists a finite set $F\subset G$ such that $Hg\subset FH$ and $gH\subset HF$.
We canonically have $C_G(H)\leqslant {\rm vC}_G(H)\leqslant {\rm Comm}_G(H)\subseteq {\rm Comm}^{(1)}_G(H)$.  

For an action $G\car I$ and a subset $F\subset I$, we denote by ${\rm Stab}_G(F)=\{g\in G \,:\, g\cdot i=i, \text{ for all } i\in F\}$ and by ${\rm Norm}_G(F)=\{g\in G \,:\, g\cdot F=F\}$ the {\it stabilizer} and the {\it normalizer} of $F$, respectively. If $F$ is finite, note that ${\rm Stab}_G(F)\leqslant_f {\rm Norm}_G(F)$.

Let $J$ be a set. We denote by $G^J = \oplus_J G$ the direct sum of $G$ along $J$, which is the group of all finitely supported functions $f: J\ra G$. For a subset $F\subset J$, we denote its complement $J\setminus F$ by  $\widehat F$. 

Let $\mathcal F$ be a family of subgroups of $G$. A set $K \subset G$ is called {\it small relative to} $\mathcal F$ if there exist finite subsets $R,T\subset G$ and $\mathcal G\subseteq \mathcal F$ such that $K\subseteq \cup_{\Sigma \in \mathcal G} R \Sigma T$.  
For every subset $K\subseteq G$ we denote by $P_K$ the orthogonal projection from $\ell^2(G)$ onto the Hilbert subspace generated by the linear span of $\{\delta_g \,:\, g\in K \}$.

Throughout this paper all von Neumann algebras $\cM$ are {\it tracial}, i.e. endowed with a unital, faithful, normal linear functional $\tau:\cM\rightarrow \mathbb C$  satisfying $\tau(xy)=\tau(yx)$ for all $x,y\in \emm$. This induces a norm on $\emm$ by the formula $\|x\|_2=\tau(x^*x)^{1/2}$ for all $x\in \emm$. The $\|\cdot\|_2$-completion of $\emm$ will be denoted by $L^2(\emm)$.  Denote by $\mathscr U(\mathcal M)$ its unitary group, by $\mathcal Z(M)$ its center, by $\mathscr P(\mathcal M)$ the set of all its non-zero projections and by $(\mathcal M)_1$ its unit ball.

For a unital inclusion $\mathcal N\subseteq \mathcal M$ of von Neumann algebras, we denote by $\mathcal N'\cap \mathcal M =\{ x\in \emm \,:\, [x, \enn]=0\}$ the {\it relative commmutant} of $\enn$ inside $\emm$ and by $\mathscr N_\emm(\enn)=\{ u\in \mathscr U(\emm)\,:\, u\enn u^*=\enn\}$ the {\it normalizer} of $\enn$ inside $\emm$. We say that $\cN$ is {\it regular} in $\cM$ if $\mathscr N_{\cM}(\cN)''=\cM$.
We also denote by $E_{\mathcal N}:\mathcal M\rightarrow \mathcal N$ the $\tau$-preserving condition expectation onto $\mathcal N$. We denote the orthogonal projection from $L^2(\emm) \rightarrow L^2(\enn)$ by $e_{\enn}$. The Jones' basic construction \cite[Section 3]{Jo83} for $\enn \subseteq \emm$ will be denoted by $\langle \emm, e_{\enn} \rangle$.

 Finally, for any countable group $G$ we denote by $(u_g)_{g\in G} \subset \mathscr U(\ell^2 (G))$ its left regular representation, i.e.\ $u_g(\delta_h ) = \delta_{gh}$ where $\delta_h:G\rightarrow \mathbb C$ is the Dirac function at $\{h\}$. The weak operator closure of the linear span of $\{ u_g\,:\, g\in G \}$ in $\mathscr B(\ell^2 (G))$ is called the {\it group von Neumann algebra} of $G$ and will be denoted by $\El(G)$; this is a II$_1$ factor precisely when $G$ is icc. The norm closure of the linear span of $\{ u_g\,:\, g\in G \}$ in $\mathscr B(\ell^2 (G))$ is called the {\it reduced group C$^*$-algebra} of $G$ and will be denoted by $C^*_r(G)$.

\subsection{Wreath products and coinduced groups} 

%In this paper we are mainly interested in von Neumann algebras arising for generalized wreath product groups and coinduced groups and for the reader's convenience we recall these constructions.

%\begin{definition}
%Let $A_0$ and $\Gamma$ be countable groups and consider an action $\Gamma\car I$ on a countable set. We define the {\it generalized Bernoulli action} $\Gamma\overset{\sigma}{\car} A_0^I$ by 
%$$
%(\sigma_g(x))_i=x_{g^{-1}i}, \text{ for all } g\in\Gamma, i\in I \text{ and } x=(x_i)_{i\in I}\in A_0^I.
%$$
%Then the associated {\it generalized wreath product} is the semi-direct product $A_0^I\rtimes_\sigma\Gamma$ and it is denoted by $A_0\wr_I\Gamma.$
%\end{definition}

In this subsection we present several structural results for generalized wreath products and coinduced groups.

\begin{proposition}\label{relcomm1} 
Let $G=A_0^I\rtimes_\sigma\Gamma$ be the coinduced group of $\Gamma_0\car A_0$ with $A_0$ icc and let $F\subset I$ be a subset. Then $\cL(A_0^F)'\cap \cL(G)\subset \cL(A_0^I)\rtimes {\rm Stab_\Gamma}(F)$.

Moreover, if we assume that $G=A_0\wr_I \Gamma$ is a generalized wreath product (i.e. $\Gamma_0\car A_0$ acts trivially), then $\cL(A_0^F)'\cap \cL(G)\subset \cL(A_0^{\widehat F})\rtimes {\rm Stab_\Gamma}(F)$.
%If $N$ is a non-trivial icc group and $F$ a subset of $I$, then $\mathcal L(N^F)'  \cap \mathcal L(N\wr_I Q) \subseteq \mathcal L ((N^{F^c}) \rtimes {\rm Stab}(F) )$. 
\end{proposition}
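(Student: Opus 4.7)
The strategy is to Fourier-decompose any $x \in \cL(A_0^F)' \cap \cL(G)$ with respect to the semidirect product structure $G = A_0^I \rtimes_\sigma \Gamma$, and then exploit $\ell^2$-summability to force the unwanted Fourier coefficients to vanish. First I would write $x = \sum_{g \in \Gamma} x_g u_g$ with $x_g \in \cL(A_0^I)$. Using the relations $u_g u_a = u_{\sigma_g(a)} u_g$ for $a \in A_0^I$ and $g \in \Gamma$, the condition $u_b x u_b^{-1} = x$ for every $b \in A_0^F$ translates term-by-term into the family of commutation relations
\[
u_b \, x_g \;=\; x_g \, u_{\sigma_g(b)}, \qquad g \in \Gamma,\ b \in A_0^F.
\]

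Next I would expand $x_g = \sum_{a \in A_0^I} c_a^{(g)} u_a$ as an $L^2$-sum inside $\cL(A_0^I)$. The relation above is equivalent to $c_a^{(g)} = c_{b^{-1} a \sigma_g(b)}^{(g)}$ for all $a \in A_0^I$ and $b \in A_0^F$, i.e., $c^{(g)}$ is constant on every orbit $O_g(a) := \{b^{-1} a \sigma_g(b) : b \in A_0^F\} \subseteq A_0^I$. Because $\sum_a |c_a^{(g)}|^2 = \|x_g\|_2^2 < \infty$, the orbit $O_g(a)$ must be finite whenever $c_a^{(g)} \neq 0$.

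For the first inclusion I would then show that for every $g \notin {\rm Stab}_\Gamma(F)$ every orbit $O_g(a)$ is infinite. By hypothesis there exists $i_0 \in F$ with $g \cdot i_0 \neq i_0$. Choosing $b \in A_0^F$ supported only at $i_0$ with $b_{i_0} = s \in A_0$, the coinduced formula puts $\sigma_g(b)$ entirely on the coordinate $g \cdot i_0 \neq i_0$; hence the $i_0$-coordinate of $b^{-1} a \sigma_g(b)$ equals $s^{-1} a_{i_0}$. As $s$ ranges over the infinite (icc) group $A_0$, this single coordinate already takes infinitely many distinct values, so $|O_g(a)| = \infty$ and $c_a^{(g)} = 0$ for every $a$. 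Thus $x_g = 0$ whenever $g \notin {\rm Stab}_\Gamma(F)$, proving $x \in \cL(A_0^I) \rtimes {\rm Stab}_\Gamma(F)$.

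For the moreover part the wreath product hypothesis makes $\sigma_0$ trivial, so $\sigma_g$ is a pure coordinate permutation. In particular, for $g \in {\rm Stab}_\Gamma(F)$ (pointwise stabilizer) one has $\sigma_g(b) = b$ for every $b \in A_0^F$, and the commutation collapses to $u_b x_g = x_g u_b$ for all $b \in A_0^F$. Since $A_0$ is icc, the group $A_0^F$ is also icc, so $\cL(A_0^F)$ is a II$_1$ factor; combined with the tensor splitting $\cL(A_0^I) = \cL(A_0^F) \bar\otimes \cL(A_0^{\widehat F})$ this forces $x_g \in \cL(A_0^F)' \cap \cL(A_0^I) = \cL(A_0^{\widehat F})$. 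Together with the first part, $x \in \cL(A_0^{\widehat F}) \rtimes {\rm Stab}_\Gamma(F)$. There is no deep obstacle; the only delicate point, and the reason the stronger containment does not extend to the general coinduced setting, is that the cocycle $c(g,\cdot)$ lets $\sigma_g$ act non-trivially on $A_0^F$ even when $g \in {\rm Stab}_\Gamma(F)$, destroying the collapse to an ordinary commutant.
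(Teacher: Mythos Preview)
Your proof is correct and proceeds along essentially the same lines as the paper's. The only cosmetic difference is that the paper first invokes the standard containment $\cL(A_0^F)'\cap\cL(G)\subset\cL(vC_G(A_0^F))$ and then analyzes a single group element $g=aq$ with finite $A_0^F$-orbit, whereas you unpack that reduction yourself by working directly with the Fourier expansion and $\ell^2$-summability; the underlying orbit analysis (pick $i_0\in F$ with $g\cdot i_0\neq i_0$ and watch the $i_0$-coordinate run through infinitely many values) is identical in spirit, and your version is arguably a bit more direct.
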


{\it Proof.} 
First we notice that $\mathcal L(A_0^F)'  \cap \mathcal L(G)$ is contained in $\cL(vC_{G}(A_0^F))$.%, where $vC_{G}(A_0^F)$ is the virtual centralizer of $A_0^F$ in $G$, i.e.\ the set of all $g\in G$  whose conjugacy class $g^{A_0^F}$ is finite. 
Thus, we fix $g =aq\in vC_{G}(A_0^F)$, where $a\in A_0^I$ and $q\in \Gamma$ and notice the centralizer satisfies $C_{A_0^F}(g)\leqslant_f A_0^F$. Thus for every $i\in F$ there is a subgroup $B_i\leqslant_f A_0$ so that $\prod_{i\in F} B_i \leqslant_f C_{A_0^F}(g)$. 

It follows that $aq k =kaq$, for all $k\in \prod_{i\in F} B_i$. This further gives $a \sigma_q(k)= ka$ and hence  $\sigma_q(k)= a^{-1}ka$, for all $k\in \prod_{i\in F} B_i$. Decomposing $a= ts$ with $s\in A_0^{\widehat F}$ and $t\in A_0^F$ we further get 
\begin{equation}\label{innerhomo}\sigma_q(k)= t^{-1}kt,\text{ for all }k\in \prod_{i\in F} B_i.
\end{equation} In particular, \eqref{innerhomo} implies that $qF\subseteq F$. By symmetry we also have $F\subseteq qF$ and hence $qF=F$. 
Now, consider the decompositions $t=(t_i)_{i\in F}$ and $k=(k_i)_{i\in F}\in \prod_{i\in F} B_i$. If we let $c:\Gamma\times \Gamma/\Gamma_0\to \Gamma_0$ be the cocycle that appears in Definition \ref{def:coind}, then \eqref{innerhomo} shows that 
\begin{equation}\label{p1}
c(q,q^{-1}i)\cdot k_{q^{-1}i}=t^{-1}_i k_i t_i \text{ for all }k_i\in B_i \text{ and }k_{qi}\in B_{qi}.  
\end{equation}
If there is an $i\in F$ so that $qi\neq i$ then the previous equation is impossible as $B_i$ and $B_{qi}$ are infinite. Thus, $qi =i$ for all $i\in F$, and hence, $q\in {\rm Stab}_{\Gamma}(F)$.

For proving the moreover part, note first that the cocycle $c$ is trivial in this case. Hence, equation \eqref{p1} shows that $t_i\in C_{A_0}(B_i)$ and since $[A_0:B_i]<\infty$ and $A_0$ is icc we conclude that $t_i=1$. Altogether this shows that $t=1$ and hence $a\in A_0^{\widehat F}$. In particular, we have  $vC_{G}(A_0^F)\leqslant A_0^{\widehat F}\rtimes {\rm Stab}_{\Gamma}(F)$ and the proof of the proposition is completed. 
\hfill$\blacksquare$

%\begin{remark}
%If $G=A_0\wr_I \Gamma$ is a generalized wreath product with $A_0$ an icc group, then the proof of Proposition \ref{relcomm1} actually shows that $vC_{G}(A_0^F)= A_0^{\widehat F}\rtimes {\rm Stab}_{\Gamma}(F)$, for any subset $F\subset I$.
%\end{remark}

\begin{lemma}\label{L:coindcomm}
Let $\Gamma_0<\Gamma$ be countable groups such that $[\Gamma:\Gamma_0]=\infty$. Let $\Gamma_0\car A_0$ be an action by group automorphisms and denote by $\Gamma\car A$ the associated coinduced action. Let $G_0=A_0\rtimes\Gamma_0$ and $G=A_0^I\rtimes\Gamma.$ Then the following hold:
\begin{enumerate}
    \item If $g\in G\setminus \Gamma$, then there exists a finite non-empty set $F\subset I$ such that $g \Gamma g^{-1}\cap\Gamma\subset {\rm Norm}(F)$.
    \item {\rm Comm}$^{(1)}_G(\Gamma)=\Gamma.$ 
    \item If Comm$^{(1)}_\Gamma(\Gamma_0)=\Gamma_0$, then Comm$^{(1)}_G(\Gamma_0)=$Comm$^{(1)}_{G_0}(\Gamma_0)$.
\end{enumerate}
\end{lemma}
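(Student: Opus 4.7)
I would prove the three parts in order, exploiting the fact that each relies on an orbit-counting principle for the $\Gamma$- or $\Gamma_0$-action on $I=\Gamma/\Gamma_0$. Part (1) is a direct semi-direct product computation, (2) then follows from (1) by transitivity, and (3) requires a more delicate analysis combining (1) with the hypothesis $\mathrm{Comm}^{(1)}_\Gamma(\Gamma_0)=\Gamma_0$.

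For (1), write $g=a\gamma$ with $a\in A_0^I$ and $\gamma\in\Gamma$; since $g\notin\Gamma$, one has $a\neq 1$, and I set $F:=\mathrm{supp}(a)$, a finite non-empty subset of $I$. For any $\gamma_1\in\Gamma$, the semi-direct product relation yields
\[
g\gamma_1 g^{-1} \;=\; a\,\sigma_{\gamma_2}(a^{-1})\,\gamma_2,\qquad \gamma_2:=\gamma\gamma_1\gamma^{-1}.
\]
If this element is to lie in $\Gamma$, the $A_0^I$-part must vanish, forcing $\sigma_{\gamma_2}(a)=a$; comparing supports gives $\gamma_2\cdot F=F$, and hence $g\gamma_1 g^{-1}=\gamma_2\in\mathrm{Norm}(F)$. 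For (2), assume $g\in\mathrm{Comm}^{(1)}_G(\Gamma)\setminus\Gamma$. Part (1) provides a finite non-empty $F\subset I$ with $g\Gamma g^{-1}\cap\Gamma\subset \mathrm{Norm}(F)$, so $[\Gamma:\mathrm{Norm}(F)]<\infty$, i.e.\ the $\Gamma$-orbit of $F$ in the finite subsets of $I$ is finite. Then $\bigcup_{\gamma\in\Gamma}\gamma F$ is a finite union of finite sets, yet it contains the $\Gamma$-orbit of any $i\in F$, which equals $I$ by transitivity---contradicting $[\Gamma:\Gamma_0]=\infty$.

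For (3), the inclusion $\mathrm{Comm}^{(1)}_{G_0}(\Gamma_0)\subset \mathrm{Comm}^{(1)}_G(\Gamma_0)$ is automatic. For the reverse, let $g=a\gamma\in\mathrm{Comm}^{(1)}_G(\Gamma_0)$ and pick finite $F\subset G$ with $\Gamma_0 g\subset F\Gamma_0$. Comparing $A_0^I$-coordinates in $\gamma_0 g=\sigma_{\gamma_0}(a)\gamma_0\gamma$ shows that $\{\sigma_{\gamma_0}(a):\gamma_0\in\Gamma_0\}$ is contained in the finite set of $A_0^I$-parts of elements of $F$; hence $K:=\mathrm{Stab}_{\Gamma_0}(a)$ has finite index in $\Gamma_0$. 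For $\gamma_0\in K$, the condition $\gamma_0 g\in F\Gamma_0$ reduces to $\gamma_0\gamma\in F_a\Gamma_0$, where $F_a:=\{\beta\in\Gamma:a\beta\in F\}$ is finite. Choosing coset representatives of $K$ in $\Gamma_0$ produces a finite set $F'\subset\Gamma$ with $\Gamma_0\gamma\subset F'\Gamma_0$, so $\gamma\in\mathrm{Comm}^{(1)}_\Gamma(\Gamma_0)=\Gamma_0$ by hypothesis.

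Knowing $\gamma\in\Gamma_0$, a direct calculation gives $g\Gamma_0 g^{-1}\cap\Gamma_0=\mathrm{Stab}_{\Gamma_0}(a)$, which must therefore have finite index in $\Gamma_0$. Writing $\mathrm{supp}(a)=\{\gamma_1\Gamma_0,\dots,\gamma_n\Gamma_0\}$, one has $\mathrm{Stab}_{\Gamma_0}(a)\leq \mathrm{Stab}_{\Gamma_0}(\mathrm{supp}(a))\leq_f \mathrm{Norm}_{\Gamma_0}(\mathrm{supp}(a))$, and so each $\mathrm{Stab}_{\Gamma_0}(\gamma_j\Gamma_0)=\gamma_j\Gamma_0\gamma_j^{-1}\cap\Gamma_0$ has finite index in $\Gamma_0$; this says $\gamma_j\in\mathrm{Comm}^{(1)}_\Gamma(\Gamma_0)=\Gamma_0$, forcing $\gamma_j\Gamma_0=e\Gamma_0$. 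Hence $a$ is supported only at the trivial coset, $g\in A_0\rtimes\Gamma_0=G_0$, and consequently $g\in\mathrm{Comm}^{(1)}_{G_0}(\Gamma_0)$. The main obstacle is the bookkeeping in (3): one must carefully separate the finite-index conditions in $G$, $\Gamma$, and $\Gamma_0$, and recognize $\mathrm{Comm}^{(1)}_\Gamma(\Gamma_0)=\Gamma_0$ as the statement that $\Gamma_0$-orbits on $I\setminus\{e\Gamma_0\}$ are all infinite, which is precisely what rules out non-trivial support of $a$.
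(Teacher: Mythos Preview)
Your proof is correct and follows essentially the same approach as the paper. Parts (1) and (2) are identical to the paper's argument. For (3), the paper reaches $\gamma\in\Gamma_0$ more directly by observing that the projection $G\to\Gamma$ sends $g\Gamma_0 g^{-1}\cap\Gamma_0$ into $\gamma\Gamma_0\gamma^{-1}\cap\Gamma_0$, so finite index passes immediately; your detour through the finiteness of $\{\sigma_{\gamma_0}(a):\gamma_0\in\Gamma_0\}$ works too but is a bit longer. One small imprecision: the inclusion $\mathrm{Stab}_{\Gamma_0}(a)\leq \mathrm{Stab}_{\Gamma_0}(\mathrm{supp}(a))$ need not hold (fixing $a$ only forces $\gamma_0$ to permute $\mathrm{supp}(a)$ setwise), but since you then pass through $\mathrm{Stab}_{\Gamma_0}(\mathrm{supp}(a))\leq_f\mathrm{Norm}_{\Gamma_0}(\mathrm{supp}(a))$ anyway, the conclusion that $\mathrm{Stab}_{\Gamma_0}(\mathrm{supp}(a))$ has finite index in $\Gamma_0$ is unaffected.
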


{\it Proof.} (1) If $g=a\gamma$ with $1\neq a\in A_0^I$ and $\gamma\in\Gamma$, then $g \Gamma g^{-1}\cap\Gamma\subset {\rm Norm}(F)$, where $F$ is the support of $a$. 

(2) For proving the second part, if $g\in {\rm Comm}^{(1)}_G(\Gamma)$, then $[\Gamma:g\Gamma g^{-1}\cap\Gamma]<\infty$. From the first part, it follows that $[\Gamma:{\rm Norm}_{\Gamma}(F)]<\infty$. Since $F$ is finite, we deduce that $[\Gamma: {\rm Stab}_{\Gamma}(F)]<\infty$, which gives $[\Gamma:\Gamma_0]<\infty,$ contradiction.

(3) If $g=a\gamma\in {\rm Comm}^{(1)}_G(\Gamma_0)$ with $a\in A_0^I$ and $\gamma\in\Gamma$, then $g\Gamma_0 g^{-1}\cap\Gamma_0\subset \gamma\Gamma_0 \gamma^{-1}\cap\Gamma_0\subset\Gamma_0$ are finite index subgroups.
This shows that $\gamma\in {\rm Comm}^{(1)}_\Gamma(\Gamma_0)=\Gamma_0$ and therefore $[\Gamma_0: a\Gamma_0 a^{-1}\cap\Gamma_0]<\infty$. If we assume by contradiction that $a\notin A_0$, then $a=(a_i)_{i\in F}\in A_0^I$, is supported on a finite non-empty set $F\subset I$ with $F\neq \{\Gamma_0\}$. Note that $a\Gamma_0 a^{-1}\cap \Gamma_0\subset {\rm Norm}_\Gamma(F)$. Since $[{\rm Norm}_\Gamma(F):{\rm Stab}_\Gamma(F)]<\infty$ and $[\Gamma_0: a\Gamma_0 a^{-1}\cap \Gamma_0]<\infty$, we derive that $[\Gamma_0: {\rm Stab}(F)\cap\Gamma_0]<\infty$. This shows that there exists $j\in\Gamma\setminus\Gamma_0$ such that $[\Gamma_0: j\Gamma_0 j^{-1}\cap\Gamma_0]<\infty$, which implies that $j\in {\rm Comm}_\Gamma^{(1)}(\Gamma_0)=\Gamma_0$, contradiction. This shows that $g\in A_0\rtimes\Gamma_0$, which ends the proof.
\hfill$\blacksquare$

\begin{proposition}\label{trivialamenableradical}
Let $G=A_0^I\rtimes_\sigma\Gamma$ be the coinduced group of $\Gamma_0\car A_0$ with $A_0$ an icc group and ${\rm Stab}_{\Gamma}(I)=1$. If $A_0$ has trivial amenable radical, then $G$ has trivial amenable radical.

\end{proposition}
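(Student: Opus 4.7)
Let $R$ denote the amenable radical of $G$; the plan is to show $R=1$ in two steps, first that $R\cap A_0^I=1$, and then that $R$ centralizes $A_0^I$ inside $G$, after which the claim follows by computing that centralizer.

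For the first step, I note that $R\cap A_0^I$ is amenable and normal in $G$, since both $R$ and $A_0^I$ are normal in $G$. For each $i\in I$, the coordinate projection $\pi_i:A_0^I\to A_0$ sends $R\cap A_0^I$ to a normal amenable subgroup of $A_0$: normality holds because for any $a\in A_0$, the element $\tilde a\in A_0^I$ concentrated at position $i$ lies in $A_0^I$ and conjugates $R\cap A_0^I$ into itself, with $\pi_i(\tilde a r\tilde a^{-1})=a\pi_i(r)a^{-1}$. The hypothesis that $A_0$ has trivial amenable radical then forces $\pi_i(R\cap A_0^I)=1$ for every $i$, whence $R\cap A_0^I=1$.

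For the second step, I observe that for any $r\in R$ and $a\in A_0^I$ the commutator $[r,a]$ lies simultaneously in $R$ (writing $[r,a]=r\cdot(ar^{-1}a^{-1})$ and using normality of $R$) and in $A_0^I$ (writing $[r,a]=(rar^{-1})a^{-1}$ and using normality of $A_0^I$). By the first step $[r,a]=1$, so $R\subseteq C_G(A_0^I)$.

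The main point, and the step I expect to be the main obstacle, is therefore the computation $C_G(A_0^I)=\{1\}$. Given $g=b\gamma\in C_G(A_0^I)$ with $b\in A_0^I$ and $\gamma\in\Gamma$, the centralizer condition $gag^{-1}=a$ for all $a\in A_0^I$ rearranges to $\sigma_\gamma(a)=b^{-1}ab$. Choosing a nontrivial $a$ supported at a single coordinate $i\in I$, conjugation by $b$ preserves the support $\{i\}$, whereas the coinduction formula $(\sigma_\gamma(x))_{\gamma i}=(\sigma_0)_{c(\gamma,i)}(x_i)$ shows that $\sigma_\gamma(a)$ is nontrivial exactly at $\gamma\cdot i$. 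Forcing these supports to agree for every $i\in I$ gives $\gamma\in {\rm Stab}_{\Gamma}(I)=1$, so $\gamma=e$; then $b\in Z(A_0^I)=Z(A_0)^I=1$ because $A_0$ is icc. Hence $C_G(A_0^I)=1$, and consequently $R=1$.
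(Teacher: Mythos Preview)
Your proof is correct and somewhat more streamlined than the paper's. The paper argues by contradiction: assuming a nontrivial amenable normal subgroup $D$, it uses the same commutator trick you do to reduce to the alternative $D\cap A_0^I=1$ or $D\cap A_0^I\neq 1$; in the first case it invokes Proposition~\ref{relcomm1} to conclude $C_G(A_0^I)=1$, while in the second case it runs a more involved argument restricting to a finite support set $F$, iterating the commutator trick inside $A_0^F=A_0^{F\setminus\{i\}}\times A_0^i$, and shrinking $F$ coordinate by coordinate. By contrast, your Step~1 dispatches $R\cap A_0^I=1$ in one stroke via the coordinate projections $\pi_i$, avoiding the finite-support detour entirely; and your Step~3 computes $C_G(A_0^I)=1$ directly from the coinduction formula rather than appealing to the von Neumann algebraic Proposition~\ref{relcomm1}. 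Both arguments share the commutator reduction, but yours is more elementary and self-contained, while the paper's has the minor advantage of reusing machinery already set up for other purposes.
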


{\it Proof.} %First assume that $D\lhd N$ is a normal amenable subgroup. Thus $D^I\lhd N^I$ is also an amenable normal subgroup. Moreover, one can see that $Q$ normalizes $D^I$ and hence $D^I$ is normal in  $G$. This proves the forward implication. Next we show the converse. 
Let $1\neq D\lhd G$ be an amenable normal subgroup. Since $G$ is icc, $D$ must be infinite. Fix $d\in D$ and $k\in A_0^I$. As $A_0^I$ is normal in $G$  we have that $[d,k]=(dkd^{-1})k^{-1}\in A_0^I$. Similarly using the normality of $D$ we also have $[d,k]=d(kd^{-1}k^{-1})\in D$ and hence $[D,A_0^I]\in D\cap A_0^I$. Assume by contradiction that $D\cap A_0^I=1$. The previous relation implies that $D\leqslant C_G(A_0^I)$. However, by Proposition \ref{relcomm1} we have $C_G(A_0^I)=1$ and hence $D = 1$, a contradiction. Therefore we have that $1\neq D\cap A_0^I\lhd A_0^I$ and there exists a finite set $F\subset I$ such that $1\neq D\cap A_0^F\lhd A_0^F$. Since $A_0^F$ is icc we deduce that  $D\cap A_0^F$ is infinite amenable. Next, we fix $i\in F$ and we see $D\cap A_0^F\lhd A_0^{F\setminus \{i\}}\times A_0^i$. Using the same argument as in the beginning of the proof we see that the commutator $[D\cap A_0^F,A_0^i]\leqslant D\cap A_0^i$. Since $D\cap A_0^i$ is amenable and normal in $A_0^i$ we derive that $D\cap A_0^i=1$ and hence $D\cap A_0^F<C_{A_0^F}(A_0^i)= A_0^{F\setminus \{i\}}$. Since this holds for all $i\in F$ we conclude that $D\cap A_0^F<\cap_{i\in F} A_0^{F\setminus\{i\}}=1$, which is a contradiction.   
\hfill$\blacksquare$

\subsection{Characters and automorphisms} In this subsection we describe the automorphisms of some of the groups considered in this paper including the class $\mathcal A$ from \cite{CD-AD20} and various coinduced groups associated with them (Theorems \ref{computeauta}, \ref{autcoinduceda}). At the end we also record some computations of the automorphisms of the generalized wreath products groups in $\mathscr{GW}(A_0)$, in the same vain with classical results of Houghton and others \cite{Ho62,Ho72, Has78}.
\begin{theorem}\label{computeauta} Let $\G$ be a property (T) group. Let $\G_1,\G_2,..., \G_n$ be isomorphic copies of $\G$. Let $\G\ca^{\rho_i} \G_i$ be the action by conjugation and let $\G\ca^\rho \G_1\ast \G_2\ast...\ast\G_n=A$ be the action induced by the canonical free product automorphism $\rho_g =\rho_g^1\ast \rho_g^2\ast...\ast \rho_g^n $. Denote by $G= A \rtimes_\rho \G$ the corresponding semi-direct product. Then the following hold:
\begin{enumerate}
    \item $G$ has finite abelianization $G/[G,G]$; in particular, ${\rm Char}(G)$ is finite.
    \item Assume in addition that $\G$ is icc, torsion free and bi-exact. Then for every $\theta\in {\rm Aut}(G)$ one can find  $b\in G$, $\mu \in \mathfrak S_n$ and $\alpha\in {\rm Aut} (\G)$ such that for every $a\in A$ and $g\in \G$ we have that  $\theta (ag)= b (\alpha_\mu (a) \alpha(g) b^{-1}$. Here $\alpha_\mu \in {\rm Aut}(A)$ is the automorphism given by $
    \alpha_\mu (a_1a_2a_3\cdots a_k)= \alpha_{\mu(i_1)}(a_1)\alpha_{\mu(i_2)}(a_2)\cdots \alpha_{\mu(i_k)}(a_k)$ whenever  $a_s\in A_{i_s}$ with $i_1\neq i_2\neq \cdots \neq i_k$. Also $\alpha_{\mu(i)}$ is the automorphism $\alpha$ seen from $\G_i$ to $\G_{\mu(i)}$.
\end{enumerate}
\end{theorem}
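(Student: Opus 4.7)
For (1), the abelianization $G^{ab}$ is computed directly. The semi-direct product relation $g\lambda g^{-1}=g_i \lambda g_i^{-1}$ (for $g\in\Gamma_0$ acting on $\lambda\in\Gamma_i$, with $g_i$ the copy of $g$ inside $\Gamma_i$) becomes trivial in $G^{ab}$ because the right-hand side is a commutator inside $\Gamma_i$. Hence $\rho$ descends to the trivial action on $A^{ab}$, giving $G^{ab}\cong A^{ab}\oplus \Gamma^{ab}\cong (\Gamma^{ab})^{n+1}$. Property (T) forces $\Gamma^{ab}$ finite, so $G^{ab}$ and ${\rm Char}(G)\cong \widehat{G^{ab}}$ are both finite.

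For (2), fix $\theta\in {\rm Aut}(G)$. First, since $\rho$ preserves each free factor, the Bass-Serre action of $A$ on the tree $T$ of the free product $A=\Gamma_1\ast\cdots\ast \Gamma_n$ extends canonically to a $G$-action with vertex stabilizers conjugate to $\Gamma_0$ and to $\Gamma_i\rtimes \Gamma_0$ for $1\le i\le n$, and with edge stabilizers conjugate to $\Gamma_0$. Since $\Gamma$ has property (T), it satisfies Serre's property FA; hence each of $\theta(\Gamma_0),\theta(\Gamma_1),\dots,\theta(\Gamma_n)$ lies in a conjugate of one of these vertex stabilizers.

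Next, I would pin down the images inside $\Gamma_i\rtimes \Gamma_0$ using the isomorphism $\Gamma_i\rtimes \Gamma_0\cong \Gamma\times \Gamma$ given by $\lambda g\mapsto (\lambda g, g)$, under which $\Gamma_i$ becomes $\Gamma\times \{1\}$, its centralizer $\Delta_i=\{g_i^{-1} g:g\in \Gamma\}$ becomes $\{1\}\times \Gamma$ (using that $C_A(\Gamma_i)=Z(\Gamma_i)=1$ by Kurosh, so $C_G(\Gamma_i)=\Delta_i\cong \Gamma$), and $\Gamma_0$ itself appears as the diagonal. Bi-exactness of $\Gamma$ combined with the icc and torsion-free hypotheses implies that every copy of $\Gamma$ in $\Gamma\times \Gamma$ is commensurable with a coordinate or diagonal subgroup; coupled with the centralizer dichotomy $C_G(\Gamma_0)=1$ versus $C_G(\Gamma_i)\cong \Gamma$, together with a cocycle analysis ruling out nontrivial twisted sections of $1\to A\to G\to \Gamma\to 1$ as images of $\Gamma_0$, we deduce that $\theta(\Gamma_0)$ is $G$-conjugate to $\Gamma_0$ and each $\theta(\Gamma_i)$ is $G$-conjugate to some $\Gamma_{\mu(i)}$ for a permutation $\mu\in \mathfrak S_n$. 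After absorbing the conjugation into ${\rm ad}(b^{-1})$, we may assume $\theta(\Gamma_0)=\Gamma_0$ and $\theta(\Gamma_i)=\Gamma_{\mu(i)}$ for all $i$.

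Setting $\alpha:=\theta|_{\Gamma_0}\in {\rm Aut}(\Gamma)$ and $\alpha_i:=\theta|_{\Gamma_i}:\Gamma_i\to \Gamma_{\mu(i)}$, applying $\theta$ to the relation $g\lambda g^{-1}=g_i \lambda g_i^{-1}$ forces $\alpha_i(g_i)^{-1}\alpha(g)\in C_G(\Gamma_{\mu(i)})=\Delta_{\mu(i)}$; writing an element of $\Delta_{\mu(i)}$ as $h_{\mu(i)}^{-1} h$ and matching the $A$- and $\Gamma_0$-components forces $\alpha_i(g_i)=\alpha(g)_{\mu(i)}$, the defining identity of $\alpha_\mu$. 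The hardest step is the $\Gamma\times\Gamma$ rigidity: the non-abelian cohomology $H^1(\Gamma,A)$ is a priori nontrivial, so the extension admits many non-conjugate sections, and ruling out all twisted sections as possible $\theta$-images is where bi-exactness is used most essentially, together with the Kurosh-type control on centralizers in the free product $A$.
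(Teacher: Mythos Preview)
Your argument for part (1) is correct and in fact slightly sharper than the paper's: you compute $G^{ab}\cong(\Gamma^{ab})^{n+1}$ exactly, whereas the paper only bounds $[G:[G,G]]$ via the chain $G\geq [A,A]\rtimes\Gamma\geq [A,A]\rtimes[\Gamma,\Gamma]$.

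For part (2) your outline has the right shape, and the final computation identifying $\alpha_i(g_i)=\alpha(g)_{\mu(i)}$ via $C_G(\Gamma_{\mu(i)})=\Delta_{\mu(i)}$ is clean. But the middle of the argument has two genuine gaps. First, the assertion that ``every copy of $\Gamma$ in $\Gamma\times\Gamma$ is commensurable with a coordinate or diagonal subgroup'' is not justified and is false as stated: twisted diagonals $\{(g,\alpha(g)):g\in\Gamma\}$ for $\alpha\in{\rm Aut}(\Gamma)$ are never commensurable with the straight diagonal unless $\alpha$ is inner, and without a co-Hopfian hypothesis you have not excluded graphs of proper self-embeddings either. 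Bi-exactness controls commuting subgroups, not arbitrary embeddings. Second, you pass from ``each $\theta(\Gamma_i)$ is conjugate to some $\Gamma_{\mu(i)}$'' to ``a single $b$ conjugates them all simultaneously'' with no argument; you yourself flag the cocycle/section analysis as the hardest step but do not carry it out.

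The paper sidesteps both issues by working with the larger subgroups $\Gamma_i\rtimes\Gamma\cong\Gamma\times\Gamma$ (which inherit property (T)) rather than with $\Gamma_0$ and $\Gamma_i$ separately. Using the amalgamated decomposition $G=\ast_{\Gamma}(\Gamma_i\rtimes\Gamma)$, property FA places each $\theta(\Gamma_i\rtimes\Gamma)$ in a conjugate $g_i^{-1}(\Gamma_{\sigma(i)}\rtimes\Gamma)g_i$; then $\theta(\Gamma)$ lies in every such conjugate, and the intersection $(\Gamma_{\sigma(i)}\rtimes\Gamma)^{g_i^{-1}}\cap(\Gamma_{\sigma(j)}\rtimes\Gamma)^{g_j^{-1}}$ is shown, via ${\rm Comm}^{(1)}_G(\Gamma)=\Gamma$ and amenability of $\rho$-stabilizers of nontrivial elements (this is where bi-exactness enters), to force $g_i=g_j$. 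Finally, instead of classifying all copies of $\Gamma$ inside $\Gamma\times\Gamma$, the paper only needs that an \emph{isomorphism} $\Gamma\times\Gamma\to\Gamma\times\Gamma$ respects the two factors up to a flip, which follows from product-indecomposability of $\Gamma$.
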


\begin{proof} 1) Since $\G$ has property (T) then the abelianization $\G/[\G,\G]$ is finite and hence $[\G,\G]\leqslant \G$ has finite index. Letting  
$A=\G_1\ast \G_2 \ast \cdots \ast \G_n$ this further implies the abelianization $A/[A,A]\cong \G_1/[\G_1,\G_1] \oplus \cdots\oplus  \G_n/[\G_n,\G_n]$ is also finite and hence $[A,A]\leqslant A$ has finite index. Altogether these show that the inclusions $G=A\rtimes \G\geqslant [A,A]\rtimes \G \geqslant [A,A]\rtimes [\G,\G]$ have finite index and since $[A,A]\rtimes [\G,\G]\leqslant [G,G]$ it follows that $[G,G]\leqslant G$ has finite index; thus $G/[G,G]$ is finite.

2) Fix $\theta\in {\rm Aut}(G)$. Notice that $G$ admits a canonical amalgamated free product decomposition $G= {\ast^n_{i=1}}_\G (\G_i \rtimes \G)$. Thus since for every $1\leq i\leq n$ the subgroup  $\theta(\G_i\rtimes \G)<G$ has property (T) one can find $g_i\in G$ and $1\leq \sigma(i)\leq n$ such that $\theta (\G_i \rtimes \G)^{g_i}\leqslant \G_{\sigma(i)}\rtimes \G$. Similarly, one can find $h_i\in G$ and $1\leq \rho(\sigma(i))\leq n$ such that $\theta^{-1} (\G_{\sigma(i)} \rtimes \G)^{h_i}\leqslant \G_{\rho(\sigma(i))}\rtimes \G$. Altogether these show that 
$\theta (\G_i \rtimes \G)^{g_i}\leqslant \G_{\sigma(i)} \rtimes \G\leqslant \theta ((\G_{\rho(\sigma(i))}\rtimes \G)^{h_i^{-1}})$. In particular, we must have that $\rho(\sigma(i))=i$ and $g_i\theta(h_i)\in \theta(\G_i \rtimes \G )$. This combined with the above shows that $\sigma,\rho\in \mathfrak S_n$ and also \begin{equation}\label{conjfactors}\theta (\G_i \rtimes \G)^{g_i}= \G_{\sigma (i)} \rtimes \G,\text{  for all } 1\leq i\leq n.\end{equation}    

In particular, for every $i\neq j$ we have that $\theta(\G)\leqslant  (\G_{\sigma (i)} \rtimes \G)^{g_i^{-1}} \cap (\G_{\sigma (j)} \rtimes \G)^{g_j^{-1}}$. 
Using \cite[Lemma 2.10]{CD-AD20} we have that ${\rm Comm}^{(1)}_{\G_i\rtimes \G} (\G)=\G$ and hence by \cite[Lemma 2.11]{CI17} we have  ${\rm Comm}^{(1)}_{G} (\G)=\G$. Letting $g_i=a_i s_i$ with $a_i \in A$ and $s_i\in \G$ and using a basic analysis of the reduced words we derive that $(\G_{\sigma (i)} \rtimes \G)^{g_i^{-1}} \cap (\G_{\sigma (j)} \rtimes \G)^{g_j^{-1}}\leqslant  \G^{g_i^{-1}} \cap  \G^{g_j^{-1}}= \G^{a_i^{-1}} \cap  \G^{a_j^{-1}} $. Thus $\theta(\G)\leqslant\G^{a_i^{-1}} \cap  \G^{a_j^{-1}} $. Basic calculations show that  $\G^{a_i^{-1}} \cap  \G^{a_j^{-1}}= \{g\in \G \,|\, \rho_{g}(a_j^{-1}a_i)=a_j^{-1}a_j\}$. However, since $\rho$ is given by the free product of inner automorphisms and since $\G_i$ are bi-exact and torsion free one can see that if  $a_j^{-1}a_j\neq 1$ then the stabilizer is amenable, contradicting $\theta(\G)\leqslant\G^{a_i^{-1}} \cap  \G^{a_j^{-1}} $. In conclusion, $a_j=a_i$ for all $i,j$  and thus in relation \eqref{conjfactors} one can pick $g=g_1=g_2=\cdots= g_n\in G$ and also $\theta(\G)^g\leqslant\G$. Reversing the roles of $\G$ and $\theta (\G)$, a similar argument shows that $\theta(\G)^g\geqslant\G$ and hence \begin{equation}\label{actinggroup2}\theta(\G)^g=\G.\end{equation}
Working with ${\rm ad} (g) \circ \theta$ instead of $\theta$ we can assume without any loss of generality that in relations \eqref{conjfactors}-\eqref{actinggroup2} we have  $g=1$. Thus there is $\alpha \in {\rm Aut}(\G)$ such that  \begin{equation}\label{actinggroup3}\theta_{|\G}=\alpha.\end{equation}  

Moreover, since $\G_i \rtimes \G,\G_{\sigma(i)} \rtimes \G\cong \G\times \G$ and $\G$ is icc, bi-exact, nonamenable, then using \eqref{conjfactors} there are $\theta_1,\theta_2\in {\rm Aut} (\G)$ such that $\theta= \theta_1\oplus \theta_2$ or $\theta= (\theta_1\oplus \theta_2)\circ {\rm flip} $.

This follows directly from the main product rigidity result in\cite{CdSS15}. However, for convenience, we include a more general and direct group theoretic argument from which the assertion follows. Namely, if $A_1, A_2, B_1, B_2$ are product indecomposable groups and $\theta:A_1\times A_2 \ra B_1\times B_2$ is a group isomorphism then one can find a permutation $\tau\in \mathfrak S_2$ such that $\theta(A_i)=B_{\tau(i)}$ for all $i=\overline{1,2}$. To see this let $C_i=\theta(A_i)$ and note that $C_1\times C_2=B_1\times B_2$. Let $\pi:B_1\times B_2\ra B_2$ be the canonical projection and observe that $B_2= \pi(C_1)\times \pi(C_2)$. As $B_2$ is product indecomposable there is $j\in \overline{1,2}$ so that $\pi(C_j)=1$ and $\pi(C_{\hat j})=B_2.$ But this further entails that $C_j\leqslant B_1$ and thus $B_1 = C_j \times C'_{\hat j}$ where $C'_{\hat j}\leqslant C_{\hat j}$. Since $B_1$ is product indecomposable  we must have   $C'_{\hat j}=1$ and thus $B_1= C_j=\theta(A_j)$. This together with $\pi(C_{\hat j})=B_2$ also imply $B_2=C_{\hat j}= \theta (A_{\hat j})$, which give the desired conclusion.
%{\bf (Why do we obtain this? Are we using your product rigidity result \cite{CdSS15}? Also, shouldn't we get the following alternative: either (1) $\theta=\theta_1\oplus \theta_2$ or (2) $\theta=(\theta_1\oplus \theta_2) \circ  $ "the flip"? Lastly, I assume this description applies to the restriction of $\theta$ to $\G_i \rtimes \G$ for given $i$)}.  

Now in both scenarios above, \eqref{actinggroup3} implies that $\theta_1= \theta_2=\alpha$ and also  $\theta(\G_{i})=\G_{\sigma(i)}$ for all $i\in \overline{1,n}$. In particular, we have \begin{equation}\label{autcore5}\theta_{|A}= \alpha_{\sigma(1)}\ast \alpha_{\sigma(2)}\ast\cdots \ast \alpha_{\sigma(n)}\in {\rm Aut}(A),\end{equation}

where $\alpha_{\sigma(i)}$ is the automorphism $\alpha$ seen from $\G_i$ to $\G_{\sigma(i)}$.

Altogether,  \eqref{autcore5} and \eqref{actinggroup3} give the conclusion.\end{proof}

\noindent \emph{Remarks}. The semi-direct product $G=(\G_1\ast \G_2 \ast\cdots \ast \G_n)\rtimes \G$ covered by the prior theorem could be slightly modified so that the ``permutation'' automorphisms are eliminiated from its automorphism group. For example one can pick  group (strict) inclusions  $\G_n > \G_{n-1}> \cdots > \G_1 > \G$ of torsion free, hyperbolic property (T) groups  such that $[\G_n:\G]<\infty$. The proof is the same as before with the only exception that the permutation part of the automorphism in equation \eqref{actinggroup3} must be trivial because the $\G_i$ are co-Hopfian groups \cite{Se99}. We leave the details to the reader.

\begin{theorem}\label{autcoinduceda}Let $\G$ be an icc, bi-exact, weakly amenable, property (T) group. Let $\G_1,\G_2,\ldots, \G_n$ be isomorphic copies of $\G$. Let $\G\ca^{\rho_i} \G_i$ be the action by conjugation and let $\G\ca^\rho \G_1\ast \G_2\ast\cdots \ast\G_n=:A$ the action induced by the canonical free product automorphism $\rho_g =\rho_g^1\ast \rho_g^2\ast\cdots\ast \rho_g^n $ for all $g\in\G$. Consider $\tilde \G=\G\times \G$ and assume $\G_0=\G<\tilde \G$ is diagonally embedded. Also let $I = \tilde\G/\G_0$ and consider the coinduced group $G=A^I\rtimes \tilde \G$. Then we have the following description of its automorphism group. Writing $I = \{h \G_0\,|\, h\in \G\times 1\} $ for every $\theta \in {\rm Aut}(G)$ there exists $b\in G$, a permutation $\mu \in \mathfrak S_n$, and $\alpha \in {\rm Aut}(\G)$ such that for every $(x_{h\G_0})_{h\in \G\times 1}\in A^I$ and $g\in \tilde \G$ we have  \begin{equation}\label{descrautcoinduced}\theta((x_{h\G_0})_{h\in \G\times 1} g) = b (\alpha_\mu  (x_{\alpha(h^{-1})\G_0}))_{h\in \G\times 1} (\alpha \oplus \alpha)(g) b^{-1}.\end{equation}

Here, $\alpha_\mu \in {\rm Aut}(A)$ is the automorphism given by $
    \alpha_\mu (a_1a_2a_3\cdots a_k)= \alpha_{\mu(i_1)}(a_1)\alpha_{\mu(i_2)}(a_2)\cdots \alpha_{\mu(i_k)}(a_k)$ whenever  $a_s\in A_{i_s}$ with $i_1\neq i_2\neq \cdots \neq i_k$. Also $\alpha_{\mu(i)}$ is the automorphism $\alpha$ seen from $\G_i$ to $\G_{\mu(i)}$. \end{theorem}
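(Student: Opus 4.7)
The strategy is to promote the proof of Theorem \ref{computeauta} from the semi-direct product $A\rtimes \Gamma_0$ to the full coinduced group $G=A^I\rtimes \tilde\Gamma$, by first identifying the acting factor $\tilde\Gamma$ up to conjugation and then transporting the analysis of the base from the trivial coset to all cosets via $\tilde\Gamma$-equivariance. Fix $\theta\in \mathrm{Aut}(G)$.

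\textbf{Step 1 (locate $\tilde\Gamma$).} I would first show that, after composing with an inner automorphism, $\theta(\tilde\Gamma)=\tilde\Gamma$. The basic input is Lemma \ref{L:coindcomm}(2), applied with $\tilde\Gamma$ in the role of $\Gamma$ and $A$ in the role of $A_0$, which gives $\mathrm{Comm}^{(1)}_G(\tilde\Gamma)=\tilde\Gamma$. Combined with property (T) of $\tilde\Gamma$ and bi-exactness/weak amenability of $\Gamma$ (to rule out non-trivial quasi-normalizer-type intersections of $\theta(\tilde\Gamma)$ with conjugates of $\tilde\Gamma$), this lets one run an argument analogous to the analysis of $\theta(\Gamma_i\rtimes \G)$ carried out in Theorem \ref{computeauta} and conclude that $\theta(\tilde\Gamma)$ is conjugate to $\tilde\Gamma$ inside $G$. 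Replace $\theta$ by $\mathrm{ad}(b^{-1})\circ \theta$ for an appropriate $b\in G$ so that $\theta(\tilde\Gamma)=\tilde\Gamma$.

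\textbf{Step 2 (the restriction to $\tilde\Gamma$).} Apply the product-rigidity argument embedded in the proof of Theorem \ref{computeauta} (or the main result of \cite{CdSS15}) to the automorphism $\theta_{|\tilde\Gamma}$ of $\Gamma\times\Gamma$. It writes $\theta_{|\tilde\Gamma}=\alpha_1\oplus \alpha_2$, up to the flip, with $\alpha_1,\alpha_2\in \mathrm{Aut}(\G)$. To see that $\alpha_1=\alpha_2$, use that the diagonal $\Gamma_0$ is singled out inside $\tilde\G$ as the unique (up to conjugation) copy of $\G$ fixing a point of $I$ under the coinduced action, forcing $\theta(\Gamma_0)$ to be conjugate to $\Gamma_0$. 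This yields $\alpha_1=\alpha_2=:\alpha$, and hence $\theta_{|\tilde\Gamma}=\alpha\oplus \alpha$.

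\textbf{Step 3 (base at the trivial coset).} Now apply Theorem \ref{computeauta} to the subgroup $A\rtimes \Gamma_0<G$, where $A$ is the copy of the base at the trivial coset. The restriction of $\theta$ (possibly after a further inner adjustment, absorbed into $b$) is an automorphism of $A\rtimes \Gamma_0$, so by Theorem \ref{computeauta} there exist $\mu\in \mathfrak S_n$ and $\alpha'\in \mathrm{Aut}(\G)$ with $\theta(ag)=\alpha_\mu(a)\alpha'(g)$ for $a\in A$, $g\in \Gamma_0$. Compatibility with Step 2 on $\Gamma_0$ forces $\alpha'=\alpha$.

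\textbf{Step 4 (propagate by equivariance).} Finally, extend the description from the trivial coset to all cosets using the relation $\theta\circ \sigma_g = \sigma_{\theta(g)}\circ \theta$ for $g\in \tilde\Gamma$. Since $\tilde\Gamma$ acts transitively on $I$ and $\theta$ acts on $\tilde\Gamma$ as $\alpha\oplus \alpha$, the copy of $A$ at a coset $h\Gamma_0$ is mapped to the copy at $\alpha(h^{\pm 1})\Gamma_0$ via $\alpha_\mu$. Tracking indices carefully produces the claimed formula \eqref{descrautcoinduced}, and the same $\mu$ appears at every coset because the permutation is already determined at the trivial coset.

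\textbf{Main obstacle.} Step 1 is the critical one. Unlike in Theorem \ref{computeauta}, where the amalgamated free product decomposition $G={\ast^n_{i=1}}_\G (\G_i\rtimes \G)$ allows one to locate property (T) subgroups via Bass--Serre theory, the coinduced group $G$ admits no such clean tree decomposition over $\tilde\Gamma$. The required location of $\theta(\tilde\Gamma)$ therefore has to be achieved by a more delicate combination of the commensurator computation in Lemma \ref{L:coindcomm}, property (T), bi-exactness and weak amenability of $\Gamma$; this may well need the von Neumann algebra machinery developed for Theorem \ref{Main:coinduced} together with the absence of atoms supplied by Proposition \ref{trivialamenableradical}, pulled back to the group level.
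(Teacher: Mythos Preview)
Your overall architecture is reasonable, but Step~1 as written has a genuine gap, and the paper's proof takes a different (and cleaner) route precisely to avoid it.

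You propose to show directly that $\theta(\tilde\Gamma)$ is conjugate to $\tilde\Gamma$ using $\mathrm{Comm}^{(1)}_G(\tilde\Gamma)=\tilde\Gamma$ together with property~(T) and bi-exactness. But $\tilde\Gamma$ is only a complement, not a normal subgroup, so there is no intrinsic group-theoretic characterization of it inside $G$ that a general automorphism must respect. The commensurator identity alone does not pin down $\theta(\tilde\Gamma)$ up to conjugacy; any other complement to $A^I$ (and there are many, parametrized by $1$-cocycles $\tilde\Gamma\to A^I$) has the same commensurator property. What the paper does instead is: first prove $\theta(A^I)=A^I$, by projecting $\theta(A^I)$ to $\tilde\Gamma$ and using bi-exactness plus triviality of the amenable radical of $\tilde\Gamma$ to force this projection to be trivial. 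Once $A^I$ is preserved, one writes $\theta(g)=a_g\,\delta(g)$ with $\delta\in\mathrm{Aut}(\tilde\Gamma)$ and $a:\tilde\Gamma\to A^I$ a $1$-cocycle for the coinduced action. The decisive missing ingredient in your outline is then \emph{cocycle superrigidity for coinduced actions of property~(T) groups} \cite[Theorem~3.1]{Dr15}, which (after passing to $\mathcal L(A^I)$ and back) forces $a_g=h\,\sigma_{\delta(g)}(h^{-1})$ for a single $h\in A^I$; conjugating by $h$ gives $\theta|_{\tilde\Gamma}=\delta$. Without this superrigidity step, your Step~1 cannot be completed.

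There are two further points your sketch does not address. First, before one can invoke anything like Theorem~\ref{computeauta} at a fixed coset, one must know that $\theta$ sends each base factor $A^i$ into a single $A^{\tau(i)}$ rather than spreading it across several. The paper proves this by a delicate bi-exactness argument on $A^J$ for finite $J$ (showing that a non-amenable bi-exact subgroup of $A^J$ with non-amenable centralizer must lie in a single factor). Second, Step~2 requires ruling out the flip $\delta=(\delta_1\oplus\delta_1)\circ\mathrm{flip}_{\tilde\Gamma}$: this is done in the paper by a separate equivariance computation with the explicit cocycle $c(g,i)$, and it does not follow from the observation that $\Gamma_0$ is a point stabilizer (the flip also preserves $\Gamma_0$).
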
 

%{\bf (In the statement of this theorem, it is not explicitly written who is actually the map $\delta$ that appears.)}

\begin{proof} Fix $\theta\in {\rm Aut}(G)$. First we prove that $\theta(A^I)=A^I$. Let $\pi: G \ra \tilde \G$ be the canonical surjection. Thus $\pi\circ \theta(A^I)$ is a normal subgroup of $\tilde\G$, generated be the commuting subgroups $ \pi\circ \theta(A^i)$ for $i\in I$. Since $\G$ is bi-exact and $I$ is infinite, there is $j\in I$ so that $ \pi\circ \theta(A^j)$ is amenable.  Let $B<\pi\circ \theta(A^I)$ be its amenable radical and notice that $ \pi\circ \theta(A^j)<B$. Since $\pi\circ \theta(A^I)$ is normal in $\tilde \G$, then so is $B$. However, since $\G$ is icc, bi-exact and weakly amenable, then by \cite[Corollary 0.2]{CSU11} we have that $\G$ does not have any non-trivial amenable normal subgroup and thus, $B =1$. Therefore,  $\pi\circ \theta(A^j)=1$, which implies that $\theta(A^j)< A^I$. Since $A^I$ is normal we have that $\theta(A^{gj})= \theta(g) \theta(A^j) \theta (g^{-1})< A^I$ and as $\tilde \G\ca I$ is transitive we conclude that $\theta(A^I)\leqslant A^I$. Similarly, we get $\theta(A^I)\geqslant A^I$ which proves the claim.

\vskip 0.03in
Notice that the semi-direct product feature of the group also implies the existence of two maps $\delta:\tilde \G \ra \tilde \G$ and $a: \tilde \G \ra A^I$ such that $\theta(g)= a_g \delta(g)$ for all $g\in \tilde \G$. Since $\theta$ is an automorphism one can easily see that $\delta$ is also an automorphism of $\tilde \G$. Moreover, the homomorphism property of $\theta$ implies that $a_g \sigma_{\delta(g)}(a_h)=a_{gh}$ for all $g,h\in \tilde \Gamma$. In other words, $a$ is a $1$-cocycle for the coinduced action $\tilde \G\ca^\sigma A^I$. Using the cocycle superigidity theorem for coinduced action of property (T) groups from \cite[Theorem 3.1]{Dr15}  one can find a unitary $v\in \mathscr U(\cL(A^I))$ and a character $\eta$ on $\tilde \G$ such that $u_{a_g}= \eta(g) v  \sigma_{\delta(g)}(v^*)$. Here, $\{u_h \,:\, h\in A^I\}$ are the canonical unitaries of $\cL(A^I)$. This implies that $u_{a_g \delta(g)} v u_{\delta(g)^{-1}} = \eta(g) v$ for all $g\in \tilde \G$. Let $v=\sum_h v_h u_h $ be its Fourier decomposition and notice that for all $h$ with $v_h \neq 0$ we have that $\mathcal O_h:=\{a_g \delta(g) h \delta(g)^{-1}, g\in \tilde \G\}$ is finite.  Thus, there is a finite index subgroup $\tilde \G_h<\tilde \G$  such that $a_g \delta(g) h \delta(g)^{-1}=h$ for all $g\in \tilde \G_h$. This further implies that $a_g= h \sigma_{\delta(g)} (h^{-1}) $ for all $g\in \tilde \G_h$. Fix $h_1$, $h_2$ such that $\mathcal O_{h_1}$ and $\mathcal O_{h_2}$ are finite. Thus, for every $g\in \tilde \G_{h_1}\cap \tilde \G_{h_2}$ we have that  $h_1 \sigma_{\delta(g)} (h^{-1}_1)= a_g= h_2 \sigma_{\delta(g)} (h^{-1}_2) $ and hence $h_2^{-1}h_1=\sigma_{\delta(g)}(h_2^{-1}h_1)$. Since $\tilde\G_{h_1}, \tilde\G_{h_2}\leqslant \tilde \G$ have finite index then so is $\tilde \G_{h_1}\cap \tilde \G_2\leqslant \tilde \G$. This implies that $h_2^{-1}h_1=1$, and hence, $h_1=h_2$. Altogether these show that there is $h\in A^I$ such that $v=u_h$, and hence, $a_g= h \sigma_{\delta(g)} (h^{-1})$ for all $g\in \tilde \G$. Thus, replacing $\theta$ by $ad(h^{-1})\circ \theta$ we can assume that \begin{equation}\label{autactgroup}\theta(g)=\delta(g)\text{ for all }g\in \tilde \G.\end{equation} 

Moreover, since $\G$ is icc and bi-exact, then using the same argument as in the proof of the previous result, one can find  $\delta_1,\delta_2\in {\rm Aut} (\G)$ such that     $\delta(g)=(\delta_1 (g_1), \delta_2(g_2))$ or $\delta(g)=(\delta_1 (g_2), \delta_2(g_1))$ for every $g=(g_1,g_2)\in \tilde \G$. 

\vskip 0.03in
Throughout this proof we denote by $i_o=\Gamma_0 \in I$ the trivial coset. Next, we show that there are automorphisms $\theta_i: A \ra A$ and a permutation $\tau:I \ra I$ with $\tau(i_0)=i_0$ such that 
\begin{equation}\label{autcore2}\theta((x_i)_{i\in I}) )=(\theta_{\tau^{-1}(i)}(x_{\tau^{-1}(i)}))_{i\in I}.\end{equation}

Fix $i\in I$. Since $\theta(A^i)$ is finitely generated, there is a finite subset $J\subset I$ that is minimal with respect to the set theoretic inclusion such that $C:=\theta(A^i)\leqslant A^J$. Assume by contradiction $|J|\geqslant 2$.  %For every $F \subseteq J$ denote by  $N_{F} = A^{F}\cap C$ and notice it is an normal subgroup of $C$. Thus for every disjoint proper subsets $F_1,F_2\subset J$ we have that $N_{F_1} , N_{F_2}\leqslant  C$ are commuting, normal subgroups.  Thus, since $C$ is icc then $N_{F_1}$ and $N_{F_2}$ are either trivial or icc infinite. Moreover, as $C$ is weakly amenable and bi-exact then one of the following cases must hold: a) $N_{F_1}=N_{F_2}=1 $; b)   $N_{F_1}=1$ and $N_{F_2}$ is icc nonamenable or c)  $N_{F_2}=1$ and $N_{F_1}$ is icc nonamenable. Assume there is  $G\subset J$ with $|G|$ minimal such that $N_G$ is nonamenable.  Since  $C=\theta(A^i) \leqslant A^J \leqslant \theta (A^i) \oplus \theta (A^{I\setminus\{i\}})$ there is a subgroup $S \leqslant  A^J\cap \theta (A^{I\setminus\{i\}})$ such that $A^J=C\oplus S$. Since $N_G\leqslant A^G$ is non-amenable and $A^i$ are bi-exact for all $i\in G$ it follows that there exist $k \in G$ and a finite index subgroup $H\leqslant C_{A^G} (N_G) $ such that  $H\leqslant A^{G\setminus \{k\}}$ (see \cite{CS11} and \cite[Lemma 2.2]{CI17}). Therefore we have that  $S=C_{A^J}(C)\leqslant C_{A^J}(N_G) =C_{A^G}(N_G)\oplus A^{J\setminus G}$ and hence there is a  finite index subgroup $S_0\leqslant S$ satisfying $S_0\leqslant H\oplus A^{J\setminus G}\leqslant A^{G\setminus\{k\}}\oplus A^{J\setminus G}=A^{J\setminus \{k\}}$.   Since $S$ is icc, by taking the centralizer of $S_0$ we see that  $C= C_{A^J}(S_0)\geqslant C_{A^J}(A^{J\setminus \{k\}})= A^k$. But since $C$ is icc bi-exact nonamenable it is product indecomposable and hence $C=A^{k}$ which contradicts the fact that $|J|\geqslant 2$. In conclusion we have $N_F =1$ for every proper subset $F\subset J$. 

%Fix $s\in J$ and let $C_s:=\pi_s(C)\leqslant A^s$, where $\pi_s:A^J\ra A^s$ is the canonical projection.   The previous property implies that $N_s =N_{\hat s}=1$. In particular $C_s$ is non-amenable and one can find a monomorphism $\phi: C_s\ra A^{J\setminus \{s\}}$ such that $C=\{(a,\phi(a)) \,:\,a\in C_s\}$.  A basic computation shows the centralizer satisfies $C_{A^J}(C)= C_{A^s}(C_s)\oplus C_{A^{J\setminus \{s\}}}(\phi(C_s))$.  Since $A^s$ is biexact and $C_s$ is non-amenable then $C_{A^s}(C_s)$ is finite.  Also, since $|J|\geqslant  2$ proceeding as in the prior case we also have that $A^J=C\oplus S$ where $S$ is icc non-amenable. This implies that $S=C_{A^J}(C)=C_{A^s}(C_s)\oplus C_{A^{J\setminus \{s\}}}(\phi(C_s))$ which contradicts the prior assertion.   

Since  $C=\theta(A^i) \leqslant A^J \leqslant \theta (A^i) \oplus \theta (A^{I\setminus\{i\}})$ there is a subgroup $S \leqslant  A^J\cap \theta (A^{I\setminus\{i\}})$ such that $A^J=C\oplus S$. Since $|J|\geqslant 2$ and $C$ is non-amenable and bi-exact it follows that $S$ is non-amenable. Moreover, as $A^J$ is icc so is $S$. Since $A^J$ is bi-exact relative to the family $\{ A^{J\setminus \{j\}}, j\in J \}$ and $C$ is non-amenable then one can find $k\in J$ and a finite index subgroup $S_0\leqslant S$ so that $S_0\leqslant A^{J\setminus \{k\}}$. Using the icc conditions we get that $C= C_{A^J}(S) = C_{A^J}(S_0)\geqslant C_{A^J}(A^{J\setminus \{k\}})=A^k$. Hence there is $D\leqslant A^J\setminus \{k\}$ so that  $C= D\oplus A^k$ and since $C$ is icc bi-exact we conclude that $C=A^k$ which again contradicts the minimality  of $|J|$.

 In conclusion,  $J$ must be a singleton and we have proved that for every $i\in I$ there is $\tau(i)\in I$ such that $\theta(A^i)\leqslant A^{\tau(i)}$.  As $\theta$ is an automorphism, then one can see that $\tau$ must be a bijection and also $\theta(A^i)= A^{\tau(i)}$. Conjugating by an appropriate group element of $\tilde \G$ and using the transitivity of the action of $\tilde \Gamma$ on $I$, we can also assume that $\tau(i_0)=i_0$. Altogether, these clearly give \eqref{autcore2}.  

 \vskip 0.03in   

Next we analyze the cocycle $c: \tilde \G \times I \ra  \G_0$. Notice that $I = \{h \G_0\,|\, h\in \G\times 1\}$ and consider the section $\phi: I \ra \Gamma$ given by $\phi(h \G_0)=h$ for all $h\in \G\times 1$. Thus for every $g=(g_1,g_2)\in \tilde \G $ and  $i= h  \G_0\in I $ we see that  $c(g,i)= \phi( g i )^{-1} g \phi(i) = \phi((g_1h,g_2) \G_0 )^{-1}(g_1h,g_2)= \phi((g_1h g_2^{-1},1) \G_0 )^{-1}(g_1h,g_2)=((g_1h g_2^{-1})^{-1},1)(g_1h,g_2)=(g_2,g_2)$.

Using the identification  $I = \{h \G\,|\, h\in \G\times 1\}$ we also notice that there exists a permutation $\sigma: \G\ra \G$ with $\sigma (1)=1$ such that $\tau(h  \G_0)= \sigma(h) \G_0$ for all $h\in \G\times 1$.     

Using relations \eqref{autcore2} and \eqref{autactgroup} we have the equivariance condition $\theta \circ \sigma_g= \sigma_{\delta( g)} \circ\theta $ for all $g\in \tilde \G$. 
Using the definition of coinduced actions this implies that for all $(x_i)_{i\in I}\in A^I$ and $g\in \tilde \G$ we have \begin{equation}\label{equivariance2}\theta_{\tau^{-1}(i)}(\rho_{c(g,g \tau^{-1}(i))}(x_{g^{-1}\tau^{-1}(i)}))=\rho_{c(\delta(g), \delta(g)^{-1}i)}(\theta_{\tau^{-1}(\delta(g)^{-1}i)}(x_{\tau^{-1}(\delta(g)^{-1}i)})).\end{equation}

In particular, for all $i\in I$, $g\in \tilde \G$ we have 
$g^{-1}\tau^{-1}(i)=\tau^{-1}(\delta(g)^{-1}i)
$ and hence  \begin{equation}\label{baseset1}\tau(g^{-1}\tau^{-1}(i))=\delta(g)^{-1}i.\end{equation} Letting $i=i_0$ and $g\in \G_0$ we get that $\delta(\G_0)=\G_0$. Combining this with the above, we see that either \begin{equation}\label{autactinggroup}\begin{split}&\delta=\delta_1\oplus \delta_1\text{ for some }\delta_1\in {\rm Aut}(\G)\text{, or }\\&
\delta= (\delta_1\oplus \delta_1) \circ {\rm  flip}_{\tilde \G}.\end{split}
\end{equation}

Assume first $\delta=\delta_1\oplus \delta_1$ for some $\delta_1\in {\rm Aut}(\G)$. Then relation \eqref{baseset1} implies that for all  $g,h\in \G \times 1$, we have $\sigma(g^{-1}\sigma^{-1}(h)) \G_0=\tau(g^{-1}\tau^{-1}(h \G_0)))=\delta_1(g)^{-1}(h\G_0)=(\delta_1(g)^{-1}h)\G_0$ and hence $\sigma(g^{-1}\sigma^{-1}(h))=(\delta_1(g)^{-1}h)$. Letting $h=1$ this gives that \begin{equation}\label{basesetaction}\sigma(g)= \delta_1(g)\text{ for all }g\in \G\times 1.\end{equation}

%Now assume $\delta$ is the flip. Same argument as before implies $\sigma(g^{-1}\sigma^{-1}(h)) \G_0=\tau(g^{-1}\tau^{-1}(h \G_0)))=(1,g)^{-1}(h, 1)\G_0)=(hg,1)\G_0$ and hence $\sigma(g^{-1}\sigma^{-1}(h))=hg$. Letting $h=1$ we get 
%\begin{equation}\sigma(g)= g^{-1}\text{ for all }g\in \G\times 1.\end{equation}

To this end observe that using relations \eqref{baseset1} in \eqref{equivariance2} we get that for every $g=(g_1,g_2)\in \tilde \G$ and $i\in I$ we have  
\begin{equation}\label{equivariance4}\theta_{\tau^{-1}(i)}\circ \rho_{(g_2,g_2) }=\theta_{\tau^{-1}(i)}\circ \rho_{c(g,g \tau^{-1}(i))}=\rho_{c(\delta(g), \delta(g)^{-1}i)}\circ \theta_{\tau^{-1}(\delta(g)^{-1}i)}=\rho_{(\delta_1(g_2), \delta_1(g_2))}\circ \theta_{\tau^{-1}(\delta(g)^{-1}i)}.
\end{equation} 
Letting $g_2=1$ in this relation we see that $\theta_{\tau^{-1}(i)}= \theta_{\tau^{-1}(\delta_1(g_1)^{-1}i)}$, for all $g_1\in \G$. This however implies that there exists $\psi\in {\rm Aut}( A)$ such that \begin{equation}\label{coreaut2}\theta_i=\psi\text{ for all }i\in I,\end{equation} and therefore the previous relation entails that for all $g\in \G_0$ we have \begin{equation}\label{equivariance3}
   \psi \circ \rho_{g }=\rho_{\delta_1(g)}\circ \psi. 
\end{equation}

Next, we describe $\psi$. Since $A=\G_1\ast ...\ast \G_n$ and $\psi(\G_i)$ are icc and have property (T) then using Kurosh theorem one can find a map $\mu: \overline{1,n}\ra  \overline{1,n}$ and elements $a_1,...,a_n\in A$ such that $\psi(\G_i)^{a_i}\leqslant \G_{\mu(i)}$ for all $i=\overline{1,n}$. Proceeding in a similar manner, one can find map $\nu: \overline{1,n}\ra  \overline{1,n}$ and some elements $b_1,...,b_n\in A$ such that $\psi^{-1}(\G_{\mu(i)}^{b_i})\leqslant \G_{\nu(\mu(i))}$ for all $i\in \overline{1,n}$.  Altogether these show that  $\psi(\G_i)^{a_i b^{-1}_i}\leqslant \G_{\mu(i)}^{b_i}\leqslant \psi(\G_{\nu(\mu(i))})$. This however entails that $\nu(\mu(i))=i$ and $a_i b^{-1}_i\in \psi(\G_i)$ and  $\psi(\G_i) = \G_{\mu(i)}^{b_i}$ for all $i$; in particular $\mu\in \mathfrak S_n$.  Thus one can find $\psi_i \in {\rm Aut} (\G)$ with $i\in \overline{1,n}$ such that $\psi = ({\rm ad}(b_1)\circ \psi_{\mu(1)})\ast ({\rm ad}(b_2)\circ\psi_{\mu(2)})\ast\cdots \ast ({\rm ad}(b_n)\circ\psi_{\mu(n)})$. Now fix $k\in \overline{1,n}$ and  $a\in \G_k$ arbitrary. Then relation \eqref{equivariance3} implies that $b_k \psi_{\mu(k)}(gag^{-1})b^{-1}_k= \psi \circ \rho_{g }(a)=\rho_{\delta_1(g)}\circ \psi(a)=\delta_1(g) b_k \psi_{\mu(k)} (a)b^{-1}_k\delta_1(g)^{-1}$ for all $g\in \G_{k}$. Therefore $b^{-1}_k\delta_1(g)^{-1}b_k \psi_{\mu(k)}(g)\in C_A(\G_{\mu(k)})=1$ and hence 
$\psi_{\mu(k)}={\rm ad} (b^{-1}_k)\circ\delta_1$. Consequently, as $k$ was arbitrary, we get  \begin{equation}\label{basegroupfreeproduct}\psi = (\delta_1)_\mu.\end{equation}

Observe that \eqref{basegroupfreeproduct}, \eqref{coreaut2}, \eqref{basesetaction}, and \eqref{autactinggroup} combined with the prior relations give \eqref{descrautcoinduced} for $\alpha=\delta_1$.

If  $\delta=(\delta_1\oplus \delta_1) \circ {\rm  flip}_{\tilde \G}$ then  \eqref{equivariance4} implies $\theta_{\tau^{-1}(i)}\circ \rho_{(g_2,g_2) }=\rho_{(\delta_1(g_1), \delta_1(g_1))}\circ \theta_{\tau^{-1}(\delta_1(g_2^{-1}g_1)i)}$. %Letting $g_1=g_2$ this implies that   $\theta_i \circ \rho_{g }=\rho_{g}\circ \theta_i$ for all $g\in \G_0$ and $i\in I$. 
Letting $g_2=1$ we see that $\theta_{\tau^{-1}(i)} =\rho_{(\delta_1(g_1), \delta_1(g_1))}\circ \theta_{\tau^{-1}(\delta_1(g_1) i)}$ for all $g_1\in \G\times 1$ and $i\in I$. Also, letting $g_1=1$ we get  $\theta_{\tau^{-1}(i)}\circ \rho_{(g_2,g_2)}=\theta_{\tau^{-1}(\delta_1(g_2^{-1})i)}$ for all $g_2\in \G\times 1$ and $i\in I$. Using these last three relations we see that 
$\theta_{\tau^{-1}(\delta_1(g_2^{-1})i)}=\theta_{\tau^{-1}(i)}\circ \rho_{(g_2,g_2)}=\rho_{(\delta_1(g_1), \delta_1(g_1))}\circ \theta_{\tau^{-1}(\delta_1(g_2^{-1}g_1)i)}= \theta_{\tau^{-1}( \delta_1(g_1^{-1}g_2^{-1}g_1)i)}$ for all $g_1,g_2\in \G\times 1$ and all $i \in I$. Therefore $\theta_{\tau^{-1}(i)}\circ \rho_{(g_2,g_2)}=\theta_{\tau^{-1}(\delta_1(g_2^{-1})i)}= \theta_{\tau^{-1}( \delta_1(g_1^{-1}g_2^{-1}g_1)i)}= \theta_{\tau^{-1}(i)}\circ \rho_{(g_1g_2g_1^{-1},g_1g_2 g_1^{-1})}$ for all $g_1,g_2 \in \Gamma \times 1$ and $i\in I$. This however entails that  $\rho_{g_2}= \rho_{g_1g_2g_1^{-1}}$ for all $g_1,g_2\in \Gamma \times 1$, which is a contradiction.  Hence $\delta\neq (\delta_1\oplus \delta_1) \circ {\rm  flip}_{\tilde \G}$.\end{proof}

We end the subsection describing the automorphisms of groups in class $\mathscr{GW}_1(A_0)$.  Notice that, in contrast to the general $KIB^*$-description of automorphisms of generalized wreath-products from \cite{Ho62,Ho72,Has78} in our situation there are no automorphisms of type $I$. In essence, this is due to the fact that the acting group has a normal infinite prop (T) subgroup.

\begin{theorem}\label{autgenbernoulli} Let $G= A_0\wr_I \G\in\mathscr{GW}_1(A_0)$ and let $\G_0={\rm  Stab}_\G(i)$.  Then the following hold:
\begin{enumerate}
    \item  For every $\theta \in {\rm Aut}(G)$, one can find $b\in G$, $\alpha\in {\rm Aut}(A_0)$ and $\beta\in {\rm Aut}_{\G_0} (\G)$ such that for every $a=(a_{h \G_0})_{h\Gamma_0\in \G/\G_0}\in A^I$ and $g\in \G$ we have that $\theta(a g)= b (\alpha(a_{\beta(h)\G_0 }))_{h\Gamma_0\in \G/\G_0}   \beta(g)b^{-1}$.   
    \item The abelianization of $G$ satisfies $G/[G,G]\leqslant A_0/[A_0,A_0]\times \G/[\G,\G]$.
\end{enumerate}
Above, ${\rm Aut}_{\G_0} (\G)$ denotes the subgroup of all automorphisms in ${\rm Aut} (\G)$ that preserve $\G_0$ as a set. 
\end{theorem}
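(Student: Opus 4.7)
The plan is to follow the strategy used in the proof of Theorem \ref{autcoinduceda}, with the role of bi-exactness of the acting group replaced by the existence of the normal infinite property (T) subgroup $\Omega \lhd \Gamma$; the hypotheses from $\mathscr{GW}(A_0)$ and the extra axioms (a)--(c) are designed precisely to rule out Houghton's ``type $I$'' automorphisms. For part (1), the main steps are: (i) show that after composing $\theta$ with an inner automorphism of $G$, $\theta(A_0^I) = A_0^I$; (ii) descend $\theta$ to some $\beta \in \mathrm{Aut}(\Gamma)$ and use cocycle superrigidity to trivialize the resulting cocycle $\Gamma \to A_0^I$; (iii) extract the permutation of $I$ and the single base automorphism $\alpha \in \mathrm{Aut}(A_0)$.

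For step (i), I would exploit that any property (T) subgroup of the direct sum $A_0^I$ is finitely generated and hence supported on a finite subset of $I$. Applied to $\theta(\Omega)$, an infinite property (T) subgroup of $G$, this forces its image in the quotient $G/A_0^I \cong \Gamma$ to be infinite: otherwise, after passing to a finite-index subgroup, $\theta(\Omega)$ would be contained in some $A_0^F$ with $F\subset I$ finite, and its $G$-normalizer would be forced to stabilize $F$ setwise, contradicting the triviality of $\mathrm{Stab}_\Gamma(I)$ via Proposition \ref{relcomm1} combined with the stabilizer axiom (b) of $\mathscr{GW}_1$. A symmetric argument for $\theta^{-1}$ together with an iteration exploiting the normality of $A_0^I$ in $G$ and the transitivity of $\Gamma \curvearrowright I$ then upgrades this to $\theta(A_0^I) = A_0^I$ after composition with some inner automorphism.

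Once $\theta(A_0^I) = A_0^I$, the automorphism $\theta$ descends to $\beta \in \mathrm{Aut}(\Gamma)$, and the lift back to $G$ produces a $1$-cocycle $a: \Gamma \to A_0^I$ for the (pre-composed) generalized Bernoulli action. Popa's cocycle superrigidity for generalized Bernoulli actions of groups with an infinite normal property (T) subgroup produces $v \in \mathscr U(\cL(A_0^I))$ and a character $\eta: \Gamma \to \mathbb T$ with $u_{a_g} = \eta(g)\, v\, \sigma_{\beta(g)}(v^*)$; the same finite-orbit analysis of the Fourier expansion of $v$ as in the proof of Theorem \ref{autcoinduceda} forces $v = u_b$ for some $b \in A_0^I$, so that after conjugating by $b$ one has $\theta|_\Gamma = \beta$.

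With $\theta$ fixing $\Gamma$ as $\beta$ and preserving $A_0^I$, equivariance $\theta \circ \sigma_g = \sigma_{\beta(g)} \circ \theta$ forces $\theta$ to permute the direct summands $\{A_0^i\}_{i \in I}$ by a bijection $\tau: I \to I$ with $\tau(g \cdot i) = \beta(g) \cdot \tau(i)$. Composing with a further inner automorphism to impose $\tau(\Gamma_0) = \Gamma_0$ gives $\beta(\Gamma_0) = \Gamma_0$, hence $\beta \in \mathrm{Aut}_{\Gamma_0}(\Gamma)$, together with $\tau(h\Gamma_0) = \beta(h)\Gamma_0$; transitivity of $\Gamma \curvearrowright I$ then forces the restriction of $\theta$ to each summand to be the same $\alpha \in \mathrm{Aut}(A_0)$. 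For part (2), one checks directly that $[G,G]$ contains $[A_0,A_0]^I$, $[\Gamma,\Gamma]$, and the twisted commutators $\{a \cdot \sigma_g(a^{-1}) : a \in A_0^I, g \in \Gamma\}$, and transitivity of $\Gamma \curvearrowright I$ collapses the contribution of $A_0^I$ in the abelianization to a single copy of $A_0/[A_0,A_0]$. The main obstacle is step (i), where the rigidity of the base subgroup has to be extracted from the subtle interplay between the property (T) normal subgroup $\Omega \lhd \Gamma$, the property (T) subgroup $B_0 \leq A_0$, and the three stabilizer conditions (a)--(c) governing $\Gamma \curvearrowright I$.
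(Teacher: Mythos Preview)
Your outline follows the template of Theorem \ref{autcoinduceda}, which is exactly what the paper does: it leaves the details of part (1) to the reader, pointing to Theorem \ref{autcoinduceda} and the classical Houghton--Hassanabadi methods. For part (2) your sketch matches the paper's explicit argument: compute that $[G,G]$ contains $[A_0,A_0]^I$, $[\Gamma,\Gamma]$, and all elements $\sigma_g(a)a^{-1}$, and then use transitivity of $\Gamma \curvearrowright I$ to collapse the contribution of $A_0^I$ in the abelianization to a single copy of $A_0/[A_0,A_0]$.

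For part (1), however, your step (i) has a genuine gap. The observation that $\pi(\theta(\Omega))$ is infinite in $\Gamma = G/A_0^I$ is correct, but it does not yield $\theta(A_0^I)=A_0^I$: it is a statement about $\theta(\Omega)$, not about $\theta(A_0^I)$, and since $\Omega \cap A_0^I = 1$ there is no way to transfer information from one to the other by ``symmetry'' or ``iteration''. By your own symmetric argument, both $A_0^I$ and $\theta(A_0^I)$ are normal subgroups of $G$ that fail to virtually contain $\Omega$---but this property does not single out $A_0^I$ among normal subgroups of $G$. In Theorem \ref{autcoinduceda} the corresponding step used bi-exactness of the acting group to force the pairwise commuting images $\pi(\theta(A_0^i))$ to be eventually amenable, hence trivial via the trivial amenable radical; here $\Gamma$ is not assumed bi-exact, and the normal property (T) subgroup $\Omega \lhd \Gamma$ is not a drop-in replacement for that argument. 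Step (iii) has a smaller issue of the same flavor: the claim that $\theta|_{A_0^I}$ permutes the summands $A_0^i$ was proved in \ref{autcoinduceda} using bi-exactness of the base $A$, which is not available here; one should instead argue via the finitely generated property (T) subgroup $B_0 \leqslant A_0$ together with axiom (c) of $\mathscr{GW}_1$, which is designed precisely to rule out a single $\theta(B_0^i)$ spreading over several coordinates.
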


\begin{proof} 1) The proof of the first part is very similar to the proof of Theorem \ref{autcoinduceda} or it can be obtained alternatively using the methods from \cite{Ho62,Ho72,Has78}. The details are left  to the reader.

2) We write $G=A_0^I\rtimes_{\sigma}\Gamma$. Notice that $[A^I_0,A^I_0]=[A_0,A_0]^I$. Also we have that the commutator is generated by $[G,G]=\langle [\G,\G], [A_0,A_0]^I, \sigma_g(a)a^{-1},\text{ for all }g\in \G,  a\in A_0^I\rangle$. Consider the subgroup $H= A_0^I\cap [G,G]$ and notice that  $[A_0,A_0]^I\lhd H\lhd A_0^I$. From the prior relations it follows that $H = \langle [A_0,A_0]^I, \sigma_g(a)a^{-1},\text{ for all }g\in \G,  a\in A_0^I\rangle$ and hence $[G,G]= H\rtimes [\G,\G]$. 

Next, we show that for any $i\in I$, we have
\begin{equation}\label{cosetscorecommut}A_0^i H= A^I.\end{equation} Towards this for every $a\in A_0$ and $j\neq k\in I$ consider the element $\delta^{j,k}(a)\in A^I$ defined as $\delta^{k,j}(a)(s)= a$ if $s=j$, $\delta^{k,j}(a)(s)= a^{-1}$ if $s=k$ and $\delta^{k,j}(a)(s)= 1$, otherwise. Also denote by $\delta^{j}(a)\in A^I$ the element defined as $\delta^{j}(a)(s)= a$ if $s=j$ and $\delta^{j}(a)(s)= 1$, otherwise. Since  $\sigma_g(a)a^{-1}\in H$ for all $g\in \G,  a\in A_0^I$ and the action $G\ca I$ is transitive then $\delta^{j,k}(a)\in H$ for all $j\neq k\in I$ and $a\in A_0$. Now fix $a=(a_j)_j\in A^I$ and let $J=\{j_1, \ldots ,j_s\}\subset I$ be its support. If $i\notin J$ then we have \begin{equation*}a =\delta^{j_1,j_2}(a_{j_1})\delta^{j_2,j_3}(a_{j_1}a_{j_2})\cdots \delta^{j_{s-1},j_s}(a_{j_1}a_{j_2}\cdots a_{j_{s-1}} ) \delta^{j_{s},i}(a_{j_1}a_{j_2}\cdots a_{j_{s}} ) \delta^i(a_{j_1}a_{j_2}\cdots a_{j_{s}}) \in H A_0^i. \end{equation*} 
 If $i\in J$ then after permuting the elements of $J$ we can assume without any loss of generality that $j_s=i$ and a similar computation shows that  
 \begin{equation*}a =\delta^{j_1,j_2}(a_{j_1})\delta^{j_2,j_3}(a_{j_1}a_{j_2})\cdots \delta^{j_{s-1},j_s}(a_{j_1}a_{j_2}\cdots a_{j_{s-1}} )  \delta^i(a_{j_1}a_{j_2}\cdots a_{j_{s}}) \in H A_0^i. \end{equation*} 
 Altogether, these show \eqref{cosetscorecommut}. Finally, since  $A^I/H= A_0^i H/H= A_0^i/A_0^i\cap H$ we can see that $G/[G,G] \cong (A^i_0/ A^i_0\cap H) \times (\G/[\G,\G])$ and since $[A^i_0,A_0^i]\leqslant A_0^i\cap H$ the conclusion follows.\end{proof}

\section{A Class of Automorphisms of Generalized Wreath Product von Neumann Algebras}\label{Section:HoughtonAut}

In the sixties, C. Houghton was able to completely describe the automorphisms of wreath product groups $G=A \wr B$, \cite{Ho62,Ho72}. Specifically, he was able to show that typically an automorphism of $G$ admits a so-called $KIB^*$ decomposition, i.e. it is a composition of three automorphisms arising from the base group $A$,  the unrestricted core $A^B$ and the acting group $B$. Similar results regarding generalized wreath products associated with transitive actions were obtained subsequently \cite{Has78}.

In this section we highlight a canonical class of $\ast$-automorphisms for wreath product von Neumann algebras which are natural analogues of the ones arising from the automorphisms of the base group in the core of the wreath product (i.e.\ type $K$). Let $\mathcal A_0$ be a finite von Neumann algebra and let $\Gamma \ca I$ be an action of a countable group $\Gamma$ on a countable set $I$. Form the infinite tensor product von Neumann algebra $\bar\otimes_I \mathcal A_0$ ``along $I$'' and consider the action $\sigma: \Gamma \ca {\rm Aut}(\bar\otimes_I \mathcal A_0)$ by Bernoulli shift $\sigma_g(\otimes_{i\in I} x_i)= \otimes_{i\in I} x_{g^{-1}i}$ for all $g\in \Gamma$ and $x_i\in \mathcal A_0$. Here, we denoted by $\otimes_{i\in I}x_i$ a finitely supported tensor from $\bar\otimes_I \mathcal A_0$. Then the corresponding crossed product von Neumann algebra $(\bar \otimes_I \mathcal A_0) \rtimes \Gamma$ is called a generalized wreath product algebra and it is denoted by $\mathcal A_0 \wr_I \Gamma$. Now  fix $\theta$ any $\ast$-automorphism of $\mathcal A_0$. This canonically gives rise to a tensor product automorphism $\otimes_I \theta$ of $\bar\otimes_I \mathcal A_0$ by letting $\otimes_I \theta (\otimes_{i\in I}  x_i)=\otimes_{i\in I} \theta(x_i)$ for all $\otimes_{i\in I} x_i\in \bar\otimes_I \mathcal A_0$. It is immediate from the definition that $(\otimes_I \theta)\circ\sigma_g=\sigma_g\circ (\otimes_I \theta)$ and hence $\otimes_I \theta$ extends to an automorphism $\Phi_\theta\in {\rm Aut}(\mathcal A_0 \wr_I \Gamma)$ that is identity on the subalgebra $\mathcal L(\Gamma)$. When $\theta={\rm ad}(v)$ for some unitary $v\in \mathcal A_0$, this will be denoted by $\Phi_v$ throughout this document.

Let $A_0\wr_I \Gamma$ be a generalized wreath product group. Note that the corresponding group von Neumann algebra $\mathcal L(A_0\wr_I \Gamma)$ can be canonically identified with the generalized wreath product algebra $\mathcal L(A_0) \wr_I \Gamma$ as introduced above. In this context, under composition, the automorphisms $\Phi_\theta$ for $\theta\in {\rm Aut}(\mathcal L(A_0))$ form a subgroup of outer automorphisms of $\mathcal L( A_0 \wr_I \Gamma)$.

\begin{proposition}\label{Proposition:notunitary} Assume $\Gamma\ca I$ satisfies $ {\rm Stab}_{\Gamma}(I)=1$ and $I$ is infinite. If $\theta \in {\rm Aut}( \cL(A_0 ))$ is non-trivial, then $\Phi_\theta\neq {\rm ad}(u)$ for any $u\in \mathcal L(A_0 \wr_I \Gamma)$.
\end{proposition}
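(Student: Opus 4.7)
The plan is to argue by contradiction. Suppose $u \in \mathcal{U}(\mathcal{L}(A_0 \wr_I \Gamma))$ satisfies $\mathrm{Ad}(u) = \Phi_\theta$, and write $\mathcal{L}(A_0 \wr_I \Gamma) = M \rtimes \Gamma$ with $M = \bar\otimes_I \mathcal{L}(A_0)$. Fourier-expanding $u = \sum_{g \in \Gamma} a_g v_g$ with $a_g \in M$, the identity $\Phi_\theta|_{\mathcal{L}(\Gamma)} = \mathrm{id}$ yields the conjugation-equivariance $a_g = \sigma_h(a_{h^{-1} g h})$ for all $g, h \in \Gamma$, while $\Phi_\theta|_M = \otimes_I \theta$ yields $a_g\, \sigma_g(x) = (\otimes_I \theta)(x)\, a_g$ for every $x \in M$. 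Specializing to $x = y|_j$ with $y \in \mathcal{L}(A_0)$ and $j \in I$ produces the central bi-local relation
\[
a_g\, y|_{gj} \;=\; \theta(y)|_j\, a_g \qquad \text{for all } g \in \Gamma,\ j \in I,\ y \in \mathcal{L}(A_0).
\]

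The goal is to show $a_g = 0$ for every $g$. For $a_e$, the $g = e$ instance reads $a_e\, y|_j = \theta(y)|_j\, a_e$ for every $y$ and $j \in I$, so $a_e$ is a simultaneous left--right intertwiner of $\mathrm{id}$ and $\theta$ at every position of the infinite set $I$. I would analyze this by passing to the conditional expectations $E_F : M \to \bar\otimes_F \mathcal{L}(A_0)$ for finite $F \subset I$, which inherit the same relation: $E_F(a_e)\, y|_i = \theta(y)|_i\, E_F(a_e)$ simultaneously at every $i \in F$. If $\theta$ is outer on $\mathcal{L}(A_0)$, the standard intertwiner rigidity makes the local intertwiner space trivial at each position, forcing $E_F(a_e) = 0$ for every non-empty $F$; hence $a_e \in \mathbb{C}$ and therefore $\theta = \mathrm{id}$, contradiction. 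If $\theta = \mathrm{Ad}(v)$ is non-trivially inner, the local intertwiner space at each position is $\mathbb{C} v$, so $E_F(a_e) = \lambda_F\, v^{\otimes F}$; compatibility $E_F \circ E_{F'} = E_F$ across $F \subset F'$ then gives the cocycle identity $\lambda_F = \lambda_{F'}\, \tau(v)^{|F' \setminus F|}$, and since $|\tau(v)| < 1$ for a non-scalar unitary $v$ in a tracial factor while $|\lambda_F| \leq \|a_e\|_2 \leq 1$, the only consistent choice is $\lambda_F \equiv 0$, yielding $a_e = 0$.

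For a coefficient $a_g$ with $g \neq e$, the assumption $\mathrm{Stab}_\Gamma(I) = 1$ guarantees some $j \in I$ with $gj \neq j$, so the bi-local relation now couples two distinct positions $j$ and $gj$. Decomposing $M = \mathcal{L}(A_0)_j \otimes \mathcal{L}(A_0)_{gj} \otimes M_{\widehat{\{j, gj\}}}$ and running a two-position variant of the conditional-expectation/cocycle argument---combined with the conjugation-equivariance $a_g = \sigma_h(a_{h^{-1}gh})$ (which ties together all $a_{g'}$ for $g'$ in the $\Gamma$-conjugacy class of $g$) and the summability $\sum_g \|a_g\|_2^2 < \infty$---forces $a_g = 0$. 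Combining everything yields $u = 0$, contradicting $u$ being a unitary.

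The main obstacle is that $\mathcal{L}(A_0)$ is not assumed to be a factor (no icc hypothesis on $A_0$), which complicates both the intertwiner-rigidity input and the local Fourier analysis. I would handle this by splitting along the center $Z(\mathcal{L}(A_0))$: if $\theta$ moves $Z(\mathcal{L}(A_0))$ non-trivially the obstruction is already visible on $Z(M) = \bar\otimes_I Z(\mathcal{L}(A_0))$, since any inner automorphism of $M$ acts trivially on its center while $\otimes_I(\theta|_Z)$ does not; if $\theta$ fixes $Z(\mathcal{L}(A_0))$ pointwise, one disintegrates over the spectrum of $Z(\mathcal{L}(A_0))$ and applies the factor argument above fiberwise.
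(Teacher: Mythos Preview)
Your cocycle analysis for $a_e$ in the inner case is correct and is essentially the paper's final step, reorganized: the paper first reduces to $u\in\cL(A_0^I)$, then shows $\theta$ must be inner, then writes $u=v_F w_F$ with $w_F\in\cL(A_0^{\widehat F})$ and derives the same $|\tau(v)|^{|F'\setminus F|}$ decay to force $v$ scalar. Your outer-case line has a small slip (from $E_F(a_e)=0$ for all nonempty $F$ you get $a_e=0$, not ``$a_e\in\mathbb C$ hence $\theta=\mathrm{id}$''), but the idea is fine.

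The real gap is your treatment of $a_g$ for $g\neq e$. The conjugation-equivariance $a_{hgh^{-1}}=\sigma_h(a_g)$ combined with $\sum_g\|a_g\|_2^2<\infty$ only kills $a_g$ when the conjugacy class of $g$ is infinite; nothing in the hypotheses forces $\Gamma$ to be icc. Your ``two-position variant'' is not spelled out, and the natural attempt---applying $E_F$ to $a_g\,y|_{gj}=\theta(y)|_j\,a_g$ with $j\in F$, $gj\notin F$ to get $(\theta(y)-\tau(y))|_j\,E_F(a_g)=0$---only yields $E_F(a_g)=0$ for finite sets $F$ that separate the pair $\{j,gj\}$, which is not enough to conclude $a_g=0$ (one only obtains that certain partial traces of $a_g$ vanish). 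The paper's argument here is different and works uniformly: pick any $i$ with $gi\neq i$, approximate $a_g$ by finite sums $\sum_k c_k d_k$ with $c_k\in\cL(A_0^i)$ and $d_k\in\cL(A_0^{\widehat i})$, use the intertwining relation to write $\|E_{\cL(A_0^i)}(a_ga_g^*)\|_2$ in terms of $\tau(\sigma_g(x)(d_l)^*d_k)$, and then send a sequence of unitaries $x_n\in\cL(A_0^i)$ to zero weakly to force $E_{\cL(A_0^i)}(a_ga_g^*)=0$, hence $a_g=0$. This diffuseness/WOT argument is the missing mechanism in your sketch; once you have it, your disintegration remark for the non-factor case becomes unnecessary for this step (though the paper's later use of $\cL(A_0)'\cap\cL(A_0)=\mathbb C1$ does implicitly want $A_0$ icc, so your observation there is well taken).
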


{\it Proof.} Suppose there exists $u\in \mathscr U(\mathcal L(A_0\wr_I \Gamma))$ so that $\Phi_\theta={\rm ad}(u)$. Fix $i\in I$ and notice that for every $x\in \cL (A_0^i)$ we have $\Phi_\theta (x)u=ux$. Using the Fourier decomposition $u= \sum_{g\in Q} a_g u_g$ with $a_g \in \cL(A_0^I)$ we get that $\Phi_\theta(x)a_g =a_g \sigma_g(x)$ for all $x\in \cL(A_0^i)$. Now assume that $g\notin {\rm Stab}_{\Gamma}(i)$ and fix $\varepsilon >0$. By using Kaplanski's Density Theorem we can approximate $\|a_g -\sum_{k\in K}  c_k^g d^g_k\|_2<\varepsilon$ where $K$ is finite, $c_k \in \cL(A_0^i)$, $d_k^g\in \cL(A_0^{I\setminus {i}})$, and $\|\sum_{k\in K}  c_k^g d^g_k\|_\infty \leq 1$. Using the previous intertwining condition together with these estimates we see that for every $x\in \mathscr U(\cL(A_0^i))$ we have the following inequality 

\begin{equation}\begin{split}\label{noactinggroup}
\| E_{\cL(A_0^i)}(a_g a_g^*)\|_2&=\|\Phi_\theta (x) E_{\cL(A_0^i)}(a_g a_g^*)\|_2=\| E_{\cL(A_0^i)}(a_g \sigma_g(x)a_g^*) \|_2 \\
&\leq 2\varepsilon + \sum_{k,l\in K}\| c^g_k (c^g_l)^* E_{\cL(A_0^i)}(d^g_k \sigma_g(x)(d^g_l)^*)\|_2  \\
&\leq 2\varepsilon + \sum_{k,l\in K}\| c^g_k (c^g_l)^*\|_2 |\tau ( \sigma_g(x)(d^g_l)^*d^g_k)|.
\end{split}\end{equation} 

Now, since $\cL(A_0^i)$ is diffuse there is a sequence of unitaries $(x_n)_n\subseteq \cL(A_0^i)$ that converges to $0$ in $WOT$. In particular, $|\tau ( \sigma_g(x_n)(d^g_l)^*d^g_k)|\ra 0$ as $n\ra \infty$ for all $k,l\in K$ and using it the inequality \eqref{noactinggroup} together with the fact that $K$ is finite we get that $\| E_{\cL(A_0^i)}(a_g a_g^*)\|_2\leq 2\varepsilon$. Since  $\varepsilon>0$ was arbitrary we infer that  $E_{\cL(A_0^i)}(a_g a_g^*)=0$ and hence $a_g=0$. In conclusion, we obtained that for every $i\in I$ and every  $g\notin \rm Stab_{\Gamma}(i)$ we have that $a_g=0$. Thus, $a_g \neq 0$ only if $g\in \cap_{i\in I}{\rm Stab}_{\Gamma}(i)=1$ and hence $u\in \cL(A_0^I)$.

Next, we show that $\theta$ is an inner automorphism, i.e.\ there is a unitary $v\in \cL(A_0)$ such that $\theta ={\rm ad}(v)$. Fix $\varepsilon>0$. Since $u\in \cL(A_0^I)$ by Kaplanski's Density Theorem one can find a finite set $G\subset I$ and a unitary $u_G\in \cL(A_0^G)$ such that $\|u-u_G\|_2<\varepsilon$. Therefore for every unitary $x\in \cL(A_0^{I\setminus G})$ we have that \begin{equation}\begin{split}\|\Phi_\theta (x)-x\|_2&=\|(\Phi_\theta (x)-x) u_G\|_2=\|\Phi_\theta (x)u_G- u_Gx \|_2\\
&=\|\Phi_\theta (x)u_G -\Phi_\theta (x)u + u x- u_Gx  \|_2\leq 2\| u-u_G\|_2< 2\varepsilon. \end{split}\end{equation} 

Thus, for every $j\in I\setminus G$ and every unitary $x \in \cL(A_0^j)$ we have that $\|\theta(x) -x\|^2_2\leqslant 4\varepsilon ^2$. This implies that $2-4\varepsilon^2\leq 2 \mathfrak Re\tau(x^*\theta (x))$ for all unitaries $x \in \cL(A_0^j)$. Consider $\mathfrak K\subset \cL(A_0^j)$ the SOT-closure of the convex hull of the set $\{ x^*\theta (x)\,:\, x\in \mathscr U(\cL(A_0^j))\}$ and let $\xi\in \mathfrak K$ be the unique element of minimal $\|\cdot \|_2 $-length. This clearly satisfies \begin{equation}\label{itertwiningtheta}\xi\theta(x) =x\xi\text{ for all }x\in \cL(A_0^j).\end{equation} 
Moreover, one can see the previous relations imply that $\|\xi-1\|_2<\varepsilon$, so choosing conveniently small $\varepsilon> 0$, we have $\xi\neq 0$. Now, using the intertwining relation \eqref{itertwiningtheta} we get that $\xi\xi^*\in \cL(A_0^j)'\cap \cL(A_0^j)=\mathbb C 1$, and hence, $\xi$ is a non-trivial scalar multiple of a unitary $v\in \cL(A_0^j)$. So cancelling the scalar multiple we have that $\theta ={\rm  ad}(v)$ as claimed.

Thus, for every finite subset $F\subset I$ we have $v_F xv_F^*= uxu^*$, for all $x \in \cL(A_0^F)$; here, we denoted by $v_F= \otimes_F v$. The previous relation and Proposition \ref{relcomm1} imply that $u^* v_F \in \mathcal L(A_0^F)'  \cap \mathcal L(A_0\wr_I \Gamma)\subset \mathcal L ( A_0^{\widehat F}) \rtimes {\rm Norm}_{\Gamma}(F)$. Since $u,v_F\in \mathcal L(A_0^I)$, it follows that there is a unitary $w_F\in \mathcal L ( A_0^{\widehat F}) $ so that $u= v_F w_F$. Next, we show this entails that $v$ is a scalar, which implies that $\theta =1$, a contradiction. To this end, fix $\varepsilon>0$ and let $u_K\subset \mathcal L( A_0^K)$ be a unitary so that $\|u-u_K\|_2< \varepsilon.$
Also consider two arbitrary finite subsets $K \subset G_1 \subseteq G_2 \subset I$ and note that $u =v_{G_i} w_{G_i}$ for $i=1,2$. Hence, 
\begin{equation}\label{aprox1}
\| \tau(w_{G_i}) v_{G_i} - u_K\|_2= \| E_{\mathcal L(A_0^{G_i}) }( v_{G_i} w_{G_i} - u_K\|_2= \| E_{\mathcal L(A_0^{G_i}) }( u - u_K)\|_2\leq \|u - u_K\|_2\leq \varepsilon.
\end{equation} 

  This inequality further implies that $\| \tau(w_{G_1}) v_{G_1}  - \tau(w_{G_2})v_{G_2}\|_2\leq 2\varepsilon$. As $G_1\subseteq G_2$ this further shows that \begin{equation}\label{ineq1}\begin{split}
|\tau(w_{G_1})|^2+ |\tau(w_{G_2})|^2&\leq 4\varepsilon^2 +2 Re\{\tau (w_{G_1}) \overline{\tau (w_{G_2})}\tau (v)^{|G_2\setminus G_1|}\}\\
&\leq 4\varepsilon^2 +2 |\tau (w_{G_1})| |\tau (w_{G_2})| |\tau (v)|^{|G_2\setminus G_1|}.
\end{split}
\end{equation} 
However, from \eqref{aprox1} we have $1-\varepsilon \leq |\tau(w_{G_i})|\leq 1$ and combining with \eqref{ineq1} we see that $1-2 \varepsilon -\varepsilon^2\leq |\tau(v)|^{|G_2\setminus G_1|}$. As this holds for every $G_1\subseteq G_2$ then letting $\varepsilon\searrow 0$ and $|G_2\setminus G_1|\nearrow \infty$ we must have that $|\tau(v)|=1$, and hence, $v\in \mathbb T 1$, which proves our claim and finishes the proof.   
\hfill$\blacksquare$

%If $\cM$ is a II$_1$ factor, an automorphism $\theta\in {\rm Aut}(\cM)$ is called approximately inner if there exists a sequence of $(u_n)_n\subset \mathscr U (\cM)$ such that $\lim_n \|\theta(x)u_n -u_n x\|_2 =0$ for  all $x\in \cM$.  The set of approximately inner automorphisms of a II$_1$ factor $\cM$ form  a normal, closed subgroup of ${\rm Aut}(\cM)$ denoted by ${\rm aInn}(\cM)$.  

%\begin{proposition}
%For every $u\in \mathscr U(\cL(A_0))$ the automorphism $\Phi_u$ is approximately inner.
%\end{proposition}

%\begin{proof} Let $F_n \nearrow I$ be an exhaustion by finite sets. Then one can check easily that $\Phi_u$ can be $\|\cdot\|_2$-approximated pointwise by the inner automorphisms ${\rm ad} (u^{F_n})$. We leave the details to the reader. \end{proof}

Recall that for any countable group $G$, the maps $\Psi_{\omega,\delta}$ defined in introduction  realize a copy of the semi-direct product group ${\rm Char}(G)\rtimes {\rm Aut}(G)$ inside ${\rm Aut}(\cL(G))$. The following proposition shows that most of the automorphisms $\Phi_v$ introduced in this section are not contained in ${\rm Char}(G)\rtimes {\rm Aut}(G)$. The proof is standard and we leave it to the reader.

\begin{proposition} Let $G=A_0\wr_I \G$, where $A_0$ is an icc group. If $v\in \mathscr U(L(A_0))$ such that $\Phi_v=\Psi_{\eta,\delta}$, then $\eta =1$ and $v=u_k$ for some $k\in A_0$. 

\end{proposition}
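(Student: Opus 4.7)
The plan is to exploit the equation $\Phi_v=\Psi_{\eta,\delta}$ first on the subalgebra $\cL(\Gamma)$ to pin down $\delta$ and $\eta$ on $\Gamma$, then on each copy $\cL(A_0^i)$ to reduce the problem to a single-factor Fourier analysis in $\cL(A_0)$. Concretely, since $\Phi_v$ is the identity on $\cL(\Gamma)$, evaluating the equation at $u_g$ for $g\in\Gamma$ gives $u_g=\eta(g)u_{\delta(g)}$, and linear independence of group unitaries in $\cL(G)$ immediately forces $\delta(g)=g$ and $\eta(g)=1$ for all $g\in\Gamma$. Next, for any fixed $i\in I$ and $a\in A_0^i$, the element $\Phi_v(u_a)=v^{(i)}u_a(v^{(i)})^{-1}$ belongs to $\cL(A_0^i)$, so its equality with $\eta(a)u_{\delta(a)}$ forces $\delta(A_0^i)\subseteq A_0^i$; running the same argument for $\delta^{-1}$ yields $\delta(A_0^i)=A_0^i$. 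Identifying $A_0^i\cong A_0$, we obtain $\delta_0\in\mathrm{Aut}(A_0)$ and $\eta_0\in\mathrm{Char}(A_0)$ satisfying $v u_a v^{-1}=\eta_0(a)u_{\delta_0(a)}$ for all $a\in A_0$.

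The heart of the argument is now to show $\delta_0$ is inner and $v$ is a scalar times a group unitary. I would Fourier-expand $v=\sum_{k\in A_0}c_k u_k$ and compare coefficients in $vu_a=\eta_0(a)u_{\delta_0(a)}v$ to derive the identity
\begin{equation*}
c_{la^{-1}}=\eta_0(a)\,c_{\delta_0(a)^{-1}l},\qquad l,a\in A_0.
\end{equation*}
Taking absolute values and reindexing $l=\delta_0(a)m$ shows that $|c_m|$ is constant on the set $\{\delta_0(a)ma^{-1}:a\in A_0\}$. Since $v\neq 0$, there exists some $k$ with $c_k\neq 0$, and summability of $\sum|c|^2=1$ forces the corresponding orbit to be finite; equivalently, the subgroup $\mathrm{Fix}(\mathrm{ad}(k^{-1})\circ\delta_0)$ has finite index in $A_0$.

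The main obstacle is then to convert this finite-index fixed-point condition into the conclusion $\delta_0=\mathrm{ad}(k)$. I would use the following icc-rigidity lemma: if $A_0$ is icc and $\alpha\in\mathrm{Aut}(A_0)$ has $H:=\mathrm{Fix}(\alpha)$ of finite index in $A_0$, then $\alpha=\mathrm{id}$. The proof is standard: for any $a\in A_0$ the subgroup $H_a:=H\cap a^{-1}Ha$ is still finite index; applying $\alpha$ to the identity $aha^{-1}\in H$ (for $h\in H_a$) and equating with $\alpha(a)h\alpha(a)^{-1}$ shows $a^{-1}\alpha(a)\in C_{A_0}(H_a)$, and the icc property forces $C_{A_0}(H_a)=1$ (any $x$ there would have $C_{A_0}(x)\supseteq H_a$ of finite index, contradicting icc). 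Applying this with $\alpha=\mathrm{ad}(k^{-1})\circ\delta_0$ yields $\delta_0=\mathrm{ad}(k)$.

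With $\delta_0=\mathrm{ad}(k)$ in hand, setting $w:=u_k^{-1}v$ the relation reduces to $wu_a=\eta_0(a)u_aw$ for all $a\in A_0$. A second Fourier expansion $w=\sum d_l u_l$ gives $|d_l|=|d_{a^{-1}la}|$, so $|d|$ is constant on $A_0$-conjugacy classes; the icc property forces $d_l=0$ for $l\neq 1$, making $w$ a scalar. Consequently $v\in\mathbb{T}\cdot u_k$, and substituting back into $wu_a=\eta_0(a)u_aw$ with $w$ scalar yields $\eta_0\equiv 1$. Since $i\in I$ was arbitrary and $\eta$ is a multiplicative character of $G=A_0^I\rtimes\Gamma$, we conclude $\eta|_{A_0^I}=1$; combined with $\eta|_\Gamma=1$ this gives $\eta=1$ on all of $G$. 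Finally, since $\Phi_v$ depends on $v$ only up to a scalar phase, the phase in $v\in\mathbb{T}\cdot u_k$ can be absorbed, yielding $v=u_k$ as required.
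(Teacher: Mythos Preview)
Your proof is correct and follows exactly the standard argument the paper has in mind; indeed, the paper omits the proof entirely with the remark ``The proof is standard and we leave it to the reader.'' Your Fourier-coefficient analysis together with the icc-rigidity lemma (an automorphism with finite-index fixed-point subgroup of an icc group is the identity) is the natural route, and every step checks out.

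One minor quibble concerns your very last sentence. What your argument actually proves is $v\in\mathbb T\cdot u_k$; the assertion that the scalar phase ``can be absorbed'' to give literally $v=u_k$ is not correct, since $v$ is a fixed given unitary and you cannot change it. This is really a slight imprecision in the \emph{statement} of the proposition rather than a gap in your proof: the intended conclusion is $v\in\mathbb T\,\{u_k:k\in A_0\}$, which is precisely what you establish.
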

\section{Popa's Intertwining Techniques}

%\subsection{Intertwining-by-bimodules}

More than fifteen years ago, Sorin Popa has introduced  in \cite[Theorem 2.1 and Corollary 2.3]{Po03} powerful analytic methods for identifying intertwiners between arbitrary subalgebras of tracial von Neumann algebras. Now this is termed in the literature  \emph{Popa's intertwining-by-bimodules technique} and is a highly instrumental tool in the classification of von Neumann algebras program via deformation/rigidity theory.

\begin{theorem}[\cite{Po03}] \label{corner} Let $(\mathcal M,\tau)$ be a separable tracial von Neumann algebra and let $\mathcal P, \mathcal Q\subseteq \mathcal M$ be (not necessarily unital) von Neumann subalgebras. 
Then the following are equivalent:
\begin{enumerate}
\item There exist $ p\in  \mathscr P(\mathcal P), q\in  \mathscr P(\mathcal Q)$, a $\ast$-homomorphism $\theta:p \mathcal P p\rightarrow q\mathcal Q q$  and a partial isometry $0\neq v\in q \mathcal M p$ such that $\theta(x)v=vx$, for all $x\in p \mathcal P p$.
\item For any group $\mathcal G\subset \mathscr U(\mathcal P)$ such that $\mathcal G''= \mathcal P$ there is no sequence $(u_n)_n\subset \mathcal G$ satisfying $\|E_{ \mathcal Q}(xu_ny)\|_2\rightarrow 0$, for all $x,y\in \mathcal  M$.
\end{enumerate}
\end{theorem}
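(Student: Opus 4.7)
My overall plan is to work inside the Jones basic construction $\langle \cM, e_\cQ\rangle$, equipped with its canonical faithful normal semifinite trace $\mathrm{Tr}$ determined by $\mathrm{Tr}(x e_\cQ y) = \tau(xy)$ for $x, y \in \cM$. The conceptual point is that both (1) and (2) reformulate the existence of a nonzero finite-$\mathrm{Tr}$ projection in $\cP' \cap \langle\cM, e_\cQ\rangle$, equivalently, the existence of a nonzero $(\cP,\cQ)$-sub-bimodule of $L^2(\cM)$ that is finitely generated as a right Hilbert $\cQ$-module. So I would set up this dictionary and run an averaging argument over $\mathscr{U}(\cP)$ inside the basic construction.

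For the easy direction $(1) \Rightarrow (2)$, I would argue by contradiction: assume data $(p, q, \theta, v)$ satisfying $\theta(x) v = vx$ on $p\cP p$, and suppose there is $(u_n) \subset \cG$ with $\|E_\cQ(x u_n y)\|_2 \to 0$ for all $x, y \in \cM$. Pick a Pimsner-Popa basis $\{w_i\}$ for the closed right $\cQ$-module $\overline{v \cdot \cQ} \subset L^2(\cM)$ and form the positive finite-trace element $T = \sum_i w_i e_\cQ w_i^* \in \langle \cM, e_\cQ\rangle$; this is the orthogonal projection onto $\overline{v \cdot \cQ}$ and satisfies $\mathrm{Tr}(T) = \tau(v^*v) > 0$. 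The intertwining relation makes $T$ equivariant with respect to the $\theta$-action of $p\cP p$, and forces the identity $\mathrm{Tr}(u T u^* \cdot T) \geq c > 0$ uniformly for $u \in \mathscr{U}(\cP)$ (essentially because conjugation by $u$ cannot push the finitely generated module $\overline{v\cdot\cQ}$ entirely away from itself). On the other hand, expanding $\mathrm{Tr}(uTu^* \cdot T) = \sum_{i,j} \|E_\cQ(w_i^* u w_j)\|_2^2$ and invoking the averaging hypothesis with $x = w_i^*, y = w_j$ drives the right-hand side to zero, a contradiction.

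For the substantive direction $(2) \Rightarrow (1)$, I would argue contrapositively. Assume (1) fails. For any finite family $x_1, \dots, x_k \in \cM$, form the positive finite-trace element $T = \sum_i x_i e_\cQ x_i^* \in \langle \cM, e_\cQ\rangle$ and verify the bilinear identity $\mathrm{Tr}(uTu^* \cdot T) = \sum_{i,j} \|E_\cQ(x_i^* u^* x_j)\|_2^2$, together with $\mathrm{Tr}(T) = \sum_i \|x_i\|_2^2$. Consider the weakly compact $\mathrm{Tr}$-bounded convex set $\{R \in \langle \cM, e_\cQ\rangle_+ : \mathrm{Tr}(R) \leq \mathrm{Tr}(T)\}$ and inside it the closed convex hull $\mathcal{K} = \overline{\mathrm{conv}}^{\mathrm w}\{u T u^* : u \in \mathscr{U}(\cP)\}$. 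The unique $\|\cdot\|_{2,\mathrm{Tr}}$-minimizer $T_0 \in \mathcal{K}$ is forced by uniqueness to be $\mathscr{U}(\cP)$-invariant under conjugation, so $T_0 \in \cP' \cap \langle \cM, e_\cQ\rangle$ with $\mathrm{Tr}(T_0) < \infty$. If $T_0 \neq 0$ for some choice of $x_i$, a spectral cutoff of $T_0$ produces a nonzero finite-$\mathrm{Tr}$ projection $f \in \cP' \cap \langle\cM, e_\cQ\rangle$, and the translation step described below extracts intertwining data $(p, q, \theta, v)$ realizing (1). Hence, under $\neg(1)$, every $T_0$ vanishes. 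Extracting $\epsilon$-small convex combinations $\sum_k \lambda_k u_k T u_k^*$ in $\|\cdot\|_{2,\mathrm{Tr}}$, then running a diagonal argument over a countable $\|\cdot\|_2$-dense family of test elements $x_i \in \cM$ and descending from $\mathscr{U}(\cP)$ to a prescribed generating group $\cG$ via a Day-Namioka type normality/convexity argument, produces the averaging sequence $(u_n) \subset \cG$ with $\|E_\cQ(x u_n y)\|_2 \to 0$ for all $x, y$, contradicting (2).

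The main technical obstacle is the translation step: converting a nonzero finite-$\mathrm{Tr}$ $\cP$-central projection $f \in \cP' \cap \langle\cM, e_\cQ\rangle$ into the concrete intertwining data $(p, q, \theta, v)$ of (1). This rests on the Hilbert-module / Pimsner-Popa correspondence between finite-trace projections in the basic construction and finitely generated right Hilbert $\cQ$-submodules of $L^2(\cM)$. The $\cP$-centrality of $f$ lifts to a compatible left $\cP$-action on the associated submodule, but to produce a genuine $*$-homomorphism $\theta$ (rather than merely a normal ucp map) one must pass to a minimal projection in the finite von Neumann algebra $f\langle\cM, e_\cQ\rangle f$, isolate an irreducible sub-bimodule, and read off the partial isometry $v \in q\cM p$ as a generating vector together with the representation $\theta : p\cP p \to q\cQ q$ it defines. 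A secondary subtlety is the aforementioned descent from $\mathscr U(\cP)$ to an arbitrary generating group $\cG$, which requires verifying that the weakly closed convex hulls coincide in the relevant sense.
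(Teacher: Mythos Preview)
The paper does not prove this statement at all: Theorem~\ref{corner} is simply quoted from \cite{Po03} as a foundational tool, with no argument supplied. Your sketch is essentially the standard proof due to Popa (working in the basic construction $\langle\cM,e_\cQ\rangle$, producing or ruling out a nonzero finite-$\mathrm{Tr}$ element in $\cP'\cap\langle\cM,e_\cQ\rangle$ via a convex-hull averaging, and translating via the bimodule dictionary), so there is nothing to compare against.

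One remark on your writeup: the ``Day--Namioka descent from $\mathscr U(\cP)$ to a prescribed generating group $\cG$'' that you flag as a secondary subtlety is in fact unnecessary here. Condition~(2) is a universal statement over all generating groups $\cG$, so in the contrapositive direction $\neg(1)\Rightarrow\neg(2)$ you only need to exhibit a bad sequence in \emph{some} generating group, and $\cG=\mathscr U(\cP)$ itself will do. Conversely, in $(1)\Rightarrow(2)$, ruling out bad sequences in $\mathscr U(\cP)$ automatically rules them out in every subgroup $\cG\subset\mathscr U(\cP)$. So the proof goes through entirely with $\cG=\mathscr U(\cP)$ and no additional convexity argument is needed.
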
 
\vskip 0.02in
\noindent If one of the two equivalent conditions from Theorem \ref{corner} holds then we say that \emph{a corner of $\mathcal P$ embeds into $\mathcal Q$ inside $\mathcal M$}, and write $\mathcal P\prec_{\mathcal M}\mathcal Q$. If we moreover have that $\mathcal P p'\prec_{\mathcal M}\mathcal Q$, for any projection  $0\neq p'\in \mathcal P'\cap 1_{\mathcal P} \mathcal M 1_{\mathcal P}$ (equivalently, for any projection $0\neq p'\in\mathscr Z(\mathcal P'\cap 1_{\mathcal P}  \mathcal M 1_{P})$), then we write $\mathcal P\prec_{\mathcal M}^{s}\mathcal Q$.
\vskip 0.02in

\begin{lemma}\label{many} Let $A \leqslant H \leqslant G$ be countable groups such that ${\rm Comm}^{(1)}_G(A)=G$. Let $G \ca \mathcal N$ be a trace preserving action  and denote $\M = \mathcal N\rtimes G$. Let $\A_1,\dots, \A_k  \subseteq p\M p$ be von Neumann subalgebras such that  $\A_1 \prec_\M \mathcal N \rtimes H$ and $\mathcal A_i\prec^s_\M \mathcal N \rtimes A$ for all $2\leq i\leq k$.

Then there exist finitely many elements $x_i, y_i \in \cM$ and a constant $D>0$ such that for every $a_j\in \mathscr U(\cA_j)$ with $1\leq j\leq n$  we have 
$\sum_i\|E_{\mathcal N \rtimes H}(x_i a_1 a_2\cdots a_k y_i)\|_2\geq D$.

\end{lemma}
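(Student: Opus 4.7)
The plan is to argue by induction on $k$, using Popa's intertwining-by-bimodules criterion (Theorem \ref{corner}) in its standard quantitative reformulation: $\mathcal{P} \prec_{\mathcal M} \mathcal{Q}$ is equivalent to the existence of a finite family $(x_i, y_i)_{i=1}^n \subset \mathcal{M} \times \mathcal{M}$ and $c > 0$ with $\sum_{i=1}^n \|E_\mathcal{Q}(x_i u y_i)\|_2^2 \geq c$ for every $u \in \mathscr{U}(\mathcal{P})$. A Cauchy--Schwarz over the finite index set converts this $\ell^2$-bound into the $\ell^1$-bound appearing in the conclusion. The base case $k = 1$ is then immediate from $\mathcal{A}_1 \prec_{\mathcal M} \mathcal{N} \rtimes H$.

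For the inductive step, write $\mathcal{Q} = \mathcal{N} \rtimes H$ and $\mathcal{R} = \mathcal{N} \rtimes A \subseteq \mathcal{Q}$, and assume the statement for $k-1$ factors. Strong intertwining $\mathcal{A}_k \prec^s_{\mathcal M} \mathcal{R}$ yields, via a maximality argument exhausting projections in the center of $\mathcal{A}_k' \cap p\mathcal{M} p$, a finite family of projections $p_\ell \in \mathcal{A}_k$ with $\sum_\ell p_\ell = 1_{\mathcal{A}_k}$, partial isometries $v_\ell \in \mathcal{M}$ with $v_\ell^* v_\ell = p_\ell$, and normal $\ast$-homomorphisms $\theta_\ell : p_\ell \mathcal{A}_k p_\ell \to \mathcal{R}$ satisfying $\theta_\ell(x) v_\ell = v_\ell x$ for all $x \in p_\ell \mathcal{A}_k p_\ell$. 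This lets me expand
\[
a_k = \sum_\ell v_\ell^* \theta_\ell(p_\ell a_k p_\ell) v_\ell + (\text{cross terms } p_\ell a_k p_m,\ \ell\neq m),
\]
where the cross terms contribute a controlled (and separately tractable) error. Substituting into the product $a_1 a_2 \cdots a_k$ reduces the goal to lower-bounding $\sum_\ell \|E_\mathcal{Q}(x \, a_1 \cdots a_{k-1} v_\ell^* r_\ell v_\ell y)\|_2$, where $r_\ell = \theta_\ell(p_\ell a_k p_\ell) \in \mathcal{R} \subseteq \mathcal{Q}$. Applying the inductive hypothesis to $a_1 \cdots a_{k-1}$ with the fixed $v_\ell^*$'s absorbed into updated test elements on the right, and combining the $\ell$-contributions via the triangle inequality, will then deliver the desired $D > 0$.

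The main obstacle is the quantitative absorption step: the inserted element $r_\ell v_\ell y_i$ depends on $a_k$ and so cannot serve directly as one of the \emph{fixed} test elements required by the inductive bound. The resolution exploits that $r_\ell \in \mathcal{R} \subseteq \mathcal{Q}$, allowing it to be moved across $E_\mathcal{Q}$ modulo a finitely generated right-$\mathcal{R}$-module correction. This is where the commensuration hypothesis ${\rm Comm}^{(1)}_G(A) = G$ is essential: since $[A : gAg^{-1} \cap A] < \infty$ for every $g \in G$, each double coset $AgA$ decomposes into finitely many right cosets $g_1 A, \ldots, g_{n_g} A$, which at the von Neumann algebra level says that $u_g \mathcal{R}$ is contained in a finite sum $\sum_i \mathcal{R} u_{g_i}$; i.e.\ $\mathcal{R}$ is quasi-regular in $\mathcal{M}$. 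Consequently the right $\mathcal{R}$-submodule of $L^2(\mathcal{M})$ generated by $\{v_\ell y_i : a_k \in \mathscr{U}(\mathcal{A}_k)\}$ is finitely generated by elements independent of $a_k$, which keeps the family of test elements \emph{uniform and finite} across all $k$ inductive steps. Without the commensuration hypothesis, iterating the substitution $k$ times could unboundedly enlarge the test family and collapse the constant $D$ to zero; commensuration is precisely what makes the induction close up.
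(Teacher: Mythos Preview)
Your proposal has a genuine gap in the inductive step. The claim that $\mathcal A_k \prec^s_{\mathcal M} \mathcal N\rtimes A$ yields a \emph{finite} family of projections $p_\ell \in \mathcal A_k$ with $\sum_\ell p_\ell = 1_{\mathcal A_k}$ and partial isometries $v_\ell$ satisfying $v_\ell^* v_\ell = p_\ell$ is not correct as stated. In the intertwining picture of Theorem~\ref{corner}, the source projection $v^*v$ lies in $(p\mathcal A_k p)'\cap p\mathcal M p$, not in $\mathcal A_k$, and is only a subprojection of the chosen $p\in\mathcal A_k$; your identity $v_\ell^*\theta_\ell(p_\ell a_k p_\ell)v_\ell = p_\ell a_k p_\ell$ therefore fails. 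Moreover, a maximality argument over $\mathcal Z(\mathcal A_k'\cap p\mathcal M p)$ produces a possibly \emph{countably infinite} orthogonal family, so ``finitely many $p_\ell$ summing to $1$'' does not follow from $\prec^s$ alone. Truncating to finitely many leaves an error that you would have to control uniformly in $a_k$, and this is exactly the uniformity you need but have not established. Finally, the absorption paragraph is too vague: the set $\{v_\ell y_i\}$ does not depend on $a_k$, but the problematic element $r_\ell=\theta_\ell(p_\ell a_k p_\ell)$ sits \emph{between} $v_\ell^*$ and $v_\ell y$, and quasi-regularity of $\mathcal N\rtimes A$ in $\mathcal M$ does not by itself let you commute a varying $r_\ell\in\mathcal N\rtimes A$ past a fixed $v_\ell y\in\mathcal M$ inside $\|E_{\mathcal N\rtimes H}(\cdot)\|_2$ while keeping a uniform finite test family.

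The paper's argument avoids intertwiners entirely and works instead with projections $P_T$ onto $\|\cdot\|_2$-closures of spans of group elements in subsets $T\subset G$ that are \emph{small relative to} $\{H\}$ or $\{A\}$. The correct quantitative consequence of $\mathcal A_i\prec^s_{\mathcal M}\mathcal N\rtimes A$ is \cite[Lemma~2.5]{Va10b}: for every $\varepsilon>0$ there is a set $S$ small over $\{A\}$ with $\|P_S(a)-a\|_2<\varepsilon$ for all $a\in\mathscr U(\mathcal A_i)$. Starting from a small-over-$\{H\}$ set $T_1=K_1HK_2$ witnessing $\mathcal A_1\prec_{\mathcal M}\mathcal N\rtimes H$, one shows $\|P_S(P_{T_1}(a_1^*)a_1a_2)\|_2$ is bounded below, and then the commensuration hypothesis ${\rm Comm}^{(1)}_G(A)=G$ is used \emph{at the level of group elements} (via $S\subset AF$ and $g^{-1}A\subset AK_g$) to convert this into a lower bound of the form $\sum\|E_{\mathcal N\rtimes H}(u_t a_1a_2 u_{s})\|_2^2$ with $t,s$ ranging over a finite set. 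This manufactures a new small-over-$\{H\}$ set $T_2$ controlling $a_1a_2$, and one iterates. The key point you are missing is that the uniform-in-$a$ small-set bound from \cite{Va10b} is the right replacement for your attempted intertwiner decomposition.
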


{\it Proof.}
Since  $\mathcal A_1\prec_\M \mathcal N\rtimes H$ we apply Popa's intertwining technique and deduce that there exist a scalar $0<C_1<1$ and a subset $T_1= K_1H K_2\subset G$ that is small relative to $\{H\}$ such that $\|P_{T_1}(a_1) - a_1 \|_2\leq C_1$, for all $a_1\in \mathscr U( \cA_1)$. Let $(1-C_1)/2>\varepsilon>0$. Since $\mathcal A_2 \prec^s_\M \mathcal N \rtimes A$, we can use \cite[Lemma 2.5]{Va10b} and obtain a subset $S\subset G$ that is small relative to $\{ A\}$ so that $\|P_S(a_2) - a_2 \|_2\leq \varepsilon$ for all $a_2\in \mathscr U( \cA_2)$. By using basic calculations, for every $a_1\in\mathscr U( A)$ and $a_2\in \mathscr U(\mathcal A_2)$ we have that \begin{equation}\label{belowbound}
\begin{split} \|P_S(P_{T_1}(a^*_1) a_1 a_2)\|_2 &\geq \|P_S(a_2)\|_2-\| P_S((P_{T_1}(a^*_1) -a^*_1) a_1a_2)\|_2 \\ & \geq 1-\varepsilon -\|P_{T_1}(a^*_1) -a^*_1 \|_2 \\
 & \geq 1-\varepsilon-C_1.
\end{split}
\end{equation}
Since ${\rm Comm}^{(1)}_G(A)=G$ there is a finite set $F\subset G$ so that $S\subseteq A F$. Thus, for every $y\in \M$ we have $\|P_S(y)\|_2^2\leq \sum_ {g\in F}\|E_{\mathcal N\rtimes A}(y u_{g^{-1}})\|_2^2$. Similarly, for every $g\in G$ there is a finite set $K_g\subset G$ such that  $g^{-1}A \subseteq A K_g$ and hence $\|E_{\mathcal N \rtimes A}(u_g y)\|_2^2=  \|u_{g^{-1}}E_{\mathcal N \rtimes A}(u_g y)\|_2^2= \|P_{g^{-1}A}(y)\|_2^2\leq \|P_{A K_g}(y)\|_2^2= \sum_{k\in K_g} \|E_{\mathcal N \rtimes A}(y u_{k^{-1}})\|_2^2$. Using these inequalities in combination with \eqref{belowbound} we derive that

\begin{equation*}
\begin{split} (1-\varepsilon-C_1)^2&\leq \|P_S(P_{T_1}(a^*_1) a_1 a_2)\|^2_2\leq \sum_{g\in F} \|E_{\mathcal N \rtimes A}(P_{T_1}(a^*_1) a_1a_2 u_{g^{-1}})\|_2^2 \\ &=  |K_1||K_2|\sum_{g\in F, s\in K_1, t\in K_2}\| E_{\mathcal N \rtimes A}( u_s E_{\cL(H)}( u_{s^{-1}}a^*_1 u_{t^{-1}}) u_ta_1 a_2 u_{g^{-1}})\|_2^2\\
& \leq |K_1||K_2|\sum_{g\in F, s\in K_1, t\in K_2, k\in K_s}\| E_{\mathcal N \rtimes A}( E_{\cL(H)}( u_{s^{-1}}a^*_1 u_{t^{-1}}) u_ta_1a_2 u_{{{kg}^{-1} }} )\|_2^2 \\
& \leq |K_1||K_2|\sum_{g\in F, s\in K_1, t\in K_2, k\in K_s}\|  E_{\mathcal N \rtimes H}(u_t a_1a_2 u_{{{kg}^{-1} }} )\|_2^2.\end{split}
\end{equation*}
This clearly implies that there exist $0<D_2<1$ and a subset $T_2<G$ that is small over $\{H\}$ such that $\|P_{T_2}(a_1a_2)\|_2\geq D_2$ for all $a_1\in \mathscr U(\cA_1), a_2\in \mathscr U(\mathcal A_2)$. Therefore letting $C_2=1-D_2$ we have $0<C_2<1$ and $\| P_{T_2}(a_1 a_2)-a_1 a_2\|_2<C_2$, for all $a_1 \in \mathscr U(\cA_1)$, $a_2 \in \mathscr U(\cA_2)$. Since $\cA_3 \prec^s \cN\rtimes A$ then repeating the same argument one can find a constant $0<C_3<1$ and a subset $T_3\subset G$  that is small relative to $\{H\}$ such that $\| P_{T_3}(a_1 a_2a_3) -a_1 a_2a_3\|_2<C_3$, for all $a_1 \in \mathscr U(\cA_1)$, $a_2 \in \mathscr U(\cA_2)$ and $a_3 \in \mathscr U(\cA_3)$. Proceeding by induction, after finitely many steps, we get the desired conclusion. 
\hfill$\blacksquare$

The following result is a generalization of \cite[Lemma 2.3]{BV12} and for the reader's convenience we include all the details of the proof.

\begin{proposition}\label{joinint} Let $A \leqslant H \leqslant G$ be countable groups such that ${\rm Comm}^{(1)}_G(A)=G$. Let $G \ca \mathcal N$ be a trace preserving action  and denote $\M = \mathcal N\rtimes G$. Let $\B \subseteq p\M p$ be a von Neumann subalgebra and let  $\mathcal G \leqslant \mathcal N_{p\M p}(\B)$ be a countable subgroup. 

If  $\B \prec^s_\M \mathcal N \rtimes A$ and $\mathcal G''\prec_\M \mathcal N \rtimes H$ then $(\mathcal B\mathcal G)''\prec_\M \mathcal N \rtimes H$.
 \end{proposition}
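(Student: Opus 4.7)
The plan is to combine Lemma \ref{many} with Popa's intertwining criterion (Theorem \ref{corner}). The first task is to identify a convenient group of unitaries that generates $(\B\mathcal G)''$. Because $\mathcal G$ normalizes $\B$, the set
$$\mathcal G_0 := \{bu : b \in \mathscr U(\B),\ u \in \mathcal G\} \subset \mathscr U(p\M p)$$
is in fact a subgroup of $\mathscr U(p\M p)$: the identity $(b_1 u_1)(b_2 u_2) = b_1(u_1 b_2 u_1^{-1})(u_1 u_2)$ gives closure under multiplication, and $(bu)^{-1} = (u^{-1}b^{-1}u)u^{-1}$ gives closure under inverses. Plainly $\mathcal G_0'' = (\B\mathcal G)''$, so by Theorem \ref{corner} it suffices to exhibit finitely many $x_i, y_i \in \M$ and a constant $D > 0$ such that
$$\sum_i \|E_{\mathcal N\rtimes H}(x_i w y_i)\|_2 \geq D \quad \text{for every } w \in \mathcal G_0.$$

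For the second step I invoke Lemma \ref{many} with $k = 2$, $\mathcal A_1 := \mathcal G''$ and $\mathcal A_2 := \B$. The hypothesis $\mathcal G'' \prec_\M \mathcal N\rtimes H$ supplies $\mathcal A_1 \prec_\M \mathcal N\rtimes H$, while $\B \prec^s_\M \mathcal N\rtimes A$ is exactly $\mathcal A_2 \prec^s_\M \mathcal N\rtimes A$. The lemma then produces finitely many $x_i, y_i \in \M$ and $D > 0$ such that
$$\sum_i \|E_{\mathcal N\rtimes H}(x_i\, u\, b\, y_i)\|_2 \geq D$$
for every $u \in \mathscr U(\mathcal G'')$ and every $b \in \mathscr U(\B)$; in particular this holds whenever $u \in \mathcal G$.

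The third and final step is to rearrange. Any $w = bu \in \mathcal G_0$ may be rewritten as $w = u(u^{-1}bu) = u b'$, where $b' := u^{-1} b u \in \mathscr U(\B)$ by the normalization hypothesis $\mathcal G \leqslant \mathcal N_{p\M p}(\B)$. Applying the bound of the previous paragraph to the pair $(u, b')$ yields the required uniform lower estimate on $\mathcal G_0$, and hence $(\B\mathcal G)'' \prec_\M \mathcal N\rtimes H$ by Theorem \ref{corner}.

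The main subtlety — and essentially the only nontrivial input beyond the two cited results — lies in isolating the correct generating subgroup $\mathcal G_0$ and using the identity $bu = u(u^{-1}bu)$ to recast a product from $\mathscr U(\B)\cdot \mathcal G$ into the form $\mathcal G \cdot \mathscr U(\B)$ that matches the ordering in Lemma \ref{many}. This is precisely the place where the assumption $\mathcal G \leqslant \mathcal N_{p\M p}(\B)$ is used.
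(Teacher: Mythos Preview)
Your proof is correct and follows essentially the same route as the paper: apply Lemma \ref{many} with $\cA_1=\mathcal G''$, $\cA_2=\cB$, observe that $\mathscr U(\cB)\mathcal G$ is a group generating $(\cB\mathcal G)''$, and conclude via Theorem \ref{corner}. Your write-up is in fact a bit more careful than the paper's, since you explicitly use the normalization $\mathcal G\leqslant\mathscr N_{p\M p}(\cB)$ to rewrite $bu=u(u^{-1}bu)$ and thereby match the factor ordering coming out of Lemma \ref{many}; the paper simply asserts that $\mathscr U(\cB)\mathcal G$ is a group and leaves this swap implicit.
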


{\it Proof.} By applying Lemma \ref{many} for $\cA_1= \mathcal G''$,  $\cA_2= \mathcal B$, it follows that there exist finitely many elements $x_i, y_i \in \cM$ and a constant $D>0$ such that for every $b\in \mathscr U(\cB)$ and $g \in \mathcal G$ we have 
$\sum_i\|E_{\mathcal N \rtimes H}(x_i b g y_i)\|_2\geq D$. However, since $\mathscr U(B)\mathcal G$ is a group generating $(\mathcal B \mathcal G)''$ as a von Neumann algebra, we conclude using Theorem  \ref{corner}  that $(\mathcal B\mathcal G)''\prec_\M \mathcal N \rtimes H$.
\hfill$\blacksquare$

The following proposition has been obtained in \cite[Lemma 2.4]{Dr19a} under the additional assumption that $\cB$ is abelian. Since we realized that this condition is not necessarily by making a mild modification of the proof, we include a complete proof of this result for the convenience of the reader.

\begin{proposition}\label{L:joint}
Let $\cM$ be a tracial von Neumann algebra and let $\cQ\subset q\cM q$ be a regular von Neumann subalgebra. Let $\cA,\cB\subset p\cM$p be commuting subalgebras such that $\cA\prec_{\cM} \cQ$ and $\cB\prec^s_\cM \cQ.$ 

Then $\cA\vee\cB\prec_\cM \cQ.$
\end{proposition}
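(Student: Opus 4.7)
Since $\cA$ and $\cB$ commute in $p\cM p$, we have $\cA \subseteq \cB' \cap p\cM p \subseteq \mathscr N_{p\cM p}(\cB)$, so $\cA \vee \cB$ is generated as a von Neumann algebra by $\cB$ together with the normalizing unitary subgroup $\mathscr U(\cA)$. In particular, the unitaries of the form $ab$ with $a \in \mathscr U(\cA)$ and $b \in \mathscr U(\cB)$ form a group generating $\cA \vee \cB$. By Theorem~\ref{corner}(2), it therefore suffices to exhibit finitely many $X_\alpha, Y_\alpha \in \cM$ and a constant $D > 0$ with
\[
\sum_{\alpha} \bigl\|E_\cQ(X_\alpha a b Y_\alpha)\bigr\|_2^2 \ \ge\ D \qquad \text{for all } a \in \mathscr U(\cA),\ b \in \mathscr U(\cB).
\]

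The plan is to derive such a uniform lower bound by combining the two intertwining hypotheses, in direct analogy with the proof of Lemma~\ref{many}. First, applying Theorem~\ref{corner}(2) to $\cA \prec_\cM \cQ$ produces finitely many $x_i, y_i \in \cM$ and $C > 0$ such that
\[
\sum_i \bigl\|E_\cQ(x_i a y_i)\bigr\|_2^2 \ \ge\ C \qquad \text{for all } a \in \mathscr U(\cA). \qquad (\ast)
\]
Next one ``absorbs'' the unitary $b \in \mathscr U(\cB)$ into $(\ast)$ using the strong intertwining $\cB \prec^s_\cM \cQ$ together with the regularity of $\cQ$ in $q\cM q$. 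More precisely, the abstract analog of Vaes' \cite[Lemma~2.5]{Va10b} --- valid when $\cQ \subset q\cM q$ is regular and $\cB \prec^s_\cM \cQ$ --- yields, for every finite set $F \subset \cM$ and every $\varepsilon > 0$, a finite family $\{w_k\}_k \subset \cM$ such that
\[
\Bigl\|b y - \sum_k E_\cQ(w_k^* b y)\, w_k\Bigr\|_2 < \varepsilon \qquad \text{for all } b \in \mathscr U(\cB),\ y \in F.
\]
Feeding this into $(\ast)$ and using $ab = ba$ to shuffle $b$ freely past $a$ produces the required uniform lower bound, and $\cA \vee \cB \prec_\cM \cQ$ follows by Popa's criterion.

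The main obstacle is the strong-intertwining absorption step above: in the crossed-product setting of Lemma~\ref{many}, it was carried out via the explicit coset projections $P_S, P_{T_1}$ furnished by the Fourier decomposition, together with the combinatorial input $\operatorname{Comm}^{(1)}_G(A) = G$. In the present abstract framework one must instead invoke the intrinsic counterparts: the regularity $\mathscr N_{q\cM q}(\cQ)'' = q\cM q$ furnishes a dense approximation of elements of $\cM$ by combinations $\sum_k c_k u_k$ with $c_k \in \cQ$ and $u_k \in \mathscr N_{q\cM q}(\cQ)$, while $\cB \prec^s_\cM \cQ$ provides the required uniformity of this approximation across all unitaries of $\cB$. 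Once this absorption lemma is in place, the remainder of the argument is a direct adaptation of the inductive step in the proof of Lemma~\ref{many}, with the commutation relation $[\cA, \cB] = 0$ playing the role played there by the freedom to rearrange group elements.
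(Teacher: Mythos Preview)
Your outline identifies the right shape --- show a uniform lower bound for $\sum_\alpha\|E_\cQ(X_\alpha ab Y_\alpha)\|_2^2$ over all $a\in\mathscr U(\cA)$, $b\in\mathscr U(\cB)$ --- but the key step is not actually carried out. You assert an ``abstract analog of \cite[Lemma~2.5]{Va10b}'': given regularity of $\cQ$ and $\cB\prec^s_\cM\cQ$, for every finite $F\subset\cM$ and $\varepsilon>0$ there are finitely many $w_k\in\cM$ with $\|by-\sum_k E_\cQ(w_k^*by)w_k\|_2<\varepsilon$ uniformly in $b\in\mathscr U(\cB)$. This is precisely the hard part, and it is not a known black box. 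Vaes' lemma lives in a crossed-product setting where one has Fourier projections $P_S$ over explicit cosets; the uniformity over $\mathscr U(\cB)$ is extracted from the combinatorics of those cosets. In the abstract setting, regularity gives you a Pimsner--Popa basis for $q\cM q$ over $\cQ$, but a Pimsner--Popa expansion $y=\sum_k E_\cQ(w_k^*y)w_k$ is an $L^2$-convergent series whose truncation error depends on $y$; getting this error to be small \emph{uniformly} in $by$ as $b$ ranges over all of $\mathscr U(\cB)$ is exactly what $\cB\prec^s_\cM\cQ$ should buy you, but you have not explained how. Without this, the proof is a restatement of the goal.

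The paper's argument avoids this absorption step entirely and proceeds by contradiction through a different mechanism. Starting from the intertwining partial isometry $w$ witnessing $\cA\prec_\cM\cQ$ (so $\Psi(x)w=wx$), one assumes $a\cA a\vee\cB a\nprec_\cM\cQ$ and picks sequences $a_n\in\mathscr U(a\cA a)$, $b_n\in\mathscr U(\cB)$ with $\|E_\cQ(xa_nb_ny)\|_2\to 0$. The intertwining relation together with $[\cA,\cB]=0$ kills the $a_n$: $\|E_\cQ(wu^*b_ny)\|_2=\|E_\cQ(\Psi(a_n)wu^*b_ny)\|_2=\|E_\cQ(wa_nu^*b_ny)\|_2\to 0$ for each $u\in\mathscr U(\cB)$. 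Regularity of $\cQ$ then upgrades this to $\|E_\cQ(x\,ufu^*\,b_ny)\|_2\to 0$ for $f=w^*w$. The crucial idea is now to pass from the single projection $ufu^*$ to the join $e=\bigvee_{u\in\mathscr U(\cB)}ufu^*\in\cB'\cap p\cM p$ via spectral calculus (support projections of finite sums), and conclude $\cB e\nprec_\cM\cQ$ --- contradicting $\cB\prec^s_\cM\cQ$. This join-of-projections maneuver is the substantive content that your ``absorption lemma'' was meant to replace, and it does not reduce to a citation.
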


{\it Proof.} The proof is inspired by the proof of \cite[Lemma 2.6]{Dr19a}. Since $\cA\prec_{\cM} \cQ$ there exist projections $a\in \cA, q_0\in Q$ a non-zero partial isometry $w\in q_0\cM a$ and a $*$-homomorphism $\Psi:a\cA a\to q_0\cQ q_0$ such that 
\begin{equation}\label{ee1}
    \Psi(x)w=wx, \text{ for any }x\in a\cA a.
\end{equation}

Assume by contradiction that $a\cA a\vee \cB a\nprec_\cM \cQ$. Then there exist two sequences $(a_n)_n\subset \mathscr U (a\cA a)$ and $(b_n)_n\subset \mathscr U(\cB)$ such that 
\begin{equation}\label{ee2}
    \lim_{n\to\infty} \|E_{\cQ}(xa_nb_ny)\|_2=0, \text{ for all } x,y\in\cM.
\end{equation}
Denote $f=w^*w\leq a $ and $e=\vee_{u\in\mathscr U(\cB)} ufu^*\in \cB'\cap p\cM p$.
By using relations \eqref{ee1}, \eqref{ee2} and the fact that $\cA$ and $\cB$ commute, we derive that for any $u\in\mathscr U(\cB)$ we have
\[
    \lim_{n\to \infty}\|E_{\cQ}(wu^*b_n y) \|_2=\lim_{n\to \infty}\|E_{\cQ}(\Psi(a_n)wu^*b_n y) \|_2=\lim_{n\to \infty}\|E_{\cQ}(wa_nu^*b_n y) \|_2=0, \text{ for any }y\in\cM.
\]

Next, since $\cQ\subset q\cM q$ is regular, we further deduce that
\begin{equation}\label{ee3}
    \lim_{n\to \infty}\|E_{\cQ}(xufu^*b_n y) \|_2=0, \text{ for all }x, y\in \cM \text{ and }u\in\mathscr U(\cB).
\end{equation}
Let $u_1,u_2\in\mathscr U(\cB)$. Using basic spectral theory we obtain from \eqref{ee3} that 
$$    \lim_{n\to \infty}\|E_{\cQ}(x\chi_{(c,\infty)}(u_1fu_1^*+u_2fu_2^*)b_n y) \|_2=0, \text{ for all }x, y\in \cM \text{ and }c>0.$$
We denote by $s(b)$ the support projection of a positive element $b\in\cM$. As $\chi_{(c,\infty)}(u_1fu_1^*+u_2fu_2^*)\to s(u_1fu_1^*+u_2fu_2^*)$ in SOT-topology, we deduce that $$   \lim_{n\to \infty}\|E_{\cQ}(s(u_1fu_1^*+u_2fu_2^*)b_n y)) \|_2=0, \text{for any }  y\in \cM.$$  As $s(u_1fu_1^*+u_2fu_2^*)=u_1fu_1^*\vee u_2fu_2^*$, we further derive that $    \lim_{n\to \infty}\|E_{\cQ}((u_1fu_1^*\vee u_2fu_2^*)b_n y)) \|_2=0$, for all $ y\in \cM.$
Proceeding by induction, we obtain that $\lim_{n\to \infty}\|E_{\cQ}((\vee_{u\in\mathcal F}ufu^*)b_n y)) \|_2=0$, for all $ y\in \cM$ and every finite subset $\mathcal F\subset\mathscr U(\cB)$. Since the family of projections $\vee_{u\in\mathcal F}ufu^*$ indexed by finite subsets $\mathcal F\subset\mathscr U(\cB)$ converges to $e$ in the SOT-topology, a basic calculation shows that $    \lim_{n\to \infty}\|E_{\cQ}(eb_n y)) \|_2=0$, for all $ y\in \cM.$ Since $\cQ\subset q\cM q$ is regular, it follows that $\cB e\nprec_{\cM} \cQ$, contradiction.
\hfill$\blacksquare$

\section{An augmentation technique for intertwining and relative amenability}

In this subsection we prove Theorem \ref{Th:augmentationtechniques} which plays an important role in the proofs of Theorem \ref{Main:leftright} and Theorem \ref{Main:coinduced}. The result is an immediate consequence of the "augmentation technique" developed in \cite{CD-AD20}, being essentially contained in \cite[Section 3]{CD-AD20}. Nevertheless, for completeness, we will include all the details. 

First, we recall the relative amenability notion introduced by 
Ozawa and Popa \cite[Definition 2.2]{OP07}. %which will be used extensively throughout the proof of Theorem \ref{Th:coind1}. 

\begin{definition}[\!\!\cite{OP07}]
Let $(\cM,\tau)$ be a von Neumann algebra and let $\cP\subset p\cM p,\cQ\subset \cM$ be von Neumann subalgebras. $\cP$ is {\it amenable relative to $\cQ$ inside $\cM$} if there exists a positive linear functional $\Phi:p\langle \cM,e_{\cQ}\rangle p\to\mathbb C$ such that $\Phi_{|p\cM p}=\tau$ and $\Phi$ is $\cP$-central. Moreover, if $\cP p'$ is non-amenable relative to $\cQ$ for any non-zero projection $p'\in \cP'\cap p\cM p$, we say that $\cP$ is {\it strongly non-amenable relative to} $\cQ$ 
\end{definition}

\begin{theorem}\label{Th:augmentationtechniques}
Let $\Gamma$ be a countable group and denote $\cM=\cL(\Gamma)$. Let $\cQ_1, \cQ_2\subset \cM $ be von Neumann subalgebras which form a commuting square, i.e. $E_{\cQ_1}\circ E_{\cQ_2}= E_{\cQ_2}\circ E_{\cQ_1}$, with $\cQ_1$ regular in $\cM$.

Let $\cP\subset p\cM p$ be a diffuse von Neumann subalgebra %with $\mathscr N_{p\cM p}(\cP)'\cap p\cM p=\mathbb C p$ 
and let $\Sigma<\Gamma$ be a subgroup such that $\cP\prec^s_{\cM} \cL(\Sigma)$. Assume in addition that there exist some non-zero projections $f_1,f_2\in \cL(\Sigma)'\cap \cM $ such that $\cL(\Sigma)f_i$ is amenable relative to $\cQ_i$ inside $\cM$ for any $1\leq i\leq 2.$ Then the following hold:

\begin{enumerate}
    \item If $\cP$ has no amenable direct summand, then $\cQ_1\cap \cQ_2$ is non-amenable.
    
    \item If $\cP$ has property (T), then $\cQ_1\cap \cQ_2$ does not have the Haagerup property.
    
\end{enumerate}

\end{theorem}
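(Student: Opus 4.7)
My approach follows the augmentation technique of \cite{CD-AD20}, proceeding in two phases: first, transfer the two partial relative-amenability hypotheses from $\cL(\Sigma)f_i$ to $\cP$ via the strong intertwining $\cP \prec^s_\cM \cL(\Sigma)$; second, combine the resulting two relative amenabilities into a single one using the commuting square structure, and then derive the conclusions.

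For the first phase, the issue to overcome is that the hypothesis only supplies amenability of $\cL(\Sigma)f_i$ rather than $\cL(\Sigma)$ itself. I would follow the augmentation construction of \cite[Section 3]{CD-AD20}: amplify $\cM$ to $\widetilde \cM_i = \cM \bar\otimes B(\ell^2)$ and replace $f_i$ by a projection equivalent to $1$ in the commutant of a suitable amplification of $\cL(\Sigma)$, so that the partial relative amenability of $\cL(\Sigma)f_i$ rel $\cQ_i$ upgrades to full relative amenability of an amplified copy of $\cL(\Sigma)$ rel $\widetilde\cQ_i = \cQ_i \bar\otimes B(\ell^2)$. Then, because $\cP \prec^s_\cM \cL(\Sigma)$, the standard transfer principle for relative amenability under intertwining (with the strong version of $\prec$ ensuring the conclusion on every non-zero direct summand) yields that $\cP$ is amenable relative to $\cQ_i$ inside $\cM$, for each $i=1,2$.

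For the second phase, I would combine these two relative amenabilities using the commuting square identity $E_{\cQ_1} \circ E_{\cQ_2} = E_{\cQ_2} \circ E_{\cQ_1} = E_{\cQ_1 \cap \cQ_2}$. Let $\Phi_i:p\langle \cM,e_{\cQ_i}\rangle p \to \mathbb{C}$ be a $\cP$-central positive functional restricting to $\tau$ on $p\cM p$, provided by relative amenability rel $\cQ_i$. Using the natural $\cM$-bimodular map $p\langle \cM,e_{\cQ_1}\rangle p \to p\langle \cM,e_{\cQ_1 \cap \cQ_2}\rangle p$ induced by $e_{\cQ_2}$ (which is well-defined thanks to the commuting square and realised as a composition inside $p\langle \cM,e_{\cQ_1\cap\cQ_2}\rangle p$), together with an averaging of $\Phi_2$ along the normalizing unitaries of $\cQ_1$ (which is legitimate because $\cQ_1$ is regular in $\cM$), I obtain a $\cP$-central positive functional on $p\langle \cM, e_{\cQ_1\cap\cQ_2}\rangle p$ extending $\tau$. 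Hence $\cP$ is amenable relative to $\cQ_1\cap \cQ_2$ inside $\cM$.

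The two conclusions now follow by standard arguments: for (1), if $\cQ_1\cap \cQ_2$ were amenable then $\cP$ would itself be amenable (by transitivity of relative amenability through an amenable intermediate), contradicting the hypothesis that $\cP$ has no amenable direct summand; for (2), if $\cQ_1\cap \cQ_2$ had the Haagerup property then the Haagerup property would transfer to $\cP$ through relative amenability, contradicting property (T) of the diffuse algebra $\cP$. The main obstacle I anticipate is the combination step in phase two: amenability relative to $\cQ_1$ and amenability relative to $\cQ_2$ do \emph{not} automatically combine to amenability relative to the intersection, and one must use the commuting square together with regularity of $\cQ_1$ in an essential way to construct and average the witnessing central functionals; the augmentation bookkeeping needed to push the $f_i$'s aside while preserving the commuting square structure through the amplifications is the most delicate part of the argument.
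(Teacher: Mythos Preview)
Your Phase Two and the deduction of (1)--(2) from ``$\cP$ amenable relative to $\cQ_1\cap\cQ_2$'' are essentially what the paper does (it quotes \cite[Proposition 2.7]{PV11} for the commuting-square combination). The gap is in Phase One: what you describe is not the augmentation technique of \cite{CD-AD20}, and the step you need does not follow from the amplification you propose.

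The hypothesis $\cP\prec^s_\cM\cL(\Sigma)$ gives control over corners of the \emph{source} $\cP$, but to exploit ``$\cL(\Sigma)f_i$ amenable rel $\cQ_i$'' you need to intertwine $\cP$ into the specific corner $\cL(\Sigma)f_i$ of the \emph{target}, i.e.\ you need $\cP\prec^{s'}_\cM\cL(\Sigma)$ in the paper's notation. Tensoring by $B(\ell^2)$ does nothing here: the relative commutant $(\cL(\Sigma)\bar\otimes 1)'\cap(\cM\bar\otimes B(\ell^2))$ is $(\cL(\Sigma)'\cap\cM)\bar\otimes B(\ell^2)$, and $f_i\otimes 1$ is in general not equivalent to $1$ there (the commutant $\cL(\Sigma)'\cap\cM$ need not be a factor). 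So your ``replace $f_i$ by a projection equivalent to $1$'' fails, and the transfer to ``$\cP$ amenable rel $\cQ_i$'' is unjustified.

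The actual augmentation move is group-theoretic rather than matricial: one passes to a crossed product $\tilde\cM=\cD\rtimes\Gamma$ by a free \emph{mixing} action on an abelian $\cD$, and works with the embedding $\Delta:\cM\to\tilde\cM\bar\otimes\cM$, $du_g\mapsto du_g\otimes u_g$. Mixing forces $(\cD\rtimes\Sigma)'\cap\tilde\cM=\mathbb C$, so $\cP\prec^s_{\tilde\cM}\cD\rtimes\Sigma$ upgrades automatically to $\prec^{s,s'}$; this is precisely the ``$s'$'' information your amplification cannot produce. One then pushes through $\Delta$ to get $\Delta(\cP)\prec^{s,s'}_{\tilde\cM\bar\otimes\cM}\tilde\cM\bar\otimes\cL(\Sigma)$, and now \cite[Lemma 2.6(2)]{DHI16} applied with the cut-down $\cL(\Sigma)f_i\oplus\mathbb C(1-f_i)$, together with transitivity \cite[Proposition 2.4(3)]{OP07}, yields $\Delta(\cP)$ amenable rel $\tilde\cM\bar\otimes\cQ_i$. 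After the commuting-square combination one has $\Delta(\cP)$ amenable rel $\tilde\cM\bar\otimes(\cQ_1\cap\cQ_2)$, and the conclusions are read off via the comultiplication lemmas \cite[Lemma 10.2]{IPV10} and \cite[Lemma 1]{HPV11}. Note in particular that one never obtains ``$\cP$ amenable rel $\cQ_i$'' directly; the argument lives at the level of $\Delta(\cP)$ throughout.
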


Note that if $f_1$ and $f_2$ have full support, then Theorem \ref{Th:augmentationtechniques} can easily be proven in the following way. First, by using \cite[Proposition 2.7]{PV11} we derive that $\cL(\Sigma)$ is amenable relative to $\cQ_1\cap \cQ_2$. Next, by \cite[Lemma 2.6(2)]{DHI16}, we have that $P$ is amenable relative to $\cL(\Sigma)$. Finally, by applying  \cite[Proposition 2.4(3)]{OP07} we derive that $\cP$ is amenable relative to $\cQ_1\cap \cQ_2$, which implies the conclusion of the theorem. However, our theorem deals with {\it arbitrary} non-zero projections $f_1$ and $f_2$. In order to overcome this difficulty, we developed in \cite{CD-AD20} an augmentation technique that exploits the group von Neumann algebra structure of $\cL(\Sigma)\subset \cL(\Gamma)$ by considering a mixing action of $\Gamma.$

{\bf Notation.} Throughout the proof we use the following notation. Let $\cM$ be a tracial von Neumann algebra and $\cP\subset p\cM p$ and $\cQ\subset q\cM q$ some subalgebras. We write $\cP\prec_{\cM}^{s'}\cQ$ if $\cP\prec_{\cM}\cQ q'$, for any non-zero projection $q'\in \cQ'\cap q\cM q$. We also write $\cP\prec_{\cM}^{s,s'}\cQ$ if $\cP\prec_{\cM}^{s}\cQ$ and $\cP\prec_{\cM}^{s'}\cQ$.

{\it Proof.} We consider a free, mixing action $\Gamma\car \cD$ with abelian base and denote $\tilde \cM=\cD\rtimes \Gamma$. Let $\Delta:\cM\to \cM\bar\otimes \cM$ be the $*$-homomorphism given by $\Delta(gu_g)=du_g\otimes u_g,$ for all $d\in\cD,g\in\Gamma$. Since $\cP\prec_{\cM} \cL(\Sigma),$ it follows that $\Sigma$ is an infinite group, and hence, $(\cD\rtimes\Sigma)'\cap \cM=\mathbb C$. 

%By \cite[Lemma 2.4(3)]{DHI16}, there exists a non-zero projection $p'\in \cP'\cap p\cM p$ such that $Pp'\prec^s_{\cM}\cL(\Sigma)$ and 
By using \cite[Remark 2.2]{DHI16} we further get $P\prec^{s}_{\tilde\cM} \cD\rtimes\Sigma$. Using \cite[Lemma 2.4(4)]{DHI16} we derive that $P\prec^{s,s'}_{\tilde\cM} \cD\rtimes\Sigma$ and by applying \cite[Lemma 2.3]{Dr19b} we further deduce that $\Delta(\cP )\prec_{\tilde\cM\bar\otimes \cM}^{s,s'} \tilde\cM\bar\otimes \cL(\Sigma)$. Using \cite[Lemma 2.6(2)]{DHI16} we obtain that $\Delta(\cP )$ is amenable relative to $\tilde\cM\bar\otimes (\cL(\Sigma)f_i\oplus \mathbb C(1-f_i))$ inside $\tilde\cM\bar\otimes \cM$, for any $1\leq i\leq 2$. Together with the assumption and \cite[Proposition 2.4(3)]{OP07} we have that for any $1\leq i\leq 2$
\begin{equation}\label{xs}
    \Delta(\cP ) \text{ is amenable relative to }\tilde\cM\bar\otimes \cQ_i \text{ inside }\tilde\cM\bar\otimes \cM.
\end{equation}

Since $\cQ_1, \cQ_2\subset \cM $ form a commuting square and $\cQ_1$ regular in $\cM$, we obtain from \cite[Proposition 2.7]{PV11} and \eqref{xs} that $\Delta(\cP ) \text{ is amenable relative to }\tilde\cM\bar\otimes (\cQ_1\cap\cQ_2) \text{ inside }\tilde\cM\bar\otimes \cM$.

Part (1) of the theorem follows directly from \cite[Lemma 10.2]{IPV10}. For proving part (2) we assume by contradiction that $\cQ_1\cap \cQ_2$ has the Haagerup property. Hence, \cite[Lemma 1]{HPV11} implies that 
$\Delta(\cP )\prec_{\tilde\cM\bar\otimes\cM} \tilde \cM \bar\otimes 1$ since $\cP$ has property (T). Using \cite[Lemma 10.2]{IPV10} we further obtain that $\cP\prec_{\tilde \cM} \cD$, which implies that $\cP$ is not diffuse, contradiction.
\hfill$\blacksquare$

\section{Some intertwining results in von Neumann algebras associated to wreath products and coinduced groups}

The goal of this section is to prove the following theorem which is an important ingredient for the proofs of our main results. 

\begin{theorem}\label{commutationcontrolincommultiplication}\label{Th:coind1} 
Let $G$ be a countable group that has one of the following forms:

\begin{enumerate}
    \item [(A)] $G= A_0\wr_I \Gamma\in \mathscr {GW}_0(A_0)$.
    
    \item [(B)] $G=A_0^I\rtimes \Gamma$ is the coinduced group of a group action $\Gamma_0\car A_0$ by automorphisms with $[\Gamma:\Gamma_0]=\infty$ and $\Gamma$ is a product of $n\ge 1$ icc non-amenable, weakly amenable, bi-exact and property (T) groups.
\end{enumerate}

Let $\cM=\cL(G)=\cL(H)$ for a countable group $H$ and denote by $\Delta: \cM\to\cM\bar\otimes\cM$ the $*$-homomorphism given by $\Delta(v_h)=v_{h}\otimes v_{h}$, for any $h\in H$. Then the following hold:
\begin{enumerate}
\item  $\Delta(\cL(A_0^{I}))\prec_{M\bar\otimes \M} \cL(A_0^{I})\bar\otimes \cL(A_0^{I})$;
\item  There exists a unitary $u\in \mathcal U(\M\bten \M)$ such that $u\Delta(\cL(\Gamma))u^*\subseteq \cL(\Gamma)\bten \cL(\Gamma)$.    
\end{enumerate}
\end{theorem}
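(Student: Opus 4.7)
The strategy follows the comultiplication method of \cite{IPV10}, adapted to the present coinduced/wreath setting. Because $\Delta(v_h)=v_h\otimes v_h$ encodes the group structure of $H$ while $\cM$ is presented as $\cL(G)$, the point is to transfer information between the $H$-side (where $\Delta$ is group-like) and the $G$-side (where the core/acting-group decomposition lives). Throughout, I would identify $\cM\bar\otimes\cM=\cL(G\times G)=(\cL(A_0^I)\bar\otimes \cL(A_0^I))\rtimes(\Gamma\times\Gamma)$.

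For part (1), I would proceed in two stages. First, I would establish the one-sided intertwinings $\Delta(\cL(A_0^I))\prec_{\cM\bar\otimes\cM}\cL(A_0^I)\bar\otimes\cM$ and its mirror image. If the first one fails there is a sequence $(x_n)\subset\mathscr U(\cL(A_0^I))$ with $\|E_{\cL(A_0^I)\bar\otimes\cM}(\Delta(x_n))\|_2\to 0$; I would produce from this a non-trivial summand of a subalgebra of $\Delta(\cL(A_0^I))$ that is amenable relative to $\cM\bar\otimes\cL(\Gamma)$, using bi-exactness of the factors of $\Gamma$ in case (B) or the normal property (T) subgroup $\Omega\lhd\Gamma$ together with the action conditions (a), (b) of the $\mathscr{GW}_0$ class in case (A). The contradiction comes from the presence of an infinite property (T) subgroup of $A_0$ (namely $K_0$ in case (B), or $B_0$ in case (A)): its diagonal copy sits inside $\Delta(\cL(A_0^I))$ as a property (T) subalgebra. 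This step is packaged cleanly through Theorem \ref{Th:augmentationtechniques} applied to $\cQ_1=\cL(A_0^I)\bar\otimes\cM$ and $\cQ_2=\cM\bar\otimes\cL(A_0^I)$, which form a commuting square with $\cQ_1$ regular.

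In the second stage I would combine the two one-sided intertwinings. First upgrade them to strong intertwinings, using Proposition \ref{relcomm1} (and its tensor-square version) to verify that the relevant relative commutants are small, so no exceptional projection arises. Then apply Proposition \ref{L:joint} to the commuting square $(\cL(A_0^I)\bar\otimes\cM,\ \cM\bar\otimes\cL(A_0^I))$, whose intersection is exactly $\cL(A_0^I)\bar\otimes\cL(A_0^I)$ and both of whose corners are regular, to conclude $\Delta(\cL(A_0^I))\prec_{\cM\bar\otimes\cM}\cL(A_0^I)\bar\otimes\cL(A_0^I)$.

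For part (2), I would lift part (1) to a unitary conjugation. Since $\Delta(\cL(A_0^I))$ is regular inside $\Delta(\cM)$ (normalized by $\Delta(\cL(\Gamma))$) and generates a factor of the correct size, the unitary-lifting argument of \cite[Theorem 4.3]{IPV10} yields $u\in\mathscr U(\cM\bar\otimes\cM)$ with $u\Delta(\cL(A_0^I))u^*\subseteq \cL(A_0^I)\bar\otimes\cL(A_0^I)$. For each $g\in\Gamma$ the unitary $u\Delta(v_g)u^*$ then normalizes $\cL(A_0^I)\bar\otimes\cL(A_0^I)$; expanding $u\Delta(v_g)u^*=\sum_{(s,t)\in\Gamma\times\Gamma}a^g_{s,t}(v_s\otimes v_t)$ in Fourier form with $a^g_{s,t}\in\cL(A_0^I)\bar\otimes\cL(A_0^I)$, the normalization condition combined with Proposition \ref{relcomm1} applied on both tensor legs forces this expansion to concentrate on a single $(s,t)$, up to a correction in $\cL(A_0^I)\bar\otimes\cL(A_0^I)$ that can be absorbed by replacing $u$. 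This delivers $u\Delta(\cL(\Gamma))u^*\subseteq \cL(\Gamma)\bar\otimes \cL(\Gamma)$. The hardest step will be the first stage of part (1): one must exploit property (T) of only a subgroup $K_0\leqslant A_0$ (rather than of $A_0^I$ itself, which fails), so the commuting-square formulation of Theorem \ref{Th:augmentationtechniques} is essential, and in case (A) one must substitute the Ozawa-type dichotomies coming from bi-exactness of $\Gamma$ with more delicate arguments built from the normal property (T) subgroup $\Omega$ and the stabilizer hypotheses.
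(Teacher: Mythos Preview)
There are two genuine gaps in your plan.

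\textbf{Part (1), case (B).} Your contradiction hinges on an infinite property (T) subgroup $K_0\leqslant A_0$, but in case (B) of Theorem~\ref{Th:coind1} \emph{no hypothesis whatsoever is placed on $A_0$}: the base group is completely arbitrary. (You are conflating the hypotheses of Theorem~\ref{Th:coind1} with those of Theorem~\ref{Main:coinduced}, where $K_0$ does appear.) Consequently your proposed contradiction simply does not exist in case (B). The paper's proof of case (B) never touches $A_0$; it works entirely on the $\Gamma$-side, using bi-exactness and weak amenability of each factor $\Gamma_k$ via the Popa--Vaes dichotomy \cite[Theorem 1.4]{PV12} and \cite[Lemma 5.2]{KV15} to produce relative amenability of pieces of $\Delta(\cL(A_0^I))$ with respect to $\cM\bar\otimes\cL(A_0^I\rtimes\Gamma_{\hat k})$, and then contradicts non-amenability of $\Gamma_k$ (not of anything in $A_0$). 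Your invocation of Theorem~\ref{Th:augmentationtechniques} with $\cQ_1=\cL(A_0^I)\bar\otimes\cM$, $\cQ_2=\cM\bar\otimes\cL(A_0^I)$ is also off target: that theorem requires as input an intertwining $\cP\prec^s\cL(\Sigma)$ for a group subalgebra $\cL(\Sigma)$, which is precisely what you are trying to prove; and its conclusion would only say $\cQ_1\cap\cQ_2=\cL(A_0^I)\bar\otimes\cL(A_0^I)$ is non-amenable, which is already obvious.

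\textbf{Part (2).} Your Fourier-expansion argument fails because $A_0^I$ is \emph{normal} in $G$: every unitary in $\cM\bar\otimes\cM$ normalizes $\cL(A_0^I)\bar\otimes\cL(A_0^I)$, so the normalization condition on $u\Delta(v_g)u^*$ carries no information and cannot force concentration on a single $(s,t)\in\Gamma\times\Gamma$. The paper instead runs the \cite[Theorem 4.2]{IPV10} trichotomy (in case (A), applied to the rigid subalgebra $\Delta(\cL(\Omega))$; in case (B), via \cite[Theorem 3.3]{Dr20a} applied to $\Delta(\cL(\Gamma))$ directly). Option (i) is ruled out by diffuseness, option (ii) is ruled out by combining part (1) with Proposition~\ref{joinint} to get $\Delta(\cM)\prec\cM\bar\otimes\cL(A_0^I\rtimes{\rm Stab}_\Gamma(i))$ and hence finite index, and option (iii) gives the desired unitary conjugation of the quasi-normalizer, which contains $\Delta(\cL(\Gamma))$.
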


Note that part (A) was proven in \cite[Proposition 4.2]{CU18} under the stronger assumption that $A_0$ has property (T). Although part (B) is a coinduced version of the first part, note that it applies to generalized wreath products with {\it arbitrary} base as long as the acting group is a product of hyperbolic, property (T) groups.

\subsection{Proof of Theorem \ref{commutationcontrolincommultiplication}, case (A)}

Before proceeding to the proof of our theorem, we prove the following lemma which is a direct consequence of \cite[Lemma 4.1.3]{IPV10}. For reader convenience we include some details in the proof on how it follows from these results.
\begin{lemma}\label{intquasinormalizer} Let $\cR= \cA_0\wr_I \Gamma$ be a generalized wreath product with $
\cA_0$ tracial. Let $\M$ be any tracial von Neumann algebra and fix a non-zero projection $p\in \M\bten \R$.  Assume that $\B\subseteq p (\M\bten \R)p$ is a von Neumann subalgebra for which there exists a finite set $\mathcal G\subset I$ satisfying  $\B\prec \M \bten (\cA_0^I\rtimes {\rm Stab}_{\Gamma}(\mathcal G))$ and  $\B\nprec \M \bten (\cA_0^I\rtimes {\rm Stab}_{\Gamma}(\mathcal G_o))$ for all finite suprasets $\mathcal G_o\supset \mathcal G$. 

Then $\mathcal {QN}_{p(\M \bten \R) p}(\B)'' \prec_{\M \bten \R} \M \bten (\cA_0^I\rtimes {\rm Stab}_{\Gamma}(\mathcal G))$.
\end{lemma}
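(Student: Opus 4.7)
The plan is to apply Popa's intertwining theorem and then exploit the mixing nature of the generalized Bernoulli action to push the quasi-normalizer into the target corner, closely following the template of \cite[Lemma 4.1.3]{IPV10}. For any finite $\mathcal F\subseteq I$, write $\cN_{\mathcal F}:=\M\bten(\cA_0^I\rtimes{\rm Stab}_{\Gamma}(\mathcal F))$.

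First, from $\B\prec_{\M\bten\R}\cN_{\mathcal G}$ and Theorem \ref{corner}, I would obtain projections $p_0\in\B$, $q_0\in\cN_{\mathcal G}$, a non-zero partial isometry $v\in q_0(\M\bten\R)p_0$, and a normal $*$-homomorphism $\theta:p_0\B p_0\to q_0\cN_{\mathcal G}q_0$ satisfying $\theta(x)v=vx$ for all $x\in p_0\B p_0$. A routine argument---composing any hypothetical intertwiner of $\theta(p_0\B p_0)$ into a smaller corner with $v$---transfers the non-intertwining hypothesis to the image, giving $\theta(p_0\B p_0)\nprec_{\cN_{\mathcal G}}\cN_{\mathcal G_o}$ for every finite superset $\mathcal G_o\supsetneq \mathcal G$.

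The core step is to show that $vyv^*\in q_0\cN_{\mathcal G}q_0$ for every $y\in\mathcal{QN}_{p(\M\bten\R)p}(\B)$. By definition of the quasi-normalizer, fix finitely many $y_1,\ldots,y_n\in p(\M\bten\R)p$ with $\B y\subseteq\sum_i y_i\B$, and Fourier-expand $vyv^*=\sum_{g\in\Gamma}z_g u_g$ with $z_g\in\M\bten\cA_0^I$. Suppose toward a contradiction that some $z_{g_0}\neq 0$ with $g_0\notin{\rm Stab}_\Gamma(\mathcal G)$. Combining the intertwining identity $\theta(x)v=vx$ with the quasi-normalization inclusion and comparing Fourier coefficients along the coset $\cA_0^I u_{g_0}$, I would extract a sequence of unitaries $(x_k)\subset\theta(p_0\B p_0)$ whose expectations onto $\cN_{\mathcal G\cup g_0\mathcal G}$ remain bounded below uniformly in $\|\cdot\|_2$-norm. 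Indeed, ${\rm Stab}_\Gamma(\mathcal G)\cap g_0{\rm Stab}_\Gamma(\mathcal G)g_0^{-1}\leq{\rm Stab}_\Gamma(\mathcal G\cup g_0\mathcal G)$, which is precisely where the generalized Bernoulli mixing forces the Fourier mass to concentrate. The only delicate case is $g_0\in{\rm Norm}_\Gamma(\mathcal G)\setminus{\rm Stab}_\Gamma(\mathcal G)$, handled by first enlarging $\mathcal G$ to the orbit ${\rm Norm}_\Gamma(\mathcal G)\cdot\mathcal G$ (finite, since $[{\rm Norm}_\Gamma(\mathcal G):{\rm Stab}_\Gamma(\mathcal G)]<\infty$) and repeating the argument. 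In every case $\mathcal G_o:=\mathcal G\cup g_0\mathcal G\supsetneq\mathcal G$, contradicting the transferred maximality. Hence every Fourier coefficient $z_g$ with $g\notin{\rm Stab}_\Gamma(\mathcal G)$ vanishes, so $vyv^*\in q_0\cN_{\mathcal G}q_0$. The standard bimodule argument then upgrades this to $\mathcal{QN}_{p(\M\bten\R)p}(\B)''\prec_{\M\bten\R}\cN_{\mathcal G}$.

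The main obstacle will be the Fourier-coefficient bookkeeping: aligning the intertwining identity with the finitely many $y_i$ simultaneously and verifying that the extracted sequence of unitaries in $\theta(p_0\B p_0)$ genuinely witnesses the forbidden intertwining into $\cN_{\mathcal G_o}$. This is standard in the IPV framework---see the proof of \cite[Lemma 4.1.3]{IPV10}---and once the algebraic setup is in place the contradiction follows from a direct expectation estimate exploiting that for $g\notin{\rm Norm}_\Gamma(\mathcal G)$ the intersection ${\rm Stab}_\Gamma(\mathcal G)\cap g{\rm Stab}_\Gamma(\mathcal G)g^{-1}$ lies in the stabilizer of a strictly larger finite set.
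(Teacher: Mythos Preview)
Your overall strategy matches the paper's: both rely on \cite[Lemma 4.1.3]{IPV10}. The paper, however, simply invokes that lemma as a black box: after setting up the intertwining $\Phi(x)v=vx$ and transferring the non-intertwining hypothesis to $\C:=\Phi(b\B b)$, it quotes \cite[Lemma 4.1.3]{IPV10} to obtain $\mathcal{QN}_{c(\M\bten\R)c}(\C)''\subseteq \M\bten(\cA_0^I\rtimes{\rm Norm}_\Gamma(\mathcal G))$, hence $vv^*$ lies there, and then applies Popa's compression formula for quasi-normalizers plus the finite-index fact $[{\rm Norm}_\Gamma(\mathcal G):{\rm Stab}_\Gamma(\mathcal G)]<\infty$ to conclude. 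You are instead attempting to reprove that lemma inline via the Fourier-coefficient argument, which is fine in principle.

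There is, however, a genuine error in your handling of the ``delicate case'' $g_0\in{\rm Norm}_\Gamma(\mathcal G)\setminus{\rm Stab}_\Gamma(\mathcal G)$. You propose to enlarge $\mathcal G$ to the orbit ${\rm Norm}_\Gamma(\mathcal G)\cdot\mathcal G$, but by definition of the normalizer this orbit \emph{is} $\mathcal G$, so the enlargement is vacuous and $\mathcal G_o=\mathcal G\cup g_0\mathcal G=\mathcal G$ is not a strict superset; no contradiction follows. The correct resolution (and the one implicit in \cite{IPV10} and explicit in the paper's last line) is that the Fourier argument only forces $vyv^*\in\M\bten(\cA_0^I\rtimes{\rm Norm}_\Gamma(\mathcal G))$, not into $\cN_{\mathcal G}$ directly. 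One then passes from ${\rm Norm}_\Gamma(\mathcal G)$ down to ${\rm Stab}_\Gamma(\mathcal G)$ at the level of intertwining, using that the index is finite. Once you make this adjustment, your ``standard bimodule argument'' at the end should be stated as the compression formula $v^*v\,\mathcal{QN}_{p(\M\bten\R)p}(\B)''\,v^*v=\mathcal{QN}_{v^*v(\M\bten\R)v^*v}(\B v^*v)''$, which is what actually converts the containment $v\,\mathcal{QN}''\,v^*\subseteq\M\bten(\cA_0^I\rtimes{\rm Norm}_\Gamma(\mathcal G))$ into the desired $\prec$.
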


{\it Proof.} The assumption implies that there exist non-zero projections $b\in \B, c\in \M \bten (\cA_0^I\rtimes {\rm Stab}_{\Gamma}(\mathcal G))$, a non-zero partial isometry $v\in \M \bten \R$ and a unital $\ast$-isomorphism onto its image $\Phi: b\B b \ra  \C:= \Phi(bBb ) \subseteq \M \bten (\cA_0^I\rtimes {\rm Stab}_{\Gamma}(\mathcal G))$ such that 
\begin{equation}\label{partint}
\Phi(x)v=vx \text{ for all }x\in b\B b.
\end{equation} 
This entails that $vv^*\in c Cc'\cap c (\M \bten \R)c$ and $v^*v\in b\B b'\cap b (\M\bten \R)b$. Moreover, we can assume without any loss of generality that the support projection of $E_{\M \bten (\cA_0^I\rtimes {\rm Stab}_{\Gamma}(\mathcal G))}(vv^*)$ equals $c$. Let $u$ be a unitary such that $u v^*v=v$. Thus equation \eqref{partint} implies that 
$u(\B v^*v )u^*= v\B v^*= \C vv^*$. Also note that since   $\B\nprec \M \bten (\cA_0^I\rtimes {\rm Stab}_{\Gamma}(\mathcal G_o))$ for all finite sets $\mathcal G_o\supset \mathcal G$ then we must have that $\C\nprec_{\M \bten (\cA_0^I\rtimes {\rm Stab}_{\Gamma}(\mathcal G))} \M \bten (\cA_0^I\rtimes {\rm Stab}_{\Gamma}(\mathcal G_o))$ for all finite suprasets $\mathcal G_o\supset \mathcal G$. Thus, by \cite[Lemma 4.1.3]{IPV10} we get that $vv^*\in \mathcal {QN}_ {c (\M\bten \R )c}(\C )''\subseteq \M \bten (\cA_0^I\rtimes {\rm Stab}_{\Gamma}(\mathcal G)) )$. Thus, by Popa's quasi-normalizer compression formula we get  
\begin{equation}\label{dd1}
\begin{split}u v^*v \mathcal {QN}_ {p(\M\bten \R )p }( \B )'' v^*v u^* &=\mathcal {QN}_ { u v^*v (\M\bten \R ) v^*v u^*}( u\B v^*v u^*)'' \\ &=\mathcal {QN}_ {vv^* (\M\bten \R ) vv^*}(\C vv^* )''\\ &= vv^* \mathcal {QN}_ { \M\bten \R }(\C )''vv^*\subseteq \M \bten (\cA_0^I\rtimes {\rm Stab}_{\Gamma}(\mathcal G)). 
\end{split}
\end{equation}  
Since ${\rm Stab}_{\Gamma}(\mathcal G)\leqslant {\rm Norm}(\mathcal G)$ has finite index, we derive the proof of the result from \eqref{dd1}.
\hfill$\blacksquare$

{\it Proof of Theorem \ref{commutationcontrolincommultiplication}, case (A).}
(1) For ease of notation, we will simply write $A$ instead of $A_0$. Fix a projection $p\in \Delta (\cL(A^I))'\cap \M \bten \M$.  Also, fix a base point $o\in I$ and denote by $\B= \cL(A^o)$ a diffuse, property (T) von Neumann subalgebra given by $G\in \mathscr {GW}_0(A)$. Let $\P\subseteq \M \bten \M$ be the quasi-normalizer of $\Delta(\B)p$ in $p(\M \bten \M)p$ and notice that $\P\supseteq \Delta(\cL (A^{\widehat o}))p$. Here, we denoted by $\widehat o=I\setminus \{o\}$. Since $\Delta (\B)p$ has property (T) then using \cite[Theorem 4.2]{IPV10} one of the following must hold:
\begin{enumerate}
\item [(i)]\label{int}$\Delta(\B)p\prec \M\otimes 1$;
\item [(ii)]\label{int2}$\P\prec \M\bten \cL(A^{(I)}\rtimes {\rm Stab}_{\Gamma}(j))$ for some $j\in I$;
\item [(iii)]\label{int3}there is a unitary $u\in p(\M\bten \M)p$ such that $u \P u^*\subseteq \M\bten \cL(\Gamma)$.
\end{enumerate} 
Since $\B$ is diffuse, we derive from \cite[Proposition 7.2]{IPV10} that the case (i) is impossible. Suppose that (iii) holds. By the remark above we get that $u\Delta(\cL(A^{\widehat o})) u^*\subseteq \M\bten \cL(\Gamma)$. There are two possibilities: either $\Delta(\cL(A^{\widehat o}))p\prec_{\M \bten \M } \M\bten \cL({\rm Stab}_{\Gamma}(j))$ for some $j\in I$ or $\Delta(\cL(A^{\widehat o}))p\nprec_{\M \bten \M } \M \bten \cL({\rm Stab}_{\Gamma}(j))$ for all $j\in I$. 
\vskip 0.08in 
In the first case we again have two possibilities: either there is a maximal finite set $j\in \mathcal G \subset I$ such that $\Delta(\cL(A^{\widehat o}))p\prec_{\M \bten \M } \M \bten \cL({\rm Stab}_{\Gamma}(\mathcal G))$ or there is no such subset. Assume the first subcase. Notice that $u^*\Delta(\cL(A^{\widehat o}))p u\prec_{\M \bten \M } \M \bten \cL(A^I\rtimes {\rm Stab}_{\Gamma}(\mathcal G))$. Also if there is a finite subset $\mathcal G\subsetneq \mathcal G_o$ so that  $ u^*\Delta(\cL(A^{\widehat o}))p u\prec_{\M \bten \M } \M \bten \cL(A^I\rtimes {\rm Stab}_{\Gamma}(\mathcal G_o))= \cL(G\times (A^I\rtimes {\rm Stab}_{\Gamma}(\mathcal G_o)))$ and since  $u\Delta(\cL(A^{\widehat o})) u^*\subseteq \M\bten \cL(\Gamma)=\cL(G\times \Gamma)$ one can find $g\in G\times G$ such that $u^*\Delta(\cL(A^{\widehat o}))p u\prec_{\M \bten \M } \cL(G \times (A^I\rtimes {\rm Stab}_{\Gamma}(\mathcal G_o))\cap g (G\times \Gamma)g)$. However, one can see that there is an element $n\in A^I$ such that $G \times (A^I\rtimes {\rm Stab}_{\Gamma}(\mathcal G_o))\cap g (G\times \Gamma)g= G\times (A^I \rtimes {\rm Stab}_{\Gamma}(\mathcal G_o) \cap n\Gamma n^{-1}))= G\times (n {\rm Stab}_{\Gamma}(\mathcal G_o) n^{-1}) = n (G\times {\rm Stab}_{\Gamma}(\mathcal G_o))n^{-1}$. Altogether, these relations show that $u^*\Delta(\cL(A^{\widehat o}))p u\prec_{\M \bten \M } \cL (G\times n \Gamma n^{-1}) = u_n (\M\bar\otimes \cL({\rm Stab}_{\Gamma}(\mathcal G_o))) u_n^* $ and hence $u^*\Delta(\cL(A^{\widehat o}))p u\prec_{\M \bten \M } \M\bar\otimes \cL({\rm Stab}_{\Gamma}(\mathcal G_o))$, which contradicts the hypothesis. Thus, $\mathcal G$ is maximal with the property that $\Delta(\cL(A^{\widehat o}))p u\prec_{\M \bten \M } \M \bten \cL(A^I\rtimes {\rm Stab}_{\Gamma}(\mathcal G))$. Using Lemma \ref{intquasinormalizer} we get that 
$$
\Delta(\cL(A^I))p \subseteq \mathcal {QN}_{p(\M\bten \M)p}(\Delta (\cL(A^{\widehat o}))p)''\prec_{\M\bten \M } \M \bten \cL(A^I\rtimes {\rm Stab}_{\Gamma}(\mathcal G)).
$$ 
Again two possibilities arise: either there is a maximal finite subset $j\in \mathcal G\subseteq \mathcal K$ such that $\Delta(\cL(A^{I}))p\prec_{\M \bten \M } \M\bten \cL(A^I\rtimes {\rm Stab}_{\Gamma}(\mathcal K))$ or there is no such subset. In the first sub-subcase, by Lemma \ref{intquasinormalizer} and Popa's quasi-normalizer compression formula we have 
$$p\mathscr {QN}_{\M\bten \M}(\Delta (\cL(A^{I})))'' p\prec_{\M\bten \M } \M \bten \cL(A^I\rtimes {\rm Stab}_{\Gamma}(\mathcal K)),$$
and hence, $\mathscr {QN}_{\M\bten \M}(\Delta (\cL(A^{I})))'' \prec_{\M\bten \M } \M \bten \cL(A^I\rtimes {\rm Stab}_{\Gamma}(\mathcal K))$ as $p\in \Delta (\cL(A^I))'\cap \M\bten \M \subseteq \mathscr {QN}_{\M\bten \M}(\Delta (\cL(A^{I})))''$. Therefore   $\Delta(\M)\prec_{\M\bten \M} \M\bten \cL(A^I\rtimes {\rm Stab}_{\Gamma}(\mathcal K))\subseteq \M\bten \cL(A^I\rtimes {\rm Stab}_{\Gamma}(j))$ which by \cite[Lemma 7.2.7]{IPV10} gives that $[\Gamma: {\rm Stab}_{\Gamma}(j)]<\infty$, contradiction. In the second sub-subcase, by taking $\mathcal G\subseteq \mathcal F$ with $|\mathcal F|\geq k$ we get that $\Delta(\cL(A^I))p\prec_{\M\bten \M} \M\otimes 1$, which again contradicts \cite[Proposition 7.2]{IPV10}. In the second subcase, again by taking $|\mathcal G|\geq k$ we get $\Delta(\cL(A^{\widehat o}))p\prec_{\M\bten \M} \M\otimes 1$, which is a contradiction as before.
\vskip 0.08in

Thus, we are left with the situation when $\Delta(\cL(A^{\widehat o}))p\nprec_{\M \bten \M } \M \bten \cL({\rm Stab}_\Gamma(j))$ for all $j\in I$ or, equivalently, $u^*\Delta(\cL(A^{\widehat o}))p u\nprec_{\M \bten \M } \M \bten \cL({\rm Stab}_\Gamma(j))$ for all $j\in I$. By \cite[Lemma 4.1.1]{IPV10} this further implies that $u^* \Delta (\cL(A^I)) pu \subseteq \mathscr {QN}_{ p(\M\bten \M)p}(u^* \Delta (\cL(A^{\widehat o}))p u)''\subseteq \M \bten \cL(\Gamma)$. As we also have that $u^*\Delta(\cL(A^{I}))p u\nprec_{\M \bten \M } \M \bten {\rm Stab}_\Gamma(j)$ for all $j\in I$, the same argument shows that  $u^* p\mathscr {QN}_{\M\bten \M}(\Delta (\cL(A^I)))''p u= \mathscr {QN}_{p(\M\bten \M)p}(u^* \Delta (\cL(A^I)) u)'' \subseteq \M \bten \cL(\Gamma)$. Since $\Delta (\mathcal M)\subseteq \mathscr {QN}_{\M\bten \M}(\Delta (\cL(A^I)))''$, we then get $\Delta (\M)\prec \M\bten \cL(\Gamma)$, which further entails that the inclusion $\cL(\Gamma)\subseteq \M$ has finite index, contradiction.
\vskip 0.08in
Next, assume (ii). Again we have two possibilities: either there exists a finite maximal set $j\in \mathcal G \subset I$ such that $\Delta (\cL(A^{\widehat o}))p\prec M\bten \cL(A^{(I)}\rtimes {\rm Stab}_\Gamma(\mathcal G))$ or there is no such subset. In the first subcase, by Lemma \ref{intquasinormalizer} we have  $ \Delta(\cL(A^I))p\subseteq \mathscr {QN}_{p(\M\bten \M)p}(\Delta (\cL(A^{\widehat o}))p )''\prec_{\M\bten \M } \M \bten \cL(A^I\rtimes {\rm Stab}_\Gamma(\mathcal G))$. Thus, using Lemma \ref{intquasinormalizer} we get $ p\mathscr {QN}_{\M\bten \M}(\Delta (\cL(A^{I})))''p= \mathscr {QN}_{p(\M\bten \M)p}(\Delta (\cL(A^{I}))p)''\prec_{\M\bten \M } \M \bten \cL(A^I\rtimes {\rm Stab}_\Gamma(\mathcal G))$ and hence $\mathscr {QN}_{\M\bten \M}(\Delta (\cL(A^{I})))''\prec \M \bten \cL(A^I\rtimes {\rm Stab}_\Gamma(\mathcal G))$. Using again that $\Delta(\M)\subseteq \mathscr {QN}_{\M\bten \M}(\Delta (\cL(A^{I})))''$ we have $\Delta (\M)\prec \M \bten \cL(A^I\rtimes {\rm Stab}_\Gamma(\mathcal G))$, which further entails that $[\M : \cL(A^I \rtimes {\rm Stab}_\Gamma(\mathcal G))]<\infty$, a contradiction. In the second subcase, by taking $\mathcal G$ with  $|\mathcal G|\geq k$ we get $ \Delta(\cL(A^{\widehat o}))p\prec \M \bten \cL(A^I)$. Since this can be done for every base point $o\in I$ we have obtained that 
%\begin{equation}\label{int4}
$$\Delta(\cL(A^{\widehat o}))p\prec \M \bten \cL(A^I) \text{ for all }o\in I.$$
%\end{equation}
By using \cite[Lemma 2.4(2)]{DHI16}, the previous relation implies that $\Delta(\cL(A^{\widehat o}))\prec^s \M \bten \cL(A^I)$  for all $o\in I$. By varying the point $o\in I$ we also have that $\Delta(\cL(A^{o}))\prec^s \M \bten \cL(A^I)$ and therefore using Proposition \ref{L:joint} we conclude that 
 $\Delta(\cL(A^I))\prec^s \M\bten \cL(A^I)$.

In a similar manner, working on the other side of the tensor we get  $\Delta(\cL(A^I))\prec^s  \cL(A^I)\bten \M $. Altogether, by using \cite[Lemma 2.8]{DHI16} we get  $\Delta(\cL(A^I))\prec^s  \cL(A^I)\bten \cL(A^I) $ which finishes the proof of i). 
\vskip 0.12in
(2) Let $\cP$ be the quasi-normalizer of $\Delta(\cL(\Omega))$ inside $\cM\bar{\otimes}\cM$ and notice that $\Delta(\cL(\Gamma))\subseteq \P$. Since the inclusion $\Delta(\cL(\Omega))\subset \M\bar{\otimes}(\cL(A^I)\rtimes \Gamma)$ is rigid by \cite[Theorem 4.2]{IPV10} one of the following has to hold:
\begin{enumerate}
\item [(i)] $\Delta(\cL(\Omega))\prec_{\M\bar\otimes \M} \M\bar{\otimes}1$;
\item [(ii)] $\P \prec_{\M\bar\otimes \M} \M\bar{\otimes}(\cL(A^I) \rtimes {\rm Stab}_\Gamma(i))$, for some $i\in I$;
\item [(iii)] There is $v\in \mathcal U(\M\bar{\otimes}\M)$ so that $v\P v^*\subset \M\bar{\otimes}\cL(\Gamma)$.
\end{enumerate}
As $\Delta(\cL(\Omega))$ is diffuse one cannot have possibility (i) by \cite[Proposition 7.2]{IPV10}. Suppose we are in case (ii). This implies in particular that $\Delta(\cL(\Gamma)) \prec_{\M\bar\otimes \M} \M\bar{\otimes}(\cL(A^I)\rtimes {\rm Stab}_\Gamma(i))$. But since $\Delta(\cL(A^I))\prec^s_{\M\bar\otimes \M} \cL(A^I)\bar{\otimes} \cL(A^I)$, by the same argument as in the beginning of the proof of \cite[Theorem 8.2]{Io10} or the Theorem \ref{joinint} above, we would get $\Delta( \cL(A^I)\rtimes \Gamma)=\Delta(\M)\prec_{\M\bar\otimes \M} \M\bar{\otimes}(\cL(A^I) \rtimes {\rm Stab}_\Gamma(i))$, which by  \cite[Lemma 7.2.2]{IPV10} implies that $\cL(A^I)\rtimes {\rm Stab}_\Gamma(i) \subset \M$ has finite index, a contradiction. So (iii) must be true, hence a fortiori $v\Delta(\cL(\Gamma))v^*\subset \M\bar{\otimes}\cL(\Gamma)$. Repeating the argument for the inclusion $v\Delta(\cL(\Gamma))v^*\subset \M\bar{\otimes}\cL(\Gamma)=(\cL(A^I)\rtimes \Gamma)\bar{\otimes}\cL(\Gamma)$, we obtain an unitary $u\in \M\bar{\otimes}\M$ such that $u\Delta(\cL(\Gamma))u^*\subset \cL(\Gamma)\bar{\otimes}\cL(\Gamma)$, as desired.
\hfill$\blacksquare$

\subsection{Proof of Theorem \ref{Th:coind1}, case (B)}

%{\it Proof of Theorem \ref{Th:coind1}, case (B).}
(1) Denote $A=A_0^I$ and let $\Gamma=\Gamma_1\times\dots\times\Gamma_n$, where the groups $\Gamma_i$'s are icc non-amenable, weakly amenable, bi-exact and property (T) groups. 

Fix $k\in\overline{1,n}.$ We first show that there exists a non-empty set $S\subset I$ such that $\Delta(\cL(A_0^S))$ is amenable relative to $\cM\bar\otimes \cL(A\rtimes\Gamma_{\hat k})$. Let $i=\Gamma_0\in I$. Using \cite[Lemma 5.2]{KV15}  we have that $\Delta (\cL(A_0))$ is amenable relative to $\cM\bar\otimes \cL(A\rtimes\Gamma_{\hat k})$ or $\Delta (\cL(A_0^{\hat i}))\prec_{\cM\bar\otimes\cM}\cM\bar\otimes \cL(A\rtimes\Gamma_{\hat k})$. If the first possibility holds, then we are done. By assuming instead that the second one holds and by applying \cite[Lemma 2.4]{DHI16} we can find a non-zero projection $z\in \mathcal Z(\Delta (\cL(A_0^{\hat i}))'\cap \cM\bar\otimes \cM)\subset \Delta (\cL(A))'\cap \cM\bar\otimes \cM$ such that 
\begin{equation}\label{x1}
\Delta (\cL(A_0^{\hat i}))z\prec^s_{\cM\bar\otimes \cM}\cM\bar\otimes \cL(A\rtimes\Gamma_{\hat k}).    
\end{equation}
Next, we notice that there exists $g\in\Gamma\setminus\Gamma_0$ such that $\Delta(u_g)z\Delta(u_g)^*z\neq 0.$ If this does not hold, we can let $J$ be a system of representatives for $\Gamma/\Gamma_0$ and derive that the projections $\{\Delta(u_g)z\Delta(u_g)^*\}_{g\in J}$ are mutually orthogonal. This would imply that $[\Gamma:\Gamma_0]<\infty$, contradiction. Note that $z_g:=\Delta(u_g)z\Delta(u_g)^* \in \Delta (\cL(A))'\cap \cM\bar\otimes \cM$ and $z_gz\neq 0$ for some $g\in\Gamma\setminus\Gamma_0$. Since $g\in\Gamma\setminus\Gamma_0$ and $i=\Gamma_0\in I$, it follows that $i\in g\cdot\hat i$, and hence,
\begin{equation}\label{x2}
 \Delta(\cL(A_0))z_g\prec^s_{\cM\bar\otimes\cM} \cM\bar\otimes \cL(A\rtimes\Gamma_{\hat k}).   
\end{equation}
Note that $z$ and $z_g$ commute since $z_g\in \Delta (\cL(A_0^{\hat i}))'\cap \cM\bar\otimes \cM$ and $z\in \mathcal Z(\Delta (\cL(A_0^{\hat i}))'\cap \cM\bar\otimes \cM)$.
Now, since $0\neq z_gz\in \Delta (\cL(A))'\cap \cM\bar\otimes \cM$ is a non-zero projection, we can apply Proposition \ref{L:joint} and obtain from \eqref{x1} and \eqref{x2} that $\Delta(\cL(A))z_gz\prec_{\cM\bar\otimes\cM} \cM\bar\otimes \cL(A\rtimes\Gamma_{\hat k})$. By using \cite[Lemma 2.4(2)]{DHI16} we would get that $\Delta(\cL(A))\prec_{\cM\bar\otimes\cM}^s \cM\bar\otimes \cL(A\rtimes\Gamma_{\hat k})$. We used here the fact that $\mathscr{N}_{\cM\bar\otimes\cM}(\Delta(\cL(A)))'\cap \cM\bar\otimes\cM\subset \Delta(\cM)'\cap \cM\bar\otimes \cM=\mathbb C 1$ which follows $\cM$ being a II$_1$ factor. Next, by using \cite[Lemma 2.6(3)]{DHI16} we derive that $\Delta(\cL(A))$ is amenable relative to $\cM\bar\otimes \cL(A\rtimes\Gamma_{\widehat k})$, and therefore, we can take $S=I$. This shows that there exists a non-empty set $S\subset I$ such that $\Delta(\cL(A_0^S))$ is amenable relative to $\cM\bar\otimes \cL(A\rtimes\Gamma_{\hat k})$.

Next, since the relative amenability property is closed for inductive limits by \cite[Lemma 2.7]{DHI16}, we can assume that $S\subset I$ is a maximal non-empty subset with the property that $\Delta(\cL(A_0^S))$ is amenable relative to $\cM\bar\otimes \cL(A\rtimes\Gamma_{\hat k})$. Assume by contradiction that $S\neq I$.
 Since $\Gamma_k$ is non-amenable, weakly amenable and bi-exact we can apply \cite[Theorem 1.4]{PV12} and derive that for any non-zero projection $e\in \Delta(\cL(A))'\cap \cM\bar\otimes \cM$ we have $\Delta(\cL(A))e$ is amenable relative to $\cM\bar\otimes \cL(A\rtimes\Gamma_{\hat k})$ or $\Delta(\cL(A_0^S))e\prec_{\cM\bar\otimes\cM} \cL(A\rtimes\Gamma_{\hat k})$. The first possibility implies by \cite[Lemma 2.6(2)]{DHI16} that $\Delta(\cL(A))$ is amenable relative to $\cM\bar\otimes \cL(A\rtimes\Gamma_{\hat k})$, since $\Delta(\cM)'\cap \cM\bar\otimes \cM=\mathbb C 1$. Hence, $S=I$, contradiction. 

Therefore, by \cite[Lemma 2.4(2)]{DHI16} we deduce that $\Delta(\cL(A_0^S))\prec^s_{\cM\bar\otimes\cM} \cM\bar\otimes \cL(A\rtimes\Gamma_{\hat k})$. %Next, we show that $|I\setminus S|=1$. To this end, let $i\in I\setminus S$ and note that  since $\Gamma_2$ is hyperbolic, we derive that $\Delta(\cL(A_0^{\hat i}))$ is amenable relative to $\cM\bar\otimes \cL(A\rtimes\Gamma_1)$ or $\Delta(\cL(A_0^{i}))\prec_\cM\bar\otimes \cL(A\rtimes\Gamma_1)$. The first possibility implies that $|I\setminus S|=1$, otherwise it contradicts the maximality of $S$. If the 
Let $i\in I\setminus S$. It is clear that we also have $\Delta(\cL(A_0^i))\prec^s_{\cM\bar\otimes\cM} \cM\bar\otimes \cL(A\rtimes\Gamma_{\hat k})$. By using Proposition \ref{L:joint} we get that $\Delta(\cL(A_0^{S\cup \{i\}}))\prec^s_{\cM\bar\otimes\cM} \cM\bar\otimes \cL(A\rtimes\Gamma_{\hat k})$. This implies by \cite[Lemma 2.6(3)]{DHI16} that $\Delta(\cL(A_0^{S\cup \{i\}}))$ is amenable relative to $ \cM\bar\otimes \cL(A\rtimes\Gamma_{\hat k})$, which contradicts the maximality of $S$.

Hence, $S=I$. Since $\Gamma_{\hat k}$ is non-amenable, weakly amenable, bi-exact, we obtain by \cite[Theorem 1.4]{PV12} that $\Delta(\cM)$ is amenable relative to $\cM\bar\otimes \cL(A\rtimes\Gamma_{\hat k})$ or $\Delta(\cL(A)) \prec_{\cM\bar\otimes\cM} \cM\bar\otimes \cL(A\rtimes\Gamma_{\hat k})$. The first option implies by \cite[Proposition 4.1(2)]{BV12} that $\cM$ is amenable relative to $\cL(A\rtimes\Gamma_{\hat k})$, which implies by \cite[Proposition 2.4(1)]{OP07} that
$\Gamma_2$ is amenable, contradiction. The second option gives $\Delta(\cL(A)) \prec^s_{\cM\bar\otimes\cM} \cM\bar\otimes \cL(A\rtimes\Gamma_{\hat k})$ by \cite[Lemma 2.4(2)]{DHI16} since $\mathcal N_{\cM\bar\otimes\cM}(\Delta(\cL(A)))'\cap \cM\bar\otimes\cM=\mathbb C 1$. 

Finally, we have $\Delta(\cL(A)) \prec^s_{\cM\bar\otimes\cM} \cM\bar\otimes \cL(A\rtimes\Gamma_{\hat k})$, for any $k\in\overline{1,n}$. By applying \cite[Lemma 2.8(2)]{DHI16}, we derive that $\Delta(\cL(A)) \prec^s_{\cM\bar\otimes\cM} \cM\bar\otimes \cL(A)$. In a similar way, we obtain that $\Delta(\cL(A)) \prec^s_{\cM\bar\otimes\cM} \cL(A) \bar\otimes \cM.$
Thus, the claim follows by using once again \cite[Lemma 2.8(2)]{DHI16}.

(2) Since $\Gamma$ has property (T) we use \cite[Theorem 3.3]{Dr20a} (see also \cite[Theorem 2.6]{KV15}) to deduce that $\Delta(\cL(\Gamma))$ $\prec_{\cM\bar\otimes \cM} \cM\bar\otimes \cL(A\rtimes\Gamma_0)$ or $\Delta(\cL(\Gamma))  \prec_{\cM\bar\otimes\cM} \cM\bar\otimes \cL(\Gamma)$. %Assume that the first option holds. Note that $\Delta(\cL(\Gamma))\nprec_{\cM\bar\otimes \cM} \cM\bar\otimes \cL(A\rtimes (g\Sigma g^{-1}\cap\Sigma))$, for any $g\in\Gamma\setminus\Sigma.$ Otherwise, since $g\Sigma g^{-1}\cap\Sigma$ is amenable for any $g\in \Gamma\setminus\Sigma$, we would get that  $\Delta(\cL(\Gamma))$ $\prec_{\cM\bar\otimes \cM} \cM\bar\otimes \cL(A)$. 
Assume the first option holds. In combination with part (1) and Proposition \ref{joinint}, we derive that $\Delta(\cM)\prec_{\cM\bar\otimes\cM} \cM\bar\otimes \cL(A\rtimes\Gamma_0)$. Using  \cite[Proposition 7.2(2)]{IPV10} we further deduce that $\cM\prec_{\cM} \cL(A\rtimes\Gamma_0)$, and hence, $[\Gamma:\Gamma_0]<\infty$, contradiction.
Therefore, the second option holds. Since $\cL(\Gamma)$ is a II$_1$ factor, we note that the proof of \cite[Theorem 4.2]{IPV10} (or the proof of the moreover part of \cite[Theorem 3.3]{Dr20a}) shows that there exists a unitary $u\in \cM\bar\otimes \cM$ such that $u\Delta(\cL(\Gamma)) u^* \subset \cM\bar\otimes \cL(\Gamma)$.
By repeating the same arguments for the inclusion $u\Delta(\cL(\Gamma)) u^* \subset \cM\bar\otimes \cL(\Gamma)$, we finish the proof.
\hfill$\blacksquare$

\section{Superrigidity for group von Neumann algebras}\label{Section:coinduced}

\subsection{W$^*$-rigidity for generalized wreath produdcts}

We first show the following strong rigidity result for groups that belong to $\mathscr{GW}_1(A_0)$.

%\begin{theorem}Let $G= A_0\wr_I \Gamma\in \mathscr{ GW}_0(A_0)$ where $A_0$ and $\Gamma$ are torsion free. Also let $H$ be a torsion free group such that $\cL(G)=\cL(H)=\M$. Let $\Delta: \M\ra \M\bar \otimes \M$ be the commultiplication along $H$, i.e. $\Delta (v_h)=v_h \otimes v_h$ for all $h\in H$. Then the following hold:
%\begin{enumerate}
%\item [i)] $\Delta(\cL(A_0^{I}))\prec_{\M\bar\otimes \M} \cL(A_0^{I})\bar\otimes \cL(A_0^{I})$, and
%\item [ii)] there is $w\in \mathcal U(\M\bten \M)$ such that $w\Delta(\Gamma)w^*\subseteq \mathbb T (\Gamma\times \Gamma)$.   
%\end{enumerate}
%\end{theorem}
%\begin{proof}This follows almost directly by using the techniques introduced in \cite[Theorem 3.1 and Proposition 7.2.3]{IPV10} and \cite[Theorem 4.1]{KV15} applied to the conclusion of Theorem \ref{commutationcontrolincommultiplication}. In fact we refer the reader to the proof of \cite[Proposition 4.3]{CU18} as the same explanations used there show how these results can be combined to derive the statement. We leave the details to the reader. \end{proof}

\begin{theorem}\label{discretizationactingpart} Let $G=A_0\wr_I \Gamma\in \mathscr{ GW}_1(A_0)$ with $A_0$ and $\Gamma$ torsion free and denote by ${\rm Stab}_\G(i)=\G_0$. Let $H$ be any torsion free group and assume that $\Theta: \cL(G)\ra \cL(H)$ is a $\ast$-isomorphism. Then $H$ admits a wreath product decomposition $H = B_0 \wr_J \Lambda \in \mathscr{GW}_1(B_0)$.

Moreover, if we denote by $\La_0= {\rm Stab}_\La(j)$, one can find a group isomorphism $\delta:\Gamma \ra \Lambda$ with $\delta(\G_0)=\La_0$, a character $\eta:\Gamma\ra \mathbb T$, a $\ast$-isomorphism $\theta: \cL(A_0)\ra \cL(B_0)$, and a unitary $w\in \cL(H)$ such that for every $x=\otimes_{h\in \G/\G_0} x_{h\G_0} \in \cL(A_0^{(I)})$ and $g\in \Gamma$ we have 
\begin{equation*}
\Theta(x u_g) =\eta(g)w^* (\otimes_{h\in \G/\G_0}\theta (x_{\delta(h)\La_0}) v_{\delta(g)})w.\end{equation*} 
Here, $\{u_g \,|\, g\in \Gamma\}$ and $\{v_h \,|\, h\in \Lambda\}$ are the canonical unitaries of $\cL(\Gamma)$ and $\cL(\Lambda)$, respectively. 
\end{theorem}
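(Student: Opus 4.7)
The strategy is to apply Theorem \ref{Th:coind1}(A), which is available since $\mathscr{GW}_1(A_0)\subset\mathscr{GW}_0(A_0)$. This produces, after composing $\Theta$ with an inner automorphism $\Ad(w_0)$ of $\cL(H)$, the two $\Delta$-rigidity properties $\Delta(\cL(\Gamma))\subseteq\cL(\Gamma)\bten\cL(\Gamma)$ and $\Delta(\cL(A_0^I))\prec_{\cM\bten\cM}\cL(A_0^I)\bten\cL(A_0^I)$, where $\cL(\Gamma)$ and $\cL(A_0^I)$ are identified with their images inside $\cM=\cL(H)$ under the modified $\Theta$, and $\Delta:\cL(H)\to\cL(H)\bten\cL(H)$ is the comultiplication $v_h\mapsto v_h\otimes v_h$.

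I would next extract the acting group from the inclusion $\Delta(\cL(\Gamma))\subseteq\cL(\Gamma)\bten\cL(\Gamma)$ by applying a character-rigidity argument in the spirit of \cite[Theorem 7.1 and Proposition 7.2]{IPV10}: Fourier-expanding $\Theta(u_g)=\sum_h\alpha_g(h)v_h$ and comparing both sides of $\Delta(\Theta(u_g))=\sum_h\alpha_g(h)\,v_h\otimes v_h$ forces each nonzero $\alpha_g(h)v_h$ to lie in $\cL(\Gamma)$. Using torsion-freeness of $H$ (to rule out finite-order obstructions) and factoriality of $\cL(\Gamma)$, this produces a subgroup $\Lambda\leqslant H$, an injective group homomorphism $\delta:\Gamma\to\Lambda$ with $\delta(\Gamma)=\Lambda$, a character $\eta:\Gamma\to\mathbb T$, and a unitary $w_1\in\cL(H)$ so that $\Theta(u_g)=\eta(g)\,w_1 v_{\delta(g)}w_1^*$. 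Absorbing $w_1$ we may assume $\Theta(u_g)=\eta(g)v_{\delta(g)}$.

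Next I would analyze the base. The algebra $\Theta(\cL(A_0^I))$ is diffuse, normalized by $\cL(\Lambda)$, and satisfies the $\Delta$-rigidity condition. By an analogous argument I would show that $\Theta(\cL(A_0^I))=\cL(\Sigma)$ for a normal subgroup $\Sigma\lhd H$ with $H=\Sigma\rtimes\Lambda$. To identify $\Sigma$ as an internal direct sum $B_0^{(J)}$ with $J=\Lambda/\Lambda_0$ and $\Lambda_0=\delta(\Gamma_0)$, I would focus on the image $\Theta(\cL(A_0^i))$ at the base point $i=\Gamma_0\in I$. Proposition \ref{relcomm1} controls the relative commutants $\cL(A_0^F)'\cap\cL(G)\subset\cL(A_0^{\widehat F})\rtimes\mathrm{Stab}_\Gamma(F)$; transporting this structure across $\Theta$ and combining it with hypotheses (a)--(c) of $\mathscr{GW}_1$ (in particular (c), which precludes overlap between the $\Lambda$-conjugates of $\Lambda_0$), one extracts an infinite subgroup $B_0\leqslant\Sigma$ with $\Theta(\cL(A_0^i))=\cL(B_0)$ and such that its $\Lambda$-translates yield $\Sigma=\bigoplus_{\lambda\Lambda_0\in J}\lambda B_0\lambda^{-1}$. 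This realizes $H=B_0\wr_J\Lambda\in\mathscr{GW}_1(B_0)$, where the properties (a)--(c) for $\Lambda\curvearrowright J$ transfer across $\delta$.

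The explicit formula then falls out: setting $\theta:=\Theta|_{\cL(A_0^i)}:\cL(A_0)\to\cL(B_0)$, the Bernoulli-intertwining property of $\Theta$ combined with $\Theta(u_g)=\eta(g)v_{\delta(g)}$ yields the displayed formula, after reabsorbing the unitary $w=w_0w_1$. The main technical obstacle I anticipate is the second step above: converting the bimodule \emph{embedding} $\Delta(\cL(A_0^I))\prec\cL(A_0^I)\bten\cL(A_0^I)$ into the \emph{honest identification} $\Theta(\cL(A_0^I))=\cL(\Sigma)$ as a group algebra. This requires carefully combining the two $\Delta$-rigidity statements with normalizer compression and using torsion-freeness of $H$ to rule out finite-index pathologies; it is also the place where the automorphism description in Theorem \ref{autgenbernoulli} is needed to confirm that, after all the above reductions, no residual ``Houghton-type'' ambiguity of the base survives beyond the chosen $\theta$.
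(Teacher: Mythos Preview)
There is a genuine gap in your reduction in the first paragraph. Theorem~\ref{Th:coind1} produces a unitary $u\in\mathscr U(\cM\bar\otimes\cM)$ with $u\Delta(\cL(\Gamma))u^*\subseteq\cL(\Gamma)\bar\otimes\cL(\Gamma)$; this unitary lives in the \emph{tensor square}, not in $\cM$. Composing $\Theta$ with an inner automorphism $\Ad(w_0)$ of $\cL(H)$ only conjugates the image of $\Delta$ by $\Delta(w_0)$, which is a very special unitary of $\cM\bar\otimes\cM$, so you cannot arrange the honest containment $\Delta(\cL(\Gamma))\subseteq\cL(\Gamma)\bar\otimes\cL(\Gamma)$ this way. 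Consequently your Fourier-expansion argument in the second paragraph does not get off the ground: once you conjugate by $u$ the diagonal form $\sum_h\alpha_g(h)\,v_h\otimes v_h$ is destroyed, and the containment $u\Delta(\Theta(u_g))u^*\in\cL(\Gamma)\bar\otimes\cL(\Gamma)$ by itself says nothing about the individual Fourier coefficients of $\Theta(u_g)$.

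What is missing is a \emph{height argument}. The paper (following \cite[Theorem 4.1]{KV15} and \cite[Proposition 4.3]{CU18}) uses the two intertwining statements from Theorem~\ref{Th:coind1} together to prove that $h_{\Gamma\times\Gamma}(u\Delta(\Gamma)u^*)>0$; only then can one invoke \cite[Theorem 3.1]{IPV10} to obtain a unitary $w\in\cM\bar\otimes\cM$ with $w\Delta(\Gamma)w^*\subseteq\mathbb T(\Gamma\times\Gamma)$. From that point the transfer from $\Gamma\times\Gamma$ to the mystery group $H$ proceeds via \cite[Proposition 7.2 and Theorem 8.3]{IPV10} (as carried out in \cite[Theorem 4.4]{CU18}), which is where torsion-freeness of $H$ is actually used. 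Your sketch of the analysis of the base is broadly in the right spirit, but note that Theorem~\ref{autgenbernoulli} concerns automorphisms of the \emph{group} $G$ and plays no role here; the identification $\Theta(\cL(A_0^I))=\cL(\Sigma)$ again comes from the comultiplication techniques of \cite{IPV10}, not from group-theoretic automorphism computations.
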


{\it Proof.} Note first that Theorem \ref{Th:coind1} gives that $\Delta(\cL(A_0^{I}))\prec_{\M\bar\otimes \M} \cL(A_0^{I})\bar\otimes \cL(A_0^{I})$ and there exists a unitary $u\in \mathcal U(\M\bten \M)$ such that $u\Delta(\cL(\Gamma))u^*\subseteq  \cL(\Gamma)\bar\otimes \cL(\Gamma)$. Moreover, it follows almost directly by using the techniques introduced in \cite[Theorem 3.1 and Proposition 7.2.3]{IPV10} and \cite[Theorem 4.1]{KV15} (see the proof of \cite[Proposition 4.3]{CU18} for precise explanations) that  
\begin{equation}\label{a1a}
\text{ there exists a unitary }w\in \mathscr U(\M\bten \M) \text{ such that } 
w\Delta(\Gamma)w^*\subseteq \mathbb T (\Gamma\times \Gamma).    
\end{equation}
Next, the conclusion of the theorem follows from \eqref{a1a} by using the techniques introduced in \cite[Proposition 7.2 and Theorem 8.3]{IPV10}. The reader is referred to the explanations provided in the proof of \cite[Theorem 4.4]{CU18} on how these results can be used to get the statement.
\hfill$\blacksquare$

\begin{corollary}\label{Cor:GW} Let $G=A_0\wr_I \Gamma\in \mathscr {GW}_1(A_0)$ with $A_0$ and $\Gamma$ torsion free and $A_0$ is abstractly $W^*$-superrigid. Let $H$ be a torsion free group and let $\Theta: \cL(G)\ra \cL(H)$ be a $\ast$-isomorphism. 

Then there exist a group isomorphism $\delta:G \ra H$,  a character $\eta:G\ra \mathbb T$, an automorphism  $ \phi\in {\rm Aut}( \cL(\delta(A_0)))$, and unitary $w\in \cL(H)$ such that  $\Theta= {\rm ad} (w) \circ  \Phi_\phi \circ \Psi_{\omega,\delta}$. 

Moreover, if $A_0$ is $W^*$-superrigid, then $\Phi_\phi=\Phi_t$, for a unitary $t \in \cL(\delta(A_0))$.

\end{corollary}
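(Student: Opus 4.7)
The plan is to combine Theorem \ref{discretizationactingpart} with the assumed abstract $W^*$-superrigidity of $A_0$ and then verify that the composition of the resulting maps has the desired form. Since $H$ is torsion free we may apply Theorem \ref{discretizationactingpart} to produce a generalized wreath decomposition $H=B_0 \wr_J \Lambda \in \mathscr{GW}_1(B_0)$, a group isomorphism $\delta_0:\Gamma\to\Lambda$ with $\delta_0(\Gamma_0)=\Lambda_0$, a character $\eta:\Gamma\to\mathbb T$, a $\ast$-isomorphism $\theta:\cL(A_0)\to\cL(B_0)$, and a unitary $w\in\cL(H)$ so that
\begin{equation*}
\Theta(xu_g)=\eta(g)w^{*}\bigl(\textstyle\bigotimes_{h\Gamma_0}\theta(x_{\delta_0(h)\Lambda_0})\,v_{\delta_0(g)}\bigr)w.
\end{equation*}

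In particular $\cL(A_0)\cong \cL(B_0)$, so the abstract $W^*$-superrigidity of $A_0$ yields a group isomorphism $\delta_1:A_0\to B_0$, a character $\eta_0\in\mathrm{Char}(A_0)$, and an automorphism $\phi_0\in\mathrm{Aut}(\cL(B_0))$ with $\theta=\phi_0\circ\Psi_{\eta_0,\delta_1}$. Next I would define the expected ingredients of the target decomposition: let $\delta:G\to H$ be the wreath product isomorphism built from $\delta_0$ and $\delta_1$ (acting on the base coordinate-wise via the coset bijection $\Gamma/\Gamma_0\to\Lambda/\Lambda_0$ induced by $\delta_0$, and on the acting group via $\delta_0$); let $\omega:G\to\mathbb T$ be the character given on $a=(a_i)_{i\in I}\in A_0^{(I)}$ and $g\in\Gamma$ by $\omega(ag)=\bigl(\prod_{i}\eta_0(a_i)\bigr)\eta(g)$ (only finitely many factors $\neq 1$); and take $\phi:=\phi_0$, viewed as an element of $\mathrm{Aut}(\cL(\delta(A_0)))=\mathrm{Aut}(\cL(B_0))$, with $\Phi_\phi$ the associated Houghton-type automorphism of $\cL(H)$ from Section \ref{Section:HoughtonAut}.

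The verification reduces to the single computation on generators of $G$, namely
\begin{equation*}
\mathrm{ad}(w^{*})\circ\Phi_\phi\circ\Psi_{\omega,\delta}(u_a u_g)=\eta(g)\,w^{*}\bigl(\textstyle\bigotimes_{j}\phi_0(v^{B_0}_{\delta_1(a_j)})\bigotimes\prod_{j}\eta_0(a_j)\bigr)v_{\delta_0(g)}\,w,
\end{equation*}
using that $\Phi_\phi$ is identity on $\cL(\Lambda)$, tensorial on the base, and that $\phi_0(v^{B_0}_{\delta_1(a)})=\overline{\eta_0(a)}\,\theta(u^{A_0}_a)$. The character factors introduced by $\omega$ cancel the ones coming from $\phi_0\circ\Psi_{\eta_0,\delta_1}=\theta$, yielding precisely the formula for $\Theta$ recalled above. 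Thus $\Theta=\mathrm{ad}(w^{*})\circ\Phi_\phi\circ\Psi_{\omega,\delta}$ after absorbing the unitary.

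For the moreover part, if $A_0$ is $W^*$-superrigid (not merely abstractly), then by definition the automorphism $\phi_0$ produced at the second step can be chosen inner in $\cL(B_0)$, say $\phi_0=\mathrm{ad}(t)$ with $t\in\mathscr U(\cL(B_0))=\mathscr U(\cL(\delta(A_0)))$; then $\Phi_{\phi_0}=\Phi_t$ by the very definition of the Houghton-type automorphism attached to an inner base automorphism. The main technical obstacle is entirely front-loaded into Theorem \ref{discretizationactingpart}: once the wreath-product structure of $H$ and the tensorial form of $\Theta$ on the base are in hand, the remainder is a bookkeeping argument matching characters and group isomorphisms, so the only subtlety I anticipate is keeping the coset bijection $\Gamma/\Gamma_0\to\Lambda/\Lambda_0$ coherent when pre- and post-composing $\Phi_\phi$ with $\Psi_{\omega,\delta}$.
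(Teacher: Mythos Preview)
Your proposal is correct and follows essentially the same approach as the paper: apply Theorem \ref{discretizationactingpart}, then invoke abstract $W^*$-superrigidity of $A_0$ to decompose $\theta$ as $\phi_0\circ\Psi_{\eta_0,\delta_1}$, and combine. The paper is terser, simply writing ``Combining this with formula \eqref{halfisom} we see that the conclusion of the theorem holds'' and not spelling out the construction of the full wreath isomorphism $\delta$, the global character $\omega$, or the moreover clause; your explicit bookkeeping (and handling of the inner case for $\phi_0$) is exactly what is being omitted there.
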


%\begin{corollary} \label{superrigid3} Let $G=N\wr_I Q\in \mathscr {GW}_1(N)$ with $N$ and $Q$ are torsion free and $N$ is $W^*_{\rm unit}$-superrigid. Let $H$ be a torsion free group and let $\Theta: \cL(G)\ra \cL(H)$ be a $\ast$-isomorphism. 

%Then there exist a group isomorphism $\delta:G \ra H$,  a character $\eta:G\ra \mathbb T$, unitaries $ t \in \cL(\delta(N))$ and  $w\in \cL(H)$ such that  $\Theta= {\rm ad} (w) \circ  \Phi_t \circ \Psi_{\omega,\delta}$. 
%\end{corollary}

{\it Proof.} Applying the previous theorem, we obtain that $H$ admits a wreath product decomposition $H = B_0 \wr_J \Lambda$ satisfying the following properties: there exist a group isomorphism $\rho_1:\Gamma \ra \Lambda$ with $\rho_1(\G_0)=\La_0$, a character $\eta_1:\Gamma\ra \mathbb T$, a $\ast$-isomorphism $\theta: \cL(A_0)\ra \cL(B_0)$ and a unitary $w\in \cL(H)$ such that for every $x=\otimes_{h\in \G/\G_0} x_{h\G_0}\in \cL(A_0^{(I)})$ and $g\in \Gamma$ we have \begin{equation}\label{halfisom}
\Theta(x u_g) =\eta_1(g)w^*( \otimes\theta(x_{\rho_1(g)\La_0}) v_{\delta(g)})w.\end{equation} 
Since $A_0$ is abstractly $W^*$-superrigid one can find a group isomorphism $\rho_2: A_0\ra B_0$, a multiplicative character $\eta_2: A_0 \ra \mathbb T$ and a $\ast$-isomorphism $\phi\in {\rm Aut}(\cL(B_0))$ such that $\theta(u_a) =\eta_2(a) \phi( v_{\rho_2(a)}) $ for all $a\in A_0$. Combining this with formula \eqref{halfisom} we see that the conclusion of the theorem holds. 
\hfill$\blacksquare$

\subsection{W$^*$-superrigidity for coinduced groups}

The goal of this subsection is to provide examples of W$^*$-superrigid coinduced groups, see Corollary \ref{Cor:coind}. This adds new examples of different structure to the prior ones found in \cite{IPV10,BV12,Be14,CI17,CD-AD20,Dr20b}. Moreover, all known W$^*$-superrigidity generalized wreath product groups are with abelian base \cite{IPV10,BV12,Be14}. In contrast, Corollary \ref{Cor:coind} provides large classes of W$^*$-generalized wreath product groups with non-amenable base.

We first introduce the following class of coinduced groups and prove the main technical result of this subsection, which is Theorem \ref{Th:coinduced}.

\begin{assumption}\label{A:assumption}
Let $G=A_0^I\rtimes \tilde\Gamma$ be the associated coinduced group of $\Gamma_0\car A_0$, where
\begin{enumerate}
    \item $\tilde\Gamma=\Gamma\times\Gamma$, where $\Gamma$ is an icc, torsion free, non-amenable, weakly amenable, bi-exact, property (T) group.
    \item $\Gamma_0<\tilde\Gamma$ is the diagonal subgroup defined by $\Gamma_0=\{(g,g)|g\in\Gamma\}.$
\end{enumerate}

\end{assumption}

\begin{theorem}\label{Th:coinduced}
Let $G=A_0^I\rtimes_\sigma \tilde\Gamma$ be as in Assumption \ref{A:assumption} and suppose that either $\Gamma_0\car A_0$ acts trivially, or $A_0$ is icc, torsion free, bi-exact and it contains an infinite property (T) subgroup $K_0$ that is $\Gamma_0$-invariant and has trivial virtual centralizer.
Let $H$ be a countable group and let $\Theta: \cL(G)\ra \cL(H)$ be a $\ast$-isomorphism. 

Then there exist subgroups $\Lambda_0<\tilde\Lambda<H$ and $\Sigma_0<H$ such that $\Sigma_0$ is normalized by $\Lambda_0$ and $H=\Sigma_0^I\rtimes_\rho\tilde\Lambda$ is the coinduced group of $\Lambda_0\car \Sigma_0.$ Moreover, there exist a group isomorphism $\delta:\tilde\Gamma\to\tilde\Lambda$ with $\delta(\Gamma_0)=\Lambda_0$, a $*$-isomorphism $\theta:\cL(A_0)\to\cL(\Sigma_0)$ that satisfies $\theta(\sigma_g(a))=\rho_{\delta(g)}(\theta(a))$ for all $g\in \Gamma_0$ and $a\in \cL(A_0)$, a character $\eta: \tilde\Gamma\to\mathbb T$ and a unitary $w\in \cL(H)$ such that $\text{for all }x\in \cL(A_0^I) \text{ and }g\in \tilde\Gamma,$ we have
\[
\Theta(x u_g) =\eta(g)w^* \theta^{\bar\otimes I}(x) v_{\delta(g)}w.
\]
%In particular, if $A_0$ is $W_{\rm aut}^*$-superrigid, then $G$ is $W_{\rm aut}^*$-superrigid.
Here, we denoted by $\{u_g\}_{g\in G}$ and $\{v_h\}_{h\in H}$ the canonical unitaries of $\cL(G)$ and $\cL(H)$, respectively. 
\end{theorem}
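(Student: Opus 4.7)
The plan is to follow the template established in Theorem \ref{discretizationactingpart} and Corollary \ref{Cor:GW} for generalized wreath products, but adapted to the coinduced setting by invoking the full generality of part (B) of Theorem \ref{Th:coind1} together with the cocycle superrigidity theorem for coinduced actions of property (T) groups \cite[Theorem 3.1]{Dr15}. The first step is to apply Theorem \ref{Th:coind1} to the comultiplication $\Delta: \cM \to \cM \bar\otimes \cM$ given by $\Delta(v_h) = v_h \otimes v_h$. Since $\tilde\Gamma = \Gamma \times \Gamma$ is a product of two icc, non-amenable, weakly amenable, bi-exact, property (T) groups, part (B) applies directly and yields two intertwinings simultaneously: $\Delta(\cL(A_0^I)) \prec_{\cM \bar\otimes \cM} \cL(A_0^I) \bar\otimes \cL(A_0^I)$, and the existence of a unitary $u \in \cM \bar\otimes \cM$ satisfying $u\Delta(\cL(\tilde\Gamma))u^* \subset \cL(\tilde\Gamma) \bar\otimes \cL(\tilde\Gamma)$.

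Next I would upgrade the second inclusion from the algebra level to the group level. Combining the untwisting techniques of \cite[Theorem 3.1 and Proposition 7.2.3]{IPV10} with the cocycle superrigidity theorem \cite[Theorem 3.1]{Dr15} applied to the coinduced action $\tilde\Gamma \overset{\sigma}{\curvearrowright} A_0^I$, one should obtain a unitary $w \in \cM \bar\otimes \cM$ such that $w \Delta(u_g) w^* \in \mathbb T \cdot (\tilde\Gamma \times \tilde\Gamma)$ for every $g \in \tilde\Gamma$, exactly as in the corresponding step of the proof of Theorem \ref{discretizationactingpart}. After passing to an appropriate diagonal projection and composing with a possible flip of the two tensor factors, this delivers a character $\eta: \tilde\Gamma \to \mathbb T$ together with an injective group homomorphism $\delta: \tilde\Gamma \to H$, and I denote $\tilde\Lambda = \delta(\tilde\Gamma)$ and $\Lambda_0 = \delta(\Gamma_0)$.

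Having identified $\cL(\tilde\Gamma)$ inside $\cM$ with $\cL(\tilde\Lambda)$ up to an inner perturbation by $w$ and the character $\eta$, one then argues that $H$ must admit a semidirect product decomposition $H = \Sigma \rtimes \tilde\Lambda$ with $\Sigma \lhd H$ normal, and that $\cL(\Sigma)$ agrees with the image of $\cL(A_0^I)$ under the adjusted $\Theta$. To upgrade this to a coinduced structure $\Sigma = \Sigma_0^I$, I would exploit the factored tensor product decomposition $\cL(A_0^I) = \bar\otimes_{i\in I} \cL(A_0^i)$ together with Proposition \ref{relcomm1}: the subfactor corresponding to the base point $i_0 = \Gamma_0 \in I$ maps to a subalgebra of $\cL(\Sigma)$, and its stability under $\Lambda_0$-conjugation, coupled with the icc-ness of $A_0$ and the existence of the property (T) subgroup $K_0 < A_0$ with trivial virtual centralizer, should force this subalgebra to be of the form $\cL(\Sigma_0)$ for a genuine subgroup $\Sigma_0 < \Sigma$ normalized by $\Lambda_0$. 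The $\tilde\Lambda$-translates $\delta(g) \Sigma_0 \delta(g)^{-1}$ for $g$ ranging over coset representatives of $\tilde\Lambda/\Lambda_0$ then realize $\Sigma$ as $\Sigma_0^I$, and the $\tilde\Lambda$-action on $\Sigma_0^I$ automatically coincides with the coinduction of $\Lambda_0 \curvearrowright \Sigma_0$ thanks to the cocycle relation inherited from $\delta$.

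The main obstacle is precisely this last identification: transferring the purely von Neumann algebraic decomposition $\cL(\Sigma) \cong \bar\otimes_I \cL(A_0)$ into a genuine group theoretic coinduced decomposition $\Sigma = \Sigma_0^I$, while simultaneously producing the equivariant $*$-isomorphism $\theta: \cL(A_0) \to \cL(\Sigma_0)$ satisfying $\theta \circ \sigma_g = \rho_{\delta(g)} \circ \theta$ for all $g \in \Gamma_0$. In the wreath product version (Theorem \ref{discretizationactingpart}), the triviality of $\Gamma_0 \curvearrowright A_0$ and the transitive permutation action on $I$ make this step essentially automatic once the base algebra has been isolated; in the coinduced setting, by contrast, the $\Gamma_0$-equivariance constraint must be established explicitly. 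This is the reason behind the hypothesis that $K_0 < A_0$ is a $\Gamma_0$-invariant property (T) subgroup with trivial virtual centralizer: it supplies exactly the rigidity in the base needed to apply Popa's intertwining techniques to the diagonal copy of $K_0$ and to conclude, via a further application of the cocycle superrigidity theorem \cite[Theorem 3.1]{Dr15}, that the identification $\theta$ is genuinely $\Gamma_0$-equivariant.
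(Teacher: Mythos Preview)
Your two-step architecture (recover $\tilde\Gamma$, then recover the coinduced structure of the kernel) matches the paper's, but both steps have genuine gaps, and in each case the gap comes from an incorrect appeal to the cocycle superrigidity theorem \cite[Theorem 3.1]{Dr15}.

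In Step I you claim that combining \cite[Theorem 3.1 and Proposition 7.2.3]{IPV10} with \cite[Theorem 3.1]{Dr15} yields $w\Delta(\tilde\Gamma)w^*\subset\mathbb T(\tilde\Gamma\times\tilde\Gamma)$, ``exactly as in'' Theorem~\ref{discretizationactingpart}. But cocycle superrigidity concerns $1$-cocycles for p.m.p.\ actions and has no bearing on the height argument needed here; in Theorem~\ref{discretizationactingpart} the bridge is supplied by \cite[Theorem 4.1]{KV15}, which is specific to the generalized wreath product setting and has no cited analogue for coinduced actions. The paper therefore takes a genuinely different route: it first produces an \emph{actual} subgroup $\tilde\Lambda<H$ with $u\cL(\tilde\Gamma)u^*=\cL(\tilde\Lambda)$ (Theorem~\ref{Th:coind2}), via bi-exactness of $\Gamma$ \cite{BO08,CD-AD20}, the commutant transfer \cite[Theorem 4.1]{DHI16}, the augmentation technique (Theorem~\ref{Th:augmentationtechniques}), and finally \cite[Theorem 2.11]{CD-AD20}. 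Only once $\tilde\Lambda$ is in hand does the height argument run (Theorem~\ref{Th:coind3}), and its key input is the product rigidity theorem \cite[Theorem A]{CdSS15} applied to the decomposition $\tilde\Lambda=\Lambda_1\times\Lambda_2$, not cocycle superrigidity.

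In Step II your sketch is directionally correct but omits the actual mechanism, and again the closing appeal to \cite{Dr15} is misplaced. The paper does not argue via Proposition~\ref{relcomm1} and $\tilde\Lambda$-translates. Instead (Claims 1--3 of the proof) it uses property~(T) of $K_0$ together with an averaging trick over ${\rm Stab}_{\tilde\Gamma}(i)$ to force $\Delta(\cL(K_0^i))\subset\cL(A_0^i)\bar\otimes\cL(A_0^i)$, then applies Ge's tensor splitting theorem \cite{Ge95}, solidity of $\cL(A_0)$ \cite{Oz03}, and a weak compactness argument \cite{OP07,CSU11} to show that the complementary tensor factor is finite-dimensional, and finally invokes \cite[Theorem 4.6]{CdSS17} to extract $\Sigma_0^i<\Sigma$ with $\cL(\Sigma_0^i)=\cL(A_0^i)$. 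The $\Gamma_0$-equivariance of $\theta$ then falls out of a direct cocycle comparison (equation~\eqref{equivariant}); no second application of \cite{Dr15} is needed or made.
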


We continue with a series of preliminary results that are needed to derive the proof of Theorem \ref{Th:coinduced}. We actually prove Theorem \ref{Th:coinduced} through the following two steps.

{\bf Step I: The reconstruction of the acting group $\tilde\Gamma$.}

\begin{theorem}\label{Th:coind2}
Assume the same conditions as in Theorem \ref{Th:coinduced} and denote $\cM=\Theta(\cL(G))=\cL(H)$.

Then there exist a subgroup $\tilde\Lambda<H$ and a unitary $u\in\mathcal U(\cM)$ such that $u\Theta(\cL(\tilde\Gamma))u^*=\cL(\tilde\Lambda)$.
\end{theorem}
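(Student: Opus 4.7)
The plan is to combine the comultiplication control already established in Theorem \ref{Th:coind1}(B) with IPV/KV-style rigidity machinery, essentially mirroring the strategy sketched in the proof of the wreath-product analogue, Theorem \ref{discretizationactingpart}. Identifying $\cM = \Theta(\cL(G)) = \cL(H)$ and letting $\Delta : \cM \to \cM\bar\otimes\cM$ be the $H$-comultiplication $v_h \mapsto v_h \otimes v_h$, Theorem \ref{Th:coind1}(B)(2) yields at once a unitary $v \in \mathcal U(\cM\bar\otimes\cM)$ with
\[
v\,\Delta(\cL(\tilde\Gamma))\,v^* \subseteq \cL(\tilde\Gamma)\,\bar\otimes\,\cL(\tilde\Gamma),
\]
which is the ``soft'' starting containment at the algebra level.

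The second and technically heaviest step is to upgrade this to a genuinely group-like statement: to produce $w \in \mathcal U(\cM\bar\otimes\cM)$ such that
\[
w\,\Delta(u_g)\,w^* \in \mathbb T\cdot(\tilde\Gamma\times\tilde\Gamma), \qquad \text{for every } g \in \tilde\Gamma.
\]
I would proceed by Fourier-expanding $v\,\Delta(u_g)\,v^*$ over $\tilde\Gamma\times\tilde\Gamma$ and using the comultiplicativity of $\Delta$ to extract a $1$-cocycle for a Bernoulli-type action, then feeding it into the rigidity machinery of \cite[Theorem 3.1 and Proposition 7.2.3]{IPV10} as refined in \cite[Theorem 4.1]{KV15}. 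This is precisely the engine invoked in the proof of Theorem \ref{discretizationactingpart} and detailed for a very close setting in the proof of \cite[Proposition 4.3]{CU18}; it is driven entirely by the property (T) of $\tilde\Gamma$, so no spectral-gap hypothesis on the base $A_0$ is needed.

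Once the group-like form is in place, the final step is a standard IPV-style reconstruction (see \cite[Theorems 7.1 and 8.3]{IPV10}): writing $w\,\Delta(u_g)\,w^* = \eta(g)\,(v_{\alpha(g)}\otimes v_{\beta(g)})$ produces a character $\eta : \tilde\Gamma \to \mathbb T$ and maps $\alpha, \beta : \tilde\Gamma \to H$ which, by the comultiplicativity of $\Delta$, are automatically group homomorphisms. Injectivity of $\Delta$ together with $H$ being icc (which holds since $\cM$ is a II$_1$ factor) forces at least one of $\alpha, \beta$, say $\alpha$, to be injective. Setting $\tilde\Lambda := \alpha(\tilde\Gamma)$ and slicing $w$ by a normal state on the second tensor factor to extract a single unitary $u \in \mathcal U(\cM)$, one obtains $u\,\Theta(\cL(\tilde\Gamma))\,u^* \subseteq \cL(\tilde\Lambda)$. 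Equality is then forced by a relative commutant comparison together with the observation that both sides are II$_1$ subfactors of $\cM$ with trivial relative commutant (coming on the $G$-side from Proposition \ref{relcomm1} and from the icc condition on $\tilde\Gamma$).

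The main obstacle is the cocycle upgrade in the second step. Unlike in the original IPV setting, the base $A_0$ is not assumed to have property (T) (only an internal subgroup $K_0$ does), so the rigidity must be extracted entirely from the acting group $\tilde\Gamma = \Gamma \times \Gamma$ and from the mixing features of the coinduced action on $\cL(A_0^I)$. Managing this asymmetry while retaining the soft containment produced by Theorem \ref{Th:coind1} is the delicate technical point, and it is precisely what the KV-refinement of the IPV machinery is designed to handle.
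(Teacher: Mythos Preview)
There is a genuine gap in your step 3, and it stems from conflating the $G$-coordinates with the $H$-coordinates. The outcome of step 2, namely $w\,\Delta(u_g)\,w^* \in \mathbb T\cdot(\tilde\Gamma\times\tilde\Gamma)$, lands in the set $\{c\,u_{\gamma_1}\otimes u_{\gamma_2}: c\in\mathbb T,\ \gamma_i\in\tilde\Gamma\}$, where the $u_\gamma$ are the canonical \emph{$G$}-unitaries. You then rewrite this as $\eta(g)\,(v_{\alpha(g)}\otimes v_{\beta(g)})$ with $\alpha,\beta:\tilde\Gamma\to H$, but there is no reason for a $G$-group unitary $u_\gamma$ to equal any $H$-group unitary $v_h$ up to a scalar. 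The comultiplicativity of $\Delta$ is with respect to $H$, so it gives no control over how the $u_\gamma$ decompose in the $H$-Fourier expansion. In short, step 2 gives information purely on the $G$-side of the picture and provides no handle on subgroups of $H$; the slicing and relative-commutant argument you sketch never gets off the ground because you never produce an element of $H$. (The route through \cite[Theorem 8.3]{IPV10} that works in the proof of Theorem~\ref{discretizationactingpart} requires, among other things, that $H$ be torsion free, which is not assumed here.)

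The paper's proof is entirely different and works directly on the $H$-side. Using bi-exactness of $\Gamma$ and \cite[Theorem 4.1]{DHI16}, one locates a subgroup $\Sigma<H$ with non-amenable centralizer such that $\cL(\Gamma_1)\prec_\cM\cL(\Sigma)$ and $\cL(\Gamma_{\hat i})\prec_\cM\cL(C_H(\Sigma))$. The augmentation technique (Theorem~\ref{Th:augmentationtechniques}) is then used to show $\cL(C_H(\Sigma))$ is strongly non-amenable relative to $\cL(A\rtimes\Gamma_1)$, after which \cite[Theorem 2.6]{KV15} forces $u\cL(\Sigma)u^*\subset\cL(\tilde\Gamma)$ and one upgrades to mutual intertwining between $\cL(\tilde\Gamma)$ and $\cL(\Omega\Sigma)$ for a suitable $\Omega<H$. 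The conclusion then follows from \cite[Theorem 2.11]{CD-AD20} together with ${\rm Comm}^{(1)}_G(\tilde\Gamma)=\tilde\Gamma$ (Lemma~\ref{L:coindcomm}). The height/comultiplication machinery you invoke is used only \emph{afterward}, in Theorem~\ref{Th:coind3}, where the identification $u\cL(\tilde\Gamma)u^*=\cL(\tilde\Lambda)$ from Theorem~\ref{Th:coind2} is precisely what allows one to translate a $(\tilde\Gamma\times\tilde\Gamma)$-height bound into a $\tilde\Lambda$-height bound via \cite[Lemmas 2.4--2.5]{CU18}.
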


{\bf Notation.} Throughout the proof we use the following notation. Let $\cM$ be a tracial von Neumann algebra and $\cP\subset p\cM p$ and $\cQ\subset q\cM q$ some subalgebras. We write $\cP\prec_{\cM}^{s'}\cQ$ if $\cP\prec_{\cM}\cQ q'$, for any non-zero projection $q'\in \cQ'\cap q\cM q$. We also write $\cP\prec_{\cM}^{s,s'}\cQ$ if $\cP\prec_{\cM}^{s}\cQ$ and $\cP\prec_{\cM}^{s'}\cQ$.

{\it Proof of Theorem \ref{Th:coind2}.}
Denote by $\Delta: \cM\to\cM\bar\otimes\cM$ the $*$-homomorphism given by $\Delta(v_\lambda)=v_{\lambda}\otimes v_{\lambda}$, for any $\lambda\in H$.
For ease of notation, we identify $\Theta(G)$ with $G$ and denote $A=A_0^ I$, $\Gamma_1=\Gamma\times 1$ and $\Gamma_2=1\times\Gamma$.

By using \cite[Theorem 15.1.15 and Lemma 15.3.3]{BO08} and \cite[Lemma 3.3]{CD-AD20}, we derive that there exists $i$ such that $\Delta(\cL(\Gamma_1))\prec_M \cL(\tilde\Gamma)\bar\otimes\cL(\Gamma_i)$. We continue by applying \cite[Theorem 4.1]{DHI16} (see also \cite[Theorem 3.1]{Io11} and \cite[Theorem 3,3]{CdSS15}). Since $\Gamma_{\hat i}$ has property (T), we therefore obtain that there exists a subgroup $\Sigma<H$ with non-amenable centralizer $C_H(\Sigma)$ such that  $\cL(\Gamma_1)\prec_{\cM} \cL(\Sigma) \text{ and }\cL(\Gamma_{\hat i})\prec_{\cM} \cL(C_H(\Sigma)).$
Since $\tilde\Gamma$ is icc, we have $\cL(\tilde\Gamma)'\cap \cM=\mathbb C1$. Hence, by using \cite[Lemma 2.4(2)]{DHI16} we deduce that
\begin{equation}\label{aa10}
\cL(\Gamma_1)\prec^s_{\cM} \cL(\Sigma) \text{ and }\cL(\Gamma_{\hat i})\prec^s_{\cM} \cL(C_H(\Sigma)).    
\end{equation}

The rest of the proof is divided between the following two claims.

{\bf Claim 1.}
$    \cL(C_H(\Sigma)) \text{ is strongly non-amenable relative to }\cL(A\rtimes\Gamma_1) \text{ inside }\cM.$

{\it Proof of Claim 1.}
Assume by contradiction that $\cL(C_H(\Sigma))e \text{ is amenable relative to }\cL(A\rtimes\Gamma_1)$ for a non-zero projection $e\in \cL(C_H(\Sigma))'\cap \cM$. By passing to relative commutants in the first intertwining relation of \eqref{aa10}, we obtain by \cite[Lemma 3.5]{Va08} that $\cL(C_H(\Sigma))\prec_\cM \cL(\Gamma_2)$. By applying \cite[Lemma 2.4(3) and Lemma 2.6(2)]{DHI16} we further derive that there exists a non-zero projection $f\in \cL(C_H(\Sigma))'\cap \cM$ such that $\cL(C_H(\Sigma))f \text{ is amenable relative to }\cL(\Gamma_2)$. 

Next, note that $\cL(A\rtimes\Gamma_1)$ and $\cL(\Gamma_2)$ have amenable intersection and form a commuting square. 
Since $\cL(\Gamma_{\hat i})\prec^s_{\cM} \cL(C_H(\Sigma))$ we are in the setup of applying Theorem \ref{Th:augmentationtechniques} and we therefore obtain that $\Gamma_{\hat i}$ is amenable, contradiction. 
\hfill$\square$

{\bf Claim 2.} There exists a subgroup $H_0<H$ such that $\cL(H_0)\prec_{\cM}^s \cL(\tilde\Gamma)$ and $\cL(\tilde\Gamma)\prec_{\cM}\cL(H)$.

{\it Proof of Claim 2.}
Next, we note that Claim 1 implies that $ \cL(C_H(\Sigma)) $  is strongly non-amenable relative to  $\cL(A)$ inside $\cM.$ Hence, we can apply \cite[Theorem 2.6]{KV15} and derive that (i) $\cL(\Sigma)\prec_{\cM} \cL(A\rtimes\Gamma_0)$ or (ii) there exists a unitary $u\in \mathscr U(\cM)$ such that $u\cL(\Sigma)u^*\subset \cL(\tilde\Gamma)$. If (i) holds, note that together with \eqref{aa10} we have 
\[
\cL(\Sigma)\prec_{\cM} \cL(A\rtimes\Gamma_0) \text{ and } \cL(\Sigma)\prec_{\cM} \cL(\Gamma_i).
\]
We can use the augmentation technique as in the proof of Claim 1 and derive from $\cL(\Gamma_1)\prec_{\cM}\cL(\Sigma)$ that
\[
\Psi(\cL(\Gamma_1))\prec_{\tilde\cM\bar\otimes \cM}^s \tilde\cM\bar\otimes \cL(A\rtimes\Gamma_0) \text{ and } \Psi(\cL(\Gamma_1))\prec_{\tilde\cM\bar\otimes \cM}^s \tilde\cM\bar\otimes \cL(\Gamma_i).
\]
Since $\Gamma_0\cap\Gamma_i=1$ , it is easy to see that \cite[Lemma 2.6(3)]{DHI16} together with \cite[Proposition 2.7]{PV11} implies that $\Psi(\cL(\Gamma_1)) $ is amenable relative to $ \tilde\cM\bar\otimes 1 $. This proves that $\Gamma_1$ is amenable, contradiction. Hence, $\cL(\Sigma)\nprec_{\cM} \cL(A\rtimes\Gamma_0)$ and (ii) holds, meaning that there exists a unitary $u\in \mathscr U(\cM)$ such that $u\cL(\Sigma)u^*\subset \cL(\tilde\Gamma)$. Note that $\Omega:=\{g\in H| \mathcal O_{\Sigma}(g) \text{ is finite}\}$ is normalized by $\Sigma$ and $C_H(\Sigma)\subset\Omega\Sigma\subset {\rm Comm}^{(1)}_H(\Sigma)$.
Since $u\cL(\Sigma) u^*\nprec_{\cL(\tilde\Gamma)} \cL(\Gamma_0)$ we can use [Theorem 2.6, CD-AD20] and obtain that $u\cL(\Omega\Sigma)u^*\subset \cL(\tilde\Gamma)$.

Next, note that Claim 1 implies that $ u\cL(C_H(\Sigma))u^* \text{ is strongly non-amenable relative to }\cL(\Gamma_1) $ inside $\cL(\tilde\Gamma)$.
By using \cite[Lemma 5.2]{KV15} we derive that $u\cL(\Sigma)u^*\prec_{\cL(\tilde\Gamma)}^s \cL(\Gamma_1).$ Using this we deduce that
\begin{equation}\label{aa14}
\cL(\Gamma_2)\prec_{\cL(\tilde\Gamma)}^{s'} u\cL(\Omega\Sigma)u^*.    
\end{equation}
Indeed, if $z\in (u\cL(\Omega\Sigma)u^*)'\cap \cL(\tilde\Gamma)$ is a non-zero projection we have that $u\cL(\Sigma)u^*z\prec_{\cM} \cL(\Gamma_1).$ By passing to relative commutants, we deduce from \cite[Lemma 3.5]{Va08} that $\cL(\Gamma_2)\prec_{\cM} u\cL(\Omega)u^*z$ since $\cL(\Sigma)'\cap\cM\subset \cL(\Omega)$. Note that $\cL(\Gamma_2)\nprec_{\cM} \cL(g\tilde\Gamma g^{-1}\cap \tilde\Gamma)$, for any $g\in G\setminus\tilde\Gamma$. Otherwise, we can use Lemma \ref{L:coindcomm}(1) and get that $\cL(\Gamma_2)\prec_{\cM} \cL(\Gamma_0)$, which contradicts the diagonal embedding of $\Gamma_0<\Gamma_1\times\Gamma_2$.
Thus, we can apply \cite[Lemma 2.7]{CI17} and derive that $\cL(\Gamma_2)\prec_{\cL(\tilde\Gamma)} u\cL(\Omega)u^*z$, which proves \eqref{aa14}.

Next, by using \cite[Lemma 2.7]{CI17} as before, we obtain from \eqref{aa10} that  $\cL(\Gamma_1)\prec_{\cL(\tilde\Gamma)} u\cL(\Omega\Sigma)u^*$. By using \cite[Lemma 2.4(4)]{DHI16} we obtain that there exists a non-zero projection $z_1\in \mathcal Z( (u\cL(\Omega\Sigma)u^*)'\cap \cL(\tilde\Gamma))$ such that
\begin{equation}\label{aa15}
    \cL(\Gamma_1)\prec^{s'}_{\cL(\tilde\Gamma)} u\cL(\Omega\Sigma)u^*z_1.
\end{equation}
From \eqref{aa14} and \eqref{aa15} we derive that $((u\cL(\Omega\Sigma)u^*)'\cap \cL(\tilde\Gamma))z_1\prec_{\cL(\tilde\Gamma)}^s \cL(\Gamma_i)$ for any $i\in\overline{1,2}$. By using \cite[Lemma 2.8(2)]{DHI16} we obtain that $((u\cL(\Omega\Sigma)u^* )'\cap \cL(\tilde\Gamma))z_1\prec^s_{\cL(\tilde\Gamma)} \mathbb C 1$ and hence there exists a non-zero projection $z_2\in \mathcal Z( (u\cL(\Omega\Sigma)u^*)'\cap \cL(\tilde\Gamma))$ with $z_2\leq z_1$ such that
$((u \cL(\Omega\Sigma)u^*) '\cap \cL(\tilde\Gamma))z_2=\mathbb C z_2$. Now, we can apply \cite[Lemma 2.6]{Dr19b} and obtain that $\cL(\tilde\Gamma)\prec_{\cL(\tilde\Gamma)} u \cL(\Omega\Sigma)u^* z_2$.
In particular, we have that
    $u \cL(\Omega\Sigma)u^*\prec_{\cM}^s \cL(\tilde\Gamma) \text{ and } \cL(\tilde\Gamma)\prec_{\cM} u \cL(\Omega\Sigma)u^*.$ This ends the proof of the claim. \hfill$\square$.

Finally, we recall that $\tilde\Gamma<G$ are icc groups that satisfy ${\rm Comm}^{(1)}_G(\tilde\Gamma)=\tilde\Gamma$ by Lemma \ref{L:coindcomm}. Using Claim 2 we can apply \cite[Theorem 2.11]{CD-AD20} to show that there
 exist a subgroup $\Lambda<H$ and a unitary $u_1\in\mathscr U(\cM)$ such that $u_1 \cL(\tilde\Gamma)u_1^*= \cL(\Lambda)$.
\hfill$\blacksquare$

\begin{theorem}\label{Th:coind3}
Assume the same conditions as in Theorem \ref{Th:coinduced} and denote $\cM=\Theta(\cL(G))=\cL(H)$.

Then there exist a subgroup $\tilde\Lambda<H$ and a unitary $u\in\mathcal U(\cM)$ such that $\mathbb T u\Theta(\tilde\Gamma)u^*=\mathbb T \tilde\Lambda$.
\end{theorem}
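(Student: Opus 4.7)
The plan is to upgrade the algebra-level conclusion of Theorem \ref{Th:coind2} to the statement at the level of group unitaries, following the same blueprint that produces equation \eqref{a1a} in the proof of Theorem \ref{discretizationactingpart}. First, by Theorem \ref{Th:coind2} there exist a subgroup $\tilde\Lambda<H$ and a unitary $u_0\in\mathscr{U}(\cM)$ with $u_0\,\Theta(\cL(\tilde\Gamma))\,u_0^* = \cL(\tilde\Lambda)$. Replacing $\Theta$ by ${\rm ad}(u_0)\circ\Theta$, I may assume $\Theta(\cL(\tilde\Gamma)) = \cL(\tilde\Lambda)$, and the task reduces to producing a unitary $u\in\mathscr{U}(\cM)$ for which $u\,\Theta(g)\,u^* \in \mathbb T\,\tilde\Lambda$ for every $g\in\tilde\Gamma$.

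Next, since $\tilde\Gamma = \Gamma\times\Gamma$ is a product of two icc, non-amenable, weakly amenable, bi-exact, property (T) groups and $[\tilde\Gamma:\Gamma_0]=\infty$, I would invoke Theorem \ref{Th:coind1}(2) in its case (B). This produces a unitary $w\in\mathscr{U}(\cM\bten\cM)$ such that
\begin{equation*}
w\,\Delta\bigl(\cL(\tilde\Lambda)\bigr)\,w^* \subset \cL(\tilde\Lambda)\bten\cL(\tilde\Lambda),
\end{equation*}
where $\Delta:\cM\to\cM\bten\cM$ is the $*$-homomorphism defined by $\Delta(v_h)=v_h\otimes v_h$ for all $h\in H$. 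The final and most substantial step is the ``discretization'' argument from \cite[Theorem 3.1 and Proposition 7.2.3]{IPV10} combined with \cite[Theorem 4.1]{KV15}, carried out in detail in the proof of \cite[Proposition 4.3]{CU18}: using the property (T) of $\tilde\Gamma$, one can further adjust $w$ inside $\mathscr{U}(\cM\bten\cM)$ so that $w\,\Delta(\Theta(u_g))\,w^* \in \mathbb T\,(\tilde\Lambda\times\tilde\Lambda)$ for every $g\in\tilde\Gamma$. A Fourier-coefficient analysis of $\Theta(u_g)=\sum_{h\in H}c_g(h)\,v_h$, in the spirit of the final portions of the proofs of \cite[Theorem 8.3]{IPV10} and \cite[Theorem 4.4]{CU18}, then shows that the support of $c_g(\cdot)$ must collapse to a single element of $\tilde\Lambda$ after an inner adjustment by some $u\in\mathscr{U}(\cM)$, producing a map $g\mapsto\lambda(g)\in\tilde\Lambda$ with $u\,\Theta(u_g)\,u^* \in \mathbb T\,v_{\lambda(g)}$, and hence $\mathbb T\,u\,\Theta(\tilde\Gamma)\,u^* = \mathbb T\,\tilde\Lambda$.

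The main obstacle is this discretization step: while the IPV/KV/CU machinery is available, applying it requires verifying the factoriality of $\cL(\tilde\Lambda)$, the non-degeneracy of the comultiplication $\Delta$ on $\cL(\tilde\Lambda)$, and the mixing/relative-commutant conditions underlying \cite[Proposition 7.2.3]{IPV10}. These all go through precisely because each factor of $\tilde\Gamma$ is icc and bi-exact, which rules out the degenerate Fourier-support scenarios that could otherwise obstruct the extraction of group-like data from $w\,\Delta(\Theta(u_g))\,w^*$.
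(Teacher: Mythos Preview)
Your high-level plan is sound---reduce via Theorem \ref{Th:coind2} and \cite[Theorem 3.1]{IPV10} to a height statement, and use the comultiplication $\Delta$ together with Theorem \ref{Th:coind1}---but the black-box you invoke for the discretization step does not apply here. The argument behind \eqref{a1a} and \cite[Proposition 4.3]{CU18} is carried out for the class $\mathscr{GW}_0(A_0)$, where condition (b) forces ${\rm Stab}_\Gamma(J)$ to be finite once $|J|\geq k$. The left-right action $\tilde\Gamma=\Gamma\times\Gamma\curvearrowright\Gamma$ in case (B) does \emph{not} satisfy this: already for two points the stabilizer contains a centralizer in $\Gamma$, which is infinite. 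So the ``adjust $w$ so that $w\Delta(\Theta(u_g))w^*\in\mathbb T(\tilde\Lambda\times\tilde\Lambda)$'' step cannot be imported from the $\mathscr{GW}$ setting, and this is precisely where the work lies.

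The paper proceeds differently. After Theorem \ref{Th:coind2} it reduces, via \cite[Theorem 3.1]{IPV10}, to proving $h_\Lambda(u\tilde\Gamma u^*)>0$. The key new ingredient is \cite[Theorem A]{CdSS15}, which gives a product decomposition $\Lambda=\Lambda_1\times\Lambda_2$ together with a unitary $x$ and amplifications such that $\cL(\Lambda_i)=x\cL(\Gamma_i)^{t_i}x^*$. Using \cite[Lemmas 2.4--2.5]{CU18}, the height problem then splits factor by factor, and for the explicit unitary $w=(x^*\otimes x^*)\Delta(x)$ one has $w\Delta(\cL(\Gamma_i)^{t_i})w^*\subset\cL(\Gamma_i)^{t_i}\bar\otimes\cL(\Gamma_i)^{t_i}$. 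The positivity of $h_{\tilde\Gamma\times\tilde\Gamma}(w\Delta(\Gamma_1)w^*)$ is then established by a direct contradiction argument in the style of \cite[Lemma 5.10]{KV15} and \cite[Theorem 4.1]{Io10}: if heights along a sequence go to zero, one exploits Theorem \ref{Th:coind1}(1) (the strong intertwining $\Delta(\cL(A_0^I))\prec^s\cL(A_0^I)\bar\otimes\cL(A_0^I)$) together with the orbit combinatorics of the left-right action---namely that translates of a finite support $F_1\times F_2$ by elements of $\Gamma_1\times F_0\times\Gamma_1\times F_0$ have uniformly bounded overlaps---to force a vanishing that contradicts a carefully chosen element of $\Delta(\cL(A_0^I))$. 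None of this is covered by your citations; the product splitting and the tailored height computation are the missing ideas.
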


Before proceeding to the proof of Theorem \ref{Th:coind3} we need to recall the notion of a height of an element. Following \cite[Section 4]{Io10}, the height of an element $x\in \cL(G)$ is defined as 
$$h_G(x)=\underset{g\in G}{\rm max}|\tau(xu_g^*)|,$$
where $\{u_g\}_{g\in G}$ are the canonical unitaries that generate $\cL(G)$.

This notion has been of crucial use in several other classification results in von Neumann algebras \cite{Io10,IPV10,KV15,CI17,CU18,CDK19,CDHK20,CD-AD20}.
The main ingredient of the proof of Theorem \ref{Th:coind3} is \cite[Theorem 3.1]{IPV10} which asserts that if $H$ is any countable group such that $\cL(G)=\cL(H)$ and $h_G(H):=\underset{h\in H}{\rm inf} h_G(v_h)>0$, then there exists a unitary $u\in\cL(G)$ such that $\mathbb T G=u \mathbb T H u^*$. 
Here, we denoted by $\{v_h\}_{h\in H}$ the canonical unitaries that generate $\cL(H)$.

{\it Proof of Theorem \ref{Th:coind3}.} 
To simplify the writing, we identify $\Theta(G)$ with $G$ and
denote $\Gamma_1=\Gamma\times 1$ and $\Gamma_2=1\times\Gamma$.
By Theorem \ref{Th:coind2} there is a unitary $u\in\mathcal U(\cM)$ such that $  u\cL(\tilde\Gamma) u^*=\mathbb \cL(\Lambda)$ and therefore the proof of Theorem \ref{Th:coind3} will follow from \cite[Theorem 3.1]{IPV10} by showing that
\begin{equation}\label{height1}
h_\Lambda(u\tilde\Gamma u^*)>0.     
\end{equation}

Next, by using \cite[Theorem A]{CdSS15} we obtain a decomposition $\Lambda=\Lambda_1\times \Lambda_2$, a unitary $x\in\ \mathcal U(\cM)$ and a decomposition $\cL(\tilde\Gamma)=\cL(\Gamma_1)^{t_1}\bar\otimes \cL(\Gamma_2)^{t_2}$ for some $t_1,t_2>0$ with $t_1t_2=1$ such that 
\begin{equation}\label{split1}
\cL(\Lambda_1)=x\cL(\Gamma_1)^{t_1}x^* \text{ and } \cL(\Lambda_2)=x \cL(\Gamma_2)^{t_2}x^*.    
\end{equation}

Using \cite[Lemma 2.4]{CU18} we derive that \eqref{height1} is equivalent to $h_\Lambda(x\tilde\Gamma x^*)>0$. Since equation \eqref{split1} shows that the heights of $x\Gamma_1 x^*$ and $x\Gamma_2 x^*$ do not interact, it follows that \eqref{height1} is equivalent to $h_\Lambda(x\Gamma_i x^*)>0$, for any $1\leq i\leq 2$. For ease of notation, we will prove this only for $i=1$.

Denote by $\Delta: \cM\to\cM\bar\otimes\cM$ the $*$-homomorphism given by $\Delta(v_\lambda)=v_{\lambda}\otimes v_{\lambda}$, for any $\lambda\in H$. By letting $w=(x^*\otimes x^*)\Delta(x)$, we obtain from \eqref{split1} that $w\Delta(\cL(\tilde\Gamma))w^*\subset \cL(\tilde\Gamma)\bar\otimes \cL(\tilde\Gamma)$ and
\begin{equation}\label{split2}
    w(\Delta(\cL(\Gamma_1)^{t_1}))w^*\subset \cL(\Gamma_1)^{t_1}\bar\otimes \cL(\Gamma_1)^{t_1} \text{ and } w(\Delta(\cL(\Gamma_2)^{t_2}))w^*\subset \cL(\Gamma_2)^{t_2}\bar\otimes \cL(\Gamma_2)^{t_2}.
\end{equation}

By using \cite[Lemma 2.4]{CU18} and \cite[Lemma 2.5]{CU18} it follows that $h_{\tilde\Gamma\times\tilde\Gamma}(w\Delta(\Gamma_1)w^*)>0$ implies $h_\Lambda(x\Gamma_1 x^*)>0$. Hence, it remains to show that $h_{\tilde\Gamma\times\tilde\Gamma}(w\Delta(\Gamma_1)w^*)>0$.
We proceed now by following the strategy of the proof of \cite[Lemma 5.10]{KV15} (see also the proof of \cite[Theorem 4.1]{Io10}). For the convenience of the reader we will include all the details. Assume by contradiction that $h_{\tilde\Gamma\times\tilde\Gamma}(w\Delta(\Gamma_1)w^*)>0$. This shows that there exists a sequence of elements $(x_n)_n\subset  w\Delta(\Gamma_1)w^*$ such that $h_{\tilde\Gamma\times\tilde\Gamma}(x_n)\to 0$.
We continue by proving that for every sequence of unitaries $(w_n)_n\subset \cL(\tilde \Gamma\times\tilde\Gamma)$ and for every $a\in \cM\bar\otimes \cM$ with $E_{\cM\bar\otimes \cL(\tilde\Gamma)}(a)=E_{\cL(\tilde\Gamma)\bar\otimes \cM}(a)=0$, we have that
\begin{equation}\label{all}
    \lim_{n\to 0} \|E_{\cL(A_0)^I\bar\otimes \cL(A_0)^I}(x_naw_n)\|_2=0.
\end{equation}
For proving \eqref{all}, note that it is enough to assume that $a\in \cL(A_0)^I\bar\otimes \cL(A_0)^I$ with $E_{\cL(A_0)^I\otimes 1}(a)=E_{1\otimes \cL(A_0)^I}(a)=0$. Moreover, since $a$ can be approximated by elementary tensors, we may further assume that $a\in (\cL(A_0)\ominus\mathbb C 1)^{F_1}\otimes (\cL(A_0)\ominus\mathbb C 1)^{F_2}$, for some finite subsets $F_1,F_2\subset I$. In this case, $(\sigma_{g_1}\otimes\sigma_{g_2})(a)\perp (\sigma_{h_1}\otimes\sigma_{h_2})(a)$, for all $(g_1,g_2),(h_1,h_2)\in \tilde\Gamma\times \tilde\Gamma$ with the property that $(g_1 F_1, g_2 F_2)\neq (h_1 F_1, h_2 F_2).$ Here and after, we denote by $\sigma$ the action $\tilde\Gamma\car A_0^I$.

For a subset $S\subset \tilde\Gamma\times\tilde\Gamma$, we let $P_S$ be the orthogonal projection of $L^2(\cM\bar\otimes \cM)$ onto the closed linear span of $L^2(\cL(A_0^I)\bar\otimes \cL(A_0^I))u_g$ with $g\in S$.
Let $\epsilon>0$. By \eqref{split2} there exists a finite subset $F_0\subset \Gamma_2$ with the property that $\|v-P_{F}(v)\|_2\leq\epsilon$, for any unitary $v\in w(\Delta(\cL(\Gamma_1)))w^*$, where $F=\Gamma_1\times F_0\times\Gamma_1\times F_0$.

Next, we note that for all $(h_1,h_2)\in \tilde\Gamma\times\tilde\Gamma$ the set $\{(g_1,g_2)\in F| (g_1 F_1, g_2 F_2)=(h_1 F_1, h_2 F_2)  \}$ contains at most $k:=|F_0|^2|F_1| ^2 |F_2|^2$ elements. This follows from the fact that the action $\tilde\Gamma\car I$ is isomorphic to the left-right action $\Gamma\times\Gamma\car\Gamma$. Using the decomposition $\cM\bar\otimes \cM=(\cL(A_0)^I\bar\otimes \cL(A_0)^I)\rtimes (\tilde\Gamma\times\tilde\Gamma)$, it follows that $E_{\cL(A_0)^I\bar\otimes \cL(A_0)^I}(P_F(x_n)aw_n)=\sum_{g\in F}\tau(x_nu_g^*)\tau(w_nu_g)(\sigma\otimes\sigma)_g(a)$.

Therefore, for all $h\in \tilde\Gamma\times\tilde\Gamma$ we have that
\[
|\langle E_{\cL(A_0)^I\bar\otimes \cL(A_0)^I}(P_F(x_n)aw_n),(\sigma\otimes\sigma)_{h}(a)\rangle|\leq k h_{\tilde\Gamma\times\tilde\Gamma}(x_n).
\]
We further deduce that for all $h\in \tilde\Gamma\times\tilde\Gamma$ we have that $\alpha:=\|E_{\cL(A_0)^I\bar\otimes \cL(A_0)^I}(P_F(x_n)aw_n)\|_2^2$ is majorated by
\begin{equation*}
\begin{split}
\alpha &\leq \sum_{h\in F} |\langle E_{\cL(A_0)^I\bar\otimes \cL(A_0)^I}(P_F(x_n)aw_n),\tau(x_nu_h^*)\tau(w_nu_h)(\sigma\otimes\sigma)_{h}(a)\rangle|\\
&\leq k h_{\tilde \Gamma\times \tilde\Gamma}(x_n)\to 0.
\end{split}
\end{equation*}
Since $\|v-P_{F}(v)\|_2\leq\epsilon$, for any unitary $v\in w(\Delta(\cL(\Gamma_1)))w^*$, the equation \eqref{all} follows.

Next, we note that Theorem \ref{Th:coind1}(1) gives that $\Delta(\cL(A_0)^I)\prec_{\cM\bar\otimes \cM}^s \cL(A_0)^I\bar\otimes \cL(A_0)^I$. Note also that $\Delta(\cL(A_0)^I)\nprec_{\cM\bar\otimes \cM}\cM\bar\otimes \cL(\tilde\Gamma)$ and $\Delta(\cL(A_0)^I)\nprec_{\cM\bar\otimes \cM} \cL(\tilde\Gamma)\bar\otimes \cM$.
Otherwise, by using Theorem \ref{Th:coind1}(2) and \cite[Lemma 2.3]{BV12} we would get that $\Delta(\cM)\prec_{\cM\bar\otimes \cM}\cM\bar\otimes \cL(\tilde\Gamma)$ or $\Delta(\cM)\prec_{\cM\bar\otimes \cM} \cL(\tilde\Gamma)\bar\otimes \cM$. Using \cite[Proposition 7.2]{IPV10} it would follow that $\cL(\tilde \Gamma)\subset \cM$ has finite index, which is not possible.
Hence, we can find a finite subset $S\subset \tilde\Gamma\times\tilde\Gamma$ such that $\|b-P_S(b)\|_2\leq 1/8$, for any unitary $b\in\mathscr U(w\Delta(\cL(A_0)^I)w^*)$ and that there exists $d\in\mathcal U(w\Delta(\cL(A_0)^I)w^*)$ such that $\|E_{\cM\bar\otimes \cL(\tilde\Gamma)}(d)\|_2\leq 1/16$ and $\|E_{\cL(\tilde\Gamma)\bar\otimes \cM}(d)\|_2\leq 1/16$. Hence, $\|x_ndx_n^*-P_S(x_ndx_n^*)\|_2\leq 1/8,$ for any $n\ge 1$.
Since $E_{\cM\bar\otimes \cL(\tilde\Gamma)}$ and $E_{\cL(\tilde\Gamma)\bar\otimes \cM}$ commute, it follows that $d_0:=(1- E_{\cM\bar\otimes \cL(\tilde\Gamma)})\circ (1-E_{\cL(\tilde\Gamma)\bar\otimes \cM})(d)$ verifies $E_{\cM\bar\otimes \cL(\tilde\Gamma)}(d_0)=E_{\cL(\tilde\Gamma)\bar\otimes \cM}(d_0)=0$ and $\|d_0\|_2\ge 7/8$. We therefore obtain that
\begin{equation}\label{f1}
    \|x_nd_0x_n^*-P_S(x_nd_0x_n^*)\|_2\leq 1/2.
\end{equation}

By using equation \eqref{all} we obtain that $\|P_S(x_nd_0x_n^*)\|_2^2=\sum_{g\in S}\|E_{\cL(A_0)^I\bar\otimes \cL(A_0)^I}(x_nd_0x_n^* u_g^*)\|_2^2\to 0.$ In combination with \eqref{f1} we obtain that $7/8\leq\|d_0\|_2\leq 1/2$, contradiction.
\hfill$\blacksquare$

{\bf Step II: The reconstruction of the coinduced group $A_0^I\rtimes\tilde\Gamma$.}

In this step we complete the proof of Theorem \ref{Th:coinduced}. Before doing that, we first prove the following structural result for group von Neumann algebras of coinduced groups that satisfy Assumption \ref{A:assumption}.

\begin{theorem}\label{Th:coind4}
Let $G=A_0^I\rtimes \tilde\Gamma$ be as in Assumption \ref{A:assumption}.
Let $H$ be a countable group and let $\Theta: \cL(G)\ra \cL(H)$ be a $\ast$-isomorphism. 

Then $H$ admits a semi-direct product decomposition $H=\Sigma\rtimes\tilde\Lambda$ and there exist a group isomorphism $\delta:\tilde\Gamma\to\tilde\Lambda$, a $*$-isomorphism $\Theta_0:\cL(A_0^I)\to\cL(\Sigma)$, a character $\eta: \tilde\Gamma\to\mathbb T$ and a unitary $w\in \cL(H)$ such that $\text{for all }x\in \cL(A_0^I) \text{ and }g\in \tilde\Gamma,$ we have
\[
\Theta(x u_g) =\eta(g)w^* \Theta_0(x) v_{\delta(g)}w.
\]
%In particular, if $A_0$ is $W_{\rm aut}^*$-superrigid, then $G$ is $W_{\rm aut}^*$-superrigid.
Here, we denoted by $\{u_g\}_{g\in G}$ and $\{v_h\}_{h\in H}$ the canonical unitaries of $\cL(G)$ and $\cL(H)$, respectively. 
\end{theorem}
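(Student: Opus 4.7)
The plan is to reduce Theorem \ref{Th:coind4} to an application of the height criterion of Ioana--Popa--Vaes, \cite[Theorem 3.1]{IPV10}, for passing between group-like bases of a $\mathrm{II}_1$ factor, with Theorems \ref{Th:coind1} and \ref{Th:coind3} providing the necessary input. First, I would invoke Theorem \ref{Th:coind3} to obtain a subgroup $\tilde\Lambda<H$ and a unitary $w_0\in\mathcal U(\cL(H))$ with $\mathbb T\,w_0\Theta(\tilde\Gamma)w_0^{*}=\mathbb T\tilde\Lambda$. Replacing $\Theta$ by $\mathrm{ad}(w_0)\circ\Theta$ (and recording $w_0$ to assemble the final unitary $w$), one compares Fourier coefficients in the canonical bases of both sides; since the $v_\lambda$ are linearly independent, this directly produces a group isomorphism $\delta:\tilde\Gamma\to\tilde\Lambda$ and a character $\eta:\tilde\Gamma\to\mathbb T$ with $\Theta(u_g)=\eta(g)v_{\delta(g)}$ for every $g\in\tilde\Gamma$. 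In particular $\cL(H)=\mathcal N\rtimes\tilde\Lambda$, where $\mathcal N:=\Theta(\cL(A_0^{I}))$ and $\tilde\Lambda$ acts by $\mathrm{Ad}(v_\lambda)$.

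The heart of the proof is to upgrade this partial identification on $\tilde\Gamma$ to a full identification on $G$, namely to establish the uniform height estimate
\[
h_G\bigl(\Theta^{-1}(H)\bigr):=\inf_{h\in H}\max_{g\in G}\bigl|\tau\bigl(\Theta^{-1}(v_h)u_g^{*}\bigr)\bigr|>0.
\]
The key input here is Theorem \ref{Th:coind1}(1), giving $\Delta(\cL(A_0^{I}))\prec_{\cL(H)\bar\otimes\cL(H)}\cL(A_0^{I})\bar\otimes\cL(A_0^{I})$ for the comultiplication $\Delta(v_h)=v_h\otimes v_h$. Combining this with the identification $\Theta(u_g)=\eta(g)v_{\delta(g)}$ on $\tilde\Gamma$, one runs the ``discreteness of group elements inside a tensor product of factors'' argument of \cite[Theorem 7.1]{IPV10}: fix $h\in H$ and expand $\Theta^{-1}(v_h)=\sum_{g\in\tilde\Gamma}x_{g}u_g$ with $x_g\in\cL(A_0^{I})$; the fact that $v_h\otimes v_h$ is a rank-one tensor combined with the intertwining of $\cL(A_0^{I})$ under $\Delta$ and the standard basis structure of $\cL(A_0^{I})=\bar\otimes_I\cL(A_0)$ force only finitely many $x_g$ to be nonzero, each of them a scalar multiple of a group element of $A_0^{I}$. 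This follows the same pattern as the proofs in \cite[Section 4]{KV15} and \cite[Section 4]{CU18}, which handle analogous situations.

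Once $h_G(\Theta^{-1}(H))>0$ is established, \cite[Theorem 3.1]{IPV10} yields a unitary $u_1\in\mathcal U(\cL(G))$ with $\mathbb T G=u_1\mathbb T\Theta^{-1}(H)u_1^{*}$. Setting $w_1=\Theta(u_1)^{*}\in\mathcal U(\cL(H))$ and further replacing $\Theta$ by $\mathrm{ad}(w_1)\circ\Theta$, we obtain a group isomorphism $\psi:G\to H$ and a character $\mu:G\to\mathbb T$ such that $\Theta(u_g)=\mu(g)v_{\psi(g)}$ for every $g\in G$. The decomposition $G=A_0^{I}\rtimes\tilde\Gamma$ then transfers to $H=\psi(A_0^{I})\rtimes\psi(\tilde\Gamma)$; one checks that $\psi(\tilde\Gamma)$ coincides (up to conjugation already absorbed into $w$) with the $\tilde\Lambda$ produced in the first step, and one takes $\Sigma=\psi(A_0^{I})$, $\Theta_0=\Theta|_{\cL(A_0^{I})}:\cL(A_0^{I})\to\cL(\Sigma)$, $\delta=\psi|_{\tilde\Gamma}$, $\eta=\mu|_{\tilde\Gamma}$, and $w=w_0 w_1$. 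The desired formula $\Theta(xu_g)=\eta(g)w^{*}\Theta_0(x)v_{\delta(g)}w$ is then immediate from $\Theta(u_g)=\mu(g)v_{\psi(g)}$ and the multiplicativity of $\Theta$ on $\cL(A_0^{I})\cdot\cL(\tilde\Gamma)$.

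The main obstacle is the height control step: establishing $h_G(\Theta^{-1}(H))>0$. This requires a delicate marriage of the global intertwining of Theorem \ref{Th:coind1}(1) with the already-identified behavior of $\Theta$ on $\tilde\Gamma$, together with exploiting that $\cL(A_0^{I})$ is an infinite tensor product of factors so that rank-one tensors in the comultiplication can only arise from genuine group elements. The remaining steps---the initial reduction, the invocation of \cite[Theorem 3.1]{IPV10}, and the final transfer of the semidirect-product structure from $G$ to $H$---are essentially bookkeeping once the height estimate is in hand.
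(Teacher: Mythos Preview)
Your first step, invoking Theorem \ref{Th:coind3} to arrange $\Theta(u_g)=\eta(g)v_{\delta(g)}$ for all $g\in\tilde\Gamma$, is correct and matches the paper. The remainder, however, attempts to prove strictly more than Theorem \ref{Th:coind4} asserts, and that stronger statement is false under Assumption \ref{A:assumption} alone.

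Theorem \ref{Th:coind4} only claims a $\ast$-isomorphism $\Theta_0:\cL(A_0^I)\to\cL(\Sigma)$, not a group isomorphism $A_0^I\cong\Sigma$. Your route, establishing $h_G(\Theta^{-1}(H))>0$ and invoking \cite[Theorem 3.1]{IPV10}, would produce a group isomorphism $G\cong H$, and in particular would prove that every $G$ satisfying Assumption \ref{A:assumption} is abstractly $W^*$-superrigid. But the paper explicitly notes (after Theorem \ref{Main:leftright}) that $A_0\wr_\Gamma(\Gamma\times\Gamma)$ is abstractly $W^*$-superrigid \emph{if and only if} $A_0$ is. So take any $A_0$ that is not abstractly $W^*$-superrigid (e.g.\ any nontrivial icc amenable group), choose $B_0\not\cong A_0$ with $\cL(A_0)\cong\cL(B_0)$, and let $H=B_0\wr_\Gamma(\Gamma\times\Gamma)$: then $\cL(G)\cong\cL(H)$ but $G\not\cong H$, and your height inequality must fail for this $\Theta$.

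The specific gap is the assertion that the intertwining $\Delta(\cL(A_0^I))\prec\cL(A_0^I)\bar\otimes\cL(A_0^I)$ together with the rank-one structure of $v_h\otimes v_h$ forces the $G$-Fourier coefficients $x_g$ of $\Theta^{-1}(v_h)$ to be scalar multiples of group elements of $A_0^I$. The comultiplication $\Delta$ is built from the $H$-basis, so the equation $\Delta(y)=y\otimes y$ characterizes group-like elements relative to $H$, not relative to $G$; it tells you nothing about the $A_0^I$-height of the $x_g$. The arguments in \cite[Section 4]{KV15} and \cite[Section 4]{CU18} that you cite do not supply this; they operate under additional hypotheses on the base that are absent here.

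The paper's proof avoids this overreach entirely. After the reduction $\Theta(u_g)=v_{\delta(g)}$, one has $\Delta(\Theta(u_g))=\Theta(u_g)\otimes\Theta(u_g)$ for $g\in\tilde\Gamma$, so $\Delta$ is $\tilde\Gamma$-equivariant on $\Theta(\cL(A_0^I))$. This equivariance, combined with Theorem \ref{Th:coind1}(1) and the transitivity of $\tilde\Gamma\curvearrowright I$, upgrades the intertwining to an actual containment $\Delta(\Theta(\cL(A_0^I)))\subset\Theta(\cL(A_0^I))\bar\otimes\Theta(\cL(A_0^I))$ (as in Claim 4 of the proof of \cite[Theorem 4.4]{CU18}). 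Then \cite[Lemma 7.1]{IPV10} immediately gives a subgroup $\Sigma<H$ with $\Theta(\cL(A_0^I))=\cL(\Sigma)$, which is exactly the content of the theorem. No global height bound is needed or claimed.
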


{\it Proof.} Denote $\cM=\Theta(\cL(G))=\cL(H)$.
Using Theorem \ref{Th:coind3} we can find a subgroup $\tilde\Lambda<H$, a group isomorphism $\delta:\tilde\Gamma\to\tilde\Lambda$, a character $\omega:\tilde\Gamma\to \mathbb T$ and a unitary $u_0\in\mathscr U(\cM)$ such that 
    $\Theta(u_g)=\omega(g)u_0^* v_{\delta(g)} u_0, \text{ for all }g\in\tilde\Gamma.$ By replacing 
   the $*$-isomorphism $\Theta$ by the $*$-isomorphism $\Theta_{u_0,\omega}$ given by $\Theta_{u_0,\omega}(u_g)=\omega(g)u_0\Theta(u_g)u_0^*$ for $g\in G$, we have that
 \begin{equation}\label{tt1}
    \Theta(u_g)= v_{\delta(g)}, \text{ for all }g\in\tilde\Gamma.
\end{equation}
Denote by $\Delta: \cM\to\cM\bar\otimes\cM$ the $*$-homomorphism given by $\Delta(v_\lambda)=v_{\lambda}\otimes v_{\lambda}$, for any $\lambda\in H$. From \eqref{tt1} we derive that $\Delta(\Theta(u_g))=\Theta(u_g)\otimes \Theta(u_g)$, for any $g\in\tilde\Gamma$ and from Theorem \ref{Th:coind1} we get that $\Delta(\Theta(\cL(A_0^I)))\prec_{\cM\bar\otimes \cM}^s \Theta(\cL(A_0^I))\bar\otimes \Theta(\cL(A_0^I))$. We can proceed as in Claim 4 of the proof of \cite[Theorem 4.4]{CU18} and deduce that $\Delta(\Theta(\cL(A_0^I)))\subset \Theta(\cL(A_0^I)) \bar\otimes \Theta(\cL(A_0^I)).$ Using \cite[Lemma 7.1.2]{IPV10} we further deduce that there exists a subgroup $\Sigma<H $ such that $\Theta(\cL(A_0^I))=\cL(\Sigma)$ and the conclusion follows.
\hfill$\blacksquare$

{\bf Proof of Theorem \ref{Th:coinduced}.}
Using Theorem \ref{Th:coind4} we can assume without loss of generality that $H$ admits a semi-direct product decomposition $H=\Sigma\rtimes\tilde\Lambda$ and there exists a group isomorphism $\delta:\tilde\Gamma\to\tilde\Lambda$ such that $\Theta(\cL(A_0^I))=\cL(\Sigma)$ and $\Theta(u_g)=v_{\delta(g)}$, for all $g\in\tilde\Gamma$. For ease of notation, we are identifying $\Theta(x)$ with $x$, for any $x\in\cL(G)$, and we therefore have $\cM:=\cL(G)=\cL(H)$.

Assume first that $A_0$ is icc, torsion free, bi-exact and it contains an infinite property (T) subgroup $K_0$ that is $\Gamma_0$-invariant and has trivial virtual centralizer.
Throughout the proof we will be using the following notation: for a subgroup $K<A_0$, we denote by $K^i<A_0^i$ the canonical copy of $K$ on the $i$th component of $A_0^I$.
The proof of our theorem follows now from the following three claims.

{\bf Claim 1.} For any $i\in I$ there exists a subgroup $\Sigma^i<\Sigma$ such that $\cL(K_0^i)\subset \cL(\Sigma^i)\subset \cL(A_0^i)$.

{\it Proof of Claim 1.} Let $i\in I$. Since $K_0^i$ has property (T), then for any $\epsilon>0$ there exists a finite subset $F\subset I$ such that 
\begin{equation}\label{o1}
    \|P_{A_0^F\times A_0^F}(\Delta(k))-\Delta(k)\|_2\leq \epsilon, \text{ for all }k\in (\cL(K_0^i))_1.
\end{equation}
%Next, we show that 
%\begin{equation}\label{o2}
%    \Delta(\cL(K_0^i))\prec_{\cL(A_0^I)\bar\otimes \cL(A_0^I)} \cL(A_0^i)\bar\otimes \cL(A_0^i).
%\end{equation}
The following argument is inspired by the proof of Claim 4.5 in \cite[Theorem 4.4]{CU18}.
Let $g\in {\rm Stab}_{\tilde\Gamma}(i)$. From \eqref{o1} we get that $\|P_{A_0^F\times A_0^F}(\Delta(u_gku_g^*)-\Delta(u_gku_g^*))\|_2\leq \epsilon,$  for all $k\in (\cL(K_0^i))_1$. As $\Delta(u_g)=v_{\delta(g)}\otimes v_{\delta(g)}=u_g\otimes u_g$, we further derive that
\begin{equation}\label{o3}
    \|P_{A_0^{gF}\times A_0^{gF}}(\Delta(k))-\Delta(k)\|_2\leq \epsilon, \text{ for all }k\in (\cL(K_0^i))_1 \text{ and }g\in{\rm Stab}_{\tilde \Gamma}(i).
\end{equation}
Combining \eqref{o1} and \eqref{o3} and the fact that $P_{S_1}\circ P_{S_2}=P_{S_1\cap S_2},$ for all subsets $S_1,S_2\subset A_0^I\times A_0^I$, we obtain that 
\begin{equation}\label{o4}
    \|P_{(A_0^F\times A_0^F)\cap(A_0^{gF}\times A_0^{gF})}(\Delta(k))-\Delta(k)\|_2\leq 2\epsilon, \text{ for all }k\in (\cL(K_0^i))_1 \text{ and }g\in{\rm Stab}_{\tilde \Gamma}(i).
\end{equation}
Since for all $j\neq i$ we have that ${\rm Stab}_{\tilde\Gamma}(i)\cdot j$ is an infinite set, we see that there exists $g\in {\rm Stab}_{\tilde\Gamma}(i)$ such that $gF\cap F\subset \{i\}$ and hence  $(A_0^F\times A_0^F)\cap(A_0^{gF}\times A_0^{gF})\subset A_0^i\times A_0^i.$ Using this in \eqref{o4}, we derive that $ \|P_{A_0^i\times A_0^i}(\Delta(k))-\Delta(k)\|_2\leq 2\epsilon, \text{ for all }k\in (\cL(K_0^i))_1$. As $\epsilon>0$ is arbitrary, that $\Delta(\cL(K_0^i))\subset \cL(\Sigma^i)\bar\otimes \cL(\Sigma^i)$. Using \cite[Lemma 2.8]{CDK19} we derive that there exists a subgroup $\Sigma^i<\Sigma$ that satisfies $\cL(K_0^i)\subset \cL(\Sigma^i)\subset \cL(A_0^i)$.
\hfill$\square$

{\bf Claim 2.} For any $i\in I$ there exists a subgroup $\Omega_i<\Sigma$ such that $\cL(A_0^{I\setminus\{i\}})\subset \cL (\Omega_i)\subset \cL(A_0^I)$ and $[\cL(\Omega_i):\cL(A_0^{I\setminus\{i\}})]<\infty$.

{\it Proof of Claim 2.} Passing to relative commutants inside $\cL(A_0^i)$ in Claim 1, we obtain that $$\cL(\Sigma^i)'\cap \cL(A_0^i)\subset \cL(K_0^i)'\cap \cL(A_0^i)\subset \cL(vC_{A_0^i}(\Sigma^i))=\mathbb C 1,$$ since $K_0^i$ has trivial virtual centralizer in $A_0^i.$ This implies that $\cL(\Sigma^i)'\cap \cL(A_0^i)=\mathbb C1$. Passing to relative commutants inside $\cL(A_0^I)=\cL(\Sigma)$ in Claim 1, we obtain that $\cL(\Sigma^i)'\cap \cL(\Sigma)=\cL(A_0^{I\setminus\{i\}})$. Let $\Omega_i=vC_\Sigma(\Sigma^i)$ and note that $$\cL(A_0^{I\setminus\{i\}})=\cL(\Sigma^i)'\cap \cL(\Sigma)\subset \cL(\Omega_i)\subset \cL(A_0^I).$$ Since $\cL(A_0)$ is a II$_1$ factor, we can apply \cite[Theorem A]{Ge95} and derive that there exists a subalgebra $B_i\subset \cL(A_0^i)$ such that 
\begin{equation}\label{oo1}
    \cL(\Omega_i)=\cL(A_0^{I\setminus\{i\}})\bar\otimes B_i.
\end{equation}
Note that $\cL(\Sigma^i\Omega_i)'\cap \cL(A_0^I)=\mathbb C 1$, which implies that $\Sigma^i\Omega_i$ is icc. Since $vC_{\Omega_i}(\Omega_i)\subset vC_{\Sigma^i\Omega_i}(\Sigma^i\Omega_i)$, we derive that $\Omega_i$ is icc as well.
Since $\Omega_i=vC_\Sigma(\Sigma^i)$, one can find an increasing sequence of groups $\Omega_i^s<\Omega_i^{s+1}<\dots<\Omega_i$ with $\cup_{s\ge 1}\Omega_i^s=\Omega_i$ such that $C_{\Sigma^i}(\Omega_i^{s+1})< C_{\Sigma^i}(\Omega_i^s)<\dots< \Sigma^i$ and $[\Sigma^i: C_{\Sigma^i}(\Omega_i^s)]<\infty$, for all $s\ge 1$. Since
$$
\cL(A_0^{I\setminus\{i\}})=\cL(\Sigma^i)'\cap \cL(\Sigma)\subset \cL(C_{\Sigma^i}(\Omega_i^s))'\cap \cL(\Sigma)\subset \cL(A_0^I),
$$
we can use \cite[Theorem A]{Ge95} once again to deduce that there exists a subalgebra $B_i^s\subset \cL(A_0^i)$ such that 
\begin{equation}\label{oo2}
\cL(C_{\Sigma^i}(\Omega_i^s))'\cap \cL(\Sigma)=\cL(A_0^{I\setminus\{i\}})\bar\otimes B^s_i.    
\end{equation}
Next, notice that $B_i^s,\cL(C_{\Sigma^i}(\Omega_i^s))\subset \cL(A_0^i)$ are commuting subalgebras and $\cL(C_{\Sigma^i}(\Omega_i^s))$ has no amenable direct summand since $[\Sigma^i: C_{\Sigma^i}(\Omega_i^s)]<\infty$ and $\Sigma^i$ is non-amenable. The assumption implies by \cite{Oz03} that $\cL(A_0^i)$ is solid, and hence, $B_i^s$ is a completely atomic von Neumann algebra for any $s\ge 1.$ By \cite[Proposition 3.2]{OP07} it follows that the action by conjugation $\Sigma^i\car B_i^s$ is weakly compact in the sense of \cite{OP07}.

From \eqref{oo2} we have  $\cL(\Omega_i^s)\subset \cL(A_0^{I\setminus\{i\}})\bar\otimes B^s_i$, and hence,
$$
\cL(\Omega_i)=\overline{\cup_{s\ge 1}\cL(\Omega_i^s)}^{\rm SOT}\subset \cL(A_0^{I\setminus\{i\}}) \bar\otimes \overline{\cup_{s\ge 1}B_i^s}^{\rm SOT}.
$$
Using \eqref{oo1} we further derive that $B_i\subset \overline{\cup_{s\ge 1}B_i^s}^{\rm SOT}$. Moreover, since $B_i$ is normalized by $\Sigma^i$, we obtain from \cite[Lemma 2.7]{DHI16} and \cite[Proposition 3.2]{OP07} that the action $\Sigma^i\car B_i$ by conjugation is weakly compact. Using that $A_0$ is bi-exact, we obtain from the proof of \cite[Theorem 6.1]{CSU11} that $B_i$ is not diffuse, and hence, it is finite dimensional.  
\hfill$\square$

%From Claim 2 and \cite[Proposition 2.3(1)]{CdSS17} we deduce that $\mathcal Z(\cL(\Omega_i))$ is finite dimensional. This implies that $vC_{\Omega_i}(\Omega_i)$ is finite, and hence, $\Omega_i$ is finite-by-icc. 
Since $\Omega_i$ is icc, we can therefore apply \cite[Theorem 4.6]{CdSS17} and derive that there exists a subgroup $\Psi_i<\Sigma$ such that %  $\Psi_i<C_{\Sigma}(\Sigma^i)$ such that %$[\Sigma:\Omega_i \Psi_i]<\infty$ and 
\begin{equation}\label{oo3}
\cL(A_0^i)\prec^s_{\cL(\Sigma)} \cL(\Psi_i) \text{ and }\cL(\Psi_i)\prec_{\cL(\Sigma)}^s\cL(A_0^i).    
\end{equation}

{\bf Claim 3.} For any $i\in I$ %we have $\Delta(\cL(A_0^i)) \subset \cL(A_0^i)\bar\otimes \cL(A_0^i).$
there exists a subgroup $\Sigma_0^i<\Sigma$ such that $\cL(\Sigma_0^i)=\cL(A_0^i)$.

{\it Proof of Claim 3.} Let $i\in I$. From relation \eqref{oo3} and \cite[Remark 2.2]{DHI16} we deduce that $\Delta(\cL(A_0^i)) \prec^s_{\cL(A_0^I)\bar\otimes \cL(A_0^I)} \Delta(\cL(\Psi_i)).$ Since $\Delta(\cL(\Psi_i))\subset \cL(\Psi_i)\bar\otimes \cL(\Psi_i)$, we conclude from \eqref{oo3} and \cite[Lemma 3.7]{Va08}  that $\Delta(\cL(A_0^i)) \prec^s_{\cL(A_0^I)\bar\otimes \cL(A_0^I)} \cL(A_0^i)\bar\otimes \cL(A_0^i).$ It follows that for any $\epsilon>0$ there exists a finite subset $G\subset I$ with $i\in G$ such that 
\begin{equation*}
    \|P_{A_0^G\times A_0^G}(\Delta(x))-\Delta(x)\|_2\leq \epsilon, \text{ for all }x\in (\cL(A_0^i))_1.
\end{equation*}
By proceeding exactly as in the proof of Claim 1, we derive that $\Delta(\cL(A_0^i)) \subset \cL(A_0^i)\bar\otimes \cL(A_0^i)$, and hence, the claim follows from \cite[Lemma 7.1]{IPV10}.
\hfill$\square$

Let $i_0=\Gamma_0\in I$ and denote $\Sigma_0=\Sigma_0^{i_0}$.
Since $\tilde\Gamma\car I$ is transitive and $\delta:\tilde\Gamma\to\tilde\Lambda$ is a group isomorphism such that $\Theta(u_g)=v_{\delta(g)}$, we get that $\Sigma_0^i\cong \Sigma_0$, for any $i\in I$, and hence, we can assume $\Sigma =\Sigma_0^I$.  Denote $\delta(\Gamma_0)=\Lambda_0<\tilde \Lambda$. Since $\Gamma_0\car A_0$ acts by group automorphisms, it follows that $\Lambda_0\car \Sigma_0$ acts by group automorphisms as well. Moreover, we have that $H=\Sigma_0^I\rtimes\tilde\Lambda$, where
$\tilde\Lambda\car \Sigma_0^I$ is the coinduced action of $\Lambda_0\car \Sigma_0$.

We continue by explicitly writing how the coinduced actions $\tilde\Gamma\car \cL(A_0)^I$ and $\tilde\Lambda\car \cL(\Sigma_0)^I$ are defined. Let $\pi_i:\cL(A_0)\to \cL(A_0^I)$ be the embedding of $\cL(A_0)$ as the $i$'th tensor factor and let $c:\tilde\Gamma\times I\to \Gamma_0$ be a cocycle such that
\begin{equation}\label{f1}
u_g \pi_i(a) u_g^*=\pi_{gi}(u_{c(g,i)}au_{c(g,i)}^*), \text{ for all }g\in\tilde\Gamma, i\in I \text{ and }a\in \cL(A_0).     
\end{equation}
We still denote by $\pi_i$ the embedding $\pi_i:\cL(\Sigma_0)\to \cL(\Sigma_0^I)$ of $\cL(\Sigma_0)$ as the $i$'th tensor factor and let $d:\tilde\Lambda\times I\to \Lambda_0$ be a cocycle such that
\begin{equation}\label{f2}
v_h \pi_i(b) v_h^*=\pi_{hi}(v_{d(h,i)}bv_{d(h,i)}^*), \text{ for all }h\in\tilde\Gamma, i\in I \text{ and } b\in\cL(\Sigma_0).     
\end{equation}
Without loss of generality we may assume that $\delta(c(g,i))=d(\delta(g),i)$, for all $g\in\tilde\Gamma,i\in I.$

Note that Claim 3 gives a family of automorphisms  $\{\theta_i:\cL(A_0)\to \cL(\Sigma_0)\}_{i\in I}$ such that $\Theta_{|\cL(A_0^I)}=\bar\otimes_{i\in I} \theta_i$, and hence, $\Theta\circ \pi_i=\pi_i\circ \theta_i$, for any $i\in I$. From \eqref{f1} and \eqref{f2} we derive that for all $g\in\tilde\Gamma,i\in I$ and $a\in \cL(A_0)$ we have $v_{\delta(g)}(\pi_i\circ \theta_i)(a)v_{\delta(g)}^*=(\pi_{gi}\circ\theta_{gi})(u_{c(g,i)}au_{c(g,i)}^*)$. Thus,
\begin{equation}\label{equivariant}
\theta_{gi_0}(c(g,i_0)\cdot a)=d(\delta(g),i_0) \cdot \theta_{i_0}(a), \text{ for all } g\in\tilde\Gamma \text{ and }a\in \cL(A_0).     
\end{equation}

Hence, we obtain that $\theta_{i_0}(g\cdot a)=\delta(g)\cdot \theta_{i_0}(a)$, for all $g\in\Gamma_0$ and $a\in \cL(A_0)$.  
Since $\tilde\Gamma\car I$ is transitive, \eqref{equivariant} implies that we can assume that $\theta_i=\theta_j$, for all $i\neq j$, and hence, the conclusion follows in this case.

Assume now that $\Gamma_0\car A_0$ acts trivially. By proceeding as in Claims 4.5-4.6 from the proof of \cite[Theorem 4.4]{CU18} we derive the conclusion of Claim 3, and hence, the theorem follows.
\hfill$\blacksquare$

As a consequence of Theorem \ref{Th:coinduced}, we obtain the following W$^*$-superrigid coinduced groups.

\begin{corollary}\label{Cor:coind}
Let $G=A_0^I\rtimes \tilde\Gamma$ be as in Assumption \ref{A:assumption}. Then the following hold:
\begin{enumerate}
    \item If $A_0$ is abstractly $W^*$-superrigid and either $\Gamma_0\car A_0$ acts trivially (i.e. $\tilde \Gamma\car A_0^I$ is a generalized Bernoulli action), or $A_0$ is icc, torsion free, bi-exact and it contains an infinite property (T) subgroup that is $\Gamma_0$-invariant and has trivial virtual centralizer, then $G$ is $W_{\rm aut}^*$-superrigid.
    \item If $A_0\rtimes\Gamma_0$ is $W^*$-superrigid such that ${\rm Comm}^{(1)}_{A_0\rtimes\Gamma_0}(\Gamma_0)=\Gamma_0$ and $A_0$ contains a property (T) subgroup that is $\Gamma_0$-invariant and has trivial virtual centralizer, then $G$ is $W^*$-superrigid.
\end{enumerate}
\end{corollary}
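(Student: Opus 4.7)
The plan is to deduce both parts of Corollary \ref{Cor:coind} as consequences of Theorem \ref{Th:coinduced} applied to the given $*$-isomorphism $\Theta: \cL(G)\to\cL(H)$. Theorem \ref{Th:coinduced} produces the coinduced decomposition $H=\Sigma_0^I\rtimes_\rho\tilde\Lambda$ of $\Lambda_0\car \Sigma_0$, a group isomorphism $\delta:\tilde\Gamma\to\tilde\Lambda$ with $\delta(\Gamma_0)=\Lambda_0$, a $*$-isomorphism $\theta:\cL(A_0)\to\cL(\Sigma_0)$ satisfying the equivariance $\theta\circ \sigma_g=\rho_{\delta(g)}\circ \theta$ on $\Gamma_0$, a character $\eta$ and a unitary $w\in\cL(H)$ with $\Theta(xu_g)=\eta(g)w^*\theta^{\bar\otimes I}(x)v_{\delta(g)}w$. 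The remaining task in each case is to decode the information carried by $\theta$ and to glue it back together with $\delta$ to obtain a group-like description of $\Theta$ on all of $\cL(G)$.

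For part (1), I would invoke the abstract $W^*$-superrigidity of $A_0$ applied to $\theta$: there exist a group isomorphism $\rho_0:A_0\to\Sigma_0$, a character $\chi:A_0\to\mathbb T$ and an automorphism $\phi\in {\rm Aut}(\cL(\Sigma_0))$ with $\theta=\phi\circ\Psi_{\chi,\rho_0}$. The key point is then to show that the $\Gamma_0$-equivariance of $\theta$, together with the fact that the group-like and automorphism components of an abstractly $W^*$-superrigid decomposition are essentially unique, forces $\rho_0$ to intertwine $\Gamma_0\car A_0$ with $\Lambda_0\car\Sigma_0$ via $\delta$ (and $\chi$ to be $\Gamma_0$-invariant). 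Once $\rho_0$ is $\Gamma_0$-equivariant, it coinduces to a group isomorphism $G=A_0^I\rtimes \tilde\Gamma\to \Sigma_0^I\rtimes\tilde\Lambda=H$; the character $\chi^I\cdot\eta$ extends accordingly; and $\phi$ gives the Houghton-type automorphism $\Phi_\phi$ of $\cL(H)$ of Section \ref{Section:HoughtonAut} with which we can absorb the remaining tensor factors. Rearranging, $\Theta={\rm ad}(w)\circ\Phi_\phi\circ\Psi_{\eta\cdot\chi^I,\,\tilde\rho}$, which is exactly the abstract $W^*$-superrigidity conclusion.

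For part (2), the strategy is the same but one works instead with $A_0\rtimes\Gamma_0$. The $\Gamma_0$-equivariance of $\theta$ lets us upgrade it to a $*$-isomorphism $\tilde\theta:\cL(A_0\rtimes\Gamma_0)\to\cL(\Sigma_0\rtimes\Lambda_0)$ (by sending $u_g\mapsto v_{\delta(g)}$ for $g\in\Gamma_0$). The $W^*$-superrigidity of $A_0\rtimes\Gamma_0$ then supplies an inner unitary $u\in\cL(\Sigma_0\rtimes\Lambda_0)$, a character and a group isomorphism $\bar\delta:A_0\rtimes\Gamma_0\to\Sigma_0\rtimes\Lambda_0$ with $\tilde\theta={\rm ad}(u)\circ\Psi_{\bar\chi,\bar\delta}$. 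The hypothesis ${\rm Comm}^{(1)}_{A_0\rtimes\Gamma_0}(\Gamma_0)=\Gamma_0$ is what forces $\bar\delta$ to respect the semi-direct product splitting, so that $\bar\delta=\rho_0\rtimes\delta_{|\Gamma_0}$ for some group isomorphism $\rho_0:A_0\to\Sigma_0$ that is $\Gamma_0$-equivariant. As in part (1), $\rho_0$ coinduces to a group isomorphism $G\to H$. The new feature is that the tensor unitary $u^{\otimes I}\in\cL(\Sigma_0^I)\subset\cL(H)$ implements $\Phi_{{\rm ad}(u)}$ as an actual inner automorphism of $\cL(H)$, collapsing the Houghton factor into ${\rm ad}(w u^{\otimes I})$ and yielding the $W^*$-superrigidity conclusion.

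The main obstacle in both parts is the rigidification of the equivariance: one must track that the $\Gamma_0$-intertwining property of $\theta$ at the von Neumann algebra level descends to the underlying group-like and character components supplied by (abstract) $W^*$-superrigidity of $A_0$ (respectively $A_0\rtimes\Gamma_0$), in a form compatible with $\delta$, so that the tensor product construction $\theta^{\bar\otimes I}$ can be promoted to a coinduced group isomorphism $G\to H$ rather than merely a non-equivariant matching fiber-by-fiber. For part (2) there is the additional issue of showing that inner-ness on $\cL(\Sigma_0\rtimes\Lambda_0)$ lifts to inner-ness on $\cL(H)$, which is precisely why the implementer has the factorizable form $u^{\otimes I}$.
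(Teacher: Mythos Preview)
Your approach to part (2) has a genuine and fatal gap. You claim that the ``tensor unitary $u^{\otimes I}\in\cL(\Sigma_0^I)$'' implements $\Phi_{{\rm ad}(u)}$ as an inner automorphism of $\cL(H)$. But for infinite $I$ the element $u^{\otimes I}$ simply does not exist in $\bar\otimes_I\cL(\Sigma_0)$ unless $u$ is a scalar: the net $u^{\otimes F}\otimes 1^{\otimes I\setminus F}$ is not $\|\cdot\|_2$-Cauchy when $|\tau(u)|<1$. More to the point, Proposition~\ref{Proposition:notunitary} proves precisely that the Houghton automorphism $\Phi_\theta$ is \emph{never} inner for non-trivial $\theta$. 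So even granting everything else in your outline, you would end up with $\Theta={\rm ad}(w)\circ\Phi_{{\rm ad}(u)}\circ\Psi_{\cdot,\cdot}$, which is abstract $W^*$-superrigidity (part (1)), not the honest $W^*$-superrigidity you need for part (2). There is also a secondary issue: your implementer $u$ lives in $\cL(\Sigma_0\rtimes\Lambda_0)$, not $\cL(\Sigma_0)$, so the Houghton construction $\Phi_{{\rm ad}(u)}$ is not even defined as stated.

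The paper avoids this obstruction entirely by a different mechanism. Rather than trying to make a single Houghton factor inner, it extracts from Theorem~\ref{Th:coinduced} two separate unitary conjugacies at the group level: one for $\tilde\Gamma$ (via a unitary $w_1$) and one for $A_0\rtimes\Gamma_0$ (via a unitary $w_2$, coming from $W^*$-superrigidity of $A_0\rtimes\Gamma_0$). It then invokes the gluing result \cite[Theorem 8.5]{CD-AD20}, whose hypotheses are ${\rm Comm}^{(1)}_G(\Gamma_0)=\Gamma_0$ (obtained from Lemma~\ref{L:coindcomm}(3)) and $\Gamma_0=\tilde\Gamma\cap(A_0\rtimes\Gamma_0)$ icc, to produce a single unitary $w$ with $\mathbb T\Theta(G)=\mathbb T w^*(\Lambda_1\vee\Lambda_2)w$. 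The Houghton obstruction never appears because the argument works directly with the canonical unitaries of the two generating subgroups, not with the tensor structure of the core.

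For part (1) your approach is broadly reasonable, but your stated mechanism (``essential uniqueness of the group-like and automorphism components'') is not correct as written: the abstract $W^*$-superrigidity decomposition $\theta=\phi\circ\Psi_{\chi,\rho_0}$ is highly non-unique, since $\phi$ may absorb any $\Psi_{\zeta,\alpha}$ with $\alpha\in{\rm Aut}(\Sigma_0)$. In the trivial-action case the equivariance problem evaporates (triviality transfers through $\theta$), so the argument does go through and matches what Theorem~\ref{Main:leftright} asserts. In the non-trivial case one has to be more careful about how conjugacy of the actions at the von Neumann level descends to the group level; the paper's proof of (1) is terse (``follows directly from Theorem~\ref{Th:coinduced}''), and your outline does not supply that missing step either.
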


{\it Proof.} (1) This part follows directly from Theorem \ref{Th:coinduced}.

(2) Let $H$ be any countable group, let $\Theta:\cL(G)\to\cL(H)$ be a $*$-isomorphism and denote $\cM=\Theta(\cL(G))=\cL(H)$. From Theorem \ref{Th:coinduced} we derive that there exist a unitary $w_1\in \cL(H)$ and some subgroups $\Lambda_1,\Lambda_2<H$ such that 
\begin{equation}\label{xx1}
\mathbb T\Theta(\tilde\Gamma)=\mathbb Tw_1^* \Lambda_1 w_1    
\end{equation}
and $\Theta(\cL(A_0\rtimes\Gamma_0))=w_1^* \cL(\Lambda_2)w_1$. Since $A_0\rtimes\Gamma_0$ is $W^*$-superrigid, there exists a unitary $w_2\in \cL(H)$ such that \begin{equation}\label{xx2}
 \mathbb T \Theta(A_0\rtimes\Gamma_0)=\mathbb T w_2^* \Lambda_2w_2.   
\end{equation}
Note that \cite[Lemma 2.10]{CD-AD20} shows that ${\rm Comm}_{\tilde\Gamma}^{(1)}(\Gamma_0)=\Gamma_0$.
Using Lemma \ref{L:coindcomm}(3) and our assumption we get that ${\rm Comm}_{G}^{(1)}(\Gamma_0)=\Gamma_0.$
Since $\Gamma_0=\tilde\Gamma\cap (A_0\rtimes\Gamma_0)<G$ are icc groups, equations \eqref{xx1},\eqref{xx2} allows us to apply
\cite[Theorem 8.5]{CD-AD20} and obtain a unitary $w\in\cL(H)$ such that $\mathbb T\Theta(G)=\mathbb T w^*(\Lambda_1\vee\Lambda_2)w$. This shows that $H=\Lambda_1\vee\Lambda_2$ and finishes the proof of the corollary.
\hfill$\blacksquare$

\section{Superrigidity for Group $\mathbb C^*$-algebras}

In \cite[Corollary B]{CI17} were introduced the first family of examples of non-amenable groups that can be completely recovered from their reduced $C^*$-algebras. These family consists of uncountable many groups that appear as specific amalgamated free products and could contain torsion. In this section we introduce new examples of $C^*$-superrigid groups from the realm of generalized wreath products as introduced in \cite{IPV10}. Notice that other examples of amalgams, HNN extensions and other semi-direct products have been found very recently in \cite{CD-AD20}. A point of contrast between the aforementioned and our results is that our methods, while still von Neumann algebraic in nature, use in an essential way both facts that the groups involved have unique trace and also satisfy the Baum-Connes conjecture. Moreover, it is the first instance when the $C^*$-superrigidity statement is slightly more complex than the ones presented in \cite{CI17,CD-AD20} as it involves a family of non-trivial automorphisms of Houghton type. 

%\vskip 0.05in
% To properly introduce the result we need some new terminology. By analogy with the von Neumann algebra situation, we introduce the following notation. 

% \begin{definition}
% A countable group $G$ is called C$^*_{\rm aut}$-\emph{superrigid} if the following holds: for any group $H$ and any $\ast$-isomorphism $\Theta:  C^*_r(G)\ra C_r^* (H)$ one can find an automorphism $\theta \in {\rm Aut}(C^*_r(H))$, a character $\eta: G\ra \mathbb T$, and a group isomorphism $\delta : G\ra  H$ so that $\Theta = \theta \circ \Psi_{\eta,\delta}$.  Moreover, if $\theta$ can always be implemented by a unitary in the von Neumann algebra $\cL(H)$, then we say that $G$ is C$^*_{\rm unit}$-\emph{superrigid}; in this case, one recovers the C$^*$-superrigidity notion from \cite[Corollary B]{CI17}.
%\end{definition}

\begin{corollary}\label{C^*} Let $G=A_0\wr_I \Gamma\in \mathscr {GW}_1(A_0)$, where  $A_0$  is $W^*$-superrigid with trivial amenable radical and C$_r^*(G)$ has no non-trivial projections. 

Then for any group $H$ and any $\ast$-isomorphism  $\Theta: C^*_r(G)\ra C_r^*(H)$ one can find a group isomorphism $\delta:G\ra H$,  a character $\eta:G\ra \mathbb T$ and  unitaries $ t\in \cL(H)$, $w\in \cL( \delta(A_0))$ such that $
\Theta= {\rm ad} (t) \circ   \Phi_w \circ \Psi_{\eta, \delta}$. In particular, $G$ is abstractly $C^*$-superrigid and not $C^*$-superrigid.
\end{corollary}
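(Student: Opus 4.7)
The strategy is to reduce the $C^*$-algebraic statement to the von Neumann algebraic superrigidity already proven (Corollary~\ref{Cor:GW}). The bridge between the two worlds is the unique trace property of $C^*_r(G)$, which will promote $\Theta$ to a $W^*$-isomorphism; once that is in place, the moreover clause of Corollary~\ref{Cor:GW} gives exactly the factorization claimed, and the failure of $C^*$-superrigidity is forced by the Houghton automorphisms.

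First, I would observe that the hypothesis that $C^*_r(G)$ has no non-trivial projections immediately forces both $G$ and $H$ to be torsion free: any element of finite order $n\ge 2$ in either group would yield a non-trivial averaging projection $\tfrac{1}{n}\sum_{k=0}^{n-1}u_g^k$ in the corresponding reduced $C^*$-algebra. In particular $A_0$ and $\Gamma$ are torsion free. Second, and this is the decisive technical step, I would show that $\Theta$ must be trace-preserving. Since $A_0$ is icc (as a $W^*$-superrigid group) with trivial amenable radical and ${\rm Stab}_\Gamma(I)=1$ by the $\mathscr{GW}$-axioms, Proposition~\ref{trivialamenableradical} applied to the wreath product viewed as a coinduced group (with trivial base action) yields that $G$ itself has trivial amenable radical. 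Invoking the dichotomy theorem of Breuillard--Kalantar--Kennedy--Ozawa \cite{BKKO14}, $C^*_r(G)$ admits a unique tracial state, necessarily the canonical one $\tau_G$; the same property transfers through $\Theta$ to $C^*_r(H)$. Consequently $\Theta$ sends $\tau_G$ to $\tau_H$ and extends uniquely to a $*$-isomorphism $\tilde\Theta:\cL(G)\to\cL(H)$ of the GNS completions.

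Third, I would apply Corollary~\ref{Cor:GW} together with its moreover clause (activated by $A_0$ being $W^*$-superrigid, not merely abstractly so) to $\tilde\Theta$, producing a group isomorphism $\delta:G\to H$, a character $\eta:G\to\mathbb T$, a unitary $w\in \cL(\delta(A_0))$, and a unitary $t\in \cL(H)$ such that $\tilde\Theta={\rm ad}(t)\circ\Phi_w\circ\Psi_{\eta,\delta}$. Restricting to $C^*_r(G)$ yields the desired factorization; even though $\Phi_w$ need not individually preserve $C^*_r(G)$, the composite ${\rm ad}(t)\circ\Phi_w=\tilde\Theta\circ\Psi_{\eta,\delta}^{-1}$ does, since both $\Theta$ and $\Psi_{\eta,\delta}$ are $C^*$-isomorphisms. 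The existence of $\delta$ furnishes abstract $C^*$-superrigidity. To see that $G$ is not $C^*$-superrigid, I would exhibit a non-scalar unitary $w_0\in C^*_r(A_0)$ (present since $A_0$ is icc non-trivial), observe that $\Phi_{w_0}$ restricts to a $C^*$-automorphism of $C^*_r(G)$ because conjugation by $w_0$ on each finitely supported tensor factor stays inside $C^*_r(A_0^I)$, and then rule out any expression $\Phi_{w_0}={\rm ad}(u)\circ\Psi_{\eta,\delta}$ with $u\in\cL(G)$ using Proposition~\ref{Proposition:notunitary} combined with the proposition describing when $\Phi_v=\Psi_{\eta,\delta}$.

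The principal obstacle is Step 2, the passage from the $C^*$-level to the $W^*$-level. Without automatic trace preservation of $\Theta$, none of the von Neumann algebraic classification machinery is available. The synthesis of Proposition~\ref{trivialamenableradical} (transporting the triviality of the amenable radical from $A_0$ to the full wreath product $G$) with the Breuillard--Kalantar--Kennedy--Ozawa unique trace criterion is precisely what activates this transition and thereby unlocks the conclusion.
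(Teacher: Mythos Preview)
Your proof is correct and follows essentially the same route as the paper: torsion-freeness from the absence of projections, trivial amenable radical of $G$ via Proposition~\ref{trivialamenableradical}, unique trace from \cite{BKKO14}, extension of $\Theta$ to the von Neumann level, and then Corollary~\ref{Cor:GW}. You supply more detail than the paper does, in particular on the ``not $C^*$-superrigid'' clause, which the paper leaves implicit. One small point: your parenthetical that $A_0$ is icc ``as a $W^*$-superrigid group'' is not the right justification; the correct reason is that $A_0$ has trivial amenable radical, hence trivial FC-center, hence is icc.
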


{\it Proof.} Fix $\Theta: C^*_r(G)\ra C_r^*(H)$ a $\ast$-isomorphism. Since C$_r^*(G)$ has no non-trivial projections then so is C$_r^*(H)$; in particular, $H$ is torsion free. Since $A_0$ has trivial amenable radical then  Proposition \ref{trivialamenableradical} implies that $G$ has trivial amenable radical as well. So using \cite[Theorem 1.3]{BKKO14}  it follows that  C$_r^*(G)$ has unique trace. This further implies that $\Theta$ lifts to a $\ast$-isomorphism of the corresponding von Neumann algebras, $\Theta: \cL(G)\ra \cL(H)$.  As $H$ is torsion free, the desired conclusion follows directly from Corollary \ref{Cor:GW}. 
\hfill$\blacksquare$

\begin{proposition} Let $G=A_0\wr_I \Gamma\in \mathscr {GW}_1(A_0)$. If  $A_0$ and $\Gamma$ satisfy the Baum-Connes conjecture then so does $G$. In particular, $C^*_r(G)$ has no non-trivial projections.  
\end{proposition}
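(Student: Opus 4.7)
The plan is to exploit the semidirect product decomposition $G = A_0^{(I)} \rtimes \Gamma$ together with standard permanence properties of the Baum-Connes conjecture. I would first establish Baum-Connes for the core $A_0^{(I)}$: writing $A_0^{(I)}$ as the directed union $\bigcup_F A_0^F$ over finite subsets $F \subseteq I$, each finite direct power $A_0^F$ satisfies Baum-Connes by iterated application of the direct-product permanence \cite[Theorem 3.1]{O-O01b} starting from the hypothesis on $A_0$. Since the Baum-Connes conjecture is known to be preserved under directed unions of subgroups (both sides of the assembly map being continuous under colimits), we conclude that $A_0^{(I)}$ satisfies the conjecture.

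Next, I would invoke the Chabert-Echterhoff permanence result for group extensions applied to
\[
1 \longrightarrow A_0^{(I)} \longrightarrow G \longrightarrow \Gamma \longrightarrow 1.
\]
Under the hypothesis that both the kernel $A_0^{(I)}$ and the quotient $\Gamma$ satisfy the Baum-Connes conjecture with coefficients, so does $G$. In practice, each of the preceding permanence steps (direct products and directed unions) can be upgraded to the statement with coefficients, which is enough to feed into the extension-permanence theorem and yield the Baum-Connes conjecture for $G$.

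For the final assertion about projections, I would combine torsion-freeness of $G$ with the Baum-Connes conjecture just established. Because $A_0$ and $\Gamma$ are torsion free (the setting in which this proposition is applied within $\mathscr{GW}_1(A_0)$), a direct element-wise computation in the generalized wreath product shows that $G$ is torsion free as well. Validity of the Baum-Connes conjecture for a torsion-free group implies its Kadison-Kaplansky conclusion, so $C^*_r(G)$ contains no non-trivial idempotents; in particular it has no non-trivial projections.

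The main obstacle I anticipate is the careful bookkeeping of Baum-Connes \emph{with coefficients} throughout the argument, since the plain statement is not in general stable under extensions. One must verify at each step (finite direct products, passage to the directed union, and finally the extension by $\Gamma$) that the appropriately strengthened form is available and that the hypotheses of the extension-permanence theorem are met in our specific setting; this is the technical core of the proof.
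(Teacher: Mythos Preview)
Your proposal is correct and follows essentially the same strategy as the paper: establish Baum--Connes for the core $A_0^{(I)}$ via stability under finite direct products and directed unions, then apply an extension-permanence theorem to the short exact sequence $1\to A_0^{(I)}\to G\to\Gamma\to 1$. The paper cites \cite[Theorem 3.1]{O-O01b} (Oyono-Oyono) for the extension step rather than Chabert--Echterhoff, and it omits the argument for the projection claim entirely; your treatment is in fact more careful, both in flagging the need for the ``with coefficients'' version throughout and in spelling out the Kadison--Kaplansky step via torsion-freeness.
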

\begin{proof} The class of groups satisfying the Baum-Connes conjecture is closed under direct products and also direct unions \cite[Theorem 8.7 (2)(d)]{Lu18} and therefore the core $A_0^{I}$ satisfies the Baum-Connes conjecture as well. Since Q satisfies the Baum-Connes conjecture then using \cite[Theorem 3.1]{O-O01b} we conclude that $G= A_0^{I}\rtimes \Gamma=A_0\wr_I \Gamma $ satisfies the Baum-Connes conjecture. \end{proof}

\begin{proposition}\label{BCgroups} Assume that $A_0$ belongs to any of the following classes of groups:
\begin{enumerate}
    \item  Let $\G$ be a torsion free hyperbolic group and let $\G_1,\G_2,..., \G_n$ be isomorphic copies of $\G$. Let $\G\ca^{\rho_i} \G_i$ be the action by conjugation and let $\G\ca^\rho \G_1\ast \G_2\ast...\ast\G_n$ the action induced by the canonical free product automorphism $\rho_g =\rho_g^1\ast \rho_g^2\ast...\ast \rho_g^n $. Denote by $A_0 = (\G_1\ast\G_2\ast...\ast\G_n ) \rtimes_\rho \G$ the corresponding semi-direct product.
    \item Let  $C$ be a torsion free Haagerup group and let $B$ be a torsion free hyperbolic group. Let $A< B$ be an infinite cyclic subgroup that is hyperbolically embedded. Consider the canonical inclusion  $\Sigma :=  C^{B}\rtimes A<C\wr B=:\G$ and let ${\rm diag}(\Sigma) =\{(g,g)\,:\, g\in \Sigma \}<\G\times \G$ be the diagonal subgroup. Then consider the amalgamated free product $A_0= (\Gamma \times \Gamma)\ast_{{\rm diag}(\Sigma)} (\Gamma\times \Gamma) $. 
\end{enumerate} 
Then $A_0$ satisfies the Baum-Connes conjecture. %In particular, $C^*_r(G)$ has no non-trivial projections. 
\end{proposition}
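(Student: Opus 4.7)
The plan is to deduce both parts from the standard permanence properties of the Baum--Connes conjecture with coefficients (BCC). The ingredients I would invoke are:
(i) torsion free hyperbolic groups satisfy BCC, by Lafforgue \cite{La12};
(ii) a-T-menable (Haagerup) groups satisfy BCC, by Higson--Kasparov;
(iii) BCC is closed under finite direct products \cite[Theorem 3.1]{O-O01b} and under iterated amalgamated free products \cite[Corollary 1.2]{O-O01a};
(iv) BCC passes to directed unions of subgroups; and
(v) for a group extension $1 \to N \to G \to Q \to 1$ of discrete groups with both $N$ and $Q$ satisfying BCC, the middle group $G$ also satisfies BCC, by the Chabert--Echterhoff permanence theorem (applied here in the torsion free setting, where its hypotheses on compact-open subgroups trivialize).

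\textbf{Part (1).} I would avoid invoking (v) altogether by rewriting $A_0$ as an amalgamated free product. Since $\rho^i$ is the conjugation action of $\Gamma$ on the isomorphic copy $\Gamma_i$, there is a group isomorphism
\begin{equation*}
\Gamma_i \rtimes_{\rho^i} \Gamma \longrightarrow \Gamma_i \times \Gamma, \qquad (\lambda,g) \longmapsto (\lambda g, g),
\end{equation*}
under which the acting factor $\Gamma$ maps onto the diagonal ${\rm diag}(\Gamma) < \Gamma_i \times \Gamma$. Combining this with the standard identity expressing the semi-direct product of a free product (with a $Q$-action preserving each factor) as an iterated amalgam over the acting group, one obtains
\begin{equation*}
A_0 \;\cong\; \underbrace{(\Gamma\times\Gamma) \ast_{{\rm diag}(\Gamma)} (\Gamma\times\Gamma) \ast_{{\rm diag}(\Gamma)} \cdots \ast_{{\rm diag}(\Gamma)} (\Gamma\times\Gamma)}_{n \text{ factors}}.
\end{equation*}
Each copy of $\Gamma \times \Gamma$ satisfies BCC by (i) and (iii), and the diagonal subgroup $\Gamma$ satisfies BCC by (i); hence iterating (iii) yields BCC for $A_0$.

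\textbf{Part (2).} I would assemble $A_0$ through a chain of closures. The Haagerup group $C$ satisfies BCC by (ii). Writing the restricted direct sum $C^B$ as the directed union over finite subsets $F \subset B$ of $C^F$, each $C^F$ is BCC by (iii), so $C^B$ inherits BCC by (iv). The wreath product $\Gamma = C \wr B = C^B \rtimes B$ then satisfies BCC by (v), applied to the short exact sequence $1 \to C^B \to \Gamma \to B \to 1$ with hyperbolic quotient $B$; consequently $\Gamma \times \Gamma$ is BCC by (iii). Analogously, $\Sigma = C^B \rtimes A$ is BCC by (v) applied to the extension with amenable cyclic quotient $A \cong \mathbb Z$, so ${\rm diag}(\Sigma) \cong \Sigma$ is BCC. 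Finally, $A_0 = (\Gamma \times \Gamma) \ast_{{\rm diag}(\Sigma)} (\Gamma \times \Gamma)$ satisfies BCC by (iii).

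\textbf{Main obstacle.} The one ingredient that goes beyond what is already cited in the paper's introduction is the extension closure (v), which is needed only for part (2); this is the most delicate step, as closure of BCC under general group extensions is strictly stronger than closure under products and amalgams. I would rely on the Chabert--Echterhoff permanence theorem, noting that all intermediate kernels and quotients in our tower are torsion free, which eliminates the usual technical hypotheses on compact subgroups. A comparatively routine but still necessary auxiliary step is the directed-union closure (iv), following from continuity of topological $K$-theory.
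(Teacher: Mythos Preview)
Your proposal is correct, and for Part~(2) it matches the paper's argument almost verbatim: the paper also obtains BCC for $\Gamma=C\wr B$ via the extension permanence result (citing Oyono--Oyono \cite{O-O01b} rather than Chabert--Echterhoff), noting that $C^B$ has the Haagerup property, and then closes under direct products and amalgams exactly as you do. Your directed-union argument for $C^B$ is fine but unnecessary, since Haagerup passes to restricted direct sums and Higson--Kasparov applies directly.

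For Part~(1) you take a genuinely different route. The paper simply observes that a free product of hyperbolic groups is again hyperbolic, so $\Gamma_1\ast\cdots\ast\Gamma_n$ satisfies BCC by Lafforgue, and then applies the extension permanence of \cite{O-O01b} to the semi-direct product with~$\Gamma$. Your rewriting of $A_0$ as an iterated amalgam $(\Gamma\times\Gamma)\ast_{\mathrm{diag}(\Gamma)}\cdots\ast_{\mathrm{diag}(\Gamma)}(\Gamma\times\Gamma)$ is correct and elegant, and it does eliminate the use of extension permanence from Part~(1). However, since you still need extension permanence for Part~(2), this detour buys no economy overall; the paper's approach is shorter because hyperbolicity of free products is a one-line fact. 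Note also that your remark that extension closure ``goes beyond what is already cited in the paper's introduction'' is a misreading: the reference \cite{O-O01b} is precisely Oyono--Oyono's paper on extensions, and the paper invokes it for exactly this purpose in both parts.
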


{\it Proof.} 1. Since $\G_i$ are hyperbolic it follows that the free product $\G_1\ast\G_2\ast...\ast\G_n$ is hyperbolic as well. By \cite{La12}  this implies that  $\G_1\ast\G_2\ast...\ast\G_n$ satisfies the Baum-Connes conjecture. Since $\G$ satisfies the Baum-Connes conjecture it follows from \cite{O-O01b} that $N= (\G_1\ast \G_2\ast...\ast \G_n)\rtimes_\rho \G$ satisfies the Baum-Connes conjecture as well. 

2. The result follows using a series of stability results for the class of groups satisfying this conjecture. Recall that $\Gamma=  C \wr B$ where $B$ is a hyperbolic group and $\Sigma=  C^B \rtimes \mathbb Z$ with $A< B$ is a hyperbolically embedded group isomorphic to the integers. Since a hyperbolic group satisfies the Baum-Connes conjecture \cite{La12} and $C^B$ has Haagerup property it follows from \cite{HK1,HK2,O-O01b} that $\Gamma= C \wr B$ satisfies the Baum-Connes conjecture as well. As the class of groups satisfying the Baum-Connes conjecture is closed under direct products \cite[Theorem 3.1]{O-O01b} and amalgamated free product \cite[Corollary 1.2]{O-O01a} we conclude that $A_0=(\Gamma \times \Gamma)\ast_{diag(\Sigma)} (\Gamma\times \Gamma)$ satisfies the Baum-Connes conjecture, as desired.
\hfill$\blacksquare$

Therefore combining  Corollary \ref{C^*} with Proposition \ref{BCgroups}, Proposition \ref{trivialamenableradical} and \cite{CD-AD20}, we get several new families of examples of non-amenable torsion free abstractly C$^*$-superrigid groups. 

\begin{corollary}\label{C^*-superrigid5} Let $G=A_0\wr_I \Gamma\in \mathscr {G W}_1(A_0)$ be any group such that $\Gamma$ is torsion free,  property (T), hyperbolic and $A_0$ belongs to one of the following classes: 

\begin{enumerate}
    \item  Let $\G_1,\G_2,..., \G_n$ be isomorphic copies of $\G$. Let $\G\ca^{\rho_i} \G_i$ be the action by conjugation and let $\G\ca^\rho \G_1\ast \G_2\ast...\ast\G_n$ the action induced by the canonical free product automorphism $\rho_g =\rho_g^1\ast \rho_g^2\ast...\ast \rho_g^n $. Denote by $A_0 = (\G_1\ast\G_2\ast...\ast\G_n ) \rtimes_\rho \G$ the corresponding semi-direct product.
    \item Let  $C$ be torsion free, amenable icc group  and let $B$ be a torsion free hyperbolic property (T) group. Let $A< B$ be an infinite cyclic subgroup that is hyperbolically embedded. Consider the canonical inclusion  $\Sigma := C^{B}\rtimes A <C\wr B=:\G$ and let ${\rm diag}(\Sigma) =\{(g,g)\,:\, g\in \Sigma \}<\G\times \G$ be the diagonal subgroup. Then consider the amalgamated free product $A_0= (\Gamma \times \Gamma)\ast_{{\rm diag}(\Sigma)} (\Gamma\times \Gamma) $. 
\end{enumerate}

Then  for every group $H$ and any $\ast$-isomorphism  $\Theta: \cL(G)\ra \cL(H)$ there exist a group isomorphism $\rho:G \ra H$,  a character $\eta:G\ra \mathbb T$, and  unitaries $ t\in \cL(\delta(A_0))$, $w\in \cL(H)$ such that  $\Theta= {\rm ad} (w) \circ  \Phi_t \circ \Psi_{\omega,\delta}$. In particular, $G$ is abstractly C$^*$-superrigid and not C$^*$-superrigid.
\end{corollary}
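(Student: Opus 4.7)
The plan is to deduce this corollary directly from Corollary \ref{C^*} applied to the base group $A_0$. That corollary produces exactly the decomposition $\Theta = \mathrm{ad}(w) \circ \Phi_t \circ \Psi_{\eta,\delta}$ under three hypotheses on $A_0$: abstract $W^*$-superrigidity, trivial amenable radical, and the projectionless condition $C^*_r(G)$ has no non-trivial projections. So the task reduces to verifying these three inputs for each of the two listed classes of $A_0$.

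For $W^*$-superrigidity, I would simply cite the results of \cite{CD-AD20}: the class $\mathscr{A}$ is shown there to consist of abstractly $W^*$-superrigid groups, and the amalgamated free product $A_0 = (\Gamma\times\Gamma) \ast_{\mathrm{diag}(\Sigma)} (\Gamma\times\Gamma)$ appearing in class (2) also falls into the families treated there. The triviality of the amenable radical is then a short group-theoretic verification: in class (1), the free product $\Gamma_1 \ast \cdots \ast \Gamma_n$ has trivial amenable radical and is normal in $A_0$, while in class (2) the amalgam is non-elementary with bi-exact vertex groups, so the same conclusion holds.

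For the projectionless property, I would invoke the proposition preceding the corollary: since $A_0$ satisfies the Baum--Connes conjecture by Proposition \ref{BCgroups}, and $\Gamma$ satisfies Baum--Connes because it is hyperbolic \cite{La12}, the generalized wreath product $G = A_0 \wr_I \Gamma$ itself satisfies Baum--Connes and hence $C^*_r(G)$ contains no non-trivial projections. Combined with Proposition \ref{trivialamenableradical} (which promotes trivial amenable radical of $A_0$ to that of $G$, using $\mathrm{Stab}_\Gamma(I) = 1$ built into $\mathscr{GW}_1(A_0)$), all hypotheses of Corollary \ref{C^*} are now in place and the decomposition of $\Theta$ follows.

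The ``not $C^*$-superrigid'' clause is then the genuinely substantive piece: it requires that the Houghton-type automorphism $\Phi_t$ appearing in the decomposition cannot generally be absorbed into $\mathrm{ad}(w)$. This is supplied by Proposition \ref{Proposition:notunitary}, which shows that for $t \in \mathscr{U}(\cL(A_0))$ not a scalar, $\Phi_t$ is not inner even in the larger von Neumann algebra $\cL(G)$, let alone implemented by a unitary in $C^*_r(G)$. Thus the main obstacle is not in this final corollary itself, which is essentially a compilation, but rather lies upstream in Corollary \ref{C^*} and the proofs of the hypotheses it requires; here one simply assembles the pieces.
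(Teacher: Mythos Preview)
Your approach is correct and matches the paper's exactly: the corollary is introduced by the sentence ``combining Corollary \ref{C^*} with Proposition \ref{BCgroups}, Proposition \ref{trivialamenableradical} and \cite{CD-AD20}'', which constitutes the paper's entire proof. One small point: Corollary \ref{C^*} actually requires $A_0$ to be $W^*$-superrigid (not merely abstractly so), but this is harmless since the groups in both classes are fully $W^*$-superrigid by \cite{CD-AD20} and \cite{CI17}.
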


We conclude this section with the following family of C$^*$-superrigid coinduced and generalized wreath product groups.

\begin{corollary}\label{superrigidC^*2}
Let $\G$ be an icc, torsion free, bi-exact, property (T) group and let  $\G_0,\G_1$,\dots,$\G_n$ be isomorphic copies of $\G$. For every $1\leq i\leq n$ consider the action $\G_0 \curvearrowright^{\rho^i} \G_i$ by conjugation, i.e.\ $\rho^i_\g (\la)=\g\la \g^{-1}$, for all $\g\in \G, \la\in \G_i$ and let $\G_0\ca^{\rho} \G_1\ast \G_2\ast ...\ast \G_n$ be the action induced by the canonical free product automorphism $\rho_\g = \rho^1_\g \ast \rho^2_\g \ast ...\ast \rho^n_\g$, for all $\g\in \G$. Note that $\Gamma_0$ can be seen as a subgroup of $\tilde\Gamma=\Gamma\times\Gamma$ by letting $\Gamma_0=\{(g,g)|g\in\Gamma\}<\tilde\Gamma$. 
 
Then the associated coinduced group $G=(\G_1\ast \G_2\ast ...\ast \G_n)^I\rtimes \tilde\Gamma$ of $\rho$ is C$^*$-superrigid.
\end{corollary}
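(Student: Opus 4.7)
The plan is to combine the W$^*$-superrigidity of $G$, which is already established in Corollary \ref{examplecoind}, with the unique trace property of $C^*_r(G)$ to deduce $C^*$-superrigidity. First, I would show that $G$ has trivial amenable radical. The base group $A_0 := (\G_1 \ast \cdots \ast \G_n) \rtimes_\rho \G_0$ from class $\mathscr{A}$ is icc and has trivial amenable radical: each $\G_i$ is icc, torsion free, and bi-exact (in particular non-amenable), so the free product $\G_1 \ast \cdots \ast \G_n$ is icc with trivial amenable radical, and the semidirect product with $\G_0$ acting by outer automorphisms preserves this property. Moreover, ${\rm Stab}_{\tilde\Gamma}(I) = \bigcap_{g \in \tilde\Gamma} g \Gamma_0 g^{-1}$ equals the largest normal subgroup of $\tilde\Gamma = \Gamma \times \Gamma$ contained in the diagonal $\Gamma_0$; since $\Gamma$ is icc, a direct calculation (using that the diagonal is not normalized by any non-diagonal element beyond center-type elements) gives this intersection to be trivial. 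With these verifications in hand, Proposition \ref{trivialamenableradical} yields that $G$ has trivial amenable radical.

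Once the trivial amenable radical of $G$ is established, \cite[Theorem 1.3]{BKKO14} gives that $C^*_r(G)$ has unique trace. Given any countable group $H$ and a $\ast$-isomorphism $\Theta: C^*_r(G) \to C^*_r(H)$, the unique trace property transfers to $C^*_r(H)$, which forces the canonical traces on the two algebras to correspond under $\Theta$. Consequently, $\Theta$ extends uniquely to a trace-preserving $\ast$-isomorphism $\hat\Theta: \cL(G) \to \cL(H)$ between the ambient group von Neumann algebras.

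Next I would invoke Corollary \ref{examplecoind}, which asserts precisely that $G$ is W$^*$-superrigid. Applied to $\hat\Theta$, it produces a group isomorphism $\delta: G \to H$, a character $\eta \in {\rm Char}(G)$, and a unitary $w \in \mathscr U(\cL(H))$ such that $\hat\Theta = {\rm ad}(w) \circ \Psi_{\eta,\delta}$. Restricting back to $C^*_r(G)$ and noting that $\Psi_{\eta,\delta}$ already sends $C^*_r(G)$ onto $C^*_r(H)$, we obtain $\Theta = {\rm ad}(w)|_{C^*_r(H)} \circ \Psi_{\eta,\delta}$, where ${\rm ad}(w) = \Theta \circ \Psi_{\eta,\delta}^{-1}$ is an automorphism of $C^*_r(H)$ implemented by a unitary of the enveloping von Neumann algebra $\cL(H)$. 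This matches exactly the definition of $C^*$-superrigidity given in the paper. No significant obstacle should arise, since each ingredient (W$^*$-superrigidity, trivial amenable radical, unique trace, lifting of $\Theta$ to $\cL(G)$) is already packaged in earlier results; the only conceptual subtlety is that the conjugating unitary $w$ is a priori only in $\cL(H)$ rather than in $C^*_r(H)$, but this is precisely what the notion of $C^*$-superrigidity in this paper allows.
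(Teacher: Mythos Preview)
Your proposal is correct and follows essentially the same route as the paper: trivial amenable radical via Proposition~\ref{trivialamenableradical}, unique trace via \cite{BKKO14}, lift the $\ast$-isomorphism to $\cL(G)$, then apply the W$^*$-superrigidity of $G$ (the paper cites Corollary~\ref{Cor:coind} rather than Corollary~\ref{examplecoind}, but these amount to the same thing here). One small notational slip: in the coinduced construction the base group $A_0$ is the free product $\G_1\ast\cdots\ast\G_n$ itself, not the class $\mathscr{A}$ group $(\G_1\ast\cdots\ast\G_n)\rtimes_\rho\G_0$; your verification that the free product is icc with trivial amenable radical is exactly what Proposition~\ref{trivialamenableradical} needs, so the argument goes through unchanged.
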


{\it Proof.} The group $G$ is W$^*$-superrigid by Corollary \ref{Cor:coind}. Next, Proposition \ref{trivialamenableradical} implies that $G$ has trivial amenable radical, and hence, by using \cite[Theorem 1.3]{BKKO14},  it follows that  C$_r^*(G)$ has unique trace. This further implies that any $\ast$-isomorphism betweem C$_r^*(G)$ and C$_r^*(H)$, where $H$ is a countable group, lifts to a $\ast$-isomorphism of the corresponding von Neumann algebras. This concludes the proof.
\hfill$\blacksquare$

\begin{corollary} Let $\Gamma$ be an icc, torsion free, weakly amenable, bi-exact, property (T) group. Let $A_0$ be an icc abstractly W$^*$-superrigid with trivial amenable radical.

Then the left-right wreath product group $G=A_0\wr_{\Gamma}(\Gamma\times\Gamma)$ is abstractly C$^*$-superrigid.

\end{corollary}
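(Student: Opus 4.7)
The plan is to reduce this C$^*$-superrigidity statement to the von Neumann algebraic result Theorem \ref{Main:leftright} via the unique trace property of $C^*_r(G)$, mirroring the strategy used in the proofs of Corollary \ref{C^*} and Corollary \ref{superrigidC^*2}. The first step is to verify that $G$ has trivial amenable radical. The left-right wreath product $A_0 \wr_\Gamma (\Gamma \times \Gamma)$ is a coinduced group associated to the trivial action of the diagonal subgroup $\Gamma_0 = \{(g,g):g\in \Gamma\}$ on $A_0$, with index set $I \cong (\Gamma \times \Gamma)/\Gamma_0 \cong \Gamma$. Under the canonical identification, the $\tilde{\Gamma}$-action on $I$ is the left-right multiplication of $\Gamma \times \Gamma$ on $\Gamma$; evaluating a potential global stabilizer $(g,h)$ on the identity forces $g=h$, whereupon $g$ must be central in $\Gamma$. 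Since $\Gamma$ is icc, $Z(\Gamma)=1$ and hence ${\rm Stab}_{\tilde{\Gamma}}(I) = 1$. Combined with the icc assumption on $A_0$, Proposition \ref{trivialamenableradical} then yields that $G$ has trivial amenable radical.

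Next, invoking \cite[Theorem 1.3]{BKKO14}, the triviality of the amenable radical of $G$ implies that $C^*_r(G)$ carries a unique tracial state. Given any countable group $H$ and any $*$-isomorphism $\Theta : C^*_r(G) \to C^*_r(H)$, the algebra $C^*_r(H)$ inherits unique trace through $\Theta$, so $\Theta$ must be trace-preserving (pulling back the canonical trace on $C^*_r(H)$ gives the unique trace on $C^*_r(G)$). In particular $\Theta$ is $\|\cdot\|_2$-isometric, and therefore extends by continuity to a normal $*$-isomorphism $\tilde{\Theta} : \cL(G) \to \cL(H)$ of the enveloping group von Neumann algebras.

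Finally, the hypotheses of Theorem \ref{Main:leftright} apply to $\tilde{\Theta}$ verbatim: $A_0$ is abstractly $W^*$-superrigid and $\Gamma$ is an icc, torsion free, bi-exact, weakly amenable, property (T) group. That theorem produces a group isomorphism $\delta : G \to H$, a character $\eta : G \to \mathbb{T}$, an automorphism $\phi \in {\rm Aut}(\cL(\delta(A_0)))$, and a unitary $w \in \cL(H)$ such that $\tilde{\Theta} = {\rm ad}(w) \circ \Phi_\phi \circ \Psi_{\eta, \delta}$. Restricting back to $C^*_r(G)$, and noting that $\Psi_{\eta,\delta}$ is by definition a $*$-isomorphism at the reduced C$^*$-algebra level, this same factorization presents $\Theta$ in the required form, proving abstract C$^*$-superrigidity. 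I do not anticipate any serious obstacle: the technical deformation/rigidity work is entirely encapsulated in Theorem \ref{Main:leftright} and the amenable radical check is isolated in Proposition \ref{trivialamenableradical}, so the remaining content of the argument is the standard trace-lifting from the C$^*$-algebra to the von Neumann algebra enabled by unique trace.
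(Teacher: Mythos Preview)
Your proposal is correct and follows essentially the same approach as the paper: verify trivial amenable radical via Proposition \ref{trivialamenableradical}, invoke \cite{BKKO14} for unique trace to lift the $C^*$-isomorphism to the von Neumann level, then apply Theorem \ref{Main:leftright}. If anything, you are slightly more careful than the paper in explicitly checking the hypothesis ${\rm Stab}_{\tilde\Gamma}(I)=1$ needed for Proposition \ref{trivialamenableradical}.
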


\section{Automorphisms of reduced group $C^*$-algebras and group von Neumann algebras}\label{Sec:Autom}

A topic of central importance in the theory of $C^*$-algebras is to understand the structure of the symmetries of a $C^*$-algebra $\mathcal N $, i.e., the automorphisms group ${\rm Aut}(\mathcal N)$. This group is usually endowed with the topology induced by the $\|\cdot\|_\infty$-pointwise convergence. Inside this topological group there are two canonical subgroups, namely the group of inner automorphisms $\rm Inn(\mathcal N)$ and its closure $\overline{\rm Inn (\mathcal N)}$. Closely related to these groups there is another class of groups that we briefly describe below. Assume that $\pi: \mathcal N \rightarrow B(\mathcal H)$ is a faithful $\ast$-representation of the $C^*$-algebra $\mathcal N$. An automorphism $\theta \in \rm Aut(\mathcal N)$ is called \emph{$\pi$-weakly inner} if there is a unitary $u$ belonging to $\pi(\mathcal N)''$, the von Neumann algebra generated by $\pi(\mathcal N)$ in $B(\mathcal H)$, such that $\theta={\rm ad}(u)$. The class of all these automorphisms form a subgroup of ${\rm Aut}(\mathcal N)$ that will be denoted by ${\rm wInn}_\pi(\mathcal N)$. When $\mathcal N= C^*_r(G)$ and $\pi$ is induced by the left regular representation of $G$ this subgroup will simply be denoted by ${\rm wInn}(\mathcal N)$ this being the only case under consideration throughout our paper. By analogy, the elements of ${\rm wInn}(\mathcal N)$ will be called \emph{weakly inner automorphisms}. Moreover the set of all unitaries $u\in \mathscr U(\mathcal N)$ implementing an element of ${\rm wInn}(\mathcal N)$ form a subgroup which will be denoted by $\mathscr U_{\rm wInn}(\mathcal N)$ throughout the section.

Also if $\tau$ is a fixed tracial state on $\cN$ then we denote by ${\rm Aut}_\tau(\cN)$ the subgroup of all  $\tau$-preserving automorphisms in ${\rm Aut}(\mathcal N)$.

Next we recall some elementary properties of these automorphism subgroups and for convenience we also include some proofs.

\begin{theorem} Let $G$ be a countable group and let $\mathcal N=C^*_r(G)$. Then the following hold:
\begin{enumerate}
\item We have the following group inclusions ${\rm Inn}(\mathcal N)\leqslant \overline{{\rm Inn}(\mathcal N)}, {\rm wInn}(\mathcal N)\leqslant {\rm Aut}_\tau(\mathcal N)\leqslant {\rm Aut}(\mathcal N)$, with ${\rm Inn}(\mathcal N)$ and $\overline{{\rm Inn}(\mathcal N)}$ normal in ${\rm Aut}(\mathcal N)$. Also ${\rm Aut}_\tau(\mathcal N)$ is closed in ${\rm Aut}(\mathcal N)$. 

\item The subgroup ${\rm wInn}(\mathcal N)$ is normal in ${\rm Aut}_\tau(\mathcal N)$. In particular, when $G$ has trivial amenable radical then ${\rm wInn}(\mathcal N)$ is normal in ${\rm Aut}(\mathcal N)$. 

\item If $\mathcal L(G)$ is a full II$_1$ factor, then ${\rm wInn}(\mathcal N)$ is a closed subgroup of ${\rm Aut}(\mathcal N)$, and hence ${\rm Inn}(\mathcal N)\leqslant \overline{{\rm Inn}(\mathcal N)}\leqslant {\rm wInn}(\mathcal N)\leqslant {\rm Aut}(\mathcal N)$.  
\end{enumerate} 
\end{theorem}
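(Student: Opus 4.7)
The three parts build on each other and the only non-elementary ingredient is fullness in (3). For (1), the inclusion ${\rm Inn}(\cN)\leq\overline{{\rm Inn}(\cN)}$ is tautological; the inclusion ${\rm Inn}(\cN)\leq{\rm wInn}(\cN)$ reflects $\cN\subseteq \pi(\cN)''=\cL(G)$; trace-invariance of inner automorphisms is immediate from the trace property. Closedness of ${\rm Aut}_\tau(\cN)$ in the point-$\|\cdot\|_\infty$ topology follows because $\tau$ is norm-continuous: if $\theta_n\to\theta$ pointwise in norm and $\tau\circ\theta_n=\tau$, then $\tau\circ\theta=\tau$. Combining these facts gives $\overline{{\rm Inn}(\cN)}\leq{\rm Aut}_\tau(\cN)$. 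For ${\rm wInn}(\cN)\leq{\rm Aut}_\tau(\cN)$ I would use that $\tau$ extends to the unique trace on the II$_1$ factor $\cL(G)$, so $\tau(uxu^*)=\tau(x)$ for every $u\in\mathscr U(\cL(G))$ and $x\in\cN$. Normality of ${\rm Inn}(\cN)$ in ${\rm Aut}(\cN)$ is the identity $\alpha\circ{\rm ad}(u)\circ\alpha^{-1}={\rm ad}(\alpha(u))$, and normality of $\overline{{\rm Inn}(\cN)}$ follows because $\alpha\mapsto\alpha\beta\alpha^{-1}$ is point-$\|\cdot\|_\infty$ continuous for fixed $\beta$.

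For (2), the key device is that every $\alpha\in{\rm Aut}_\tau(\cN)$ is $\|\cdot\|_2$-isometric on $\cN$, so it extends uniquely, by continuity on the $\|\cdot\|_2$-dense subalgebra, to a $*$-automorphism $\tilde\alpha\in{\rm Aut}(\cL(G))$. Given $\theta={\rm ad}(u)\in{\rm wInn}(\cN)$ with $u\in\mathscr U(\cL(G))$, a direct computation on $\cL(G)$ yields $\alpha\theta\alpha^{-1}={\rm ad}(\tilde\alpha(u))|_{\cN}$, and since $\tilde\alpha(u)\in\mathscr U(\cL(G))$ this is weakly inner, proving normality in ${\rm Aut}_\tau(\cN)$. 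The ``in particular'' statement follows from \cite{BKKO14}: trivial amenable radical forces $C^*_r(G)$ to have a unique tracial state, so every $*$-automorphism automatically preserves $\tau$, giving ${\rm Aut}(\cN)={\rm Aut}_\tau(\cN)$.

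For (3), I would suppose $\theta_n={\rm ad}(u_n)\in{\rm wInn}(\cN)$ converges point-$\|\cdot\|_\infty$ to $\theta\in{\rm Aut}(\cN)$. By (1) and (2), $\theta\in{\rm Aut}_\tau(\cN)$, hence it extends to $\tilde\theta\in{\rm Aut}(\cL(G))$, while each $\tilde\theta_n={\rm ad}(u_n)$ already lives in ${\rm Inn}(\cL(G))$. Because $\|\cdot\|_2\leq\|\cdot\|_\infty$ on $\cN$ and each $\tilde\theta_n,\tilde\theta$ is $\|\cdot\|_2$-isometric on $\cL(G)$, a routine $\varepsilon/3$-argument exploiting $\|\cdot\|_2$-density of $\cN$ in $\cL(G)$ upgrades the convergence to $\tilde\theta_n\to\tilde\theta$ in the point-$\|\cdot\|_2$ topology on $\cL(G)$. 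Fullness of $\cL(G)$ means precisely that ${\rm Inn}(\cL(G))$ is closed in ${\rm Aut}(\cL(G))$ endowed with this topology, so $\tilde\theta={\rm ad}(v)$ for some $v\in\mathscr U(\cL(G))$. Restricting to $\cN$ gives $\theta={\rm ad}(v)|_{\cN}\in{\rm wInn}(\cN)$, and the closedness statement yields the final chain $\mathrm{Inn}(\cN)\leq\overline{\mathrm{Inn}(\cN)}\leq\mathrm{wInn}(\cN)\leq\mathrm{Aut}(\cN)$.

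The main obstacle is essentially organizational: one must simultaneously keep track of two distinct topologies—point-$\|\cdot\|_\infty$ on ${\rm Aut}(\cN)$ and point-$\|\cdot\|_2$ on ${\rm Aut}(\cL(G))$—and ensure the extension map $\alpha\mapsto\tilde\alpha$ intertwines them correctly. Beyond that, the only genuinely nontrivial ingredient is the characterization of fullness as closedness of ${\rm Inn}(\cL(G))$ in the point-$\|\cdot\|_2$ topology, which does the heavy lifting in (3).
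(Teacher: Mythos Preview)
Your proposal is correct and follows essentially the same approach as the paper's proof: the paper likewise extends $\tau$-preserving automorphisms to $\cL(G)$, conjugates the implementing unitary for (2), and for (3) uses the $\|\cdot\|_2\leq\|\cdot\|_\infty$ inequality together with a Kaplansky-density $\varepsilon/3$ argument to upgrade convergence to point-$\|\cdot\|_2$ on $\cL(G)$, then invokes Connes' characterization of fullness \cite{Co74} to conclude $\tilde\theta\in{\rm Inn}(\cL(G))$. The only minor imprecision is your reference to ``the unique trace on the II$_1$ factor $\cL(G)$'' in part (1), where $G$ is not yet assumed icc; but the argument works verbatim with the canonical trace on $\cL(G)$, which is preserved by ${\rm ad}(u)$ for any $u\in\mathscr U(\cL(G))$ regardless of factoriality.
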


{\it Proof.} 1) We only check normality of $\overline{{\rm Inn}(\mathcal N)}$. Fix $\theta\in \overline{{\rm Inn}(\mathcal N)}$  and $\sigma \in \rm Aut(\mathcal N )$. There exist a sequence $u_n\in \mathscr U(\mathcal N)$ so that $\lim_n\|\theta(x)-u_nxu_n^*\|_\infty= 0$ for all $x\in \mathcal N$. As $\sigma$ preserves $\|\cdot \|_\infty$ we get  $\lim_n\|\sigma \circ \theta(x)-\sigma(u_nxu_n^*) \|_\infty=\lim_n\|\sigma(\theta(x)-u_nxu_n^*)\|_\infty= 0$ for all $x\in \mathcal N$. Letting $x=\sigma^{-1}(y)$ with $y\in \mathcal N$ we get $\lim_n\|\sigma \circ \theta\circ \sigma^{-1}(y)-\sigma(u_n)y \sigma(u_n^*) \|_\infty= 0$ for all $y\in \mathcal N$. As $\sigma(u_n)\in \mathscr U(\mathcal N)$ we conclude that $\sigma\circ \theta\circ \sigma^{-1}\in \overline{\rm Inn(\mathcal N)}$. Since this holds for all $\sigma\in {\rm Aut}(\mathcal N)$, the statement follows.

2) To check normality fix $\theta \in {\rm wInn} (\mathcal N)$ and $\sigma \in {\rm Aut}_\tau (\mathcal N)$. As $\sigma$ is $\tau$-invariant then it extends to $\tilde \sigma \in {\rm Aut}(\mathcal L(G))$. Picking $u\in \mathscr U(\mathcal L(G))$ so that $\theta={\rm ad} (u)$ we have $\sigma \circ \theta\circ\sigma^{-1}(x)= \sigma( u \sigma^{-1}(x)u^* )= \tilde \sigma( u \sigma^{-1}(x)u^* )= \tilde \sigma(u) x \tilde\sigma (u^*)$, for all $x\in \mathcal N$. Hence $ \sigma \circ \theta \circ \sigma^{-1}\in {\rm wInn}(\mathcal N)$ and thus ${\rm wInn} (\mathcal N)$ is normal in ${\rm Aut}_\tau (\mathcal N)$. When $G$ has trivial amenable radical then by \cite[Theorem 1.3]{BKKO14} $\mathcal N$ has unique trace and hence ${\rm Aut}_\tau (\mathcal N)={\rm Aut} (\mathcal N)$ and the conclusion follows from above. 
\vskip 0.03in

3) Fix $\theta \in \overline{{\rm wInn} (\mathcal N)}$. Let $u_n\in \mathscr U(\mathcal L(G))$ be a sequence so that $\lim_n\|\theta(x)- u_nxu_n^*\|_\infty=0$,  for all $x\in \mathcal N$. Notice this implies that $\theta$ is $\tau$-preserving and thus it extends to an automorphism  $\tilde \theta\in {\rm Aut}(\mathcal L(G))$. Also we canonically have $\lim_n\|\theta(x)- u_nxu_n^*\|_2= 0$ and hence $\lim_n\|\tilde \theta(x)- u_nxu_n^*\|_2= 0$, for all $x\in \mathcal N$. Now fix $\varepsilon >0$ and $x\in \mathcal L(G)$. By Kaplansky Density theorem there is $x_\varepsilon\in \mathcal N$ so that $\|x_\varepsilon\|_\infty\leq\|x\|_\infty$ and $\|x-x_\varepsilon\|_2\leq \varepsilon$. From above there is  $n_\varepsilon \in \mathbb N$ so that $\|\tilde \theta(x_\varepsilon)- u_nx_\varepsilon u_n^*\|_2\leq \varepsilon$ for all $n\geq n_\varepsilon$. These estimates and the triangle inequality show that for all $n\geq n_\varepsilon$ we have $\|\tilde \theta(x)-u_n xu_n^*\|_2\leq \| \tilde \theta (x)-\tilde \theta (x_\varepsilon)\|_2 +\| \theta (x_\varepsilon)-u_n x_\varepsilon u_n^*\|_2+ \|u_n xu_n^*-u_n x_\varepsilon u_n^*\|_2=2\|x-x_\varepsilon\|_2+ \| \theta (x_\varepsilon)-u_n x_\varepsilon u_n^*\|_2\leq 3\varepsilon$. In particular, $\tilde \theta$ belongs to the closure ${\rm Inn }(\mathcal L(G))$ with respect to the $\|\cdot\|_2$-pointwise topology. As $\mathcal L(G)$ is full then by \cite{Co74} the group ${\rm Inn }(\mathcal L(G))$ is closed in this topology and hence $\tilde \theta\in \rm Inn (\mathcal L(G))$. Thus $\tilde\theta={\rm ad}(w)$ for some $w\in \mathscr U(\mathcal L(G))$ and hence $\theta \in {\rm wInn}(\mathcal N)$ showing that ${\rm wInn}(\mathcal N)$ is closed.
\hfill$\blacksquare$

Consequently, to every reduced group $C^*$-algebra $\mathcal N=C^*_r(G)$ we can associate the following automorphism  quotient groups: the \emph{outer automorphism group} ${\rm Out}(\mathcal N )={\rm Aut}(\mathcal N )/{\rm Inn}(\mathcal N ) $ and the \emph{strictly outer $\tau$-preserving automorphism group} ${\rm sOut}_\tau(\mathcal N)= {\rm Aut}_\tau(\mathcal N)/{\rm wInn}  (\mathcal N) $. We also have the \emph{strictly weakly inner automorphism subgroup} ${\rm swInn}(\cN)={\rm wInn}(\cN)/{\rm Inn}(\cN)$ that is normal in ${\rm Aut}_\tau(\cN)/{\rm Inn}(\cN)$.   When $\mathcal N$ has unique trace then ${\rm sOut}_\tau(\mathcal N)$ will be denoted by ${\rm sOut}(\mathcal N)$. In this situation we have the following canonical short exact squence 
\begin{equation}
   1 \ra {\rm swInn}(\cN)\ra {\rm Out}(\cN)\ra {\rm sOut}(\cN)\ra 1. 
\end{equation}

\vskip 0.05in

This shows that understanding the structure of the outer automorphisms of $\cN$  amounts to the structural study of the strictly weakly-inner autormorphism and the strictly outer automorphisms of $\cN$.  Even from the early studies in the subject it emerged that in general $C^*$-algebras tend to have an abundance of strictly weakly inner automorphisms \cite{KR67,AP79,Ki81}. In fact, these works culminated in the mid eighties with a remarkably general result by J. Phillips which asserts that for all non-inner amenable icc groups $G$ the $C^*$-algebra $\cN=C^*_r(G)$ possesses \emph{uncountably} many strictly weakly-inner automorphisms \cite{Ph87}. The precise statement is the following.

\begin{theorem}\label{phillipsresult}\cite[Theorem 3.1 and Corollary 3.2]{Ph87} Let $G$ be an icc group such that $\mathcal L(G)$ is a full factor. Then the quotient $\overline{{\rm Inn} (C^*_r(G))}/ {\rm Inn}(C^*_r(G))$ (and hence ${\rm swInn} (C^*_r(G))$)  is uncountable.
\end{theorem}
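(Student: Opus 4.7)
Since this statement is cited as Phillips' theorem \cite{Ph87}, the paper itself presumably invokes it as a black box; what I would write, if I had to reproduce a proof, essentially reconstructs Phillips' argument and leans heavily on the mechanism already used in part~(3) of the preceding theorem. The first step is a reduction: for $\theta\in\overline{{\rm Inn}(C^*_r(G))}$, norm-limits of inner automorphisms are automatically $\tau$-preserving, so $\theta$ extends to $\tilde\theta\in{\rm Aut}(\cL(G))$; moreover $\|\cdot\|_\infty$-convergence on $C^*_r(G)$ entails $\|\cdot\|_2$-convergence, so $\tilde\theta$ sits in the pointwise $\|\cdot\|_2$-closure of ${\rm Inn}(\cL(G))$. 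By Connes' characterization of full factors \cite{Co74} this closure equals ${\rm Inn}(\cL(G))$, so $\tilde\theta={\rm ad}(u)$ for some $u\in\mathscr U(\cL(G))$. In particular, the quotient in the statement identifies canonically with
\[
\mathscr U_{{\rm wInn}}(C^*_r(G))\,\big/\,\bigl(\mathbb T\cdot \mathscr U(C^*_r(G))\bigr),
\]
where $\mathscr U_{{\rm wInn}}(C^*_r(G))\subset\mathscr U(\cL(G))$ is the set of unitaries whose adjoint action restricts to a norm-approximately inner automorphism of $C^*_r(G)$.

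\textbf{Construction of uncountably many classes.} The second step is to exhibit continuum-many pairwise $\mathbb T\cdot\mathscr U(C^*_r(G))$-inequivalent elements inside $\mathscr U_{{\rm wInn}}(C^*_r(G))$. I would use a parameter family: pick group elements $g_1,g_2,\dots$ in $G$ whose translations act with sufficient mixing/spectral-gap properties, and for $t=(t_n)\in\ell^2(\mathbb N)$ consider the approximants
\[
u^{(t)}_N\;=\;\prod_{n=1}^{N}\exp\!\bigl(i t_n(\lambda_{g_n}+\lambda_{g_n^{-1}})\bigr)\;\in\;\mathscr U(C^*_r(G)).
\]
For $(t_n)\in\ell^2\setminus\ell^1$ the sequence $(u^{(t)}_N)_N$ is $\|\cdot\|_2$-Cauchy but not norm-Cauchy, hence converges to a unitary $u^{(t)}\in\mathscr U(\cL(G))\setminus C^*_r(G)$. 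Varying $t$ over a Cantor set gives uncountably many candidates, and the Fourier expansion of $u^{(t)}$ in $\ell^2(G)$ remembers $t$ faithfully, which is used to separate their cosets modulo $\mathbb T\cdot\mathscr U(C^*_r(G))$.

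\textbf{Main obstacle.} The delicate part is verifying that ${\rm ad}(u^{(t)}_N)$ is actually Cauchy in the norm topology on ${\rm Aut}(C^*_r(G))$, so that the limiting $u^{(t)}$ truly lies in $\mathscr U_{{\rm wInn}}(C^*_r(G))$ and not just in some weaker closure. This is where the hypothesis that $\cL(G)$ is a full factor (equivalently, that $G$ is non-inner amenable) enters crucially: it provides the spectral-gap estimates needed to bound $\|[\exp(it_n(\lambda_{g_n}+\lambda_{g_n^{-1}})),\lambda_g]\|_\infty$ uniformly in $g$ by a summable-in-$n$ quantity, so that the telescoping differences ${\rm ad}(u^{(t)}_{N+1})-{\rm ad}(u^{(t)}_N)$ decay in norm. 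Once this estimate is in place, the existence of continuum many distinct cosets follows by a standard measure-theoretic separation argument applied to the parametrizing Cantor set, completing the uncountability claim.
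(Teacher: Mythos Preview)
The paper does not prove this statement at all: it is quoted verbatim from Phillips \cite{Ph87} and used as a black box, so there is no ``paper's own proof'' to compare against. What can be compared is your reconstruction against Phillips' actual argument, and here there is both a methodological difference and a genuine gap in your sketch.

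\textbf{The gap.} Your construction needs, for $t\in\ell^2\setminus\ell^1$, that the sequence $\bigl({\rm ad}(u^{(t)}_N)\bigr)_N$ be Cauchy in the pointwise-norm topology on ${\rm Aut}(C^*_r(G))$. You assert that fullness of $\cL(G)$ supplies ``spectral-gap estimates'' making $\bigl\|[\exp(it_n(\lambda_{g_n}+\lambda_{g_n^{-1}})),\lambda_g]\bigr\|_\infty$ summable in $n$ uniformly in $g$. This is where the argument breaks. Fullness (equivalently non-inner-amenability of $G$, absence of property $\Gamma$) is an $L^2$ spectral-gap phenomenon; it says nothing about operator-norm commutator bounds. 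A direct estimate gives $\bigl\|[\exp(it_n h_n),\lambda_g]\bigr\|_\infty\lesssim |t_n|\,\|[h_n,\lambda_g]\|_\infty$, and for generic $g$ the commutator $\|[h_n,\lambda_g]\|_\infty$ is bounded \emph{below} by a positive constant independent of $n$. Hence summability in norm forces $\sum_n|t_n|<\infty$, i.e.\ $t\in\ell^1$, in which case $u^{(t)}\in C^*_r(G)$ and the class is trivial. For $t\in\ell^2\setminus\ell^1$ you get $\|\cdot\|_2$-convergence of $u^{(t)}_N$, but neither norm-pointwise convergence of ${\rm ad}(u^{(t)}_N)$ nor even the fact that ${\rm ad}(u^{(t)})$ preserves $C^*_r(G)$ follows.

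\textbf{How Phillips actually proceeds.} Phillips' argument is not an explicit construction but a Baire-category dichotomy. For separable $A$, the group ${\rm Aut}(A)$ is Polish and ${\rm Inn}(A)$ is an analytic subgroup (continuous image of the Polish group $\mathscr U(A)$), hence has the Baire property. A subgroup with the Baire property is either clopen in its closure or meager there; in the meager case the quotient $\overline{{\rm Inn}(A)}/{\rm Inn}(A)$ is automatically uncountable. So the entire content reduces to showing that ${\rm Inn}(C^*_r(G))$ is \emph{not} closed. Phillips obtains this from the central-sequence characterisation going back to \cite{AP79,KR67}: closedness of ${\rm Inn}(A)$ forces the existence of nontrivial central sequences in the trace representation, i.e.\ property $\Gamma$ for $\cL(G)$, contradicting fullness. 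Thus fullness enters as an obstruction to central sequences, not as a source of uniform norm bounds. If you want a self-contained proof, this Polish-group route is the one to reproduce; the explicit-family approach you outline does not close as stated.
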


This theorem highlights that in general the strictly weakly-inner automorphism group is pretty wild. In fact, little is known about its structure and future investigations in this direction  would be very interesting. For example, we do not even know whether in general all these automorphisms are approximately inner or not.

Using Popa's deformation/rigidity theory, over the last fifteen years or so, many instances have been discovered when complete calculation of automorphism groups were achieved for various families of von Neumann algebras arising from groups and their trace preserving actions. For example, in \cite{IPP05} it was shown that there exist many II$_1$ factors that possess only inner automorphisms, thus settling a long-standing open question of A. Connes. More generally, in \cite{PV06} it was shown in fact that  every finitely presented group can be realized as the outer automorphism group of a certain group factor $\mathcal L(G)$. See also \cite{FV08,Va08,PV21} for other very interesting results in this direction. However, these impressive results in the von Neumann algebraic context do not yield much insight towards the reduced group $C^*$-algebras. In fact, at this time there are very few results in the literature regarding the possible structure of ${\rm Aut}(C^*_r(G))$ for $G$ non-amenable. Except for the main results in \cite{CI17,CD-AD20}, the other existent von Neumann algebraic results do not seem to canonically extend to the $C^*$-algebraic framework. 
%In addition, Theorem \ref{phillipsresult} already depicts a stark contrast with the aforementioned von Neumann algebras results. For example, it implies that for extensive families of groups $G$ the reduced $C^*$-algebra $C^*_r(G)$ has an overabundance of outer automorphisms. 
\vskip 0.06in
In the remaining part, we use the rigidity results from the prior sections to describe the automorphisms of some reduced group $C^*$-algebras, especially its strictly outer automorphisms.  

We start by observing that all families of groups 
$G$ covered by \cite[Corollary C]{CI17}, \cite[Theorem 11.2]{CD-AD20} and Theorem \ref{superrigidC^*2} in this paper satisfy in particular the following formula \begin{equation}\label{soutform}{\rm sOut} (C^*_r(G))= {\rm Char}(G)\rtimes {\rm Out}(G).\end{equation} Pairing this formula with Theorems \ref{computeauta} and \ref{autcoinduceda}, one obtains concrete descriptions of these strictly outer automorphism groups. However, to simplify the writing, we highlight only a few applications of these results.   
\vskip 0.04in 
Recall that two groups $G$ and $H$ are commensurable up to finite kernels (abbrev.\ $G\cong_{\rm fk}H$) if and only if there exist finite index subgroups $G_1\leqslant G$, $H_1\leqslant H$ and finite, normal subgroups $M_1\lhd G_1$,  $N_1\lhd H_1$ such that the quotients  are isomorphic, $G_1/M_1\cong H_1/N_1$. It is easy to see that commensurability up to finite kernels is an equivalence relation.

\begin{corollary} \begin{enumerate}
\item For every icc, torsion free, bi-exact, property (T) group $\Gamma$, there is a countable icc group $G$ such that ${\rm sOut}(C^*_r(G))\cong_{\rm fk} {\rm Aut}(\G)$. 
\item For any icc torsion free, hyperbolic, property (T) group $\Gamma$, there exists a countable icc group $G$ such that ${\rm sOut}(C^*_r(G))\cong_{\rm fk} \G$. \end{enumerate}
\end{corollary}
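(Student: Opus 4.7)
For both parts I would use a single construction: the C*-superrigid coinduced group $G = (\Gamma_1 \ast \cdots \ast \Gamma_n)^I \rtimes \tilde\Gamma$ produced by Corollary \ref{superrigidC^*2}, where $\tilde\Gamma = \Gamma \times \Gamma$, $\Gamma_0 = \operatorname{diag}(\Gamma)$, and the free-product base arises from $\Gamma$ via the class $\mathscr A$ setup. Because $G$ is C*-superrigid, formula \eqref{soutform} specialises to
\[\operatorname{sOut}(C^*_r(G)) = \operatorname{Char}(G) \rtimes \operatorname{Out}(G),\]
and finiteness of $\operatorname{Char}(G)$ follows along the lines of Theorem \ref{computeauta}(1): property (T) of $\Gamma$ forces both $\Gamma/[\Gamma,\Gamma]$ and each $\Gamma_i/[\Gamma_i,\Gamma_i]$ to be finite, and this pushes through the coinduced construction to bound $G/[G,G]$.

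The main task is the exact computation of $\operatorname{Out}(G)$. Theorem \ref{autcoinduceda} provides a surjection $\Psi\colon \mathfrak{S}_n \times \operatorname{Aut}(\Gamma) \twoheadrightarrow \operatorname{Out}(G)$ sending $(\mu, \alpha)$ to the class of the canonical lift $\psi_{\mu, \alpha}$ that acts on $A_0^I$ coordinate-wise by $\alpha_\mu$ and on $\tilde\Gamma$ by $\alpha \oplus \alpha$. I plan to show that $\Psi$ is injective. If $\psi_{\mu_0, \alpha_0} = \operatorname{ad}(b)$ for some $b \in G$, then $b$ must normalise both $\tilde\Gamma$ and $A_0^I$. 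Using $C_G(\tilde\Gamma) = 1$ and the observation that the only $\tilde\Gamma$-invariant finitely supported element of $A_0^I$ is trivial (which follows from transitivity of the left-right action $\tilde\Gamma \curvearrowright I = \Gamma$), this forces $b \in \tilde\Gamma$; writing $b = (g_1, g_2)$ and matching restrictions to $\tilde\Gamma$ via $Z(\Gamma) = 1$ gives $g_1 = g_2 =: g$ and $\alpha_0 = \operatorname{ad}(g)$. A final comparison of $\sigma_{(g,g)}$ with $\alpha_{\mu_0}^{\otimes I}$ on $A_0^I$ then forces $g = 1$, whence $\alpha_0 = 1$ and $\mu_0 = 1$. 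Consequently $\operatorname{Out}(G) \cong \mathfrak{S}_n \times \operatorname{Aut}(\Gamma)$, and combining with finiteness of $\operatorname{Char}(G)$ and of $\mathfrak{S}_n$ yields $\operatorname{sOut}(C^*_r(G)) \cong_{\rm fk} \operatorname{Aut}(\Gamma)$, which is part (1). For (2), Paulin's theorem gives that every torsion-free hyperbolic property (T) group has finite outer automorphism group; since $\Gamma$ is icc, $\operatorname{Inn}(\Gamma) \cong \Gamma$ is then a finite-index normal subgroup of $\operatorname{Aut}(\Gamma)$, so $\operatorname{Aut}(\Gamma) \cong_{\rm fk} \Gamma$ and (1) specialises accordingly.

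The step I expect to be the main obstacle is the final comparison of $\sigma_{(g,g)}$ with the pointwise-diagonal lift $\alpha_{\mu_0}^{\otimes I}$ on $A_0^I$: the coinduced shift $\sigma_{(g,g)}$ permutes the indexing copy of $\Gamma$ via $h \mapsto ghg^{-1}$, while $\alpha_{\mu_0}^{\otimes I}$ fixes every index. It is precisely the combination $Z(\Gamma) = 1$ together with the left-right structure of $\tilde\Gamma \curvearrowright I$ that allows one to conclude $g = 1$; in any coinduced construction where these features were weaker, $\operatorname{Inn}(\Gamma)$ would be absorbed into the kernel of $\Psi$ and $\operatorname{Out}(G)$ would shrink to something only commensurable with $\operatorname{Out}(\Gamma)$, far too small to match $\operatorname{Aut}(\Gamma)$ when $\Gamma$ is infinite.
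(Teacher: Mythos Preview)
Your overall strategy coincides with the paper's second construction for part~(1): take the coinduced group $G = A^I \rtimes \tilde\Gamma$ from Corollary~\ref{superrigidC^*2}, invoke formula~\eqref{soutform}, and read off ${\rm Out}(G)$ from Theorem~\ref{autcoinduceda}. The paper's own proof is terse and simply asserts that Theorem~\ref{autcoinduceda} yields ${\rm sOut}(C^*_r(G)) \cong_{\rm fk} {\rm Aut}(\Gamma)$ without isolating the kernel of the surjection $\Psi\colon \mathfrak S_n \times {\rm Aut}(\Gamma) \twoheadrightarrow {\rm Out}(G)$; you attempt to fill this in by arguing that $\Psi$ is injective.

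Your kernel computation, however, has a genuine gap. You assert that $\psi_{\mu_0,\alpha_0}$ acts on $A_0^I$ as the pure tensor $\alpha_{\mu_0}^{\otimes I}$, ``fixing every index''. But the description in Theorem~\ref{autcoinduceda} (formula~\eqref{descrautcoinduced}, and in particular relation~\eqref{basesetaction} in its proof) shows that $\psi_{\mu,\alpha}$ \emph{also permutes} the index set $I=\{h\Gamma_0: h\in \Gamma\times 1\}$ via $h\Gamma_0\mapsto \alpha(h)\Gamma_0$. When $\alpha_0={\rm ad}(g)$ this permutation is exactly $h\mapsto ghg^{-1}$, the very permutation you attribute only to $\sigma_{(g,g)}$. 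Using the cocycle identity $c((g,g),h\Gamma_0)=(g,g)$ derived in that proof, one checks directly that $\sigma_{(g,g)}$ and the $A_0^I$-component of $\psi_{{\rm id},{\rm ad}(g)}$ agree, and on $\tilde\Gamma$ both are $(\operatorname{ad}(g)\oplus\operatorname{ad}(g))$. Hence $\psi_{{\rm id},{\rm ad}(g)}={\rm ad}((g,g))$ on all of $G$, so $\{1\}\times{\rm Inn}(\Gamma)\subseteq\ker\Psi$, which is infinite. Your injectivity claim therefore fails, and $\Psi$ factors through $\mathfrak S_n\times{\rm Out}(\Gamma)$ --- precisely the collapse you warned against in your final paragraph. (The same phenomenon occurs verbatim for the paper's first construction $G_1\in\mathscr A$: there $\psi_{{\rm id},{\rm ad}(g_0)}(ag)=\rho_{g_0}(a)\,g_0gg_0^{-1}={\rm ad}(g_0)(ag)$.)
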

\begin{proof} For each $\G$  we indicate two constructions of $G$ satisfying the statement. First, let $\G_1,\G_2,\ldots, \G_n$ be isomorphic copies of $\G$. Let $\G\ca^{\rho_i} \G_i$ be the action by conjugation and let $\G\ca^\rho \G_1\ast \G_2\ast\cdots\ast \G_n=A$ be the action induced by the canonical free product automorphism $\rho_g =\rho_g^1\ast \rho_g^2\ast\cdots\ast \rho_g^n $ for $g\in \G$. Let $G_1= A \rtimes_\rho Q$ be the corresponding semi-direct product. Using formula \eqref{soutform}  and Theorem \ref{computeauta} we obtain that ${\rm sOut} (C^*_r(G_1))\cong_{\rm fk} {\rm Aut} (\G)$. 
Second, let $\tilde \G=\G\times \G$ and assume $\G_0=\G<\tilde \G$ is diagonally embedded. Denote by $I= \tilde \G/\G_0$ and consider  $G_2=A^I\rtimes \tilde \G$ the coinduced group associated with $\G \ca^\rho A$. Then by \eqref{soutform} and Theorem \ref{autcoinduceda} we get ${\rm sOut} (C^*_r(G_2))\cong_{\rm fk} {\rm Aut} (\G)$, as desired.   

2) Since $\G$ is hyperbolic and has property (T) then Paulin's result implies that ${\rm Out}(\G)$ is finite. In particular, ${\rm Aut}(\G)\cong_{\rm fk} \G$ and the conclusion follows from part 1).
\end{proof}

\noindent \emph{Remarks}. If in the prior statement, one considers additionally that $\G$ has trivial abelianization, ${\rm Out}(\G)=1$  and $n=2$ in both constructions of $G$ we actually obtain that ${\rm sOut}(C^*_r(G))$ contains a copy of $\G$ as a subgroup of index two.   
\vskip 0.03in

Now, we turn towards describing the  automorphisms of wreath-products algebras. First, we tackle the von Neumann algebras arising from groups in $\mathscr {GW}_1(A_0)$. For these algebras we have the following von Neumann algebraic counterpart of Houghton's so-called automorphisms $KIB^*$-decomposition in group theory.

\begin{corollary}\label{C^*} For any $G=A_0\wr_I \Gamma\in \mathscr {GW}_1(A_0)$, where  $A_0,\G$ are torsion free we have
\begin{equation*}{\rm Aut}( \cL(G))=\{{\rm ad} (w) \circ  \Phi_\phi \circ \Psi_{\eta,\delta} \,|\,\delta\in {\rm Aut}(G), \eta\in {\rm Char} (G),\phi\in {\rm Aut}( \cL(A_0)), w\in \mathscr U(\cL(G))\}.\end{equation*}
If in addition $A_0$ satisfies ${\rm Out}(\cL(A_0))={\rm  Char}(A_0)\rtimes {\rm Out(A_0)}$ (e.g.\ when is $W^*$-superrigid) then
\begin{equation*}\begin{split}&{\rm Aut}( \cL(G))=\{{\rm ad} (w) \circ  \Phi_v \circ \Psi_{\eta,\delta} \,|\,\delta\in {\rm Aut}(G), \eta\in {\rm Char} (G),v\in {\mathscr U}( \cL(A_0)), w\in \mathscr U(\cL(G))\}.%\\& {\rm Aut}( C^*_r(G))=\{{\rm ad} (w) \circ  \Phi_v \circ \Psi_{\eta,\delta} \,|\,\delta\in {\rm Aut}(G), \eta\in {\rm Char} (G),v\in \mathscr U_{wInn}( \cL(A_0)), w\in \mathscr U_{wInn}(\cL(G))\}.
\end{split}\end{equation*}

\end{corollary}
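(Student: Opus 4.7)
The proof follows by combining the structural result Theorem~\ref{discretizationactingpart} with the description of ${\rm Aut}(G)$ from Theorem~\ref{autgenbernoulli}. Given $\Theta \in {\rm Aut}(\cL(G))$, view it as a $*$-isomorphism $\cL(G)\to \cL(H)$ with $H=G$. Since $A_0$ and $\Gamma$ are torsion free, so is $G$, and Theorem~\ref{discretizationactingpart} yields: an alternative wreath-product decomposition $G = B_0 \wr_J \Lambda \in \mathscr{GW}_1(B_0)$, a group isomorphism $\delta_0:\Gamma \to \Lambda$ with $\delta_0(\Gamma_0)=\Lambda_0$, a character $\eta_0\in {\rm Char}(\Gamma)$, a $*$-isomorphism $\theta:\cL(A_0)\to \cL(B_0)$, and a unitary $w\in \cL(G)$ realizing the explicit formula stated in that theorem.

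To recast this in the required form, I would exploit the rigidity of wreath-product presentations of $G$: by Theorem~\ref{autgenbernoulli}, every group automorphism of $G$ is, up to inner conjugation, of the form $ag \mapsto (\alpha(a_{\beta(h)\Gamma_0}))_{h\Gamma_0}\beta(g)$ with $\alpha\in {\rm Aut}(A_0)$ and $\beta\in {\rm Aut}_{\Gamma_0}(\Gamma)$. This forces the alternative wreath-product presentation $B_0\wr_J\Lambda$ of $G$ to be obtainable from the canonical one $A_0\wr_I\Gamma$ via a group automorphism $\delta \in {\rm Aut}(G)$, after possibly absorbing an inner automorphism into $w$. One then repackages $\eta_0$ as a character $\eta$ on $G$, converts $\delta_0$ into $\delta\in {\rm Aut}(G)$, and uses the identification of $\cL(B_0)$ with $\cL(A_0)$ forced by the above to extract from $\theta$ an automorphism $\phi \in {\rm Aut}(\cL(A_0))$ whose tensor extension $\Phi_\phi$ implements the base-algebra part of $\Theta$. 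Combining these pieces gives $\Theta = {\rm ad}(w) \circ \Phi_\phi \circ \Psi_{\eta,\delta}$, as required.

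For the moreover part, assume ${\rm Out}(\cL(A_0)) = {\rm Char}(A_0)\rtimes {\rm Out}(A_0)$. Then $\phi$ decomposes as $\phi = {\rm ad}(v)\circ \Psi_{\eta_1,\delta_1}$ for some $v \in \mathscr U(\cL(A_0))$, $\eta_1\in {\rm Char}(A_0)$, and $\delta_1\in {\rm Aut}(A_0)$. Since the Houghton-type construction is functorial in its argument, $\Phi_\phi = \Phi_v \circ \Phi_{\Psi_{\eta_1,\delta_1}}$. The crucial observation is that $\Phi_{\Psi_{\eta_1,\delta_1}}$ is itself a canonical group-like automorphism of $\cL(G)$: the group automorphism $\delta_1$ extends to $\tilde\delta_1\in {\rm Aut}(G)$ acting identically on each copy of $A_0$ inside the core and trivially on $\Gamma$, and $\eta_1$ extends multiplicatively to $\tilde\eta_1\in {\rm Char}(G)$ via $\tilde\eta_1((a_i)_{i\in I}g) = \prod_i \eta_1(a_i)$. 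A direct check on group unitaries gives $\Phi_{\Psi_{\eta_1,\delta_1}}=\Psi_{\tilde\eta_1,\tilde\delta_1}$, and absorbing this into $\Psi_{\eta,\delta}$ yields the desired factorization with $\Phi_v$ in place of $\Phi_\phi$.

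The main obstacle is the second step above: rigorously identifying the two wreath-product decompositions of $G$ via a group automorphism, so that the formula of Theorem~\ref{discretizationactingpart} fits precisely into the ${\rm ad}(w)\circ \Phi_\phi\circ \Psi_{\eta,\delta}$ template. This requires carefully matching the tensor-factor bijection $I\leftrightarrow J$ induced by $\delta_0$, ensuring that the resulting map extends to a genuine element of ${\rm Aut}(G)$ rather than merely an isomorphism between two abstractly isomorphic groups, and keeping track of the character and unitary pieces that get shuffled during the identification.
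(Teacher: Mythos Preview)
Your approach is essentially correct and matches the paper's intent: the paper states this result as a corollary without explicit proof, viewing it as a direct consequence of Theorem~\ref{discretizationactingpart} applied with $H=G$ (combined with Theorem~\ref{autgenbernoulli} for the subsequent remarks). Your outline is in fact more detailed than what the paper supplies.

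One point of precision on the step you flagged as the ``main obstacle'': Theorem~\ref{autgenbernoulli} literally describes ${\rm Aut}(G)$ given a fixed wreath-product structure; it does not by itself assert that any two wreath-product decompositions of $G$ in $\mathscr{GW}_1$ differ by an automorphism. What you really need is the \emph{uniqueness} of the core $A_0^I$ as a normal subgroup of $G$ (up to conjugation), which is exactly what the Houghton-type arguments underlying Theorem~\ref{autgenbernoulli} establish (cf.\ the references \cite{Ho62,Ho72,Has78}). Once $B_0^J$ is identified with a conjugate of $A_0^I$ inside $G$, comparing the direct-sum structures gives a group isomorphism $A_0\cong B_0$; precomposing $\theta$ with the induced $*$-isomorphism $\cL(B_0)\to\cL(A_0)$ yields the desired $\phi\in{\rm Aut}(\cL(A_0))$, and the remaining data assembles into $\delta\in{\rm Aut}(G)$ and $\eta\in{\rm Char}(G)$ exactly as you describe. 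So your identification of the obstacle is accurate, and the resolution is the Houghton uniqueness rather than Theorem~\ref{autgenbernoulli} per se. Your treatment of the ``moreover'' part is correct as written.
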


 Under the result's assumptions, the first part combined with Theorem \ref{autgenbernoulli} gives that 
 ${\rm Out}(\cL(G)\cong {\rm Aut}(\cL(A_0))\times ({\rm Char}(\G)\rtimes {\rm Aut}_{\G_0} (\G)).$
 Under the second assumption we have 
 ${\rm Out}(\cL(G)\cong ({\rm Inn}(\cL(A_0)) $ $({\rm Char} (A_0)\rtimes {\rm Out}(A_0)) \times ({\rm Char}(\G)\rtimes {\rm Aut}_{\G_0} (\G).$
 Thus, the ``non-discrete'' part in the outer automorphisms arises from the unitary group of $\cL(A_0)$. Iterating the construction provided in the Corollary \ref{C^*} one can obtain more and more complicated outer automorphism groups containing new unitary groups of various II$_1$ factors at every level. Specifically, fix $G_1= A_0\wr_I\G  \in \mathscr {GW}_1(A_0)$ and for every $k\geq 2$ we can define inductively  $G_k:= G_{k-1} \wr_I \G\in \mathscr {GW}_1(G_k)$. Therefore, we have the following recursion formula ${\rm Out}(\cL(G_k)\cong {\rm Aut}(\cL(G_{k-1}))\times ({\rm Char}(\G)\rtimes {\rm Aut}_{\G_0} (\G))= ({\rm Inn}(\cL(G_{k-1})) {\rm Out}(\cL(G_{k-1}))\times ({\rm Char}(\G)\rtimes {\rm Aut}_{\G_0} (\G))$  which implies our claim.  
 
 \vskip 0.03in
Corollary \ref{C^*-superrigid5} also sheds some light towards understanding the automorphism of the reduced group $C^*$-algebra of groups in class $\mathscr {GW}_1(A_0)$. More precisely, let $G= A_0\wr_I \G$ where $A_0$ is $W^*$-superrigid group with trivial amenable radical---for instance for all groups $A_0$ covered by Corollary \ref{C^*-superrigid5}. Thus, the conclusion of the same corollary implies that \begin{equation}{\rm Aut}( C^*_r(G))\subseteq \{{\rm ad} (w) \circ  \Phi_v \circ \Psi_{\eta,\delta} \,|\,\delta\in {\rm Aut}(G), \eta\in {\rm Char} (G),v\in \mathscr U( \cL(A_0)), w\in \mathscr U(\cL(G))\}.\end{equation}

Observe that ${\rm Aut}( C^*_r(G))$ contains all automorphisms ${\rm ad} (w) \circ  \Phi_v \circ \Psi_{\eta,\delta}$ whenever $v\in \mathscr U_{\rm wInn}( \cL(A_0))$, $w\in \mathscr U_{\rm wInn}(\cL(G))$, and we conjecture these should be all the automomorphisms. However, we could not determine whether this condition is automatic only from the fact that ${\rm ad} (w) \circ  \Phi_v \circ \Psi_{\eta,\delta}$ invaries $C^*_r(G)$ and further analysis is needed. We leave this as an open problem.
 \vskip 0.05in

In the final part of the paper we highlight some computations of automorphisms groups of algebras associated with coinduced groups.

\begin{corollary}   Assume $\Gamma_0\car A_0$ is a group action by automorphisms satisfying the following: 
\begin{enumerate}
    \item $\tilde\Gamma=\Gamma\times\Gamma$, where $\Gamma$ is an icc, torsion free, weakly amenable, bi-exact, property (T) group. 
    \item $\Gamma_0<\tilde\Gamma$ is the diagonal subgroup defined by $\Gamma_0=\{(g,g)\,|\,g\in\Gamma\}.$
    \item One of the following two properties holds  \begin{enumerate}
\item The action $\G_0\ca A_0$ is trivial;
    \item $A_0$ is icc, torsion free, bi-exact and it contains an infinite property (T) subgroup $K_0$ that is $\Gamma_0$-invariant and has trivial virtual centralizer. \end{enumerate}
    \end{enumerate}
    If $G=A_0^I\rtimes \tilde\Gamma$ is the group coinduced from $\Gamma_0\car A_0$ then $${\rm Aut}( \cL(G))=\{{\rm ad} (w) \circ  \Phi_\phi \circ \Psi_{\eta,\delta} \,|\,\delta\in Aut(G), \eta\in {\rm Char} (G),\phi\in C_{{\rm Aut}( \cL(A_0))}(\G_0), w\in \mathscr U(\cL(H))\}.$$
\end{corollary}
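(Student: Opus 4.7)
The plan is to apply Theorem \ref{Th:coinduced} with $H=G$ to any $\Theta\in{\rm Aut}(\cL(G))$ and then rearrange the output into the normal form claimed. The theorem gives a second coinduced decomposition $G=\Sigma_0^I\rtimes_\rho\tilde\Lambda$, a group isomorphism $\delta:\tilde\Gamma\to\tilde\Lambda$ with $\delta(\Gamma_0)=\Lambda_0$, a $\Gamma_0$-equivariant $*$-isomorphism $\theta:\cL(A_0)\to\cL(\Sigma_0)$, a character $\eta\in{\rm Char}(\tilde\Gamma)$, and a unitary $w\in\cL(G)$ satisfying $\Theta(xu_g)=\eta(g)w^*\theta^{\bar\otimes I}(x)v_{\delta(g)}w$ for all $x\in\cL(A_0^I)$ and $g\in\tilde\Gamma$. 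I then need to produce $\bar\delta\in{\rm Aut}(G)$ and $\phi\in C_{{\rm Aut}(\cL(A_0))}(\Gamma_0)$ from this data so that $\Theta$ can be re-expressed in the desired form.

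The first key step is to show that the two coinduced decompositions of $G$ share the same core, i.e., $\Sigma_0^I=A_0^I$ as subgroups of $G$. Both are normal subgroups of $G$ whose quotients are isomorphic to $\tilde\Gamma$, so the pair of quotient maps yields an embedding $G/(A_0^I\cap\Sigma_0^I)\hookrightarrow\tilde\Gamma\times\tilde\Gamma$. Combining this with the very limited normal-subgroup structure of $\tilde\Gamma=\Gamma\times\Gamma$ (forced by the icc, property (T), bi-exact, weakly amenable nature of $\Gamma$) and a Hopfian-type argument, I would deduce that $A_0^I/(A_0^I\cap\Sigma_0^I)$ must be trivial; hence $A_0^I\subseteq\Sigma_0^I$, and by symmetry equality holds. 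Once $\Sigma_0^I=A_0^I$, $\Sigma_0$ coincides with some base factor $A_0^{\{i'\}}$; using the transitivity of $\tilde\Gamma\car I$ together with $\delta(\Gamma_0)=\Lambda_0$, I would absorb a suitable inner automorphism of $\cL(G)$ into $w$ so as to achieve $\Sigma_0=A_0$, $\Lambda_0=\Gamma_0$ and $\tilde\Lambda=\tilde\Gamma$ as subgroups of $G$. Then $\delta$ becomes an automorphism of $\tilde\Gamma$ preserving $\Gamma_0$ setwise, and extends coordinate-wise to $\bar\delta\in{\rm Aut}(G)$ via the identity on each tensor factor of $A_0^I$ permuted according to $\delta$'s induced action on $I\cong\tilde\Gamma/\Gamma_0$.

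Finally, I set $\phi:=\cL(\alpha_0)^{-1}\circ\theta$, where $\alpha_0:A_0\to\Sigma_0=A_0$ is the identity resulting from the above absorptions. Since both $\theta$ and $\cL(\alpha_0)$ intertwine the $\Gamma_0$-action on $\cL(A_0)$ with the $\Lambda_0$-action on $\cL(\Sigma_0)$ via $\delta$, their composition $\phi$ commutes with the $\Gamma_0$-action, placing it in $C_{{\rm Aut}(\cL(A_0))}(\Gamma_0)$. Extending $\eta$ trivially from $\tilde\Gamma$ to a character on $G$ and verifying on generators $xu_g$ then yields $\Theta={\rm ad}(w)\circ\Phi_\phi\circ\Psi_{\eta,\bar\delta}$, as required. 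The main obstacle is the uniqueness-of-core assertion $\Sigma_0^I=A_0^I$: Theorem \ref{Th:coinduced} supplies the secondary coinduced decomposition abstractly but not its pinning inside $G$, and pushing this identification through is what distinguishes the corollary from the raw theorem and requires the full strength of the rigidity assumptions on $\tilde\Gamma$ and, in case (b), on $A_0$.
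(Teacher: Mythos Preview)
Your overall plan---apply Theorem \ref{Th:coinduced} with $H=G$ and then pin the resulting second coinduced decomposition inside $G$---is the right one, and since the paper gives no proof this is indeed what must be supplied. However, two of your key steps are not justified as written.

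First, your argument for $\Sigma_0^I=A_0^I$ via a ``Hopfian-type argument'' and ``limited normal-subgroup structure of $\tilde\Gamma$'' does not go through. Bi-exact property (T) groups can have many normal subgroups, and bi-exact groups are not known to be Hopfian in general, so neither ingredient is available. The argument that does work already appears verbatim in the first paragraph of the proof of Theorem \ref{autcoinduceda}: letting $\pi:G\to\tilde\Gamma$ be the quotient by $A_0^I$, the normal subgroup $\pi(\Sigma_0^I)\lhd\tilde\Gamma$ is generated by the infinitely many pairwise commuting subgroups $\pi(\Sigma_0^i)$; bi-exactness of $\Gamma$ forces one of these to be amenable, and since $\tilde\Gamma$ has trivial amenable radical (by \cite{CSU11}) that image is trivial, so $\Sigma_0^j\le A_0^I$ for some $j$; normality of $A_0^I$ and transitivity of $\tilde\Lambda\car I$ give $\Sigma_0^I\le A_0^I$, and symmetry gives equality. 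Conjugating $\tilde\Lambda$ onto $\tilde\Gamma$ then requires cocycle superrigidity for the coinduced action (\cite{Dr15}), exactly as in the second paragraph of the proof of Theorem \ref{autcoinduceda}; this is not something you can ``absorb'' without that input.

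Second, there is a gap in your construction of $\phi$. With $\alpha_0={\rm id}$ you obtain $\phi=\theta$, and after the identifications $\theta$ satisfies $\theta\circ\sigma_g=\sigma_{\delta(g)}\circ\theta$ for $g\in\Gamma_0$. This is \emph{not} $\Gamma_0$-equivariance unless $\sigma_g=\sigma_{\delta(g)}$ for all $g\in\Gamma_0$. In case (a) the action is trivial so this is automatic, but in case (b) it fails whenever $\delta|_{\Gamma_0}\neq{\rm id}$. Likewise your proposed $\bar\delta$ (permute coordinates, identity on each $A_0$-factor) is not a group automorphism of $G$ in case (b), for the same reason: checking $\bar\delta\circ\sigma_g=\sigma_{\delta(g)}\circ\bar\delta$ on $A_0^I$ reduces to $\sigma_{c(g,i)}=\sigma_{c(\delta(g),\tau(i))}$, which fails. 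The fix is to let $\bar\delta$ act on each base factor by some $\alpha\in{\rm Aut}(A_0)$ with $\alpha\circ\sigma_g=\sigma_{\delta(g)}\circ\alpha$, and then set $\phi:=\cL(\alpha)^{-1}\circ\theta$, which does lie in $C_{{\rm Aut}(\cL(A_0))}(\Gamma_0)$. You have not addressed the existence of such an $\alpha$; it is automatic in case (a) and can be written down explicitly in the paper's concrete examples (class $\mathscr A$, where one takes $\alpha=\delta_1\ast\cdots\ast\delta_1$ as in \eqref{basegroupfreeproduct}), but in the full generality of hypothesis (b) it requires separate justification.
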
 

Here, we denoted by $C_{{\rm Aut}( \cL(A_0))}(\G_0)$ the subgroup of  ${\rm Aut}( \cL(A_0))$ that contains all the $\Gamma_0$-equivariant automorphisms.

\end{document}